\definecolor{unbleu}{rgb}{0.03, 0.15, 0.4}
\definecolor{monvert}{rgb}{0.0,.5,0.0}
\definecolor{britishracinggreen}{rgb}{0.0, 0.26, 0.15}
\definecolor{monbleu}{rgb}{0,.2,.8}
\definecolor{monautrebleu}{rgb}{0,0.4,.75}
\definecolor{applegreen}{rgb}{0.55, 0.71, 0.0}
\definecolor{monrouge}{rgb}{0.8, 0.0, 0.0} 
\definecolor{cadmiumgreen}{rgb}{0.0, 0.42, 0.24}
\definecolor{royalblue(traditional)}{rgb}{0.0, 0.14, 0.4}
\definecolor{black}{rgb}{0.0, 0.0, 0.0}
\definecolor{sepia}{rgb}{0.44, 0.26, 0.08}
\definecolor{teagreen}{rgb}{0.82, 0.94, 0.75}
\definecolor{yellow-green}{rgb}{0.6, 0.8, 0.2}
\definecolor{azure(colorwheel)}{rgb}{0.0, 0.5, 1.0}
\definecolor{awesome}{rgb}{1.0, 0.13, 0.32}
\definecolor{cadmiumyellow}{rgb}{1.0, 0.96, 0.0}
\definecolor{carrotorange}{rgb}{0.93, 0.57, 0.13}
\definecolor{green-yellow}{rgb}{0.68, 1.0, 0.18}
\definecolor{huntergreen}{rgb}{0.21, 0.37, 0.23}
\numberwithin{equation}{section}
\newcommand{\widesim}{\scaleobj{1.3}{\sim}}
\newcommand{\uset}[3][0ex]{%
  \mathrel{\mathop{#2}\limits_{
    \vbox to#1{\kern-6\ex@
    \hbox{$\scriptstyle#3$}\vss}}}}
\newcommand{\lip}{\operatorname{Lip}}
\newcommand{\dd}{\mathrm{d}}
\newcommand{\leb}{\operatorname{Leb}}
\newcommand{\diam}{\operatorname{\mathrm{diam}}}
\newcommand{\fPp}{\operatorname{FPP}}
\newcommand{\cfPp}{\operatorname{CFPP}}
\newcommand{\geo}{\operatorname{Geo}}
\newcommand{\RPP}{\operatorname{RPP}}
\newcommand{\DRPP}{\operatorname{DRPP}}
\newcommand{\lr}{\scaleto{r}{5pt}}
\newcommand{\mr}{\scaleto{r}{4pt}} 
\newcommand{\RV}{\mathrm{RV}}
\newcommand\eqlaw{\stackrel{\mathrm{{{\scriptscriptstyle law}}}}{=}}
	\theoremstyle{definition}
	\newtheorem{defn}{Definition}[section]
    \newtheorem*{notations}{Notations}
	\theoremstyle{plain}
	\newtheorem{thm}{Theorem}[section]
        \newtheorem{prop}{Proposition}[section]
	\newtheorem{cor}[prop]{Corollary}
        \newtheorem{lem}[prop]{Lemma}
	\theoremstyle{plain}
	\newtheorem{rem}{Remark}[section]
\begin{document}

\title{The Fractional Poisson Process and Other Limit Point Processes for Rare Events in Infinite Ergodic Theory}


\author{Dylan Bansard-Tresse \vspace{0.2cm}\\ 
CPHT, CNRS, École polytechnique, \\ Institut Polytechnique de Paris, 91120 Palaiseau, France\\
\texttt{dylan.bansard-tresse@polytechnique.edu}\\
\small{\url{https://sites.google.com/view/bansard-tresse}}}


\maketitle
\begin{abstract}
We study the process of suitably normalized successive return times to rare events in the setting of 
infinite-measure preserving dynamical systems. Specifically, we consider small neighborhoods of points whose 
measure tends to zero. We obtain two types of results. First, we conduct a detailed study of a class of interval maps with a neutral fixed point and we fully characterize the limit processes for all points, highlighting a trichotomy and the emergence of the fractional (possibly compound) Poisson process. 
This is the first time that these processes have been explicitly identified in this context.
Second, we prove an abstract result that offers an explanation for the
emergence of the fractional Poisson process, as the unique fixed point of a functional equation, drawing a 
parallel with the well-established behavior of the Poisson process in finite-measure preserving dynamical systems.
\end{abstract}

{\small
\tableofcontents
}

\section{Introduction}

We consider a measure-theoretic dynamical system $(X, \mathscr{B}, \mu, T)$ where $T$ acts on the phase 
space $X$ and preserves the measure $\mu$ which can be finite or infinite. We are interested
in ``asymptotically rare events'', meaning that we consider sequences
$(B_n)_{n\geq 0}$ of measurable sets such that $\mu(B_n)\to 0$ as
$n\to\infty$. Denoting $\lr_{B_n}^{(k)}$ the $k$-th return to $B_n$, we want to find the 
asymptotic behavior of the point process
\[
N_{B_n}^{\gamma} := \sum_{k\geq 1} \delta_{\gamma(\mu(B_n))\,\mr_{B_n}^{(k)}}
\]
where $\gamma$ is an appropriate scaling function. Such point processes are often 
called ``Rare Event Point Processes'' (\text{REPP} for short). When $\mu$ is a probability 
measure, extensive research has focused on the limiting distribution of the first 
hitting or return time, with contributions from many authors (see, {\em e.g.}, 
\cite{Sau09_Survey_AnIntroductionToQuantitativeRecurrenceInDynamicalSystems,
Haydn13_EntryAndReturnTimesDistribution} and references therein), or the whole sequence of return times seen as a process (see \cite{LFFF16} and  references therein). 
For differentiable dynamical systems, the measure $\mu$ is either 
absolutely continuous with respect to the Lebesgue measure or, more broadly, is an SRB 
(Sinai-Ruelle-Bowen) measure; see, {\em e.g.}, \cite{ChazottesCollet13, SuBunimovitch22}.
Due to Ka\v{c}'s theorem, the right scaling consists in taking $\gamma = \text{id}$. In 
this context, if mixing is sufficiently strong or correlations decay sufficiently fast, when the sets $B_n$ are either balls or 
cylinder sets shrinking to a point $x$ in the phase space and $x$ is $\mu$-generic, 
then $N^{\text{id}}_{B_n}$ converges towards the Poisson point process ($\text{PPP}$ 
for short), with this convergence understood when $N^{id}_{B_n}$
is considered as a random variable on the probability space $(X, \mathscr{B}, \mu)$
or on the restricted probability spaces $(B_n,\mathscr{B}\cap B_n, \mu_{B_n})$. 
These cases are commonly referred to as ``hitting REPP'' and ``return REPP'', respectively.

Hitting and return REPPs are intrinsically tied, converging to the same limit (see \cite{HLV05} for the first 
hitting/return time relationship and \cite{Mar17,Zwe16} for the point process version). 
The Poisson point process is the unique fixed point of the equation connecting the two 
limits, thereby confirming its prominent role as the anticipated limit point process 
for sufficiently mixing systems.

Yet, not all dynamical systems of interest preserve a probability measure, and some indeed preserve a $\sigma$-finite measure with infinite mass.
Examples include null-recurrent Markov chains and Markov shifts 
\cite{Sar01_NullRecurrentPotentials}, interval maps with indifferent fixed points 
\cite{Aar97, Tha80_EstimatesInvariantDensities}, $\mathbb{Z}^d$-extensions of probability
preserving systems or billiards with cusps. For conservative ergodic systems (see Section \ref{section:preliminaries} for 
precise definitions), every set of positive measure is visited infinitely often by almost 
every orbit. Consequently, $N_{B_n}^{\gamma}$ is well defined, and the question of its 
limiting behavior arises naturally. 
Research on infinite measure systems has been more limited but has seen significant growth in recent years.
 According to Ka\v{c}'s formula, the 
mean return time to any finite measure set is infinite, making the identity scaling 
$\gamma = \text{id}$ inappropriate. Nevertheless, suitable scalings can still be 
identified \cite{RZ20}. The convergence of first hitting and return times has been 
investigated, revealing non-exponential limiting laws for various infinite measure 
preserving dynamical systems and natural targets \cite{BZ01, PeneSaussol10_BackToBallsInBilliards, RZ20, Yas18, 
Yas24_quantitativerecurrencezextensionthreedimensional}, with fewer studies considering 
the whole sequence of successive returns \cite{PSZ13}. 
Recent work \cite{PS23} has shed light on the limiting behavior of point processes for
$\mathbb{Z}$-extensions of subshifts of finite type.

In this paper, we encompass both an in-depth analysis of a paradigmatic interval map with a 
neutral fixed point and a general theoretical framework.  We will now present our results in a 
fairly informal way, deferring the precise statements until afterwards.

We choose to focus on the following class of interval maps with a neutral fixed point as it 
allows for a clear illustration of our main results without the need for overly technical 
considerations. Namely, we consider $X=[0,1]$ and
\begin{equation}\label{def:LSVmap}
T_p(x) =
\begin{cases}
x + 2^px^{p+1}, & 0\leq x < 1/2,\\
2x - 1, & 1/2\leq x \leq 1,    
\end{cases}    
\end{equation}
where $0$ is the neutral fixed point, and $p$ is a nonnegative parameter. This map preserves an absolutely continuous measure which is finite for $p<1$ and infinite for $p\geq 1$ (see more details below).
This map is commonly referred to as Manneville-Pomeau map or LSV map in the literature, following Liverani, Saussol and Vaienti \cite{LSV97}.
This paradigmatic one-parameter family of maps displays a rich spectrum of statistical behaviors in both finite (see {\em e.g.} \cite{Gouezel04_PhD}) and infinite (see {\em e.g.} \cite{Tha80_EstimatesInvariantDensities, Tha83}) contexts. 

Our first result establishes the \emph{fractional Poisson process} as the limiting point 
process for cylinders shrinking to \emph{generic points} of the invariant measure in the 
infinite measure case (see Theorem \ref {thm:REPP_LSV_nonperiodic_points}, p.\pageref{thm:REPP_LSV_nonperiodic_points} for the precise statement).\newline

\noindent \textbf{Theorem A.} \textit{For the map \eqref{def:LSVmap} with $p>1$, and for
cylinders $B_n$ shrinking to a point $x$ which is generic for the absolutely continuous invariant measure, 
the point process $N_{B_n}^{\gamma}$ converges in law towards a fractional Poisson process
(parametrized by $p$), both when one starts in $B_n$ or off $B_n$ (return and hitting REPPs, respectively).}\newline

The fractional Poisson process, which we will define below, was developed to extend the framework of the 
Poisson process to systems exhibiting long-term temporal correlations, which prevent exponential behavior 
\cite{Las03} (see also \cite{MS19_Stochastic_models_for_fractional_calculus} for related applications). 
The following result offers an abstract explanation for the emergence of the fractional Poisson process, 
drawing a parallel with the well-established behavior of the Poisson process in finite-measure preserving 
dynamical systems, as previously mentioned. It generalizes the results of \cite{Zwe16} to the infinite 
measure situation and builds up on \cite{RZ20} where only the first return is considered.\newline

\noindent \textbf{Theorem B} (Abstract result). \textit{Consider a dynamical system
$(X, \mathscr{B}, \mu, T)$ preserving an infinite measure, and let $(B_n)_{n\geq 0}$ be a sequence of asymptotically rare events lying in a ``good'' subset of $X$. Then, 
$N_{B_n}^{\gamma}$ when one starts off $B_n$ (hitting REPP) converges in law if and only if
$N_{B_n}^{\gamma}$ when one starts in $B_n$ (return REPP) converges in law.
If convergence takes place, both limits determine one another through a functional equation whose unique fixed point is the fractional Poisson process (with parameter related to $\gamma$).} \newline

For the precise statement, see Theorem \ref{thm:HTS-REPP_vs_RTS-REPP_infinite_measure_renormalized_measure}, p.\pageref{thm:HTS-REPP_vs_RTS-REPP_infinite_measure_renormalized_measure}. It is worth noting that restricting to  ``good'' subsets is an intrinsic limitation arising in infinite ergodic theory, and that the scaling function $\gamma$ is directly linked to the regularly varying function defining the correct normalizing sequence in the analogue of the ergodic theorem for the infinite measure case,  see Section \ref{section:preliminaries} for further details.


In the context of finite-measure preserving dynamical systems, the study of non-generic points, 
especially periodic points, has gained significant attention in recent years, particularly in the context of 
extreme value theory. It is not surprising that these periodic points give rise to clustering phenomena 
leading to a compound Poisson limit process for hitting and return times. 
The intensity and multiplicity of Poisson compound processes are characterized by a parameter $\theta$, lying between 0 and 1, known as the ``extremal index'' (see \cite{LFFF16} and references therein).
Besides the generic points of the invariant measure under consideration and periodic points, there also 
exist generic points for other invariant measures, as well as points that are not generic for any measure.
For several classes of probability-preserving dynamical systems with sufficiently rapid decay of correlations, a 
\emph{dichotomy} arises in the asymptotic behavior of points: periodic points lead to a compound Poisson limit 
process, while \emph{all} other points give rise to a Poisson limit process; see {\em e.g.}, \cite{LFFF16}
(Rychlik and Gibbs-Markov maps), \cite{FFTV16} (for the map \eqref{def:LSVmap} with $p<1$) and \cite{BF23, DT23}
(quadratic maps with Misiurewicz parameters).

In the context of infinite measure preserving dynamical systems, the only existing study of certain non-
generic points is done for hyperbolic periodic points of prototypical null-recurrent interval maps 
\cite{RZ20}.
In this paper, we conduct a thorough investigation of the application of the interval \eqref{def:LSVmap} and demonstrate 
that the asymptotic behavior of successive return times exhibits not a dichotomy but a trichotomy, involving fractional 
Poisson processes, compound fractional Poisson processes, and a third type of process (for which no standard nomenclature 
has been established). This result represents a novel contribution to the theory of 
infinite-measure dynamical systems.\newline

\noindent \textbf{Theorem C.} \textit{
Consider the map \eqref{def:LSVmap} with $p =: 1/\alpha > 1$ with its (infinite) absolutely continuous 
invariant measure. Let $x \in (0,1]$ and $(B_n)$ be the cylinders shrinking to $x$. Then we have the 
following trichotomy: 
\begin{itemize}
\item If $x$ is a periodic point, then $N_{B_n}^{\gamma}$ converges in law towards a 
compound fractional 
Poisson process $\cfPp_{\alpha}$ with an extremal index $\theta \in (0,1)$ depending 
only on $x$ (see Theorem \ref{thm:REPP_LSV_periodic_points}, 
p.\pageref{thm:REPP_LSV_periodic_points}).
\item If $x$ is a preimage of $0$,  then $N_{B_n}^{\gamma}$ converges in law towards 
a point process $N_x$ 
depending on the point $x$ (see Theorem \ref{thm:REPP_LSV_preimages_of_0}, 
p.\pageref{thm:REPP_LSV_preimages_of_0}).
\item Otherwise, $N_{B_n}^{\gamma}$ converges in law towards the fractional Poisson 
process $\fPp_{\alpha}$ (see Theorem \ref{thm:REPP_LSV_nonperiodic_points}, 
p.\pageref{thm:REPP_LSV_nonperiodic_points}).
\end{itemize}}

This theorem generalizes Theorem A. As in that theorem, the parameter $\gamma$ is determined by $\alpha$, which in turn is determined by $p$.
In the case of finite measure, {\em i.e.}, $p < 1$, prior results \cite{FFTV16, Zwe18} have shown that two 
types of limiting behavior emerge: a compound Poisson process appears around periodic points, while a 
standard Poisson process describes almost all other points except for the fixed point at $0$. This point, 
being indifferent, is distinguished by an extremal index $\theta = 0$, producing an infinite cluster. 
Interestingly, for shrinking neighborhoods around $0$, a different scaling allows the hitting time 
distribution to converge to the exponential distribution. However, for $p > 1$, the limiting distribution 
deviates from the exponential law \cite{Zwe08}. Notably, unlike the finite measure case, as established in 
Theorem C, preimages of $0$ give rise to limit point processes that do not align with the expected 
fractional Poisson process $\fPp_{\alpha}$. Instead, these processes are obtained through thinning and 
rescaling of a specific renewal process.

Finally, in the ``barely infinite case'' ($p=1$), we prove a similar dichotomy to that found in the finite-measure 
case. Our approach parallels the method for $p>1$, with the essential distinction that the preimages of 
$0$ demand specific handling, differing from that of other points. Here, however, we benefit from the 
persistence of the exponential distribution as the limiting law, which continues to apply for neighborhoods 
around $0$ that are scaled down \cite{CG93_Statisticsofclosevisitstotheindifferentfixedpointofanintervalmap}.\newline

\noindent \textbf{Theorem D.} \textit{Consider the map \eqref{def:LSVmap} with $p =1$ with its (infinite) 
Let $x\in (0,1]$ and $(B_n)$ be the cylinders shrinking to $x$. Then we have the following dichotomy:
\begin{itemize}
\item If $x$ is a periodic point, then $N_{B_n}^{\gamma}$ converges in law towards a 
compound Poisson point process with an extremal index $\theta \in (0,1)$ depending 
only on $x$ (see Theorem \ref{thm:REPP_LSV_periodic_points}, 
p.\pageref{thm:REPP_LSV_periodic_points}).
\item If $x$ is not a periodic point, then $N_{B_n}^{\gamma}$ converges in law 
towards the fractional Poisson process (see Theorems 
\ref{thm:REPP_LSV_nonperiodic_points} p.\pageref{thm:REPP_LSV_nonperiodic_points} and 
\ref{thm:REPP_preimages_of_0_barely_infinite_case} 
p.\pageref{thm:REPP_preimages_of_0_barely_infinite_case}).
\end{itemize}}

This paper is structured as follows: Section \ref{section:statements_of_results} states our main results, 
Section \ref{section:proofs_abstract_results} provides proofs of the abstract theorems relating hitting and 
return REPPs and establishes convergence to fractional and compound fractional Poisson processes, Section 
\ref{section:proofs_LSV_map} focuses on convergence for neighborhoods of all points for the map 
\eqref{def:LSVmap} and Section \ref{section:discussion_Pene_Saussol} discusses related work and future 
research directions.

\section{Statement of results}
\label{section:statements_of_results}

\subsection{Preliminaries}\label{section:preliminaries}

Let $(X,\mathscr{B},\mu, T)$ be a measure-theoretic dynamical system. This means that $(X, \mathscr{B}, \mu)$ 
is a measure space and the self-map $T : X \to X$ leaves the measure $\mu$ invariant (\textit{i.e.}, the 
push forward $T_{\#}\mu$ of $\mu$ by $T$ is equal to $\mu$).  
Assume that $\mu$ is $\sigma$-finite and $\mu(X) = +\infty$. 
The transfer operator $\widehat{T} : L^1(\mu) \to L^1(\mu)$ of the system is defined via the following identity : $\forall f\in L^1(\mu), \; \forall g\in L^{\infty}
(\mu), \; \int f\cdot(g\circ T)\,\dd\mu = \int (\widehat{T}f)\cdot g\,\dd\mu$. 
We say that $(X,\mathscr{B},\mu, T)$ is a ``conservative ergodic measure preserving 
transformation'' (CEMPT for short) if $\sum_{k\geq 0} \widehat{T}^ku = +\infty$ $\mu$-
a.e. for all 
$u \in L^1_+(\mu) := \big\{ u\in L^1(\mu)\;|\; u\geq 0,\, \int u\,\dd\mu > 0\big\}$ or, 
equivalently, if this is true for all
$u \in \mathcal{D}(\mu) := \big\{u \in L^1(\mu)\;|\; u\geq 0,\, \int u\,\dd\mu = 1\big\}$ (see \cite[Propostion 1.3.2]{Aar97}).

For $A \in \mathscr{B}$ and $x\in X$, let $r_A(x)= r_A^{(1)}(x)=\inf \{n \geq 1\;|\; T^nx \in A\}$ be the first time the orbits of $x$ hits $A$, and for $k\geq 1$, define inductively the $(k+1)$-th return time to $A$, namely $r^{(k+1)}_A(x) := \inf \{n > r^{(k)}_A(x)\;|\; T^nx \in A\}$, with the convention that $r_A^{(k)}(x) = +\infty$ if $A=\emptyset$. 
If $(X, \mathscr{B}, \mu, T)$ is a CEMPT and $\mu(A) > 0$, then $r_A^{(k)}$ is finite $\mu$-almost everywhere, for each $k\geq 1$. 
\begin{rem}
It is also possible to work with inter-arrival times.
Set $r_A^{\{1\}} := r_A = r_A^{(1)}$ and $r_A^{\{k+1\}} = r_A \circ T^{r_A^{(k)}}$ 
for $k\geq 1$. By construction, we have $r_A^{\{k\}} = r_A^{(k)} - r_A^{(k-1)}$ with 
the convention that $r_A^{(0)} = 0$. 
To distinguish between inter-arrival and return times, we use the super-scripts
$\{k\}$ and $(k)$, respectively.
Again, if $(X, \mathscr{B}, \mu, T)$ is a CEMPT and $\mu(A) > 0$, then $r_A^{\{k\}}$ 
is finite for every $k \geq 1$, $\mu$-almost everywhere. 
\end{rem}

\begin{rem}
Sometimes, we will consider return times on an induced system. For example,
if $A \subset Y \subset X$, we will write $r_A^{Y,(k)}$ for the $k$-th return time to $A$ in the induced dynamical system $(Y, \mathscr{B} \cap Y, T_Y, \mu_Y)$ where $T_Y(x) := T^{r_Y(x)}(x)$ for $x\in X$ and $\mu_Y = \mu(\cdot\cap Y)/\mu(Y)$. In 
particular, on $Y$,
\begin{align*}
\lr\!_A = \sum_{k=0}^{\mr_A^{Y} - 1} \lr_Y \circ T_Y^k = \lr_Y^{(r_A^Y)}.
\end{align*}
\end{rem}

We are now able to define the objects that we are going to study along this article. 

\begin{defn}[Process of hitting/return times]
For $A \in \mathscr{B}$, let 
\begin{align*}
\Phi_A :=  \big(\lr_A^{(1)}, \lr_A^{(2)}, \dots\big).
\end{align*}
Furthermore, for every $d \geq 1$, we will write $\Phi_A^{[d]}$ for $\big(\lr_A^{(1)}, \lr_A^{(2)}, \dots,\lr_A^{(d)}\big)$.
\end{defn}

\begin{rem}
For all $A \in \mathscr{B}$ and $x\in X$, $\Phi_A(x) \in (\overline{\mathbb{R}}_+)^{\mathbb{N}}$ where 
$\overline{\mathbb{R}}_+ := [0,+\infty]$. If $\mu(A) > 0$ and $(X,\mathscr{B}, \mu, T)$ is a CEMPT, then
$\Phi_A \in (\mathbb{R}_+)^{\mathbb{N}}$  $\mu$-almost everywhere. 
However, it is more convenient to see it as a function 
taking values in the compact space $(\overline{\mathbb{R}}_+)^{\mathbb{N}}$ (see \cite{Zwe16} for instance). 
On $(\overline{\mathbb{R}}_+)^{\mathbb{N}}$ one can take the product metric 
$d( (x_n)_{n\geq 1}, (y_n)_{n\geq 1}) = \sum_{n \geq 0} d_1(x_n,y_n)/(2^n(1+d_1(x_n,y_n)))$
where $d_1(s,t) := |e^{-s} - e^{-t}|$ is a standard distance on $\overline{\mathbb{R}}_+$.
Furthermore, as we consider successive return times, $\Phi_A$ is taking values inside the compact subset $\mathcal{W}$ of 
$(\overline{\mathbb{R}}_+)^{\mathbb{N}}$ where
\begin{align*}
\mathcal{W} := \left\{(\phi^{(i)})_{i\geq 1} \in 
(\overline{\mathbb{R}}_+)^{\mathbb{N}} \;\big|\;  \forall i \geq 0,\; \phi^{(i)} \leq 
\phi^{(i+1)} \right\}.
\end{align*}
\end{rem}

For a non-decreasing function $f: \mathbb{R}_+ \to \mathbb{R}_+$ and a process
$\Phi = (\phi_k)_{k\geq 1}$ taking values 
in $(\overline{\mathbb{R}}_+)^{\mathbb{N}}$, we write $f(\Phi)$ the process 
$(f(\phi_k))_{k\geq 1}$ with the convention 
that $f(+\infty) = +\infty$. Note that in this case,
$f(\mathcal{W}) \subseteq \mathcal{W}$.

\begin{defn}[Rare Event Point Process]
For a set $A\in \mathscr{B}$ and a function $\gamma : \mathbb{R}_+ \to \mathbb{R}_+$, the Rare Event Point Process
(REPP) is defined by
\begin{align*}
N_A^{\gamma} := \sum_{k\geq 1} \delta_{\gamma(\mu(A))\, \mr_{{A}}^{(k)}}. 
\end{align*}
\end{defn}
For a CEMPT, if $\mu(A) > 0$, $N_A^{\gamma}$ is well defined for almost every point (all the points such that
$r_{A}^{(k)} <+\infty$ for all $k\geq 1$ or equivalently when $\Phi_A \in (\mathbb{R}_+)^{\mathbb{N}}$) and belongs to 
the set $\mathcal{M}^{\text{Rad}}_{\text{atom}}(\mathbb{R}_+)$ of Radon atomic measures on $\mathbb{R}_+$. 
$\mathcal{M}^{\text{Rad}}_{\text{atom}}(\mathbb{R}_+)$ is endowed with the topology of vague convergence. 

When \(\mu(A) < +\infty\) and \(N_A^{\gamma}\) is treated as a random variable on the probability space
\((A, \mathscr{B} \cap A, \mu_A)\), we refer to it as the return REPP. Conversely, when \(N_A^{\gamma}\) is considered as 
a random variable on \((X, \mathscr{B}, \nu)\), where \(\nu\) is a probability measure absolutely continuous with respect 
to \(\mu\), it is termed the hitting REPP. For a fixed set \(A\), the hitting REPP depends on the specific choice of the 
probability measure \(\nu\). However, as our interest lies in the behavior of \(N_A^{\gamma}\) as \(\mu(A) \to 0\), 
\cite[Corollary 6]{Zwei07} ensures that any limiting distribution, if it exists, is independent of \(\nu\). This result 
justifies the use of the term ``hitting REPP'' without dependence on the particular choice of \(\nu\).

For a CEMPT preserving a \(\sigma\)-finite measure \(\mu\) with infinite mass, it is 
well-established that a direct analogue of the Birkhoff theorem is unattainable, as 
no normalization exists such that the time average along an orbit converges almost 
surely \cite[Theorem 2.4.1]{Aar97}. Nonetheless, many such systems exhibit a related property, which provides insight into their asymptotic behavior.

\begin{defn}[Pointwise dual ergodicity]
\label{defn:pointwise_dual_ergodic}
A CEMPT $(X, \mathscr{B}, \mu, T)$ is said to be pointwise dual ergodic (PDE) if there 
exists a sequence $(a_n)_{n\in \mathbb{N}}$ such that
\begin{equation}
\frac{1}{a_n} \sum_{k = 0}^{n-1} \widehat{T}^k u 
\xrightarrow[n\to +\infty]{\mu-\mathrm{a.e.}} \int u\,\dd\mu\,, \; 
\forall u \in L^1(\mu).
\end{equation}
In this case $(a_n)_{n\in \mathbb{N}}$ is called a normalizing sequence for 
$(X,\mathscr{B}, \mu, T)$. 
\end{defn}

For instance, examples include the Boole map \cite[\S 3.7]{Aar97}, interval maps 
with a finite number of indifferent fixed points \cite{Zwe98} or null recurrent 
Markov shifts \cite{Sar01_NullRecurrentPotentials}. In fact, the PDE property is 
equivalent to the existence of a uniform subset on which the convergence is stronger. 
Such sets are of paramount importance in the study of quantitative recurrence.

\begin{defn}[Uniform set]\label{defn:uniform_set}
A set $Y\in \mathscr{B}$ with $\mu(Y) > 0$ is said to be $f$-uniform for
$f\in L^1(\mu)$ if there exists a sequence $(a_n)_{n\geq 0}$ such that 
\begin{align*}
\frac{1}{a_n}\sum_{k=0}^{n-1}\widehat{T}^k f 
\xrightarrow[n\to + \infty]{L^{\infty}(\mu_Y)} \int f\,\dd\mu\,.
\end{align*}
We say that $Y$ is uniform if it is $f$-uniform for some $f\in L^1(\mu)$.
\end{defn}

The existence of uniform sets from the PDE property is immediate by Egorov's theorem. The proof of the reciprocal can be 
found in \cite[Proposition 3.7.5]{Aar97}. 
  
Additionally, in order to get convergence results, we usually require more information on the normalizing sequence 
through regular variation properties. A measurable function
$a: \mathbb{R}_+ \to \mathbb{R}_+$ is said to be regularly 
varying of index $\alpha \in \mathbb{R}$ at infinity if, for all $y\in\mathbb{R}_+$,
\begin{align*}
\lim_{x\to+\infty} \frac{a(xy)}{a(x)} = y^{\alpha}.
\end{align*}

The notion of regular variation at infinity can be generalized for sequences 
$(u_n)_{n\geq 0}$ by looking at the function $u : x \mapsto u_{\lfloor x \rfloor}$. 
In both cases, we will write $a \in \RV(\alpha)$ or
$(u_n)_{n\geq 0} \in \RV(\alpha)$.
We also say that a function
$b : \mathbb{R}_+ \to \mathbb{R}_+$ is regularly varying of index $\alpha$ at 0 if 
$x\mapsto b(1/x)$ is regularly varying of parameter $\alpha$ at infinity. We write 
$\RV_{0+}(\alpha)$ the set of such functions or simply $\RV(\alpha)$ when the limit $0$ or $+\infty$ is clear from the context.  

Given a normalizing sequence $(a(n))_{n\geq 0} \in \RV(\alpha)$, its asymptotic inverse $b$ is defined by the property $b(a(s))\sim a(b(s)) \sim s$
(see \cite{Bingham89_RegularVariation} for more on regular variation). Furthermore, we define the scaling function
\begin{align}
\label{eq:definition_of_gamma}
\gamma : s \mapsto \frac{1}{b(1/s)}\,,\; \forall s > 0.
\end{align}
If $a \in \RV(\alpha)$, then $\gamma \in \RV_{0+}(1/\alpha)$. The function $\gamma$ 
will be used to scale return times. 

We end this section with several notations.

\begin{notations}
We write $\xRightarrow[]{\mu}$ for the convergence in law under the law $\mu$. When the 
sequence of random variables is defined on different probability spaces with probability measures $\mu_n$, we will write 
$\xRightarrow[]{\mu_n}$ for the convergence in law. For example, if $X_n$ is a sequence of random variables defined on 
$(\Omega_n, \mathscr{B}_n, \mu_n)$ and $X$ is another random variable, $X_n \xRightarrow[]{\mu_n} X$ means that 
$(X_n)_{\#}\mu_n$ converges weakly towards the law of $X$. Finally, when the measure $\mu$ is infinite, we cannot draw 
random variables from it. However, we can still set a convergence, that we will write $\xRightarrow[]{\mathcal{L}(\mu)}$ 
if we have $\xRightarrow[]{\nu}$ for every probability $\nu$ that is absolutely continuous with respect to $\mu$. In this 
case, we say that we have strong convergence in law (see \cite[\S\,3.6]{Aar97} for more details). 
\end{notations}

\begin{notations} 
Every set equality is understood as an equality up to a set of 0 measure. Furthermore, we 
use $\sqcup$ for a disjoint union. We will write $(a_n)_{n\geq 0}$ or  $(a(n))_{n\geq 0}$ for sequence of real numbers, 
depending on the context. 
\end{notations}

We are now prepared to formally present our results. Section \ref{abstract_results} is devoted to general, abstract 
findings on quantitative recurrence for pointwise dual ergodic CEMPTs. 
Section \ref{Paragraph_map_with_one_indifferent_fixed_point} examines a specific family of maps with an 
indifferent fixed point, where we determine the limiting behavior of shrinking targets around every point in the phase 
space.

\subsection{Abstract results}
\label{abstract_results}

\subsubsection{The general relationship between hitting and return REPPs}
 
We say that a sequence $(B_n)_{n\geq 0} \in \mathscr{B}^{\mathbb{N}}$ is a sequence of asymptotically rare events 
with respect to a measure $\mu$ if $\mu(B_n) \xrightarrow[n\to +\infty]{} 0$. 
From now on, we shall consider that asymptotically rare events fulfill the following hypothesis.

\begin{enumerate}[label = $(A\arabic*)_{\alpha}$, leftmargin=*]
\setcounter{enumi}{-1}
\item \label{cond:Living_in_uniform_set} There exists a uniform set $Y$ with normalizing sequence $(a_n)_{n\geq 0} \in \RV(\alpha)$ such that $B_n \subset Y$ for all $n\geq 0$.
\end{enumerate}

We start by showing that, if \ref{cond:Living_in_uniform_set} is satisfied, then the convergence of the hitting REPP implies the convergence of the return REPP and reciprocally, each limit being determined by the other. In particular it generalizes \cite[Theorem 4.3]{RZ20} where the 
authors only consider the first hitting or return.

\begin{thm}
\label{thm:HTS-REPP_vs_RTS-REPP_infinite_measure_renormalized_measure}
Let $(X,\mathscr{B},\mu, T)$ be a PDE CEMPT with $\mu(X)=+\infty$.
Let $(B_n)_{n\geq 0}$ be a sequence of asymptotically rare events satisfying \ref{cond:Living_in_uniform_set}.
Let $\Psi, \widetilde{\Psi}$ be stochastic processes in $(\overline{\mathbb{R}}_+)^{\mathbb{N}}$.
Then, we have 
\begin{align*}
\gamma(\mu(E_n))\, \Phi_{E_n} \xRightarrow[n\to +\infty]{\mu_{E_n}} \widetilde{\Psi} \quad \text{if and only if} \quad 
\gamma(\mu(E_n))\, \Phi_{E_n} \xRightarrow[n\to +\infty]{\mathcal{L}(\mu)} \Psi. 
\end{align*}
Moreover, the distributions of $\Psi$ and $\widetilde{\Psi}$ uniquely determine each other in the following way. For all $d \geq 1$, denoting $F^{[d]}$ 
(respectively $\widetilde{F}^{[d]}$) the distribution function of the $d$ first coordinates of $\Psi$ (respectively 
$\widetilde{\Psi}$), we have, for all $0 \leq t_1 \leq \dots \leq t_d$,
\begin{align}
\label{eq:relation_hitting_return_infinite_measure}
F^{[d]}(t_1,\dots, t_d)
&= \alpha \int_0^{t_1} \biggl( \widetilde{F}^{[d-1]}\left(t_2 -t_1 + x, \dots, t_d - t_1 + x\right) \nonumber\\
& \quad\qquad \qquad - \widetilde{F}^{[d]}\left(x, t_2 - t_1 + x,\dots, t_d - t_1 +x\right)\biggr) 
(t_1 - x)^{\alpha - 1}\,\dd x\,,
\end{align}
with the convention $\widetilde{F}^{[0]} = 1$.
\end{thm}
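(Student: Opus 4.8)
The plan is to relate the two REPPs via the classical induced-system picture (Kac / first-return decomposition) combined with the uniformity hypothesis \ref{cond:Living_in_uniform_set}, which converts the infinite-measure time-normalization into the regularly-varying factor $\gamma$. Throughout, fix $d\geq 1$ and work only with the finite-dimensional marginals $\Phi_{B_n}^{[d]}$; by Kolmogorov's consistency / the compactness of $\mathcal W$ it suffices to prove the claimed relation \eqref{eq:relation_hitting_return_infinite_measure} between the $d$-dimensional distribution functions, with the equivalence of the two convergences then following because \eqref{eq:relation_hitting_return_infinite_measure} is a continuous, invertible correspondence (the integral transform $\widetilde F\mapsto F$ of ``fractional-integration type'' $\alpha\int_0^{t_1}(t_1-x)^{\alpha-1}(\cdots)\,dx$ can be inverted by a fractional derivative, so knowing $F^{[d]}$ for all $d$ determines $\widetilde F^{[d]}$ for all $d$ and vice versa). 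So the core is: \emph{derive the identity relating the hitting law started from $\nu\ll\mu$ and the return law started from $\mu_{B_n}$, in the limit $\mu(B_n)\to 0$.}

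First I would set up the basic combinatorial identity on the level of a single rare set $B=B_n$. Writing $S_B^{(j)}$ for the $j$-th entrance time of a generic orbit to $B$ and using that between any two consecutive hits the orbit performs a full excursion, one has for a hitting orbit the decomposition: hitting $B$ at time $S_B^{(1)}=m$, and then the subsequent return times are $m$ plus the return times as seen from $B$. The measure of $\{$first hit at time $m$, and the next $d-1$ returns fall in prescribed windows$\}$ is, by $T$-invariance and the definition of the transfer operator, expressible as $\int_B \widehat T^{\,m}\mathbf 1_{(\text{orbit not in }B\text{ before})}\cdots$; more cleanly, one uses the standard identity $\nu(\{r_B> n\})$-type sums together with $\mu$-invariance to write the hitting distribution as a Cesàro-type average over the starting time of the return distribution. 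Concretely, the key pre-limit identity should read, schematically,
\begin{align*}
\nu\bigl(\gamma(\mu(B))\Phi_B^{[d]}\in\cdot\bigr)
\approx \frac{1}{a_{N}}\sum_{m=0}^{N-1}\mu_B\!\left(\Phi_B^{[d]}\ \text{shifted by}\ m\in\cdot\right),
\end{align*}
where $N\approx 1/\gamma(\mu(B))$ is the natural time-scale (so that $a_N\mu(B)\to$ const by the uniform-set renewal asymptotics, cf.\ \cite{Aar97, RZ20}); this is where \ref{cond:Living_in_uniform_set} and the regular variation of $(a_n)$ enter, guaranteeing $a_n\in\RV(\alpha)$ and hence that the Cesàro average of $m^{?}$-weights against the scale $a_N$ produces exactly the kernel $\alpha(t_1-x)^{\alpha-1}$ after passing to the limit (this is the Riemann-sum $\frac1N\sum_{m\le Nt_1} \approx \int_0^{t_1}$ together with the $\RV(\alpha)$ correction $a_{N}/a_{m}\to (?)$). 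The two terms in \eqref{eq:relation_hitting_return_infinite_measure} — the $\widetilde F^{[d-1]}$ minus the $\widetilde F^{[d]}$ — come from inclusion–exclusion: ``at least $d-1$ returns after the shift fall in the windows'' minus ``the orbit also re-enters $B$ within the first coordinate'', i.e.\ one subtracts the event that what we counted as the first hit was not actually the first hit.

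The steps, in order: (1) record the exact finite-$n$ identity linking $\nu(\Phi_{B_n}\in A)$ and $\mu_{B_n}(\Phi_{B_n}\in\cdot)$ via summation over the first-hit time and $\mu$-invariance, keeping track of the ``no earlier visit'' correction term; (2) choose the cutoff $N=N_n$ with $\gamma(\mu(B_n))N_n\to 1$ and use \ref{cond:Living_in_uniform_set} plus Hopf's ratio ergodic theorem / the uniform-set estimate to show the sum over $m\le N_n$ captures all the mass (tail negligibility for $m\gg N_n$) and that $a_{N_n}\mu(B_n)\to c_\alpha$; (3) rewrite the truncated sum as a Riemann sum in $x=m/N_n$, identify the weight $a_{N_n}/a_{\lfloor N_n x\rfloor}\cdot(\text{increment})\to \alpha(1-x)^{\alpha-1}$ type factor using $(a_n)\in\RV(\alpha)$ and the asymptotic-inverse relation between $a$, $b$ and $\gamma$ from \eqref{eq:definition_of_gamma}; (4) pass to the limit assuming one side converges, apply dominated/weak convergence and a continuity-point argument on the distribution functions (monotone, so a.e.\ continuity is automatic) to obtain \eqref{eq:relation_hitting_return_infinite_measure}; (5) check \eqref{eq:relation_hitting_return_infinite_measure} is a bijective correspondence on the space of consistent families $(\widetilde F^{[d]})_d$, giving the ``if and only if''. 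The main obstacle I anticipate is step (2)–(3): making the exchange of limits rigorous, i.e.\ controlling the contribution of orbits that take anomalously long to hit $B_n$ (the heavy tail of $r_{B_n}$ in infinite measure is exactly what forces $\gamma\neq\mathrm{id}$) and showing the $\RV(\alpha)$ asymptotics are uniform enough across the range $m\le N_n$ to yield the precise kernel $(t_1-x)^{\alpha-1}$ rather than merely a kernel of the right qualitative shape — this is precisely the point where \cite{RZ20}'s first-return analysis must be upgraded to handle all $d$ coordinates simultaneously, and where one must be careful that the ``shift by $m$'' inside $\mu_{B_n}$ interacts correctly with the windows $t_2-t_1+x,\dots,t_d-t_1+x$.
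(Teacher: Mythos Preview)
Your proposal is correct and takes essentially the same route as the paper: a disjoint first-hit-time decomposition (the paper's Lemma~\ref{lem:decomposition_returns} gives the precise identity producing the two terms you identify via inclusion--exclusion), duality via $\widehat T^\ell u$, the uniform-set property applied to block sums $\sum_{\ell=\lfloor mN/M\rfloor}^{\lfloor(m+1)N/M\rfloor}\widehat T^\ell u\sim\bigl((\tfrac{m+1}{M})^\alpha-(\tfrac{m}{M})^\alpha\bigr)a_N$ to generate the kernel, a Riemann-sum limit, and finally Helly selection plus invertibility of the fractional integral (Lemma~\ref{lem:functional_equation_uniquely_determined_by_one_another}) for the converse. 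One clarification: your anticipated main obstacle in step~(2) --- tail negligibility for $m\gg N_n$ --- does not in fact arise, because the decomposition sum is finite from the outset (it runs only over $\ell\le p_n^{[t_1]}:=b(t_1/\mu(B_n))$, the natural time-scale), so the real analytic work is exactly step~(3), as you correctly flag.
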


\begin{rem}
Since, for all $d\geq 1$, $F^{[d]} \leq 1$
and $\alpha \int_0^{t_1} (t_1 - x)^{\alpha - 1} \dd x= t_1^{\alpha}$, for all $t_2, \dots, t_d = t_1$, we necessarily have
\begin{align*}
\lim_{x\to +\infty} \widetilde{F}^{[d]}(x,\dots, x) = \lim_{x\to +\infty} F^{[d - 1]}(x,\dots, x) = 1,
\end{align*}
justifying the fact that $\widetilde{\Psi}$ belongs to $(\mathbb{R}_+)^{\mathbb{N}}$ 
almost surely.
\end{rem}

\begin{rem} Theorem \ref{thm:HTS-REPP_vs_RTS-REPP_infinite_measure_renormalized_measure} considers stochastic processes of hitting/return times taking values in $(\overline{\mathbb{R}}_+)^{\mathbb{N}}$. However, point processes are more natural in our context. If the limits $\Psi$ and $\widetilde{\Psi}$ belong to a natural subset of $(\overline{\mathbb{R}}_+)^{\mathbb{N}}$, it will be enough to ensure the result for REPPs. \label{rem:from_stochastic_process_to_point_process}
Let \[\mathcal{W}' := \left\{(\phi^{(i)})_{i\geq 0} \in (\mathbb{R}_+)^{\mathbb{N}} 
\;\big|\;  \phi^{(i)} \leq \phi^{(i+1)} \; \text{and} \; \lim \phi^{(i)} = +\infty 
\right\}.\] 
and define the following map
\[
\Xi :
\begin{cases}
\mathcal{W} & \longrightarrow \quad \mathcal{M}_{\mathrm{atom}}^{\mathrm{Rad}}(\mathbb{R}_+) \\
\Psi = (\psi^{(i)})_{i\geq 1} & \xmapsto{\quad}
\begin{cases}
\sum_{i\geq 1} \delta_{\psi^{(i)}} & \text{if} \; \Psi \in \mathcal{W}'  \\
0 & \text{otherwise}.
\end{cases}
\end{cases}
\]
Then, $\Xi$ is continuous on $\mathcal{W}'$ (see also \cite[Remark 3.5]{Zwe22}). Thus, by the extended continuous mapping theorem, we immediately get the following corollary.
\end{rem}

\begin{cor}
    \label{cor:equivalence_HTS_RTS_true_Point_Processes}
    Under the same assumptions as in Theorem \ref{thm:HTS-REPP_vs_RTS-REPP_infinite_measure_renormalized_measure}. Assume furthermore that $\Psi, \widetilde{\Psi} \in \mathcal{W}'$ almost surely. Then, for $N = \Xi(\Psi)$ and $\widetilde{N} = \Xi(\widetilde{\Psi})$, we have
    \begin{align*}
        N_{E_n}^{\gamma} \xRightarrow[n\to +\infty]{\mu_{E_n}} \widetilde{N} \quad \text{and} \quad N_{E_n}^{\gamma} \xRightarrow[n\to +\infty]{\mathcal{L}(\mu)} N.
    \end{align*}
    The law of $N$ determines the law of $\widetilde{N}$ and reciprocally.
\end{cor}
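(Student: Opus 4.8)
The plan is to realize the REPP $N_{E_n}^{\gamma}$ as the image under $\Xi$ of the rescaled process of successive return times, to push both halves of Theorem~\ref{thm:HTS-REPP_vs_RTS-REPP_infinite_measure_renormalized_measure} through $\Xi$ by the continuous mapping theorem, and then to push the ``laws determine each other'' statement back through the inverse of $\Xi$. First I would check that $N_{E_n}^{\gamma} = \Xi\bigl(\gamma(\mu(E_n))\,\Phi_{E_n}\bigr)$ off a $\mu$-null set: on a CEMPT with $\mu(E_n)>0$ the return times $r_{E_n}^{(k)}$ are a.e.\ finite and strictly increasing in $k$, hence diverge, so $\gamma(\mu(E_n))\,\Phi_{E_n}$ takes values in $\mathcal{W}'$ almost everywhere — both under $\mu_{E_n}$ and under every probability $\nu\ll\mu$ on which the hitting REPP is defined — and on $\mathcal{W}'$ the map $\Xi$ is precisely $\phi\mapsto\sum_k\delta_{\phi^{(k)}}$, which returns $\sum_{k\geq1}\delta_{\gamma(\mu(E_n))\,r_{E_n}^{(k)}}=N_{E_n}^{\gamma}$.

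Next I would invoke Theorem~\ref{thm:HTS-REPP_vs_RTS-REPP_infinite_measure_renormalized_measure}, which supplies $\gamma(\mu(E_n))\,\Phi_{E_n}\xRightarrow[n\to+\infty]{\mu_{E_n}}\widetilde{\Psi}$ and $\gamma(\mu(E_n))\,\Phi_{E_n}\xRightarrow[n\to+\infty]{\mathcal{L}(\mu)}\Psi$, and apply the (extended) continuous mapping theorem to the Borel map $\Xi\colon\mathcal{W}\to\mathcal{M}_{\mathrm{atom}}^{\mathrm{Rad}}(\mathbb{R}_+)$. By Remark~\ref{rem:from_stochastic_process_to_point_process}, $\Xi$ is continuous on $\mathcal{W}'$, so its discontinuity set is contained in $\mathcal{W}\setminus\mathcal{W}'$, which carries no mass under the law of $\Psi$ nor under that of $\widetilde{\Psi}$ thanks to the standing hypothesis $\Psi,\widetilde{\Psi}\in\mathcal{W}'$ a.s. This gives $N_{E_n}^{\gamma}\xRightarrow[n\to+\infty]{\mu_{E_n}}\Xi(\widetilde{\Psi})=\widetilde{N}$; applying the same theorem under each $\nu\ll\mu$ (using that the identification of the previous paragraph holds $\nu$-a.e.) yields $N_{E_n}^{\gamma}\xRightarrow[n\to+\infty]{\mathcal{L}(\mu)}\Xi(\Psi)=N$.

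For the statement that the law of $N$ determines the law of $\widetilde{N}$ and conversely, I would note that $\Xi$ restricted to $\mathcal{W}'$ is \emph{injective}, with a Borel-measurable inverse on its image, namely the map that reads off the atoms of a point process in non-decreasing order with multiplicity. Since $\Psi\in\mathcal{W}'$ a.s., the law of $N=\Xi(\Psi)$ therefore determines the law of $\Psi$; the functional relation~\eqref{eq:relation_hitting_return_infinite_measure} then determines the law of $\widetilde{\Psi}$, which in turn, as $\widetilde{\Psi}\in\mathcal{W}'$ a.s., determines the law of $\widetilde{N}=\Xi(\widetilde{\Psi})$; the reverse direction is identical.

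I do not expect a deep obstacle here: all the analytic content sits inside Theorem~\ref{thm:HTS-REPP_vs_RTS-REPP_infinite_measure_renormalized_measure}. The only genuinely delicate point is the matching of topologies — the product metric on $(\overline{\mathbb{R}}_+)^{\mathbb{N}}$ against the vague topology on $\mathcal{M}_{\mathrm{atom}}^{\mathrm{Rad}}(\mathbb{R}_+)$. Verifying (or, as here, citing) that $\Xi$ is continuous exactly on $\mathcal{W}'$ and not on the larger space is what obliges us to use the discontinuity-set version of the continuous mapping theorem rather than its naive form; once that is granted, the rest is bookkeeping.
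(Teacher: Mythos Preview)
Your proposal is correct and follows essentially the same approach as the paper, which simply observes (in Remark~\ref{rem:from_stochastic_process_to_point_process}) that $\Xi$ is continuous on $\mathcal{W}'$ and invokes the extended continuous mapping theorem. You have fleshed out the details the paper leaves implicit --- the a.e.\ identification $N_{E_n}^{\gamma}=\Xi(\gamma(\mu(E_n))\Phi_{E_n})$, the verification that the rescaled processes live in $\mathcal{W}'$, and the injectivity argument for the mutual determination of laws --- but the strategy is identical.
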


\begin{rem}
    \label{rem:saying_that_the_two_sclaings_are_equivalents_for_hREPP_rREPP}
    Here, and in the following, we have chosen to change the renormalization by taking $\gamma$. We could have chosen to keep the normalization by the measure, but instead change the return random variable. As we assumed $\alpha \in (0,1]$, both result are equivalent (see \cite[Lemma 1]{BZ01} for example). The main difference is when $\alpha = 0$ (see \cite[Propostion 4.1]{RZ20}) but we do not consider this case here.
\end{rem}

  With the choice of scaling the stochastic process instead of scaling the measures, it gives the following theorem, equivalent to Theorem \ref{thm:HTS-REPP_vs_RTS-REPP_infinite_measure_renormalized_measure}.
\begin{thm}
\label{thm:HTS-REPP_vs_RTS-REPP_infinite_measure_renormalized_returns}
    Under the  same assumptions as in Theorem \ref{thm:HTS-REPP_vs_RTS-REPP_infinite_measure_renormalized_measure}, we have 
	\begin{align*}
			\mu(E_n)\, a(\Phi_{E_n}) \xRightarrow[n\to + \infty]{\mu_{E_n}} \widetilde{\Phi} \quad \text{if and only if} \quad 
			\mu(E_n)\, a(\Phi_{E_n}) \xRightarrow[n\to + \infty]{\mathcal{L}(\mu)} \Phi.
		\end{align*}
      Furthermore, the laws of $\Phi$ and $\widetilde{\Phi}$ determine one another through the distribution function of their marginals by a change of variable in \eqref{eq:relation_hitting_return_infinite_measure}.
\end{thm}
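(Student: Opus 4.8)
The plan is to deduce Theorem~\ref{thm:HTS-REPP_vs_RTS-REPP_infinite_measure_renormalized_returns} from Theorem~\ref{thm:HTS-REPP_vs_RTS-REPP_infinite_measure_renormalized_measure} by noticing that the two renormalizations $\gamma(\mu(E_n))\,\Phi_{E_n}$ and $\mu(E_n)\,a(\Phi_{E_n})$ of the hitting/return process differ, in the limit $n\to+\infty$, by the fixed homeomorphism $\Theta=(\theta^{(k)})_{k\geq 1}\mapsto\Theta^{\alpha}:=\bigl((\theta^{(k)})^{\alpha}\bigr)_{k\geq 1}$ of $(\overline{\mathbb{R}}_+)^{\mathbb{N}}$ (which maps $\mathcal{W}$ onto $\mathcal{W}$ since $t\mapsto t^{\alpha}$ is non-decreasing and $\alpha>0$). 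As already recalled in Remark~\ref{rem:saying_that_the_two_sclaings_are_equivalents_for_hREPP_rREPP}, this equivalence of scalings is essentially \cite[Lemma~1]{BZ01}; the concrete statement is that, since $\Phi_{E_n}$ is $\mathbb{N}$-valued and $a$ is a normalizing sequence, $\mu(E_n)\,a(\Phi_{E_n})$ is obtained from $\gamma(\mu(E_n))\,\Phi_{E_n}$ by applying \emph{exactly} the measurable map $h_n$ acting coordinatewise by $\theta\mapsto\mu(E_n)\,a\bigl(\theta/\gamma(\mu(E_n))\bigr)$, where $a$ is extended to $[0,+\infty]$ in some measurable non-decreasing way with $a(+\infty)=+\infty$.

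The regular-variation heart of the argument is the following claim: writing $s_n:=\mu(E_n)\to 0$ and letting $b$ be the asymptotic inverse of $a$, so that $\gamma(s_n)=1/b(1/s_n)$, one has $h_n(z_n)\to z^{\alpha}:=\bigl((z^{(k)})^{\alpha}\bigr)_{k\geq1}$ in $(\overline{\mathbb{R}}_+)^{\mathbb{N}}$ whenever $z_n\to z=(z^{(k)})_{k\geq1}$ in $(\overline{\mathbb{R}}_+)^{\mathbb{N}}$. Since convergence in the product topology is coordinatewise, it suffices to fix $k$, to set $u_n:=z_n^{(k)}/\gamma(s_n)=z_n^{(k)}\,b(1/s_n)$, and to use the decomposition
\[
s_n\,a(u_n)\;=\;\bigl(s_n\,a(b(1/s_n))\bigr)\cdot\frac{a(u_n)}{a(b(1/s_n))}\,,
\]
whose first factor tends to $1$ because $a(b(x))\sim x$. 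For the second factor, $u_n/b(1/s_n)=z_n^{(k)}\to z^{(k)}$, so when $z^{(k)}\in(0,+\infty)$ the uniform convergence theorem for regularly varying functions gives $a(u_n)/a(b(1/s_n))\to(z^{(k)})^{\alpha}$, while the boundary cases $z^{(k)}\in\{0,+\infty\}$ follow from Potter's bounds \cite{Bingham89_RegularVariation}: since $\alpha>0$, choosing $0<\delta<\alpha$ forces this ratio to $0$ when $z_n^{(k)}\to 0$ and to $+\infty$ when $z_n^{(k)}\to+\infty$. In all cases $s_n\,a(u_n)\to(z^{(k)})^{\alpha}$, proving the claim.

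We can now conclude. Since the power map $\Theta\mapsto\Theta^{\alpha}$ is continuous and the claim verifies the relevant convergence hypothesis on all of $(\overline{\mathbb{R}}_+)^{\mathbb{N}}$, the extended continuous mapping theorem shows that, under any fixed probability measure, $\gamma(s_n)\,\Phi_{E_n}\Rightarrow\Theta$ implies $\mu(E_n)\,a(\Phi_{E_n})=h_n\bigl(\gamma(s_n)\,\Phi_{E_n}\bigr)\Rightarrow\Theta^{\alpha}$. Conversely, since $(\overline{\mathbb{R}}_+)^{\mathbb{N}}$ is compact the laws of $\gamma(s_n)\,\Phi_{E_n}$ are tight, and any subsequential limit $\Psi$ satisfies $\Psi^{\alpha}\eqlaw\Theta^{\alpha}$ by the forward implication, hence $\Psi\eqlaw\Theta^{1/\alpha}$ by injectivity of the power map, so in fact $\gamma(s_n)\,\Phi_{E_n}\Rightarrow\Theta^{1/\alpha}$. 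Applying this on $(E_n,\mu_{E_n})$ and under every probability $\nu\ll\mu$, and combining with Theorem~\ref{thm:HTS-REPP_vs_RTS-REPP_infinite_measure_renormalized_measure}, yields the stated equivalence with $\widetilde{\Phi}=\widetilde{\Psi}^{\alpha}$ and $\Phi=\Psi^{\alpha}$ for the processes $\widetilde{\Psi},\Psi$ of that theorem. For the marginals, if $F^{[d]},\widetilde{F}^{[d]}$ denote the distribution functions of the first $d$ coordinates of $\Psi,\widetilde{\Psi}$ and $G^{[d]},\widetilde{G}^{[d]}$ those of $\Phi,\widetilde{\Phi}$, then $G^{[d]}(t_1,\dots,t_d)=F^{[d]}\bigl(t_1^{1/\alpha},\dots,t_d^{1/\alpha}\bigr)$ and similarly for the tilded functions, so substituting $t_i\mapsto t_i^{1/\alpha}$ in \eqref{eq:relation_hitting_return_infinite_measure} gives the functional relation between $G^{[d]}$ and $\widetilde{G}^{[d-1]},\widetilde{G}^{[d]}$ --- the ``change of variable in \eqref{eq:relation_hitting_return_infinite_measure}'' announced in the statement.

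The main obstacle is the uniformity of the regular-variation estimates near the boundary values $0$ and $+\infty$, i.e. checking that the hypotheses of the extended continuous mapping theorem hold on the whole of $(\overline{\mathbb{R}}_+)^{\mathbb{N}}$ rather than merely on $(0,+\infty)^{\mathbb{N}}$; this is precisely where Potter's bounds and the standing assumption $\alpha>0$ are used. Everything else is bookkeeping and a direct translation of Theorem~\ref{thm:HTS-REPP_vs_RTS-REPP_infinite_measure_renormalized_measure}.
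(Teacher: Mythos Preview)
Your argument is correct: the regular-variation claim (including the boundary cases handled via Potter's bounds, which crucially use $\alpha>0$) and the tightness/subsequence/injectivity argument for the converse are all sound. There is a small notational slip in the converse paragraph (having called the assumed limit of $\mu(E_n)\,a(\Phi_{E_n})$ by the name $\Theta^\alpha$, the conclusion should read $\Psi\eqlaw\Theta$, not $\Psi\eqlaw\Theta^{1/\alpha}$), but the intended content is clear.

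The main point of comparison is that you and the paper run the equivalence in opposite directions. The paper proves Theorem~\ref{thm:HTS-REPP_vs_RTS-REPP_infinite_measure_renormalized_returns} \emph{directly}: it introduces the distribution functions $G^{[d]}_{u,E_n}$ and $\widetilde G^{[d]}_{E_n}$, uses the disjoint decomposition of Lemma~\ref{lem:decomposition_returns} for $\{\Phi^{[d]}_{E_n}\le(p_n^{[t_1]},\dots,p_n^{[t_d]})\}$, exploits the uniform-set property of $Y$ to control $\sum_\ell \widehat T^\ell u$ and pass to a Riemann sum, and then closes with Helly selection and the uniqueness Lemma~\ref{lem:functional_equation_uniquely_determined_by_one_another_normalized_returns}. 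Theorem~\ref{thm:HTS-REPP_vs_RTS-REPP_infinite_measure_renormalized_measure} is then obtained from Theorem~\ref{thm:HTS-REPP_vs_RTS-REPP_infinite_measure_renormalized_returns} by the change of variable $F^{[d]}(t_1,\dots,t_d)=G^{[d]}(t_1^\alpha,\dots,t_d^\alpha)$. Your route is the reverse: you take Theorem~\ref{thm:HTS-REPP_vs_RTS-REPP_infinite_measure_renormalized_measure} as given and transport it through the coordinatewise map $\Theta\mapsto\Theta^\alpha$ via the extended continuous mapping theorem. This is exactly the equivalence alluded to in Remark~\ref{rem:saying_that_the_two_sclaings_are_equivalents_for_hREPP_rREPP} (and \cite[Lemma~1]{BZ01}), so your proof is morally ``the same idea applied the other way''. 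The only caveat is that, within the paper's own logical order, Theorem~\ref{thm:HTS-REPP_vs_RTS-REPP_infinite_measure_renormalized_measure} is itself established \emph{via} Theorem~\ref{thm:HTS-REPP_vs_RTS-REPP_infinite_measure_renormalized_returns}, so citing it as an input makes your derivation circular relative to the paper; as a stand-alone deduction of one statement from the other, however, it is entirely legitimate.
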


  In particular, Theorem \ref{thm:HTS-REPP_vs_RTS-REPP_infinite_measure_renormalized_measure} gives back the equivalence result for the first hitting and return times if we take the projection on the first component.

\begin{cor} \textup{\cite[Theorem 4.2]{RZ20}}
\label{cor:equivalence_HTS/RTS_infinite_measure}
    Under the same assumptions as in Theorem \ref{thm:HTS-REPP_vs_RTS-REPP_infinite_measure_renormalized_measure}, we have for $R, \widetilde{R}$ random variables in $\overline{\mathbb{R}}_+$, 
        \begin{align*}
            \gamma(\mu(B_n))\, \lr\!_{B_n} \xRightarrow[n\to +\infty]{\mathcal{L}(\mu)} R \quad \text{if and only if} \quad \gamma(\mu(B_n))\, \lr\!_{B_n}  \xRightarrow[n\to +\infty]{\mu_{B_n}} \widetilde{R}.
        \end{align*} 
        If convergence takes place, then
        \begin{align}
            \label{eq:relation_HTS_RTS_first_return}
            \mathbb{P}(R \leq t) = \alpha \int_0^{t} \big( 1 - \mathbb{P}(\widetilde{R} \leq u)\big)(t - u)^{\alpha - 1}\,\dd u, \; \forall t\geq 0.
        \end{align}
        In terms of the Laplace transform, this equation can be rewritten as
        \begin{align}
            \label{eq:Laplace_transform_HTS_from_RTS}
            \mathbb{E}\left[e^{-sR}\right] = \frac{\Gamma(1+\alpha)}{s^{\alpha}}\Big(1 - \mathbb{E}\Big[e^{-s\widetilde{R}}\,\Big]\Big), \; \forall s > 0,
        \end{align}
        where $\Gamma : z \mapsto \int_0^{+\infty} t^{z - 1}e^{-t}\,\dd t$ is the standard Gamma function.
    \end{cor}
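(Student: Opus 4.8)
The statement is the $d=1$ projection of Theorem~\ref{thm:HTS-REPP_vs_RTS-REPP_infinite_measure_renormalized_measure}, and the plan is to deduce it from that theorem by a soft compactness argument. I would first record three facts. The coordinate map $\pi_1:(\overline{\mathbb{R}}_+)^{\mathbb{N}}\to\overline{\mathbb{R}}_+$, $(\phi^{(i)})_{i\geq 1}\mapsto\phi^{(1)}$, is continuous, and $\overline{\mathbb{R}}_+$ and $\mathcal{W}$ are compact metric spaces, so any sequence of random elements valued in them is automatically tight. Next, the $d=1$ instance of \eqref{eq:relation_hitting_return_infinite_measure} (using $\widetilde F^{[0]}=1$) is exactly \eqref{eq:relation_HTS_RTS_first_return}, once one writes $F^{[1]}(t)=\mathbb{P}(R\leq t)$ and $\widetilde F^{[1]}(u)=\mathbb{P}(\widetilde R\leq u)$. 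Finally, the relation \eqref{eq:relation_HTS_RTS_first_return}---equivalently its Laplace form \eqref{eq:Laplace_transform_HTS_from_RTS}, derived at the end---is a bijection between the law of $R$ on $\overline{\mathbb{R}}_+$ and the law of $\widetilde R$ on $\overline{\mathbb{R}}_+$: reading it from right to left determines the law of $R$ from that of $\widetilde R$, while a Laplace inversion determines the law of $\widetilde R$ from that of $R$.

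For the equivalence of the two convergences, the idea is to run every tightness extraction on the \emph{return} side, where each term lives on the genuine probability space $(B_n,\mathscr{B}\cap B_n,\mu_{B_n})$, so that the strong hitting-side convergence is always supplied by Theorem~\ref{thm:HTS-REPP_vs_RTS-REPP_infinite_measure_renormalized_measure} rather than extracted. Suppose $\gamma(\mu(B_n))\,\lr\!_{B_n}\xRightarrow[n\to+\infty]{\mathcal{L}(\mu)}R$. Given an arbitrary subsequence, extract a further one $(n_k)$ along which $\gamma(\mu(B_{n_k}))\Phi_{B_{n_k}}\xRightarrow[k\to+\infty]{\mu_{B_{n_k}}}\widetilde\Psi$ with $\widetilde\Psi$ valued in $\mathcal{W}$, apply Theorem~\ref{thm:HTS-REPP_vs_RTS-REPP_infinite_measure_renormalized_measure} to obtain $\gamma(\mu(B_{n_k}))\Phi_{B_{n_k}}\xRightarrow[k\to+\infty]{\mathcal{L}(\mu)}\Psi$ with $\Psi$ and $\widetilde\Psi$ linked by \eqref{eq:relation_hitting_return_infinite_measure}, and then apply $\pi_1$: comparison with the hypothesis gives $\pi_1(\Psi)\eqlaw R$, and the $d=1$ relation becomes \eqref{eq:relation_HTS_RTS_first_return} for $\widetilde R:=\pi_1(\widetilde\Psi)$. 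By the bijection above, the law of $\widetilde R$ is forced by the law of $R$, hence is the same for every subsequential $\mu_{B_n}$-limit of $\gamma(\mu(B_n))\,\lr\!_{B_n}$; compactness of $\overline{\mathbb{R}}_+$ then upgrades this to $\gamma(\mu(B_n))\,\lr\!_{B_n}\xRightarrow[n\to+\infty]{\mu_{B_n}}\widetilde R$. The converse runs the same way: from $\gamma(\mu(B_n))\,\lr\!_{B_n}\xRightarrow[n\to+\infty]{\mu_{B_n}}\widetilde R$ one extracts $(n_k)$ with $\gamma(\mu(B_{n_k}))\Phi_{B_{n_k}}\xRightarrow[k\to+\infty]{\mu_{B_{n_k}}}\widetilde\Psi$ (so $\pi_1(\widetilde\Psi)\eqlaw\widetilde R$), applies the theorem, projects, and uses the bijection to see that the law of $R:=\pi_1(\Psi)$ does not depend on the subsequence; the subsequence criterion, applied under each probability $\nu\ll\mu$ separately, then yields $\gamma(\mu(B_n))\,\lr\!_{B_n}\xRightarrow[n\to+\infty]{\mathcal{L}(\mu)}R$.

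It remains to turn \eqref{eq:relation_HTS_RTS_first_return} into \eqref{eq:Laplace_transform_HTS_from_RTS} and thereby justify the bijection. As in the remark following Theorem~\ref{thm:HTS-REPP_vs_RTS-REPP_infinite_measure_renormalized_measure}, the bound $\mathbb{P}(R\leq t)\leq1$ together with $\alpha\int_0^t(t-u)^{\alpha-1}\,\dd u=t^\alpha$ forces $\mathbb{P}(\widetilde R=+\infty)=0$. The right-hand side of \eqref{eq:relation_HTS_RTS_first_return} is the convolution on $\mathbb{R}_+$ of $u\mapsto\alpha u^{\alpha-1}$ with $u\mapsto\mathbb{P}(\widetilde R>u)$; multiplying \eqref{eq:relation_HTS_RTS_first_return} by $e^{-st}$, integrating over $t\in\mathbb{R}_+$, and using $\int_0^\infty e^{-st}\mathbb{P}(R\leq t)\,\dd t=s^{-1}\mathbb{E}[e^{-sR}]$, $\int_0^\infty e^{-su}\mathbb{P}(\widetilde R>u)\,\dd u=s^{-1}(1-\mathbb{E}[e^{-s\widetilde R}])$ (valid since $\widetilde R<+\infty$ a.s.) and $\int_0^\infty e^{-su}u^{\alpha-1}\,\dd u=\Gamma(\alpha)s^{-\alpha}$ gives $s^{-1}\mathbb{E}[e^{-sR}]=\alpha\Gamma(\alpha)s^{-1-\alpha}(1-\mathbb{E}[e^{-s\widetilde R}])$, which is \eqref{eq:Laplace_transform_HTS_from_RTS} since $\alpha\Gamma(\alpha)=\Gamma(1+\alpha)$. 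Reading \eqref{eq:Laplace_transform_HTS_from_RTS} the other way expresses $\mathbb{E}[e^{-s\widetilde R}]$ affinely in terms of $\mathbb{E}[e^{-sR}]$ for all $s>0$, so by uniqueness of Laplace transforms the two laws determine one another, which completes the bijection claim. I expect the only genuinely delicate point to be organisational rather than analytical: arranging all extractions on the $\mu_{B_n}$-side so as to sidestep the dependence of strong convergence on the choice of $\nu$, and invoking the uniqueness in \eqref{eq:relation_HTS_RTS_first_return} in the correct direction within each of the two implications.
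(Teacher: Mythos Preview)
Your proposal is correct and follows the paper's approach: both obtain \eqref{eq:relation_HTS_RTS_first_return} as the $d=1$ instance of \eqref{eq:relation_hitting_return_infinite_measure} with $\widetilde F^{[0]}=1$, and pass between the integral and Laplace forms. The paper's proof is a two-line sketch (``immediate by looking at the projection on the first coordinate'' and a citation for the Laplace identity), whereas you have spelled out the compactness/subsequence argument needed to bridge from convergence of the first coordinate alone to an application of Theorem~\ref{thm:HTS-REPP_vs_RTS-REPP_infinite_measure_renormalized_measure} (which concerns the full process), and you have written out the convolution computation for \eqref{eq:Laplace_transform_HTS_from_RTS}; this extra care is warranted and your organisation of the extractions on the $\mu_{B_n}$-side is clean.
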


More generally, some properties of a point process $N$ on $\mathbb{R}_+$ are defined 
throughout their evolution equation. In our case, 
\eqref{eq:relation_hitting_return_infinite_measure} can be used to find the evolution 
equation of the limit point process $N$ from the one of the point process 
$\widetilde{N}$. For a point process $N$ and $d\geq 1$,
let $P_{N}(d,t) := \mathbb{P}(N[0,t] = d)$. We see $P_N(d,\cdot)$ as a function 
from $\mathbb{R}_+$ to $[0,1]$. 

Before stating the results, we need to recall some basics notions of fractional 
calculus. For every $\beta > 0$, we define the Riemann-Liouville integral $I^{\beta}$ 
as follows. For every Riemann integrable $f$ and $t\in \mathbb{R}$
\begin{align}
\label{eq:definition_Riemann-Liouville_integral}
\big(I^{\beta}f\big)(t) := (I^{\beta}_{0+} f)(t) 
:= \frac{1}{\Gamma(\beta)}\int_0^t f(x)(t-x)^{\beta - 1}\,\dd x.
\end{align}
Associated to this integral, we can go backwards and define the Caputo derivative for 
differentiable functions $f$ by 
\begin{align}
\label{eq:definition_Caputo_derivative}
\prescript{\mathrm{Cap}}{}{D^{\beta}}f(t) := 
\big(I^{1 - \beta}f'\big)(t)\,,\; \forall t\in 
\mathbb{R}, \, 0 < \beta < 1. 
\end{align}
Then, Theorem \ref{thm:HTS-REPP_vs_RTS-REPP_infinite_measure_renormalized_measure} 
implies the following result.
\begin{cor}
\label{cor:generalized_Kolmogorov-Feller_HTSvsRTS}
Assume the same hypothesis as in Corollary 
\ref{cor:equivalence_HTS_RTS_true_Point_Processes}. Then, we have 
\begin{align}
\label{eq:generalized_Kolmogorov-Feller_HTSvsRTS}
\prescript{\mathrm{Cap}}{}{D^{\alpha}} P_N(d,\cdot) = \Gamma(1+\alpha)
(P_{\widetilde{N}}(d-1, \cdot) - P_{\widetilde{N}}(d, \cdot))\,,\; \forall d\geq 0,
\end{align}
with the convention that $P_{\widetilde{N}}(-1,t) := 0$ for all $t\geq 0$ and 
$P_N(0,0) = 1$.
\end{cor}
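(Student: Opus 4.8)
The plan is to translate the relation \eqref{eq:relation_hitting_return_infinite_measure} between the distribution functions $F^{[d]}$ and $\widetilde F^{[d]}$ of the limit processes into the announced relation between the ``exactly $d$ points in $[0,t]$'' probabilities $P_N(d,\cdot)$ and $P_{\widetilde N}(d,\cdot)$, using only the definition of the Riemann–Liouville integral \eqref{eq:definition_Riemann-Liouville_integral} and the Caputo derivative \eqref{eq:definition_Caputo_derivative}. First I would record the elementary dictionary between the two descriptions of a point process on $\mathbb R_+$: for $N=\Xi(\Psi)$ with $\Psi\in\mathcal W'$ a.s., the event $\{N[0,t]\ge d\}$ is exactly $\{\psi^{(d)}\le t\}$, so $\mathbb P(N[0,t]\ge d)=F^{[d]}(t,\dots,t)$ (the $d$-fold diagonal value), and therefore
\[
P_N(d,t)=\mathbb P(N[0,t]\ge d)-\mathbb P(N[0,t]\ge d+1)=F^{[d]}(t,\dots,t)-F^{[d+1]}(t,\dots,t),
\]
with the conventions $F^{[0]}\equiv 1$ and $P_N(0,0)=1$; the same holds for $\widetilde N$ and $\widetilde F$.

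Next I would specialize \eqref{eq:relation_hitting_return_infinite_measure} to the diagonal $t_1=\dots=t_d=t$. All the shifted arguments $t_j-t_1+x$ collapse to $x$, so the relation becomes
\[
F^{[d]}(t,\dots,t)=\alpha\int_0^t\Bigl(\widetilde F^{[d-1]}(x,\dots,x)-\widetilde F^{[d]}(x,\dots,x)\Bigr)(t-x)^{\alpha-1}\,\dd x
=\Gamma(1+\alpha)\,I^{\alpha}\bigl(P_{\widetilde N}(d-1,\cdot)\bigr)(t),
\]
using $\alpha\Gamma(\alpha)=\Gamma(1+\alpha)$ and recognizing the integrand $\widetilde F^{[d-1]}(x,\dots,x)-\widetilde F^{[d]}(x,\dots,x)=P_{\widetilde N}(d-1,x)$ from the dictionary above (valid for $d\ge 1$; for $d=1$ one uses $\widetilde F^{[0]}\equiv 1$, consistent with $P_{\widetilde N}(0,\cdot)=1-\widetilde F^{[1]}$). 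Then, forming the telescoping difference,
\[
P_N(d,t)=F^{[d]}(t,\dots,t)-F^{[d+1]}(t,\dots,t)=\Gamma(1+\alpha)\,I^{\alpha}\bigl(P_{\widetilde N}(d-1,\cdot)-P_{\widetilde N}(d,\cdot)\bigr)(t),
\]
again with the convention $P_{\widetilde N}(-1,\cdot):=0$ so that the case $d=0$ reads $P_N(0,t)=\Gamma(1+\alpha)I^{\alpha}\bigl(-P_{\widetilde N}(0,\cdot)\bigr)(t)+\text{const}$; I would need to be slightly careful here, since the $d=0$ line of \eqref{eq:relation_hitting_return_infinite_measure} is the convention $\widetilde F^{[0]}=1$ rather than an instance of the integral formula, and $P_N(0,0)=1$ fixes the constant of integration — this bookkeeping at $d=0$ is the one genuinely delicate point.

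Finally I would apply the Caputo derivative $\prescript{\mathrm{Cap}}{}{D^{\alpha}}$ to both sides and invoke the fundamental identity of fractional calculus, $\prescript{\mathrm{Cap}}{}{D^{\alpha}}\bigl(I^{\alpha}g\bigr)=g$, valid for $0<\alpha<1$ and $g$ Riemann integrable (this is the standard left-inverse property, e.g. from \cite{MS19_Stochastic_models_for_fractional_calculus}); this yields
\[
\prescript{\mathrm{Cap}}{}{D^{\alpha}}P_N(d,\cdot)=\Gamma(1+\alpha)\bigl(P_{\widetilde N}(d-1,\cdot)-P_{\widetilde N}(d,\cdot)\bigr),\qquad d\ge 0,
\]
which is \eqref{eq:generalized_Kolmogorov-Feller_HTSvsRTS}. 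The only mild technical obstacles are: justifying that $P_{\widetilde N}(d,\cdot)$ is regular enough (right-continuous, of bounded variation, hence Riemann integrable) for the fractional calculus identities to apply, which follows from $\widetilde F^{[d]}$ being a distribution function and from $\widetilde\Psi\in\mathcal W'$ a.s.; and handling the boundary case $d=0$ with the correct constant, as noted above. Everything else is a direct substitution.
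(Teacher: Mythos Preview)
Your proposal is correct and follows essentially the same approach as the paper: specialize \eqref{eq:relation_hitting_return_infinite_measure} to the diagonal $t_1=\dots=t_d=t$, recognize $\widetilde F^{[d-1]}(x,\dots,x)-\widetilde F^{[d]}(x,\dots,x)=P_{\widetilde N}(d-1,x)$, take a telescoping difference to obtain the integral identity $P_N(d,\cdot)=\Gamma(1+\alpha)\,I^{\alpha}\bigl(P_{\widetilde N}(d-1,\cdot)-P_{\widetilde N}(d,\cdot)\bigr)$, and treat $d=0$ separately via $P_N(0,\cdot)=1-\Gamma(1+\alpha)I^{\alpha}\bigl(P_{\widetilde N}(0,\cdot)\bigr)$. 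The paper simply stops at this integral formulation and declares it equivalent to \eqref{eq:generalized_Kolmogorov-Feller_HTSvsRTS}, whereas you go one step further and explicitly invoke the left-inverse property of the Caputo derivative; your extra care about the constant at $d=0$ and the regularity needed for the fractional-calculus identities is appropriate.
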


\subsubsection{The fractional Poisson process as a common limit law}

  We now focus on characterizing point processes that will serve as potential limits for hitting and return REPP. As a preliminary step, we first revisit key concepts from renewal theory and present some properties of the Fractional Poisson Process.

\begin{defn}[Renewal point process]
Let $W$ be a non negative random variable such that $\mathbb{P}(W > 0) > 0$, and 
$(W_i)_{i\geq 1}$ i.i.d. random variables having the same law as $W$. The renewal 
point process with waiting times $(W_i)_{i\geq 1}$ is defined by
\[
\RPP(W) \eqlaw \sum_{i = 1}^{+\infty} \delta_{T_i},
\]
where $T_{i+1} - T_{i} = W_{i+1}$ for all $i\geq 0$, with the convention $T_0 = 0$.
Note that $\RPP(W) \in \mathcal{M}^{\text{Rad}}_{\text{atom}}(\mathbb{R}_+)$.
\end{defn}

\begin{defn}[Fractional Poisson Process]
\label{defn:Fractional_Poisson_Process}\leavevmode
The Fractional Poisson Process $\fPp_{\alpha}(\lambda)$ of parameters $\alpha \in 
(0,1]$ and $\lambda > 0$ is the renewal point process $\RPP(H_{\alpha}(\lambda))$ 
where $H_{\alpha}(\lambda)$ is a Mittag-Leffler law of the first type characterized 
by its Laplace transform
\[
\mathbb{E}\big[e^{-sH_{\alpha}(\lambda)}\big]
:= \frac{\lambda}{\lambda + s^{\alpha}} \,,\; \forall s \geq 0.
\]
\end{defn}

\begin{rem}\leavevmode
    \begin{enumerate}[label = \normalfont(\roman*)]
        \item When $\lambda = 1$, we will simply write $\fPp_{\alpha}$   and $H_{\alpha}$. 
        \item We can easily see that $H_{\alpha}(\lambda) \eqlaw \frac{1}{\lambda^{1/\alpha}}H_{\alpha}$.
        \item If $\alpha = 1$, then $H_{1}(\lambda)$ is the exponential law of parameter $\lambda$ and $\fPp_1(\lambda)$ is the homogeneous Poisson point process of parameter $\lambda$.
        \item The first type Mittag-Leffler law $H_{\alpha}(\lambda)$ should not be confused with the second type Mittag-Leffler law $Y_{\alpha}$ commonly called Mittag-Leffler law in the context of infinite measure dynamical systems (see \cite[\S\,3.6]{Aar97} for a definition). \footnote{Both are called Mittag-Leffler because they are defined from the Mittag-Leffler function $E_{\alpha}(z) :=  \sum_{k=0}^{+\infty} \frac{z^p}{\Gamma(\alpha k + 1)}$, but we have $\mathbb{E}[e^{zY_{\alpha}}] = E_{\alpha}(\Gamma(1+\alpha) z)$ for $z \in \mathbb{R}$, whereas $\mathbb{P}(H_{\alpha} > t) = E_{\alpha}( - t^{\alpha})$ for $t \geq 0$.} 
        \item When $0 < \alpha < 1$, $\mathbb{E}[H_{\alpha}(\lambda)] = +\infty$.
    \end{enumerate}
\end{rem}

\begin{rem}
    The fractional Poisson process $\fPp_{\alpha}(\lambda)$ was introduced as a fractional generalization of the standard Poisson process through its Kolmogorov-Feller equation \cite{Las03}. In particular, a process $N$ having the law of $\fPp_{\alpha}(\lambda)$ is characterized by the independence of its waiting times and the generalized Kolmogorov-Feller evolution equation 
    \begin{align}
        \label{eq:generalized_Kolmogorov-Feller_definition_FPP}
        \prescript{\mathrm{Cap}}{}{D^{\alpha}} P_N(d,\cdot) = \lambda(P_{N}(d-1, \cdot) - P_{N}(d, \cdot)) \,,\; \forall d\geq 0,
    \end{align}
    with the convention $P_N(-1, \cdot) = 0$ (see \cite[Equation (19)]{Las03} or \cite[Equation (7.10) p. 207]{MS19_Stochastic_models_for_fractional_calculus}).
\end{rem}

For the majority of well-behaved rare events, we anticipate that the hitting REPP and 
return REPP converge to the same limiting point process. Consequently, the point 
processes of primary interest are the fixed points of 
\eqref{eq:relation_hitting_return_infinite_measure}. Here, we see that 
\eqref{eq:generalized_Kolmogorov-Feller_definition_FPP} is exactly 
\eqref{eq:generalized_Kolmogorov-Feller_HTSvsRTS} when $N$ and $\widetilde{N}$ have 
the same law and $\lambda = \Gamma(1+\alpha)$ pointing $\fPp_{\alpha}
(\Gamma(1+\alpha))$ as a potential fixed point. The following proposition confirms 
that this is indeed a fixed point and further ensures its uniqueness.

\begin{prop}
\label{prop:fix_point_equation_FPP}
The fractional Poisson process $\fPp_{\alpha}(\Gamma(1+\alpha))$ is the only process 
such that the distribution functions of its finite-dimensional marginals are the fixed 
points of \eqref{eq:relation_hitting_return_infinite_measure}.
\end{prop}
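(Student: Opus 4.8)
The plan is to prove existence and uniqueness separately, working at the level of the distribution functions $F^{[d]}$ of the finite-dimensional marginals, since the fixed-point equation \eqref{eq:relation_hitting_return_infinite_measure} is phrased in those terms. For existence, I would take $N = \widetilde{N} = \fPp_{\alpha}(\Gamma(1+\alpha))$ and verify that \eqref{eq:relation_hitting_return_infinite_measure} holds. The cleanest route is to pass through the generalized Kolmogorov–Feller equation: by the remark following Definition \ref{defn:Fractional_Poisson_Process}, $\fPp_{\alpha}(\Gamma(1+\alpha))$ satisfies \eqref{eq:generalized_Kolmogorov-Feller_definition_FPP} with $\lambda = \Gamma(1+\alpha)$, which is precisely \eqref{eq:generalized_Kolmogorov-Feller_HTSvsRTS} in the self-referential case $N = \widetilde{N}$; since Corollary \ref{cor:generalized_Kolmogorov-Feller_HTSvsRTS} shows \eqref{eq:relation_hitting_return_infinite_measure} implies \eqref{eq:generalized_Kolmogorov-Feller_HTSvsRTS}, I need the converse implication --- that the integral equation and the fractional differential equation are equivalent given the renewal (independent-waiting-times) structure. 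This is the statement that applying $I^{\alpha} = I^{1}\circ I^{1-\alpha}$ to \eqref{eq:generalized_Kolmogorov-Feller_HTSvsRTS} and using $\prescript{\mathrm{Cap}}{}{D^{\alpha}} = I^{1-\alpha}\circ \tfrac{d}{dt}$ together with the initial condition $P_N(0,0)=1$ recovers \eqref{eq:relation_hitting_return_infinite_measure}; concretely one translates $P_N(d,t) = F^{[d]}(t,\dots,t) - F^{[d+1]}(t,\dots,t)$ and checks the bookkeeping. Alternatively, and perhaps more transparently, I would verify \eqref{eq:relation_hitting_return_infinite_measure} directly by a Laplace-transform computation: \eqref{eq:Laplace_transform_HTS_from_RTS} is the $d=1$ case, where one checks $\mathbb{E}[e^{-sR}] = \tfrac{\Gamma(1+\alpha)}{s^\alpha}(1 - \mathbb{E}[e^{-sH_\alpha(\Gamma(1+\alpha))}])$ reduces, using $\mathbb{E}[e^{-sH_\alpha(\lambda)}] = \lambda/(\lambda+s^\alpha)$ with $\lambda = \Gamma(1+\alpha)$, to $\mathbb{E}[e^{-sR}] = \lambda/(\lambda + s^\alpha)$ again --- so $R \eqlaw H_\alpha(\Gamma(1+\alpha))$, consistent with the first waiting time of a renewal process being distributed as the generic waiting time; the higher-dimensional marginals then follow by exploiting that in \eqref{eq:relation_hitting_return_infinite_measure} the vector $(t_2 - t_1 + x, \dots)$ shifts all later coordinates by the same amount, which matches the renewal property that conditionally on the first arrival at $t_1$, the remaining process is an independent copy shifted by $t_1$.

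For uniqueness, suppose $F^{[d]}$ ($d\geq 1$) are the marginal distribution functions of some process $N'$ satisfying \eqref{eq:relation_hitting_return_infinite_measure} with $\widetilde F = F$ (the fixed-point condition means the hitting and return limits coincide). I would argue by induction on $d$, or more efficiently by passing to Laplace transforms in all $d$ variables at once. Setting $t_1 = \dots = t_d = t$ and differentiating in the Riemann–Liouville sense, \eqref{eq:relation_hitting_return_infinite_measure} forces the one-dimensional quantities $P_{N'}(d,t)$ to satisfy \eqref{eq:generalized_Kolmogorov-Feller_HTSvsRTS} with $\widetilde N = N'$, i.e. \eqref{eq:generalized_Kolmogorov-Feller_definition_FPP} with $\lambda = \Gamma(1+\alpha)$; by the characterization in the remark after Definition \ref{defn:Fractional_Poisson_Process}, a process with independent waiting times satisfying this equation is exactly $\fPp_\alpha(\Gamma(1+\alpha))$ --- but I must still establish that a fixed point of \eqref{eq:relation_hitting_return_infinite_measure} has independent waiting times, or circumvent this. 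The honest way is to use the full multidimensional equation: I would show that \eqref{eq:relation_hitting_return_infinite_measure} with $\widetilde F = F$ determines $F^{[d]}$ from $F^{[d-1]}$ uniquely (the right-hand side involves $\widetilde F^{[d]} = F^{[d]}$ only through the term $\widetilde F^{[d]}(x, t_2 - t_1 + x, \dots)$, and one must check this is a contraction or can be inverted --- e.g. by iterating the Volterra-type integral operator in the $t_1$ variable, whose kernel $(t_1-x)^{\alpha-1}$ is integrable, so Picard iteration converges on $[0,T]$ for every $T$). Then since $\fPp_\alpha(\Gamma(1+\alpha))$ is a solution and the solution is unique given $F^{[d-1]}$, induction on $d$ starting from $\widetilde F^{[0]} = 1$ pins down every marginal, hence the law of $N'$.

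The main obstacle is the uniqueness direction, specifically handling the self-referential appearance of $\widetilde F^{[d]} = F^{[d]}$ on both sides of \eqref{eq:relation_hitting_return_infinite_measure}: one cannot simply read off $F^{[d]}$ from $F^{[d-1]}$, and must argue that the resulting fixed-point problem in $F^{[d]}(\cdot)$ (as a function of $t_1$ with the other coordinates as parameters) has a unique solution. I expect the clean resolution is to rewrite the equation, after the substitution $x \mapsto t_1 - x$, as $G(t_1) = (\text{known, involving } F^{[d-1]}) + \alpha\int_0^{t_1} G(x)\,(\text{kernel})\,dx$-type Volterra equation with a weakly singular but integrable kernel, for which existence/uniqueness is classical (Gronwall / Picard on weakly singular Volterra equations); combined with the already-established fact that $\fPp_\alpha(\Gamma(1+\alpha))$ solves it, this yields the proposition. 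A secondary technical point is making sure that the regularity implicitly assumed by the fractional-calculus manipulations (Riemann integrability of $P_N(d,\cdot)$, differentiability where Caputo derivatives are taken) holds for an arbitrary fixed point; this can be bootstrapped from the integral equation itself, since the right-hand side of \eqref{eq:relation_hitting_return_infinite_measure} is automatically continuous (indeed absolutely continuous) in $t_1$.
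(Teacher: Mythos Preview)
Your uniqueness argument is essentially the paper's: both proceed by induction on $d$, fix the shifts $s_i = t_i - t_1$, and reduce to an equation for the difference $h(x) = F_1^{[d]}(x,x+s_2,\dots) - F_2^{[d]}(x,x+s_2,\dots)$, which becomes $h = -\Gamma(1+\alpha)\, I^{\alpha} h$. The paper dispatches this via the Laplace transform rather than Picard iteration, but the two are interchangeable for a bounded $h$ with a weakly singular Volterra kernel, so your plan is sound here.

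For existence the paper takes a quite different and much shorter route. Instead of verifying directly that $\fPp_{\alpha}(\Gamma(1+\alpha))$ satisfies \eqref{eq:relation_hitting_return_infinite_measure}, it observes that \eqref{eq:relation_hitting_return_infinite_measure} is precisely the relation between hitting and return limits furnished by Theorem~\ref{thm:HTS-REPP_vs_RTS-REPP_infinite_measure_renormalized_measure}; hence any dynamical example in which both limits exist and coincide automatically produces a fixed point. The paper then cites null-recurrent Markov chains (\cite{BZ01,PSZ13,RZ20}), where by the strong Markov property the waiting times are i.i.d.\ and the common limit is known to be $\fPp_{\alpha}(\Gamma(1+\alpha))$ (and, more generally, $\RPP(W_{\alpha,\theta}(\theta\Gamma(1+\alpha)))$ for the $\theta$-version \eqref{eq:equation_CFPP_fix_point}). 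This existence-by-example trick bypasses all computation.

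Your direct route is feasible but your sketch is not yet complete. The Kolmogorov--Feller path only constrains the diagonal values $P_N(d,t) = F^{[d]}(t,\dots,t) - F^{[d+1]}(t,\dots,t)$, so it cannot by itself recover the full multidimensional identity \eqref{eq:relation_hitting_return_infinite_measure}; you would still need the renewal structure to reconstruct the off-diagonal joints. Your second approach (renewal + Laplace) is the right one, but the step ``higher-dimensional marginals then follow'' hides real work: writing $F^{[d]}(t_1,\dots,t_d) = \int_0^{t_1} f(s)\,F^{[d-1]}(t_2-s,\dots,t_d-s)\,\dd s$ (conditioning on the first arrival), one must check this matches \eqref{eq:relation_hitting_return_infinite_measure}, which after the change of variable $s = t_1 - x$ amounts to the identity $\Gamma(1+\alpha)\,I^{\alpha}\big(1 - F_{H_{\alpha}(\Gamma(1+\alpha))}\big) = F_{H_{\alpha}(\Gamma(1+\alpha))}$ convolved against $F^{[d-1]}$, together with a further application of the renewal recursion to handle the $F^{[d]}(x,\cdot)$ term. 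That identity is exactly the $d=1$ fixed-point equation you already checked, so an induction closes it --- but this should be spelled out.
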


\begin{rem}
    \label{rem:FPP_PhiFPP}
    In Proposition \ref{prop:fix_point_equation_FPP}, we took a slight liberty in stating that the fractional Poisson process is the fixed point of the equation. It is in fact $\Phi_{\fPp_{\alpha}(\Gamma(1+\alpha))} := (\phi^{(i)})_{i\geq 1}$ taking values in $\overline{\mathbb{R}}_+$ where for every $i\geq 1$, $\phi^{(i)} = \sum_{k=1}^i X_k$ where $(X_k)_{k\geq 1}$ are i.i.d. with common law that of $H_{\alpha}(\Gamma(1+\alpha))$.
    However, if
    \begin{align*}
            \mathcal{W}'' := \big\{(\phi^{(i)})_{i\geq 0} \in (\mathbb{R}_+)^{\mathbb{N}} \;\big|\;  \forall i \geq 0,\; \phi^{(i)} < \phi^{(i+1)} \;\text{and}\; \lim_{i\to +\infty} \phi^{(i)} = +\infty\big\},
    \end{align*}
    we then have $\Phi_{\fPp_{\alpha}(\Gamma(1+\alpha))} \in \mathcal{W}''$ almost surely, so the law of $\Phi_{\fPp_{\alpha}(\Gamma(1+\alpha))}$ and $\fPp_{\alpha}(\Gamma(1+\alpha))$ are uniquely defined by one another. In the sequel, we will not distinguish between the point process, which takes values in $\mathcal{M}_{\mathrm{atom}}^{\mathrm{Rad}}(\mathbb{R}_+)$, and the stochastic process, which takes values in $(\mathbb{R}_+)^{\mathbb{N}}$, hence both will be denoted by $\fPp_{\alpha}$.
\end{rem}

  While the Fractional Point Process will act as the limit in most cases, other point processes naturally emerge. To accommodate situations where the first waiting time differs from subsequent ones or the mass associated to each point is random, we introduce delayed renewal point processes and compound point processes.

\begin{defn}[Delayed renewal point process]
Let $V,W$ be two non negative random variables such that $\mathbb{P}(V > 0), 
\mathbb{P}(W > 0) > 0$, and
let $(W_i)_{i\geq 1}$ be i.i.d. random variables with the same law as $W$. 
The delayed renewal point process with delay $V$ and waiting times $(W_i)_{i\geq 1}$ 
is defined by
\[
\DRPP(V,W) \eqlaw \sum_{i=1}^{+\infty} \delta_{T_i},
\]
where $T_{i+1} - T_i = W_{i+1}$ for all $i\geq 1$ and $T_1 = V$. Note that 
$\DRPP(V,W) \in \mathcal{M}^{\text{Rad}}_{\text{atom}}(\mathbb{R}_+)$.
\end{defn}

\begin{defn}[Compound point process]
For a simple point process $P = \sum_{i=1}^{+\infty}\delta_{T_i}$, we define the 
associated compound point process $c(P)(\pi)$ of multiplicity $\pi$ (where $\pi$ is a 
probability distribution on $\mathbb{N}$) as
\[
c(P)(\pi) \eqlaw \sum_{i=1}^{+\infty}X_i\delta_{T_i},
\]
where $(X_i)_{i\geq 1}$ are i.i.d. random variables distributed according to $\pi$ and
independent of $(T_i)_{i\geq 1}$. 
\end{defn}

\begin{rem}
This is not the standard approach to defining a compound process. Here, we utilize 
the structure of \(\mathbb{R}\) and the fact that the multiplicity \(\pi\) takes 
integer values. However, compound point processes are typically constructed in a more general framework, particularly the compound Poisson process, as described in detail in \cite[Chapter 15]{LastPenrose18_LecturesOnPoissonProcess}. 
\end{rem}

\begin{defn}[Compound Fractional Poisson Process]
    We define $\cfPp_{\alpha}(\lambda, \pi)$ as the compound process with multiplicity $\pi$ associated to the fractional Poisson process $\fPp_{\alpha}(\lambda)$. 
\end{defn}

\begin{rem}
\label{rem:relation_compound_processes_and_renewal_processes}
In our setting, we do not impose the waiting time of a renewal process to be positive 
almost surely, so compound processes can also be characterized as renewal processes. 
For instance, let $H_{\alpha}(\lambda)$ be the waiting time of the fractional Poisson 
process. Let $\theta \in (0,1]$ and let $W_{\alpha,\theta}(\lambda)$ be a non-
negative random variable with distribution function 
\begin{align}
\label{eq:def_of_W_alpha_theta}
\mathbb{P}(W_{\alpha,\theta}(\lambda) \leq t) = 1 - \theta + \theta\, \mathbb{P}
(H_{\alpha}(\lambda)\leq t).
\end{align}
It means that with probability $1 - \theta$, $W_{\alpha,\theta}(\lambda) = 0$ and 
$(W_{\alpha,\theta}(\lambda)|W_{\alpha,\theta}(\lambda) > 0)$ has the same law as 
$H_{\alpha}(\lambda)$. Then, 
\[
\DRPP(H_{\alpha}(\lambda), W_{\alpha,\theta}(\lambda)) \eqlaw \cfPp_{\alpha}(\lambda, \geo(\theta)),
\]
where $\geo(\theta)$ is the positive geometric law, \textit{i.e.} if 
$Y\eqlaw\geo(\theta)$, for all $k\geq 1$,
$\mathbb{P}(Y = k) = \theta(1 - \theta)^{k-1}$.
\end{rem}

To prove convergence of the hitting REPP and the return REPPs, further conditions on 
the sequence of targets $(B_n)_{n\geq 0}$ are necessary.\newline 

\noindent  \textbf{Assumptions $(A)_{\alpha}$.} \newline 
A sequence $(B_n) \in \mathscr{B}^{\mathbb{N}}$ of asymptotically rare events 
satisfies $(A)_{\alpha}$ if it satisfies \ref{cond:Living_in_uniform_set} and the 
following conditions :
\begin{enumerate}[label = $(A\arabic*)_{\alpha}$, leftmargin=*]
\item \label{cond_CFPP:extremal_index} For every $n \geq 1$, we can write 
$B_n = U(B_n) \sqcup Q(B_n)$, and $\lim_{n\to +\infty} \mu(Q(B_n))/\mu(B_n) = \theta 
\in (0,1]$ (the extremal index).
\item \label{cond_CFPP:good_density_after_tau_n} There exists a sequence of 
measurable functions $\tau_n : B_n \to \mathbb{N}$ and a compact subset $\mathcal{U}$ 
of $L^1(\mu)$ such that
\[
\widehat{T^{\tau_n}}\left(\frac{\mathbf{1}_{Q(B_n)}}{\mu(Q(B_n))}\right)\in 
\mathcal{U} \;, \forall n\geq 1.
\]
\item \label{cond_CFPP:tau_n_small_enough} The sequence $(\tau_n)_{n\geq 0}$
satisfies $\gamma(\mu(B_n))\,\tau_n \xRightarrow[n\to +\infty]{\mu_{B_n}} 0,$ where 
$\gamma$ is defined from $(a_n)_{n\geq 0}$ by \eqref{eq:definition_of_gamma}.
\item \label{cond_CFPP:cluster_compatible_tau_n_no_cluster_from_Q} The sequence 
$(Q(B_n))_{n\geq 0}$ is such that $\mu_{Q(B_n)}(\lr_{B_n} < \tau_n) \xrightarrow[n\to 
+\infty]{} 0.$
\end{enumerate}
Furthermore, if $U(B_n) \neq \emptyset$, we have 
\begin{enumerate}[label = $(A\arabic*)_{\alpha}$, leftmargin=*]
\setcounter{enumi}{4}
\item \label{cond_CFPP:cluster_compatible_tau_n_cluster_from_U}
The sequence $(U(B_n))_{n\geq 0}$ is such that $\mu_{U(B_n)}(\lr_{B_n} > 
\tau_n)\xrightarrow[n\to +\infty]{} 0$
\item \label{cond_CFPP:Compatibility_Geometric_law} We have the following limit
\begin{align*}
\left\| \widehat{T_{B_n}}\left(\frac{\mathbf{1}_{U(B_n)}}{\mu(U(B_n))}\right) - 
\frac{1}{\mu(B_n)}\mathbf{1}_{B_n}\right\|_{L^{\infty}(\mu_{B_n})} \xrightarrow[n\to 
+\infty]{} 0.
\end{align*}
\end{enumerate}

Let us analyze these conditions. The assumptions $(A)_{\alpha}$ are identical to those outlined in 
\cite[Theorem 7.2]{RZ20}, where only the first return is considered. We show that these assumptions 
are sufficiently robust to establish the convergence of the point process as well. Assumption 
\ref{cond:Living_in_uniform_set} ensures that Theorem
\ref{thm:HTS-REPP_vs_RTS-REPP_infinite_measure_renormalized_measure}. can be applied effectively.

Condition \ref{cond_CFPP:extremal_index} serves two purposes: it identifies points that are likely 
to form clusters within $U(B_n)$ and points that escape the target $Q(B_n)$, while also 
guaranteeing that the extremal index $\theta$ is well-defined.

Condition \ref{cond_CFPP:good_density_after_tau_n} specifies an appropriate waiting time 
$\tau_n$ such that the density within the target set transforms into a desirable structure under the dynamics. However, $\tau_n$ must remain asymptotically negligible, a requirement addressed by \ref{cond_CFPP:tau_n_small_enough}.

Cluster compatibility is handled through \ref{cond_CFPP:cluster_compatible_tau_n_no_cluster_from_Q}
and \ref{cond_CFPP:cluster_compatible_tau_n_cluster_from_U}, ensuring that $\tau_n$ aligns with clustering behavior.

Finally, Condition \ref{cond_CFPP:Compatibility_Geometric_law}
explains the geometric distribution of multiplicities. While this condition could be generalized to 
allow for other multiplicity distributions, we retain it as stated, since the geometric law is the 
only distribution arising in our examples.

We can now state our main theorem, which establishes that Assumptions
\((A)_{\alpha}\) are sufficient conditions for convergence to the Compound 
Poisson Process.

\begin{thm}
\label{thm:sufficient_conditions_convergence_compound_FPP}
Let $(X,\mathscr{B},\mu, T)$ be a PDE CEMPT with $\mu(X) = +\infty$. Let 
$(B_n)_{n\geq 0}$ be a sequence of asymptotically rare events satisfying 
$(A)_{\alpha}$. 
Then,
    \begin{align*}
        N_{B_n}^{\gamma} \xRightarrow[n\to +\infty]{\mathcal{L}(\mu)} \cfPp_{\alpha}(\theta\Gamma(1+\alpha), \geo(\theta))
    \end{align*}
    and 
    \begin{align*}
        N_{B_n}^{\gamma} \xRightarrow[n\to +\infty]{\mu_{B_n}} \RPP(W_{\alpha,\theta}(\theta\Gamma(1+\alpha))).
    \end{align*}
    In particular, if $\theta = 1$, we have 
    \begin{align*}
        N_{B_n}^{\gamma} \xRightarrow[n\to +\infty]{\mathcal{L}(\mu)} \fPp_{\alpha}(\Gamma(1+\alpha)) \quad \text{and} \quad  N_{B_n}^{\gamma} \xRightarrow[n\to +\infty]{\mu_{B_n}} \fPp_{\alpha}(\Gamma(1+\alpha)).
    \end{align*}
\end{thm}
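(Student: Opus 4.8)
The plan is to establish the return statement first, deduce the hitting statement from the abstract correspondence of Theorem~\ref{thm:HTS-REPP_vs_RTS-REPP_infinite_measure_renormalized_measure}, and then identify both limits.

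\emph{Step 1 (reduction and identification).} Since the candidate limits $\RPP(W_{\alpha,\theta}(\theta\Gamma(1+\alpha)))$ and $\cfPp_{\alpha}(\theta\Gamma(1+\alpha),\geo(\theta))$ lie in $\mathcal{W}'$ almost surely, Corollary~\ref{cor:equivalence_HTS_RTS_true_Point_Processes} reduces everything to proving $N_{B_n}^{\gamma}\xRightarrow{\mu_{B_n}}\RPP(W_{\alpha,\theta}(\theta\Gamma(1+\alpha)))$; the hitting convergence then follows automatically, with limit the unique point process whose finite-dimensional marginals solve \eqref{eq:relation_hitting_return_infinite_measure} against those of $\RPP(W_{\alpha,\theta}(\theta\Gamma(1+\alpha)))$. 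To recognize this limit as $\cfPp_{\alpha}(\theta\Gamma(1+\alpha),\geo(\theta))$, I use Remark~\ref{rem:relation_compound_processes_and_renewal_processes}, which gives $\cfPp_{\alpha}(\lambda,\geo(\theta))\eqlaw\DRPP(H_{\alpha}(\lambda),W_{\alpha,\theta}(\lambda))$, and observe that $\DRPP(H_{\alpha}(\lambda),W_{\alpha,\theta}(\lambda))$ and $\RPP(W_{\alpha,\theta}(\lambda))$ share the same i.i.d.\ increments $W_{\alpha,\theta}(\lambda)$ beyond the first coordinate; conditioning on the first coordinate therefore collapses \eqref{eq:relation_hitting_return_infinite_measure}, for every $d$, to a single scalar identity relating the density of $H_{\alpha}(\lambda)$ to the Riemann--Liouville kernel $x^{\alpha-1}$, which is exactly \eqref{eq:Laplace_transform_HTS_from_RTS} of Corollary~\ref{cor:equivalence_HTS/RTS_infinite_measure} with $\widetilde R=W_{\alpha,\theta}(\lambda)$ and holds precisely when $\lambda=\theta\Gamma(1+\alpha)$.

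\emph{Step 2 (cluster decomposition, sizes, regeneration).} Work under $\mu_{B_n}$ and let $x_{j}:=T_{B_n}^{j}(x)$ be the successive return points, so the $j$-th atom of $N_{B_n}^{\gamma}$ sits at $\gamma(\mu(B_n))\,r_{B_n}^{(j)}$. By \ref{cond_CFPP:extremal_index} every $x_{j}$ lies in $U(B_n)$ or $Q(B_n)$; group the returns into \emph{clusters}, a cluster being a maximal block of $U(B_n)$-returns terminated by a $Q(B_n)$-return. By \ref{cond_CFPP:cluster_compatible_tau_n_cluster_from_U} and \ref{cond_CFPP:cluster_compatible_tau_n_no_cluster_from_Q} each within-cluster step takes at most $\tau_n$ iterations while the step out of a $Q(B_n)$-point takes more than $\tau_n$; together with \ref{cond_CFPP:tau_n_small_enough} ($\gamma(\mu(B_n))\tau_n\to0$ in probability) this makes each cluster collapse, after rescaling, to a single location, distinct clusters having distinct limiting locations. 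By \ref{cond_CFPP:Compatibility_Geometric_law}, whenever $x_{j}\in U(B_n)$ the law of $x_{j+1}$ is asymptotically $\mu_{B_n}$; iterating, each cluster has asymptotic size $\geo(\theta)$ (each new return lands in $Q(B_n)$ with probability $\to\mu(Q(B_n))/\mu(B_n)\to\theta$ by \ref{cond_CFPP:extremal_index}), and the $Q(B_n)$-point ending it is asymptotically uniform on $Q(B_n)$. Starting from such a point the orbit spends, by \ref{cond_CFPP:cluster_compatible_tau_n_no_cluster_from_Q}, at least $\tau_n$ steps outside $B_n$, after which \ref{cond_CFPP:good_density_after_tau_n} places its density in the fixed compact set $\mathcal{U}\subset L^1(\mu)$; the induced entrance law into $B_n$ from a density in $\mathcal{U}$ is then asymptotically $\mu_{B_n}$ (first-entrance equidistribution, as in the analysis of \cite{RZ20}, using pointwise dual ergodicity and compactness of $\mathcal{U}$). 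Hence the process regenerates: after each cluster one is back, independently of the past, at a point asymptotically uniform on $B_n$, and the cluster sizes form an asymptotically i.i.d.\ $\geo(\theta)$ sequence asymptotically independent of the cluster locations.

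\emph{Step 3 (inter-cluster law and conclusion).} Up to within-cluster contributions, negligible after rescaling by Step~2, the time between two consecutive clusters equals the first return time to $Q(B_n)$ from the cluster-ending $Q(B_n)$-point. The sets $Q(B_n)$ satisfy $(A)_{\alpha}$ with extremal index $1$ (take $U(Q(B_n))=\emptyset$; \ref{cond_CFPP:good_density_after_tau_n}--\ref{cond_CFPP:cluster_compatible_tau_n_no_cluster_from_Q} for $Q(B_n)$ follow from those for $B_n$ and $r_{Q(B_n)}\geq r_{B_n}$), so the first-return theorem of \cite{RZ20} gives $\gamma(\mu(Q(B_n)))\,r_{Q(B_n)}\xRightarrow{\mu_{Q(B_n)}}H_{\alpha}(\Gamma(1+\alpha))$. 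Since $\mu(Q(B_n))/\mu(B_n)\to\theta$ and $\gamma\in\RV_{0+}(1/\alpha)$, one has $\gamma(\mu(B_n))=\theta^{-1/\alpha}\gamma(\mu(Q(B_n)))(1+o(1))$, hence $\gamma(\mu(B_n))\,r_{Q(B_n)}\xRightarrow{\mu_{Q(B_n)}}\theta^{-1/\alpha}H_{\alpha}(\Gamma(1+\alpha))\eqlaw H_{\alpha}(\theta\Gamma(1+\alpha))$ by $H_{\alpha}(\lambda)\eqlaw\lambda^{-1/\alpha}H_{\alpha}$; the same computation shows the first cluster sits at rescaled time $0$ with probability $\to1-\theta$ (when $x\in U(B_n)$) and at a rescaled location converging to $H_{\alpha}(\theta\Gamma(1+\alpha))$ with probability $\to\theta$ (when $x\in Q(B_n)$). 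Combining with Step~2, the finite-dimensional laws of $\gamma(\mu(B_n))\,\Phi_{B_n}$ under $\mu_{B_n}$ converge to those of a point process whose clusters lie along a renewal process with waiting time $H_{\alpha}(\theta\Gamma(1+\alpha))$ (possibly with an extra cluster at the origin, with probability $1-\theta$) carrying independent $\geo(\theta)$ multiplicities; by Remark~\ref{rem:relation_compound_processes_and_renewal_processes} this is $\RPP(W_{\alpha,\theta}(\theta\Gamma(1+\alpha)))$. Being in $\mathcal{W}'$ almost surely, this limit upgrades through Corollary~\ref{cor:equivalence_HTS_RTS_true_Point_Processes} to both asserted REPP convergences, Step~1 identifying the hitting limit; the case $\theta=1$ is the specialization where $W_{\alpha,1}=H_{\alpha}$ has no atom at $0$ and $\geo(1)\equiv1$, so both limits are $\fPp_{\alpha}(\Gamma(1+\alpha))$.

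\emph{Main obstacle.} The technical heart is Step~2 and its interface with Steps~1 and~3: turning the cluster/regeneration picture into convergence of \emph{all} finite-dimensional marginals requires uniform control of the error terms (number of clusters before a fixed rescaled time, within-cluster durations, asymptotic independence across clusters), and in particular the first-entrance equidistribution statement, which is the delicate ingredient inherited from the $d=1$ analysis of \cite{RZ20}.
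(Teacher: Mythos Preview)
Your approach and the paper's diverge substantially. The paper never attempts a direct cluster/regeneration construction. Instead it runs a fixed-point argument: by compactness of $(\overline{\mathbb R}_+)^{\mathbb N}$ one extracts a subsequential return limit $(\widetilde F^{[d]})_{d\ge1}$; the decomposition $B_n=U(B_n)\sqcup Q(B_n)$ is used only to split $\widetilde F_{B_n}^{[d]}$ into two pieces, shown (Lemmas~\ref{lem:first_term_decomposition_in_proof_sufficient_condition_CFPP}--\ref{lem:second_term_decomposition_in_proof_sufficient_condition_CFPP}) to converge to $(1-\theta)\widetilde F^{[d-1]}(t_2,\dots,t_d)$ and $\theta F^{[d]}(t_1,\dots,t_d)$ respectively. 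Combined with \eqref{eq:relation_hitting_return_infinite_measure} this forces $(\widetilde F^{[d]})$ to satisfy \eqref{eq:equation_CFPP_fix_point}, whose unique solution was already identified in the proof of Proposition~\ref{prop:fix_point_equation_FPP}. The $Q(B_n)$ piece is handled not by proving the restart law is $\approx\mu_{B_n}$, but by the much softer statement that after pushing forward by $T^{\tau_n}$ the density lies in the compact set $\mathcal U$, and Lemma~\ref{lem:convergence_Birkhoff_transfer_operator_compact} shows all densities in $\mathcal U$ yield the \emph{same} process limit as any fixed $\nu\ll\mu$---namely the hitting limit $F^{[d]}$ itself. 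No independence or regeneration is ever invoked.

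Your outline has a genuine gap precisely where you flag it. Step~2's ``first-entrance equidistribution'' (the entrance law into $B_n$ from a $\mathcal U$-density is asymptotically $\mu_{B_n}$) is neither proved in \cite{RZ20} nor here, and is strictly stronger than what the paper needs or uses. More importantly, even granting it, you still must upgrade the single-return result of \cite{RZ20} for $Q(B_n)$ to joint convergence of \emph{all} inter-cluster gaps with asymptotic independence from cluster sizes; this is the regeneration statement you assert but do not prove, and it is exactly what the paper's functional-equation method is designed to avoid. A secondary issue: in Step~3 the relevant quantity is $r_{B_n}$ starting from $Q(B_n)$, not $r_{Q(B_n)}$; the two differ by a within-cluster time and agree asymptotically, but closing this requires the same uniform controls you defer. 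Your route can likely be made rigorous, but as written it is a heuristic sketch; the paper's approach is both shorter and bypasses the delicate step entirely.
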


  The primary goal in the following is to apply Theorem \ref{thm:sufficient_conditions_convergence_compound_FPP} to interval maps with a single indifferent fixed point, using shrinking cylinders as rare events. However, the theorem is broadly applicable to a wider range of maps and asymptotically rare events, making it a versatile tool for many other contexts.

\subsection{Rare event point processes in maps with an indifferent fixed point}
\label{Paragraph_map_with_one_indifferent_fixed_point}

We now focus on the specific case of interval maps with an indifferent fixed point, as defined by \eqref{def:LSVmap}. Recall that $x\in \left[0,1\right]$
and
\[
T_p(x) =
\begin{cases}
x + 2^px^{p+1}, & 0\leq x < 1/2,\\
2x - 1, & 1/2\leq x \leq 1.   
\end{cases} 
\]
We define \(\alpha := p^{-1}\). Once \(p\) is fixed, for simplicity, we omit its 
dependence in the index and we will simply write \(T\) instead of \(T_p\).
Let $T_1$ and $T_2$ be the two diffeomorphic branches 
of $T$ (more precisely their extensions to $[0,1/2]$ and $[1/2,1]$ respectively) and set 
$c_n := T_1^{-n}1$. In particular, $c_0 = 1$ and $c_1 = 1/2$. 

For \( 0 \leq p < 1 \), the absolutely continuous invariant measure \(\mu\) is 
unique (up to scaling) and normalized to a probability measure. In contrast, 
for \( p \geq 1 \), \(\mu\) remains unique but becomes infinite. 
Since 
we are interested by the infinite case, we assume $p \geq 1$ and fix a scaling so 
that $\mu([1/2,1]) = 1$. The density $\rho = \dd\mu/\dd\leb$ is the following: 
\begin{align}
\label{eq:formula_density_LSV_map}
\rho(x) = h_0(x)\frac{x}{x - T_1^{-1}x}, \quad  x\in [0,1],
\end{align}
for some continuous function $h_0$ bounded away from $0$ and $+\infty$. For more 
details about such dynamical systems, see \cite{LSV97},\cite{You99} or 
\cite[Section 3.5]{Alv20} or \cite{Tha80_EstimatesInvariantDensities, Tha83}.  

The dynamical system $([0,1], T, \mu)$ is a PDE CEMPT with normalizing sequence 
$(a_n)_{n\in \mathbb{N}} \in \RV(\alpha)$ and every interval
$I = [c,1]$ with $c> 0$ is uniform (see \cite{Zwe98} for example).\\

Set $Y := [c_1, c_0] = [1/2, 1]$ and define
$\xi := \{[c_{k+1},c_k], \; k\geq 0\} = \{Y\}\cup \{Y^c\cap \{r_Y = k\}, n\geq 1\}$ 
the measurable partition of $[0,1]$ defined from return times to $Y$. Then, for every 
$n\geq 1$, let $\xi_n = \bigvee_{i=0}^{n-1} T^{-i}\xi$. The partition $\xi$ 
dynamically generates the Borel $\sigma$-algebra $\mathscr{B}$ and for a point 
$x \in [0,1]$, we write $\xi_n(x)$ the element of $\xi_n$ that contains $x$. If $x$ 
is not a preimage of $0$, it is easy to see that $\xi_n(x)$ is well defined for every 
$n\geq 1$.

In this case, $\xi_n(x)$ shrinks to $\{x\}$ as $n$ goes to $+\infty$ and will serve 
as our asymptotically rare events in the study of the hitting and return REPP. 
  
For the preimages of $0$, such a definition for asymptotically rare events is not 
possible anymore. However, if $T^kx = 0$, the connected component
$C^n_x$ of $T^{-k}[0,c_n]$ containing $x$ is a union of elements in $\xi_k$ for all $n\geq 1$. As $n$ goes to $+\infty$, $C^n_x$ shrinks towards $x$ and thus $(C^n_x)_{n\geq 1}$ will serve as our 
sequence of asymptotically rare events associated to $x$ in this case. 

\subsubsection{Points that are not preimages of the indifferent fixed point}

If \( x \) is not a preimage of \( 0 \), as determined by the chosen partition \(\xi\), we define the asymptotically rare events as \( B_n := \xi_n(x) \) for all \( n \geq 1 \). This setup results in two distinct behaviors depending on whether \( x \) is periodic. If \( x \) is not periodic, both the hitting and return REPPs converge to the fractional Poisson process. 

\begin{thm}
    \label{thm:REPP_LSV_nonperiodic_points}
    Let $p \geq 1$. Assume $x \in [0,1]$ is not periodic and not a preimage of $0$. For all $n\geq 1$, set $B_n := \xi_n(x)$. Then,
    \begin{align*}
        N_{B_n}^{\gamma} \xRightarrow[n\to +\infty]{\mathcal{L}(\mu)} \fPp_{\alpha}(\Gamma(1+\alpha)) \quad \text{and} \quad N_{B_n}^{\gamma} \xRightarrow[n\to +\infty]{\mu_{B_n}} \fPp_{\alpha}(\Gamma(1+\alpha))\,.
    \end{align*}
\end{thm}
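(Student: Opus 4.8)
The strategy is to verify that the sequence $B_n := \xi_n(x)$ satisfies Assumptions $(A)_\alpha$ with extremal index $\theta = 1$ and trivial decomposition $U(B_n) = \emptyset$, $Q(B_n) = B_n$, and then invoke Theorem \ref{thm:sufficient_conditions_convergence_compound_FPP} in the case $\theta = 1$. Condition \ref{cond:Living_in_uniform_set} is immediate: since $x$ is not a preimage of $0$ and $x \neq 0$, for $n$ large the cylinder $B_n = \xi_n(x)$ is contained in some interval $[c,1]$ with $c > 0$, which is a uniform set with normalizing sequence $(a_n) \in \RV(\alpha)$ by \cite{Zwe98}. With $U(B_n) = \emptyset$ and $Q(B_n) = B_n$, Condition \ref{cond_CFPP:extremal_index} holds with $\theta = 1$, and Conditions \ref{cond_CFPP:cluster_compatible_tau_n_cluster_from_U}--\ref{cond_CFPP:Compatibility_Geometric_law} are vacuous. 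It remains to produce the waiting times $\tau_n : B_n \to \mathbb{N}$ and check \ref{cond_CFPP:good_density_after_tau_n}, \ref{cond_CFPP:tau_n_small_enough}, and \ref{cond_CFPP:cluster_compatible_tau_n_no_cluster_from_Q}.

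The key is the choice of $\tau_n$. Since $B_n = \xi_n(x)$ is an element of the refined partition $\xi_n = \bigvee_{i=0}^{n-1} T^{-i}\xi$, the map $T^n$ (or rather an appropriate iterate tied to the return structure to $Y$) sends $B_n$ diffeomorphically onto a union of base cylinders in $Y$ with controlled distortion; the bounded distortion property for the LSV map and the form \eqref{eq:formula_density_LSV_map} of the density then guarantee that $\widehat{T^{\tau_n}}(\mathbf{1}_{B_n}/\mu(B_n))$ lies in a fixed compact subset $\mathcal{U}$ of $L^1(\mu)$ — concretely one expects these pushed-forward densities to be uniformly bounded above and below on $Y$ and equi-continuous, hence precompact. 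This is \ref{cond_CFPP:good_density_after_tau_n}. For \ref{cond_CFPP:tau_n_small_enough} one needs $\gamma(\mu(B_n))\,\tau_n \to 0$ in $\mu_{B_n}$-probability; here $\tau_n$ is essentially the number of iterates needed for $B_n$ to ``spread out'', which grows like a constant times $n$ or like the maximal return-time depth inside $\xi_n(x)$, while $\mu(B_n)$ decays and $\gamma \in \RV_{0+}(1/\alpha)$ — a comparison of the polynomial/regularly-varying rates gives the claim, using that $x$ is bounded away from the indifferent fixed point so that $\mu(B_n)$ does not decay anomalously slowly. Condition \ref{cond_CFPP:cluster_compatible_tau_n_no_cluster_from_Q} asserts $\mu_{B_n}(r_{B_n} < \tau_n) \to 0$, i.e.\ no short returns: this is exactly where non-periodicity of $x$ enters, since a point with a short return to every neighborhood $\xi_n(x)$ would force $x$ to be periodic (or eventually periodic, which for this partition and the absence of preimages of $0$ also reduces to periodic); one argues that for $x$ non-periodic the first return time $r_{B_n}$ tends to $\infty$ fast enough to dominate $\tau_n$.

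The main obstacle I expect is \ref{cond_CFPP:cluster_compatible_tau_n_no_cluster_from_Q} together with the precise book-keeping in \ref{cond_CFPP:good_density_after_tau_n}: one must simultaneously choose $\tau_n$ large enough that $T^{\tau_n}$ fully regularizes the density on $B_n$ (which in an infinite-measure system with a neutral fixed point is delicate, since orbits passing near $0$ stay there for a long time and the transfer operator does not contract uniformly), yet small enough, and compatible with the absence of short returns. The balance is achieved by exploiting the induced system on $Y = [1/2,1]$, which is a Gibbs--Markov / uniformly expanding map: one transfers the problem to the induced map, where bounded distortion and exponential mixing give the compactness in \ref{cond_CFPP:good_density_after_tau_n} cleanly, and then pulls back, controlling the extra time spent near $0$ via the regular variation of $(a_n)$ and the estimates on $\mu(\{r_Y = k\})$ from \cite{Tha80_EstimatesInvariantDensities, Tha83}. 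Once all of $(A)_\alpha$ is verified, Theorem \ref{thm:sufficient_conditions_convergence_compound_FPP} with $\theta = 1$ yields both convergences to $\fPp_\alpha(\Gamma(1+\alpha))$ directly.
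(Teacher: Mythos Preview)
Your overall strategy is exactly the paper's: verify $(A)_\alpha$ with $\theta=1$, $U(B_n)=\emptyset$, and appeal to Theorem~\ref{thm:sufficient_conditions_convergence_compound_FPP}. The choice of the induced Gibbs--Markov system on $Y$ as the workhorse is also correct; in particular \ref{cond_CFPP:good_density_after_tau_n} is handled precisely as you suggest, via $\widehat{T^{\tau_n}}(\mathbf{1}_{B_n}/\mu(B_n)) = \widehat{T_Y^{s_n}}(\mathbf{1}_{\xi^Y_{s_n}(x)}/\mu(\xi^Y_{s_n}(x)))$ and standard Gibbs--Markov compactness. The paper takes the deterministic $\tau_n := \min\{i\ge n-1 : T^i x\in Y\}$, so that $s_n := S_{\tau_n}\mathbf 1_Y(x)$ and $B_n = \xi^Y_{s_n}(x)$.

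There are, however, two genuine gaps in your sketch. First, for \ref{cond_CFPP:tau_n_small_enough} the ``polynomial/regularly-varying comparison'' is not what is used, and it is not clear it would work for \emph{every} non-periodic $x$. The paper proves the stronger $\tau_n\mu(B_n)\to 0$ by a dichotomy along subsequences: either $S_{\tau_n}\mathbf 1_Y(x)/\log\tau_n\to\infty$, in which case the exponential bound $\mu(B_n)\le \kappa e^{-c s_n}$ from Gibbs--Markov kills $\tau_n$; or this ratio stays bounded, forcing some inter-return $r_Y^{\{j\}}(x)\gtrsim \tau_n/\log\tau_n$, and then the tail estimate $\mu(Y\cap\{r_Y=m\})\asymp m^{-(\alpha+1)}$ makes $\mu(B_n)$ small enough. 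The second case is exactly where a generic-point polynomial comparison breaks down, so the dichotomy is essential.

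Second, your argument for \ref{cond_CFPP:cluster_compatible_tau_n_no_cluster_from_Q} is insufficient. Non-periodicity only gives $\inf_{B_n} r_{B_n}\to\infty$, but $\tau_n\ge n-1$ also diverges, and there is no a priori reason the former dominates for every non-periodic $x$. The paper does \emph{not} argue this way: instead it uses that the induced Gibbs--Markov map has exponential return-time statistics around non-periodic points, writes
\[
\mu_{B_n}(r_{B_n}<\tau_n)\le \mu_{B_n}\big(\mu_Y(B_n)\,r^Y_{B_n}<t\big)+\mu_{B_n}\big(\mu_Y(B_n)\,S_{\tau_n}\mathbf 1_Y>t\big),
\]
makes the first term small by taking $t$ small (via the exponential limit), and kills the second using $S_{\tau_n}\mathbf 1_Y\le \tau_n$ together with the already-proved $\tau_n\mu(B_n)\to 0$. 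So the short-return condition is a consequence of the induced RTS, not of non-periodicity directly.
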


When $x$ is $q$-periodic, clusters of hittings appear and thus we get a 
compound fractional Poisson process in the limit.

\begin{thm}
\label{thm:REPP_LSV_periodic_points}
Let $p \geq 1$. Assume $x \in (0,1]$ is periodic of prime period $q$. For all $n\geq 1$, set $B_n := \xi_n(x)$. Then,
\begin{align*}
N_{B_n}^{\gamma} \xRightarrow[n\to +\infty]{\mathcal{L}(\mu)} \cfPp_{\alpha}
(\theta\Gamma(1+\alpha), \geo(\theta)) \quad \text{and} \quad N_{B_n}^{\gamma} 
\xRightarrow[n\to +\infty]{\mu_{B_n}} \RPP(W_{\alpha,\theta}(\theta\Gamma(1+\alpha)))\,,
\end{align*}
where the extremal index $\theta = 1  - |(T^q)'(x)|^{-1}$.
\end{thm}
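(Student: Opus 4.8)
\textbf{Proof plan for Theorem \ref{thm:REPP_LSV_periodic_points}.}

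The strategy is to verify that the sequence $B_n := \xi_n(x)$ satisfies Assumptions $(A)_{\alpha}$ and then invoke Theorem \ref{thm:sufficient_conditions_convergence_compound_FPP}. The starting point is the observation that, because $x$ is periodic of prime period $q$, the cylinder $B_n$ eventually contains a point which is ``$q$-periodic for the cylinder structure'', so that the first return map to $B_n$ has a natural decomposition into a ``short return of length $q$'' and a ``long return''. Concretely, I would decompose $B_n = U(B_n) \sqcup Q(B_n)$ where $U(B_n)$ is the part of $B_n$ that returns to $B_n$ in exactly $q$ steps (the sub-cylinder $B_n \cap T^{-q}B_n$, for $n$ large enough that $T^q$ is a diffeomorphism onto its image on $B_n$) and $Q(B_n)$ is the complement. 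The ratio $\mu(Q(B_n))/\mu(B_n)$ should converge to $\theta = 1 - |(T^q)'(x)|^{-1}$ by the bounded-distortion / continuity-of-density properties recorded in \eqref{eq:formula_density_LSV_map}: $T^q$ maps $U(B_n)$ diffeomorphically onto $B_n$ with derivative close to $(T^q)'(x)$, so $\mu(U(B_n))/\mu(B_n) \to |(T^q)'(x)|^{-1}$, giving \ref{cond_CFPP:extremal_index}. This step also uses that $x \neq 0$ and that $x$ is not a preimage of $0$ (a periodic point in $(0,1]$ is never such a preimage), so the branch $T_1$ near $0$ never interferes.

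Next I would choose the waiting time $\tau_n$. A natural choice is a constant-in-space $\tau_n$ growing slowly, e.g. tied to the first return time to the uniform set $Y = [1/2,1]$ of the orbit segment, or simply $\tau_n \equiv m_n$ for a sequence $m_n \to \infty$ with $m_n = o(b(1/\mu(B_n)))$ so that \ref{cond_CFPP:tau_n_small_enough} holds ($\gamma(\mu(B_n))\tau_n \to 0$). For \ref{cond_CFPP:good_density_after_tau_n} one needs $\widehat{T^{\tau_n}}(\mathbf 1_{Q(B_n)}/\mu(Q(B_n)))$ to live in a fixed compact subset of $L^1(\mu)$; this is where the regularity of the LSV transfer operator enters — after a bounded number of steps the normalized indicator of a small interval bounded away from $0$ spreads out to a density with controlled variation, and one invokes a Lasota–Yorke / Helly-type compactness argument exactly as in \cite{RZ20}. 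For the cluster-compatibility conditions: \ref{cond_CFPP:cluster_compatible_tau_n_no_cluster_from_Q} says that from $Q(B_n)$ the orbit does not return to $B_n$ before time $\tau_n$ — this holds because $Q(B_n)$ consists of sub-cylinders whose return time to $B_n$ is either $\geq$ (period of a longer cycle) or, more relevantly, the only way to return in few steps is via the period-$q$ orbit which has been excised; one must check that the measure of points in $Q(B_n)$ returning within $m_n$ steps vanishes, which follows from the fact that the set $\{r_{B_n} \le m_n\} \cap B_n$ has measure $\asymp \mu(B_n) \cdot \mu(B_n)$-type bounds away from the period-$q$ sub-cylinder (using that cylinders of bounded depth have comparable measure and that the number of sub-cylinders returning quickly is $O(m_n)$). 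Symmetrically \ref{cond_CFPP:cluster_compatible_tau_n_cluster_from_U} holds since $U(B_n)$ returns in exactly $q \le \tau_n$ steps for $n$ large. Finally \ref{cond_CFPP:Compatibility_Geometric_law}: the first-return map $T_{B_n}$ restricted to $U(B_n)$ is essentially $T^q$, which maps $U(B_n)$ onto $B_n$, and by bounded distortion the pushed-forward normalized density $\widehat{T_{B_n}}(\mathbf 1_{U(B_n)}/\mu(U(B_n)))$ converges in $L^\infty(\mu_{B_n})$ to the normalized indicator $\mathbf 1_{B_n}/\mu(B_n)$ — again a distortion estimate for $T^q$ on shrinking cylinders.

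Having verified $(A)_{\alpha}$, Theorem \ref{thm:sufficient_conditions_convergence_compound_FPP} immediately yields $N_{B_n}^{\gamma} \xRightarrow{\mathcal L(\mu)} \cfPp_{\alpha}(\theta\Gamma(1+\alpha),\geo(\theta))$ and $N_{B_n}^{\gamma} \xRightarrow{\mu_{B_n}} \RPP(W_{\alpha,\theta}(\theta\Gamma(1+\alpha)))$ with the stated $\theta = 1 - |(T^q)'(x)|^{-1}$. The main obstacle, I expect, is the verification of \ref{cond_CFPP:cluster_compatible_tau_n_no_cluster_from_Q} together with the compatibility of the growth rate of $\tau_n$: one needs $\tau_n$ large enough that $U(B_n)$ (returns in $q$ steps) is captured but small enough that no spurious short returns survive from $Q(B_n)$ and that $\gamma(\mu(B_n))\tau_n \to 0$. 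This is a balancing act that relies on quantitative control of how fast $\mu(\xi_n(x)) \to 0$ relative to the combinatorics of short returns, i.e. on the precise polynomial decay of cylinder measures for the LSV map and on distortion bounds uniform in the depth of the cylinder — the technical heart of the argument, and presumably where the bulk of Section \ref{section:proofs_LSV_map} is spent. A secondary subtlety is making precise ``for $n$ large enough $T^q$ is a diffeomorphism on $B_n$ with nice distortion'': since $x$ is periodic and not $0$, its orbit avoids a neighborhood of $0$, so all $q$ branches involved are uniformly expanding with bounded distortion, but one should state this cleanly using the structure of $\xi_n$ and the fact that $\xi_n(x)$ eventually lies in a single domain of injectivity of $T^q$.
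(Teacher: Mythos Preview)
Your plan is essentially the paper's: decompose $B_n = U(B_n) \sqcup Q(B_n)$ with $U(B_n) = B_n \cap T^{-q}B_n$, verify $(A)_\alpha$, and apply Theorem~\ref{thm:sufficient_conditions_convergence_compound_FPP}. The arguments you sketch for \ref{cond_CFPP:extremal_index}, \ref{cond_CFPP:cluster_compatible_tau_n_cluster_from_U} and \ref{cond_CFPP:Compatibility_Geometric_law} are exactly what the paper does.

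Where you are vague is precisely where the paper has a clean device you do not invoke: the induced Gibbs--Markov system on $Y$. The paper observes (Lemma~\ref{lem:equivalence_cylinders_induced_and_cylinders}) that $B_n = \xi^Y_{s_n}(x)$ with $s_n = S_{n-1}\mathbf 1_Y(x)$, and chooses $\tau_n := \min\{i \ge n+q-1 : T^i x \in Y\}$, so that $\widehat{T^{\tau_n}}$ restricted to the relevant cylinders is $\widehat{T_Y^{s_{n+q}}}$. This single choice handles \ref{cond_CFPP:good_density_after_tau_n}--\ref{cond_CFPP:cluster_compatible_tau_n_no_cluster_from_Q} simultaneously: \ref{cond_CFPP:good_density_after_tau_n} follows because $Q(B_n)$ is a finite union of $\xi^Y_{s_{n+q}}$-cylinders and Gibbs--Markov compactness applies; \ref{cond_CFPP:tau_n_small_enough} follows because $x$ periodic forces $s_n \sim c\,n$ linearly, hence $\mu(B_n) \le \kappa e^{-c s_n}$ decays exponentially and $\tau_n\mu(B_n) \to 0$; and \ref{cond_CFPP:cluster_compatible_tau_n_no_cluster_from_Q} is obtained not by a direct cylinder-counting estimate as you propose, but by invoking the already-known return-time statistics for the induced Gibbs--Markov map (from $Q(B_n)$ the rescaled induced return time converges to the law $t \mapsto 1 - e^{-\theta t}$), combined with the bound $r_{B_n} \ge r^Y_{B_n}$ via $S_{\tau_n}\mathbf 1_Y \le \tau_n$.

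Your ``$\mu(B_n)\cdot\mu(B_n)$-type bound with $O(m_n)$ sub-cylinders'' sketch for \ref{cond_CFPP:cluster_compatible_tau_n_no_cluster_from_Q} is not wrong in spirit, but it does not obviously survive the balancing you yourself flag: $\tau_n$ must be of order $n$ for \ref{cond_CFPP:good_density_after_tau_n} to hold (you need to fully unwrap the cylinder in the induced system), and at that scale a naive ``$m_n$ sub-cylinders each of comparable measure'' count is too crude. The paper sidesteps this entirely by passing to the induced map, where both the compactness and the short-return control are standard.
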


The framework closely mirrors that of the finite measure case, with the significant 
difference being that the Poisson point process is replaced by the fractional Poisson 
process. Notably, in the barely infinite case where \( p = 1 \), the limits are 
instead the standard Poisson point process and the compound Poisson point process, 
characterized by parameters \(\theta\) and multiplicity distributed as
\(\geo(\theta)\).

\subsubsection{The indifferent fixed point and its preimages}

If $p > 0$, the neutral fixed point $0$ always requires a special treatment, because 
its extremal index $\theta = 0$ as the neutrality gives a cluster of infinite 
length on average. In the finite measure context, for shrinking neighborhoods $(B_n)_{n\geq 1}$ of $0$, $\mu(B_n)r_{B_n}$ converges almost surely to $+\infty$ when one starts in $B_n$ and to $0$ when one starts off $B_n$. However, the exponential law can be recovered with the scaling $\mu(Q(B_n))$ instead of $\mu(B_n)$, where $Q(B_n):= B_n \backslash T^{-1}B_n$.
\begin{prop} \textup{\cite[Theorem 2]{FFTV16}, \cite[Theorem 5.1]{Zwe18}}
Assume $0 < p < 1$ and let $(B_n)_{n\in \mathbb{N}}$ be a nested sequence of 
intervals neighborhoods of $0$. Then, we have
\begin{align*}
\mu(Q(B_n))\,\lr_{B_n} \xRightarrow[n\to +\infty]{\mu} \mathcal{E}\,,
\end{align*}
where $\mathcal{E}$ is an exponential random variable.
\end{prop}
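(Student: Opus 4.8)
The plan is to reduce the statement to a known first-hitting-time theorem for the induced (first-return) map on a uniform set. The key observation is that neighborhoods $B_n$ of the indifferent fixed point $0$ are, up to lower order terms, the complement of the part $T^{-1}B_n$ that flows \emph{into} $B_n$ rather than being \emph{entered} from outside; concretely $B_n = Q(B_n) \sqcup (B_n \cap T^{-1}B_n)$, and the set $B_n \cap T^{-1}B_n$ contributes an almost-sure infinite cluster near $0$ (this is exactly why $\theta=0$ and why the naive scaling $\mu(B_n)r_{B_n}$ degenerates). Thus the "true" rare event, from the point of view of hitting time statistics, is $Q(B_n)$: hitting $B_n$ and then staying, versus hitting $Q(B_n)$, differ only by the time spent in the parabolic tongue before escaping, and one must show this time is negligible after the correct rescaling by $\mu(Q(B_n))$.

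First I would pass to the induced system on a fixed uniform interval $Y=[c,1]$ with $c>0$, which is a Gibbs--Markov (or Rychlik-type) map with exponential decay of correlations and an absolutely continuous invariant probability measure $\mu_Y$. On this induced system, the sets $Q(B_n)\cap Y$ (or rather their natural proxies) form a nested sequence of shrinking targets whose $\mu_Y$-measure is comparable to $\mu(Q(B_n))$; since the induced map is sufficiently mixing and $0$ is not a periodic point of the induced map, standard hitting-time results (e.g. of the type in \cite{HLV05,LFFF16}) give that $\mu_Y(Q(B_n))\, r^Y_{Q(B_n)}$ converges to an exponential random variable $\mathcal E$ under $\mu_Y$, hence under any absolutely continuous probability measure by \cite[Corollary 6]{Zwei07}. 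Next I would transfer this back to the original system: writing $r_{B_n} = r_{Q(B_n)} + (\text{time to escape the tongue})$ and relating $r_{Q(B_n)}$ to the induced return time $r^Y_{Q(B_n)}$ via the Ka\v{c}/tower correspondence, one shows $\mu(Q(B_n))\, r_{B_n}$ and $\mu_Y(Q(B_n))\, r^Y_{Q(B_n)}$ have the same weak limit. The ergodic-theoretic input here is that the total excursion time outside $Y$ accumulated before time $r_{B_n}$, once multiplied by $\mu(Q(B_n))\to 0$, vanishes in probability — essentially a law-of-large-numbers / Birkhoff-on-the-base argument controlling $\sum_{k<r^Y} r_Y\circ T_Y^k$.

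The main obstacle I expect is precisely this last transfer step: the excursion times $r_Y$ in this parabolic map have heavy tails (index $\alpha\le 1$), so the accumulated time outside $Y$ before the $m$-th induced return is of order $m^{1/\alpha}$ rather than $m$, and one must check that $\mu(Q(B_n))\cdot (r^Y_{Q(B_n)})^{1/\alpha}\to 0$ in probability. Since $r^Y_{Q(B_n)}$ is of order $1/\mu_Y(Q(B_n)) \asymp 1/\mu(Q(B_n))$, this requires $\mu(Q(B_n))^{1-1/\alpha}\to 0$, which holds for $\alpha\le 1$ but is a \emph{delicate} boundary estimate when $\alpha=1$ (i.e. $p=1$), where logarithmic corrections appear; one handles it by a truncation argument, separating the single largest excursion (which is absorbed into the correct centering because hitting $B_n$ versus $Q(B_n)$ already accounts for one long parabolic passage) from the sum of the remaining excursions, which is then genuinely of smaller order. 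A secondary, more routine technical point is verifying that $\mu(Q(B_n))/\mu(B_n)$ and the density estimates \eqref{eq:formula_density_LSV_map} make $Q(B_n)$ a bona fide sequence of shrinking targets for the induced map with no residual periodicity — this follows from the explicit form of $T_p$ near $0$ and the fact that $0$ is not periodic for the induced map.

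Finally, I would remark that although this proposition is quoted from \cite{FFTV16,Zwe18} and applies to the \emph{finite} measure regime $0<p<1$, the same strategy (induce, apply the mixing hitting-time theorem to $Q(B_n)$, transfer back) is exactly what will be adapted later in the paper to the infinite regime $p\ge 1$; the reason the exponential law survives here, as noted in the surrounding text, is that for $p<1$ the relevant return time to the base is integrable so the excursion-time correction is of smaller order with no pathology, whereas for $p\ge 1$ one must instead invoke \cite{CG93_Statisticsofclosevisitstotheindifferentfixedpointofanintervalmap} (for $p=1$) or accept a non-exponential limit \cite{Zwe08} (for $p>1$).
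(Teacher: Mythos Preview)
The paper does not prove this proposition; it is quoted from \cite{FFTV16,Zwe18} and only the key idea is sketched in the surrounding text. That sketch, however, is more specific than your proposal: the crucial step is to relate hitting $B_n$ (a neighborhood of $0$) to hitting $E_n:=T_2^{-1}B_n$, which is a one-sided neighborhood of $1/2$, the only other preimage of $0$. Since $1/2\in Y=[1/2,1]$ and the induced map $(Y,T_Y,\mu_Y)$ is Gibbs--Markov, standard finite-measure hitting-time results give $\mu_Y(E_n)\,r^Y_{E_n}\Rightarrow\mathcal{E}$; one then uses $\mu(Q(B_n))=\mu(E_n)$ (by $T$-invariance) and the Birkhoff-on-the-base identity $r_{B_n}\sim r^Y_{E_n}/\mu(Y)$ to conclude. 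Your proposal gestures at the same inducing scheme but is vague at exactly the decisive point: you write ``$Q(B_n)\cap Y$ (or rather their natural proxies)'', yet $Q(B_n)=[T_1^{-1}\eta_n,\eta_n]$ is \emph{disjoint} from $Y$, so identifying the correct proxy $E_n$ near $1/2$ is not a detail but the heart of the argument.

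There is also a genuine confusion in your ``main obstacle'' paragraph. This proposition is about $0<p<1$, i.e.\ $\alpha=1/p>1$, where $r_Y$ has finite mean and the transfer step is straightforward (as you yourself note in your final paragraph). Your extended discussion of heavy tails with $\alpha\le 1$, accumulated excursion time of order $m^{1/\alpha}$, and the condition ``$\mu(Q(B_n))^{1-1/\alpha}\to 0$, which holds for $\alpha\le 1$'' is therefore misplaced here---and that last claim is in fact false: for $\alpha<1$ the exponent $1-1/\alpha$ is negative, so the quantity diverges. That difficulty is precisely why the $p>1$ case yields $\mathfrak{J}_\alpha$ rather than $\mathcal{E}$; it does not arise in the regime of the present proposition.
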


This highlights the robustness of the exponential law in the finite measure case. The 
key idea of \cite[Theorem 5.1]{Zwe18} is to relate returns to \(0\) with returns to \(1/2\), the only 
other preimage of \(0\). Unlike \(0\), the point \(1/2\) is more favorable for analysis 
since it is not a fixed point and lies within the common inducing set \(Y = [1/2, 1]\). 
Within this set, standard inducing techniques can be employed to establish the 
convergence of the return process to the standard homogeneous Poisson point process for 
shrinking neighborhoods of \(1/2\). The result for shrinking neighborhoods of \(0\) is 
then derived as a consequence of this approach.

When \( p \geq 1 \), there are no established techniques to directly address the point
\( 1/2 \). However, it is possible to analyze hitting times for neighborhoods of
\( 0 \) by viewing them as the apex of the tower constructed via the induction
\( Y = [1/2, 1] \). At this stage, it becomes necessary to distinguish between two 
scenarios: the ``proper infinite case'' (\( p > 1 \)) and the ``barely infinite case''
(\( p = 1 \)).

For \( p > 1 \), \cite[Theorem 2]{Zwe08} demonstrates that a non-degenerate limit can 
be achieved with an appropriately chosen scaling. However, the resulting limit does not 
align with the exponential law observed in the finite measure case or the Mittag-
Leffler law \( H_{\alpha}(\Gamma(1+\alpha))\) typical for generic points.

\begin{thm} \textup{\cite[Theorem 2]{Zwe08}}
\label{thm:Hitting_to_0_infinite}
Let $p > 1$. Let $B_n := [0,\eta_n]$ such that
$\eta_n \xrightarrow[n\to +\infty]{} 0$. Then, 
    \begin{align*}
        \frac{1}{I(\eta_n)}\,\lr_{B_n} \xRightarrow[n\to +\infty]{\mathcal{L}(\mu)} \mathcal{J}_{\alpha},
    \end{align*}
    where $\mathcal{J}_{\alpha}$ is characterized by its Laplace transform
    \begin{align*}
        \mathbb{E}\left[e^{-s\mathcal{J}_{\alpha}}\right] = \frac{1}{e^{-s} + s\int_0^1 y^{-{\alpha}}e^{-sy}\,\dd y}\,,\; \forall s \geq 0,
    \end{align*}
    and 
    \begin{align*}
        I(\eta_n) := \int_{\eta_n}^1 \frac{\dd x}{x - T_1^{-1}(x)}. 
    \end{align*}
\end{thm}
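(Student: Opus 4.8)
The plan is to realize the neighborhood $B_n=[0,\eta_n]$ as the apex region of the tower built over the inducing set $Y=[1/2,1]$, and to transfer the problem to a renewal-theoretic statement about the excursions away from $Y$. Concretely, under the induced map $T_Y$ the first-return time $\lr_Y$ has a regularly varying tail of index $\alpha=1/p$: one has $\mu_Y(\lr_Y=k)\sim c\,k^{-(1+\alpha)}$, which follows from the asymptotics $c_k=T_1^{-k}1\sim c'\,k^{-\alpha}$ near the neutral fixed point together with the density formula \eqref{eq:formula_density_LSV_map}. Writing $\lr_{B_n}$ from a point of $Y$ as a sum $\lr_{B_n}=\sum_{j=0}^{\mr^{Y}_{B_n}-1}\lr_Y\circ T_Y^j=\lr_Y^{(r^Y_{B_n})}$, I would first reduce the hitting problem for $B_n$ under $\mu$ (or any a.c.\ probability) to a hitting problem for the pulled-back set $B_n\cap Y$-type event under the induced dynamics. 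The key combinatorial fact is that hitting $[0,\eta_n]$ is essentially the same as making an excursion out of $Y$ of length at least roughly $I(\eta_n)$ (since the time spent above height $y$ during one excursion, expressed through $x\mapsto x-T_1^{-1}x$, is controlled by $I(\eta_n)=\int_{\eta_n}^1 dx/(x-T_1^{-1}x)$), so the normalizing constant $I(\eta_n)$ is exactly the scale at which a single long excursion reaches depth $\eta_n$.

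Next I would set up the renewal picture: starting from $Y$ with its normalized measure, the successive return times to $Y$ form (asymptotically, by the good mixing of $T_Y$, which is Gibbs–Markov) an i.i.d.\ sequence of heavy-tailed variables $(\lr_Y^{\{j\}})$ with index $\alpha\in(0,1)$. The first time this renewal walk produces an excursion deep enough to enter $[0,\eta_n]$, rescaled by $I(\eta_n)$, converges in law; the limiting law $\mathcal J_\alpha$ is the distribution of the position, at the first "record-type" jump, of an $\alpha$-stable subordinator-like object. The stated Laplace transform
\[
\mathbb E\big[e^{-s\mathcal J_\alpha}\big]=\frac{1}{e^{-s}+s\int_0^1 y^{-\alpha}e^{-sy}\,dy}
\]
should come out of a renewal-equation / last-exit decomposition: conditioning on whether the first excursion from $Y$ is the one that reaches depth $\eta_n$ (contributing the term with the "overshoot within a single excursion", whose rescaled profile has density proportional to $y^{-\alpha}$ on $(0,1)$ by the $c_k\sim c'k^{-\alpha}$ asymptotics) or whether one returns to $Y$ first and restarts (contributing the $e^{-s}\mathbb E[e^{-s\mathcal J_\alpha}]$ self-similar term, since conditionally the remaining time is a fresh copy of $\mathcal J_\alpha$ at the same macroscopic scale plus a negligible $O(1)$ return). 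Solving that fixed-point relation for the Laplace transform yields the displayed formula; one checks it is indeed a genuine Laplace transform of a probability law (value $1$ at $s=0$, complete monotonicity).

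The main technical obstacle, and the place where I expect to spend most of the effort, is making the last-exit / single-excursion decomposition rigorous uniformly in $n$: one must show that (i) with probability tending to $1$ the event $\{\lr_{B_n}=m\}$ is realized during a single excursion out of $Y$ that is "typical" (no pile-up of several moderately long excursions conspiring, which is where the index $\alpha<1$ and the heavy-tail one-big-jump principle enter), (ii) the rescaled depth reached within that excursion has the $y^{-\alpha}$ profile on $(0,1)$ with vanishing error, and (iii) the number of short excursions before the decisive one, after rescaling by $I(\eta_n)$, contributes only the self-similar $e^{-s}$-factor and nothing else — this requires a regular-variation estimate $I(\eta_n)\asymp c_{k(n)}^{-1}$ linking the analytic scale $I(\eta_n)$ to the combinatorial scale of excursion lengths, plus control that starting off $Y$ (for the $\mathcal L(\mu)$-convergence, via an arbitrary a.c.\ probability) only costs an asymptotically negligible delay, which is exactly the kind of statement guaranteed by pointwise dual ergodicity and \cite[Corollary 6]{Zwei07}. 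Once these are in place, the computation collapses to the renewal fixed-point equation above.
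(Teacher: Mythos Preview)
The paper does not give a proof of this statement: Theorem~\ref{thm:Hitting_to_0_infinite} is quoted verbatim from \cite[Theorem~2]{Zwe08} and used as a black box (to derive Corollary~\ref{cor:HTS_neighborhood_0_normalization_gamma} and Proposition~\ref{prop:HTS/RTS_1/2}). So there is nothing in the present paper to compare your proposal against.

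That said, your outline is broadly in the right spirit---tower over $Y=[1/2,1]$, heavy-tailed excursion lengths with index $\alpha$, and a one-big-jump heuristic---and this is indeed the picture behind Zweim\"uller's result. Two places where your sketch is imprecise: (i) the identification of $I(\eta_n)$ with ``the scale at which a single long excursion reaches depth $\eta_n$'' needs to be made exact; what is actually true is that $I(\eta_n)$ is asymptotically the integer $k$ for which $c_k\approx\eta_n$, i.e.\ the duration of an excursion that just reaches $\eta_n$, and this is where the change of variables $x\mapsto x-T_1^{-1}x$ enters. (ii) The renewal fixed-point you write (``$e^{-s}\mathbb E[e^{-s\mathcal J_\alpha}]$ self-similar term'') is not quite the right decomposition: the displayed Laplace transform is not of the form $1/(e^{-s}+\text{something}\cdot\mathbb E[e^{-s\mathcal J_\alpha}])$ but rather a direct identity, and it arises in \cite{Zwe08} from a Dynkin--Lamperti type analysis of the age/residual life of the regenerative process at the first passage time, not from a naive first-step conditioning. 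If you want to reconstruct the argument, the cleanest route is to show that $\lr_{B_n}/I(\eta_n)$ under $\mu_Y$ converges to the law of the first time an $\alpha$-stable subordinator started at $0$ exceeds level $1$ \emph{by overshooting into} $[1,\infty)$ from a jump, and then identify that law via the L\'evy--Khintchine formula; the $y^{-\alpha}$ factor in the integral is the density of the overshoot location within the last incomplete excursion.
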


  Using Lemma \ref{lem:Comparison_renormalization_0}, we can rewrite this result in our framework, using the scaling $\gamma$ defined in \eqref{eq:definition_of_gamma}.  
\begin{cor}
\label{cor:HTS_neighborhood_0_normalization_gamma}
Under the same assumptions as in Theorem \ref{thm:Hitting_to_0_infinite}. Then, 
\begin{align*}
\gamma(\mu(T_2^{-1}(B_n)))\,\lr_{B_n} \xRightarrow[n\to +\infty]{\mathcal{L}(\mu)} 
\mathfrak{J}_{\alpha},
\end{align*}
where $\mathfrak{J}_{\alpha} := d_{\alpha}\mathcal{J}_{\alpha}$ and $d_{\alpha} := (\Gamma(1+\alpha)\Gamma(1 - \alpha))^{-1/\alpha} = (\sin(\pi\alpha)/(\pi\alpha))^{1/\alpha}$.
\end{cor}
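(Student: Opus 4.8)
The plan is to derive Corollary~\ref{cor:HTS_neighborhood_0_normalization_gamma} directly from Theorem~\ref{thm:Hitting_to_0_infinite} by computing the asymptotic ratio of the two normalizing quantities $1/I(\eta_n)$ and $\gamma(\mu(T_2^{-1}(B_n)))$, and then invoking the scaling relation $H_\alpha(\lambda)\eqlaw \lambda^{-1/\alpha}H_\alpha$ at the level of the limit law (equivalently, a deterministic scaling of $\mathcal{J}_\alpha$). First I would note that $T_2$ is the linear branch $x\mapsto 2x-1$, so $T_2^{-1}(B_n) = T_2^{-1}([0,\eta_n]) = [1/2,(1+\eta_n)/2]$, and hence $\mu(T_2^{-1}(B_n)) = \int_{1/2}^{(1+\eta_n)/2}\rho(x)\,\dd x$. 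Since $\rho$ is continuous and bounded away from $0$ and $\infty$ near $1/2$, with $\rho(1/2) = h_0(1/2)\cdot\frac{1/2}{1/2 - T_1^{-1}(1/2)}$, we get $\mu(T_2^{-1}(B_n))\sim \rho(1/2)\cdot \eta_n/2$ as $n\to\infty$; in particular $\mu(T_2^{-1}(B_n))\to 0$, so the rare-event framework applies.

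Next I would analyze $I(\eta_n) = \int_{\eta_n}^1 \dd x/(x - T_1^{-1}(x))$. Near $0$, the map $T_1(x) = x + 2^p x^{p+1}$ gives $x - T_1^{-1}(x)\sim 2^p (T_1^{-1}x)^{p+1}\sim 2^p x^{p+1}$, so the integrand behaves like $2^{-p}x^{-(p+1)} = 2^{-p}x^{-(1+1/\alpha)}$, which is not integrable at $0$; thus $I(\eta_n)\to\infty$ and $I(\eta_n)\sim \frac{\alpha}{p}2^{-p}\eta_n^{-1/\alpha} = \frac{\alpha^2}{p}2^{-1/\alpha}\eta_n^{-1/\alpha}$ as $n\to\infty$ (the leading divergent term dominates the bounded remainder). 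This is precisely the kind of computation referenced by ``Lemma~\ref{lem:Comparison_renormalization_0}'' in the sentence preceding the corollary: it should relate $I(\eta_n)$ to $1/\gamma(\mu(B_n))$ or to $a_n$-type quantities via regular variation. So $1/I(\eta_n)\in\RV_{0+}$ in $\eta_n$ of the appropriate index, and I would match it against $\gamma(\mu(T_2^{-1}(B_n)))$, which by \eqref{eq:definition_of_gamma} is $1/b(1/\mu(T_2^{-1}(B_n)))\in\RV_{0+}(1/\alpha)$ in $\mu(T_2^{-1}(B_n))$, hence in $\RV_{0+}(1/\alpha)$ in $\eta_n$ as well. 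Both normalizations are therefore regularly varying of the same index in $\eta_n$, so their ratio converges to a constant $d_\alpha$, and the claimed convergence follows from Theorem~\ref{thm:Hitting_to_0_infinite} together with $\mathfrak{J}_\alpha := d_\alpha\mathcal{J}_\alpha$ and the fact that scaling a random variable by $d_\alpha$ scales its Laplace transform argument accordingly.

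The one genuinely delicate point is pinning down the constant $d_\alpha = (\Gamma(1+\alpha)\Gamma(1-\alpha))^{-1/\alpha} = (\sin(\pi\alpha)/(\pi\alpha))^{1/\alpha}$ exactly, rather than merely up to a constant. This requires knowing the precise asymptotics of the normalizing sequence $a_n$ (equivalently of the tail of the wandering rate, equivalently of $\gamma$) in terms of the local data of $T$ at the neutral fixed point and of the density $\rho$ at $1/2$; these are the classical Thaler-type estimates (cf.\ \cite{Tha80_EstimatesInvariantDensities,Tha83,Zwe98}) and presumably exactly the content of Lemma~\ref{lem:Comparison_renormalization_0}. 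Concretely, one needs that $\gamma(\mu(B_n))\,I(\eta_n)$, or more precisely $\gamma(\mu(T_2^{-1}(B_n)))\,I(\eta_n)$, tends to $1/d_\alpha$; the factors $\Gamma(1+\alpha)$ and $\Gamma(1-\alpha)$ enter respectively through the standard normalization convention for the Mittag-Leffler/fractional-Poisson parameter (the $\Gamma(1+\alpha)$ appearing throughout, e.g.\ in Corollary~\ref{cor:equivalence_HTS/RTS_infinite_measure}) and through the asymptotic-inversion constant relating a regularly varying sequence of index $\alpha$ to its asymptotic inverse. Once Lemma~\ref{lem:Comparison_renormalization_0} is in hand, the corollary is essentially a bookkeeping exercise: substitute $T_2^{-1}(B_n)$, use the $\RV$ matching, and read off $d_\alpha$. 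I would present the computation of $\mu(T_2^{-1}(B_n))\sim\rho(1/2)\eta_n/2$ and of $I(\eta_n)$'s leading term first, then cite the lemma for the exact constant, and conclude by the scaling property of $\mathcal{J}_\alpha$.
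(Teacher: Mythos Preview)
Your approach is essentially the same as the paper's: both reduce the corollary to showing $\gamma(\mu(T_2^{-1}B_n))\sim d_\alpha/I(\eta_n)$ via explicit asymptotics (this is exactly Lemma~\ref{lem:Comparison_renormalization_0}), then apply Theorem~\ref{thm:Hitting_to_0_infinite} with the deterministic rescaling $\mathfrak{J}_\alpha=d_\alpha\mathcal{J}_\alpha$. The only cosmetic difference is that the paper evaluates the measure as $\mu(Q(B_n))=\int_{T_1^{-1}\eta_n}^{\eta_n}\rho\,\dd x\sim h_0(0)\eta_n$ (near $0$) rather than your $\mu(T_2^{-1}B_n)\sim\rho(1/2)\eta_n/2$ (near $1/2$), which makes the cancellation with the constant in $a_n\sim c\,h_0(0)^{-1}n^\alpha$ more transparent; also note your leading term for $I(\eta_n)$ has a slip (it should be $\alpha\,2^{-1/\alpha}\eta_n^{-1/\alpha}$, not $\tfrac{\alpha}{p}2^{-p}\eta_n^{-1/\alpha}$), though this is harmless since you defer the constant to the lemma anyway.
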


  Using \eqref{eq:Laplace_transform_HTS_from_RTS}, we can construct a random variable $\widetilde{\mathfrak{J}}_{\alpha}$ associated to $\mathfrak{J}_{\alpha}$. In particular, the Laplace function of $\widetilde{\mathfrak{J}}_{\alpha}$ is the following
\begin{align}
    \label{eq:laplace_transform_J_alpha_tilde}
    \mathbb{E}\left[e^{-s\widetilde{\mathfrak{J}}_{\alpha}}\right] = 1 - \frac{s^{\alpha}}{\Gamma(1+\alpha)} \left(e^{-sd_{\alpha}} + sd_{\alpha}\int_0^{1} y^{-\alpha}e^{-d_{\alpha}sy}\,\dd y\right)^{-1}, \quad s\geq 0.
\end{align}

The random variable \( \widetilde{\mathfrak{J}}_{\alpha} \) plays a central role in 
characterizing the limiting point processes obtained for shrinking targets around 
points that include \( 0 \) in their orbit. Specifically, any such limit can be 
represented as a thinning and rescaling of the renewal point process
\( \RPP(\widetilde{\mathfrak{J}}_{\alpha}) \). To formalize the concept of thinning and 
rescaling a point process, we rely on the following definition.

\begin{defn}[Thinning and rescaling]
    For a point process $N = \sum_{i= 1}^{+\infty} \delta_{T_i}$, $\tau > 0$ and $v > 0$, we call $\tau$-thinning and $v$-rescaling of $N$ the point process
    \begin{align*}
        N^{(\tau,v)} \eqlaw \sum_{i = 1}^{+\infty} X_i\, \delta_{(vT_i)}\,,
    \end{align*}
    where $(X_i)_{i\geq 1}$ are i.i.d. random variables, independent from $N$ and $X_1 \sim \text{Bernoulli}(\tau)$.
\end{defn}

  We are now able to state our theorem for shrinking neighborhoods of preimages of $0$. 
\begin{thm}
    \label{thm:REPP_LSV_preimages_of_0}
    Let $p > 1$. Let $k \geq 0$ and assume $x \in T^{-(k+1)}\{0\}$. Let $B_n := C_x^n$ be the connected component of $T^{-(k+1)}[0,c_n]$ containing $x$. Then, 
    \begin{align*}
                N_{B_n}^{\gamma} \xRightarrow[n \to +\infty]{\mu_{B_n}} \RPP(\widetilde{\mathfrak{J}}_{\alpha})^{(\mathbb{Q}_k(x), \mathbb{Q}_k(x)^{1/\alpha})} \quad \text{and} \quad N_{B_n}^{\gamma} \xRightarrow[n \to +\infty]{\mathcal{L}(\mu)} \DRPP(\mathfrak   {J}_{\alpha}, \widetilde{\mathfrak{J}}_{\alpha})^{(\mathbb{Q}_k(x), \mathbb{Q}_k(x)^{1/\alpha})} \,,
    \end{align*}
        where 
        \[\mathbb{Q}_k(x) := \frac{\rho(x)}{\rho(1/2) (T^{k})'(x)}.\]
\end{thm}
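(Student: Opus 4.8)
\textbf{Proof proposal for Theorem \ref{thm:REPP_LSV_preimages_of_0}.}

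The plan is to reduce the statement for a preimage $x$ of $0$ to the already-known hitting-time result for neighborhoods of $0$ itself (Corollary \ref{cor:HTS_neighborhood_0_normalization_gamma}), and then upgrade from a single hitting time to the whole point process. First I would note that if $T^{k+1}x = 0$ and $k$ is minimal, then for large $n$ the set $B_n := C_x^n$ maps homeomorphically under $T^{k}$ onto $T_2^{-1}[0,c_n]$ (or $[0,c_n]$ after one more step), so that every visit to $B_n$ is, modulo the first $k$ iterates, a visit to a shrinking neighborhood of $0$. The branch $T^{k}$ restricted to $C_x^n$ is a diffeomorphism with derivative converging to $(T^{k})'(x)$, so $\mu(B_n) \sim \mu(T_2^{-1}[0,c_n])/ (\rho(1/2)(T^k)'(x)/\rho(x))$; this is exactly where the constant $\mathbb{Q}_k(x) = \rho(x)/(\rho(1/2)(T^k)'(x))$ enters, as the ratio between the natural scaling $\gamma(\mu(B_n))$ and the scaling $\gamma(\mu(T_2^{-1}[0,c_n]))$ appearing in Corollary \ref{cor:HTS_neighborhood_0_normalization_gamma} (raised to the power $1/\alpha$ because $\gamma \in \RV_{0+}(1/\alpha)$).

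Next I would analyze the return structure. A point $y$ near $0$ that has just entered escapes after a long excursion pushed away from the neutral fixed point; it returns to $B_n$ only if, having left the immediate neighborhood of $0$, its forward orbit again passes through the thin channel $T^{-k}$ leading back to $B_n$. Because $x$ is a preimage of $0$ and not periodic on the relevant scale, the probability that a generic return to the ``macroscopic'' neighborhood also lands in $B_n$ is asymptotically a fixed thinning probability, which I expect to be precisely $\mathbb{Q}_k(x)$ again (the density ratio controlling how much of the incoming mass is funneled into $C_x^n$). The successive inter-arrival times between such returns, suitably normalized, should converge to i.i.d.\ copies of $\widetilde{\mathfrak{J}}_{\alpha}$ — the return-time analogue of $\mathfrak{J}_{\alpha}$ produced by the functional relation \eqref{eq:Laplace_transform_HTS_from_RTS}, whose Laplace transform is \eqref{eq:laplace_transform_J_alpha_tilde} — while under $\mathcal{L}(\mu)$ the first arrival is instead a copy of $\mathfrak{J}_{\alpha}$, explaining the delayed renewal process $\DRPP(\mathfrak{J}_{\alpha},\widetilde{\mathfrak{J}}_{\alpha})$ versus the pure renewal process $\RPP(\widetilde{\mathfrak{J}}_{\alpha})$ on the two sides. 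Concretely I would realize $N_{B_n}^\gamma$ as a $\tau_n$-thinning (with $\tau_n \to \mathbb{Q}_k(x)$) and $v_n$-rescaling (with $v_n \to \mathbb{Q}_k(x)^{1/\alpha}$) of the REPP associated to the larger target around $0$, then pass to the limit using an extended continuous mapping argument for the thinning/rescaling operation together with the joint convergence of the positions and the Bernoulli marks.

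The technical engine for the inter-arrival convergence is Theorem \ref{thm:HTS-REPP_vs_RTS-REPP_infinite_measure_renormalized_measure} (and Corollary \ref{cor:equivalence_HTS_RTS_true_Point_Processes}) applied to the induced system on $Y = [1/2,1]$: once one knows the first hitting law $\mathfrak{J}_{\alpha}$ for $0$ strongly in law, the functional equation \eqref{eq:relation_hitting_return_infinite_measure} manufactures the return law $\widetilde{\mathfrak{J}}_{\alpha}$, and the renewal/Markov structure of successive visits to a shrinking set (the ``loss of memory'' after each long excursion, quantified by a compactness-of-densities argument as in \ref{cond_CFPP:good_density_after_tau_n}) promotes this to convergence of the full sequence of inter-arrival times to an i.i.d.\ sequence. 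I expect the main obstacle to be the last point: establishing that after each return the conditional density on $B_n$ relaxes, uniformly in $n$, to a fixed profile so that successive normalized inter-arrival times are asymptotically independent and identically distributed — in other words, proving a mixing/quasi-compactness statement for the transfer operator of the induced map restricted to these very thin, $0$-adjacent cylinders, where the usual bounded-distortion estimates degenerate near the neutral fixed point. This is where one must use the explicit structure of the map \eqref{def:LSVmap}, the precise form \eqref{eq:formula_density_LSV_map} of the density, and the Thaler-type estimates for $c_n = T_1^{-n}1$ to control the excursions; handling the delay term correctly under $\mathcal{L}(\mu)$ versus $\mu_{B_n}$ is a comparatively routine bookkeeping step once the i.i.d.\ structure of the bulk inter-arrival times is in hand.
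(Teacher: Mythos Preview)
Your high-level strategy is essentially the paper's: reduce to the known hitting-time law for shrinking neighborhoods of the neutral orbit, upgrade to a renewal process via a loss-of-memory argument, and then realize the REPP at the preimage $x$ as a thinning-and-rescaling of that renewal process, with $\mathbb{Q}_k(x)$ appearing both as the thinning probability and (to the $1/\alpha$) as the rescaling factor. The identification of $\DRPP(\mathfrak{J}_\alpha,\widetilde{\mathfrak{J}}_\alpha)$ versus $\RPP(\widetilde{\mathfrak{J}}_\alpha)$ via the hitting/return relation is also exactly right.

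Where your sketch diverges from the paper is in locating the technical difficulty. You anchor the argument at $0$ and worry that ``the usual bounded-distortion estimates degenerate near the neutral fixed point'', anticipating a delicate quasi-compactness statement for transfer operators on thin, $0$-adjacent cylinders. The paper sidesteps this entirely by anchoring at $1/2$ instead: one sets $E_n=T_2^{-1}[0,c_n]=[1/2,\delta_n]\subset Y$, so that all the relevant cylinders live in the uniformly hyperbolic region $Y=[1/2,1]$ where Thaler's bounded distortion (Lemma \ref{lem:Distortion_estimates_Thaler}, Corollary \ref{cor:distortion_bounds_comparison_measures}) holds with a constant depending only on $Y$. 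The renewal structure for $E_n$ itself (Proposition \ref{prop:result_REPP_point_1/2}) then follows from a direct computation showing $\widehat{T}_{E_n}^d(\mathbf{1}_{M_n}/\mu(M_n))$ is uniformly close to $\mathbf{1}_{E_n}/\mu(E_n)$, with no spectral input needed.

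The genuinely hard step---which you correctly flag but whose resolution you do not anticipate---is proving that the Bernoulli marks (which component $B_{z,n}\subset T^{-k}E_n$ the orbit hits) are asymptotically i.i.d.\ $\mathbb{Q}_k$ \emph{and independent of the return times}. The paper formalizes this as convergence of a marked point process on $\mathbb{R}_+\times T^{-k}\{1/2\}$ (Theorem \ref{thm:Marking_RREPP}), and the engine is a ``last visit'' decomposition: one introduces an intermediate scale $[\delta_n,\delta_p]$ with $p$ fixed, conditions on the last visit to this set before returning to $E_n$, and uses bounded distortion there to decouple the past trajectory from which branch is subsequently selected (Lemmas \ref{lem:measure_images_back_to_[p,n)_wrt_Bni}--\ref{lem:comparison_measure_real_set_and_up_to_last_passage}, Proposition \ref{prop:first_return_B_n_preimages_of_1/2}). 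This combinatorial/measure-theoretic argument, rather than any operator-theoretic compactness on thin cylinders, is what drives the proof.
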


\begin{rem}
    For every $k\geq 0$, $\mathbb{Q}_k$ is a probability on $T^{-(k+1)}\{0\}$ because $\rho$ is a fixed point of the transfer operator $\widehat{T}$, whence
    \begin{align*}
        \rho(1/2) = \big(\widehat{T}^k\rho\big)(1/2) = \sum_{T^ky = \frac12}\frac{\rho(y)}{(T^k)'(y)}.
    \end{align*}
\end{rem}

Theorem \ref{thm:REPP_LSV_preimages_of_0} reveals that points whose orbits include
\(0\) exhibit markedly different behavior compared to the cases outlined in Theorems 
\ref{thm:REPP_LSV_nonperiodic_points} and \ref{thm:REPP_LSV_periodic_points}, where 
the transition from the finite measure setting is more straightforward. Specifically, 
in the finite measure case, preimages of \(0\) conform to the standard Poisson 
process. However, in the infinite measure scenario, the interplay between extended 
excursions and visits to neighborhoods of \(0\)'s preimages disrupts similar outcomes. 
That said, points located farther from \(0\) are expected to exhibit a diminishing 
dependence on \(0\). This phenomenon is explored further in the next proposition.

\begin{prop}
    \label{prop:convergence_to_FPP_when_going_further_from_0}
    For every sequence of $(x_k)_{k\geq 1}$, with $x_k \in T^{-(k+1)}\{0\}$, we have 
    \begin{align*}
        \RPP(\widetilde{\mathfrak{J}}_{\alpha})^{(\mathbb{Q}_k(x_k), \mathbb{Q}_k(x_k)^{1/\alpha})} \xRightarrow[k\to +\infty]{} \fPp_{\alpha}(\Gamma(1+\alpha)),
    \end{align*}
    and
    \begin{align*}
        \DRPP(\mathfrak   {J}_{\alpha}, \widetilde{\mathfrak{J}}_{\alpha})^{(\mathbb{Q}_k(x_k), \mathbb{Q}_k(x_k)^{1/\alpha})} \xRightarrow[k\to +\infty]{} \fPp_{\alpha}(\Gamma(1+\alpha))\,.
    \end{align*}
\end{prop}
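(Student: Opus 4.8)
The plan is to reduce the statement to two facts. First, I would show that the thinning parameter degenerates: $q_k := \sup\{\mathbb{Q}_k(x): x \in T^{-(k+1)}\{0\}\} \to 0$ as $k \to +\infty$, where, as in Theorem \ref{thm:REPP_LSV_preimages_of_0}, the relevant points are those with $T^kx = 1/2$, i.e.\ the atoms of the probability measure $\mathbb{Q}_k$. Second, I would prove the general convergence statement: for \emph{any} sequence $q_k \to 0^+$, the $q_k$-thinning and $q_k^{1/\alpha}$-rescaling of both $\RPP(\widetilde{\mathfrak{J}}_\alpha)$ and $\DRPP(\mathfrak{J}_\alpha, \widetilde{\mathfrak{J}}_\alpha)$ converge in law to $\fPp_\alpha(\Gamma(1+\alpha))$. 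Since $\mathbb{Q}_k(x_k) \leq q_k$ for every choice of $x_k \in T^{-(k+1)}\{0\}$, combining the two facts gives the proposition.

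For the first fact I would split the analysis of $\mathbb{Q}_k$ according to distance to $0$, using that $\mathbb{Q}_k$ is a probability measure on $\{T^kx = 1/2\}$ because $\widehat{T}^k\rho = \rho$. \emph{Near $0$:} if $x$ lies in the partition interval $[c_{j+1},c_j]$, then $T^j$ maps $[c_{j+1},c_j]$ diffeomorphically onto $[1/2,1]$ with distortion bounded uniformly in $j$, so (using the standard estimate $c_j - c_{j+1}\asymp (j+1)^{-\alpha-1}$) $|(T^j)'(x)|\gtrsim (j+1)^{\alpha+1}$; combined with $\rho(x)\lesssim (j+1)$ (from \eqref{eq:formula_density_LSV_map}, $\rho\asymp x^{-p}$ near $0$ and $\alpha p = 1$) and $|(T^k)'(x)|\geq |(T^j)'(x)|$ (as $j\leq k$ and $|T'|\geq 1$), this bounds $\mathbb{Q}_k(x)\lesssim (j+1)^{-\alpha}$ with a constant independent of $k$; hence any $x$ with $\mathbb{Q}_k(x)\geq\varepsilon$ must satisfy $x\geq c_{j_0(\varepsilon)}$ for some $j_0(\varepsilon)$ independent of $k$. \emph{Away from $0$:} for $x \geq \delta$ I would induce on $Y=[1/2,1]$: the orbit of $x$ reaches $Y$ within a $\delta$-dependent number of steps, and since $T^kx = 1/2 \in Y$, writing $y_0$ for the first entry point there is $N_k\geq 1$ with $T_Y^{N_k}(y_0) = 1/2$ and $|(T^k)'(x)| \geq |(T_Y^{N_k})'(y_0)| \geq \prod_{l=1}^{N_k} \max(2,\, c\, r_Y^{\{l\}}(y_0)^{\alpha+1})$, using $|T_Y'|\geq 2$ on every branch and the same distortion estimate on branches with a large return time. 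A dichotomy (either $N_k\to\infty$, or some return time among the first $N_k$ blows up) then forces $|(T^k)'(x)|\to+\infty$ uniformly on $\{x\geq\delta\}$, and since $\rho$ is bounded on $[\delta,1]$, $\sup_{x\geq\delta}\mathbb{Q}_k(x)\to 0$. Feeding $\delta = c_{j_0(\varepsilon)}$ from the first case into the second yields $q_k\to0$.

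For the second fact I would note that thinning a renewal process with retention probability $q$ produces a renewal process whose generic waiting time, after $q^{1/\alpha}$-rescaling, is $q^{1/\alpha}\sum_{i=1}^{G_q}\widetilde{\mathfrak{J}}_\alpha^{(i)}$ with $G_q\sim\geo(q)$ independent of i.i.d.\ copies of $\widetilde{\mathfrak{J}}_\alpha$, and whose delay is of the same type (shifted by one, with an extra term $q^{1/\alpha}\mathfrak{J}_\alpha\to 0$ in probability in the $\DRPP$ case). The Laplace transform of the rescaled waiting time is $q\,\widehat{g}(sq^{1/\alpha})/(1-(1-q)\widehat{g}(sq^{1/\alpha}))$ with $\widehat{g}(s) = \mathbb{E}[e^{-s\widetilde{\mathfrak{J}}_\alpha}]$; reading off from \eqref{eq:laplace_transform_J_alpha_tilde} that $1-\widehat{g}(s)\sim s^\alpha/\Gamma(1+\alpha)$ as $s\to0^+$ (the bracketed factor there tends to $1$ since $\alpha<1$), I obtain that this converges, as $q\to0^+$, to $\Gamma(1+\alpha)/(\Gamma(1+\alpha)+s^\alpha)$, the Laplace transform of $H_\alpha(\Gamma(1+\alpha))$ (Definition \ref{defn:Fractional_Poisson_Process}); the same computation handles the delay. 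Hence the rescaled partial sums converge in law in $(\mathbb{R}_+)^{\mathbb{N}}$ to the partial sums of i.i.d.\ $H_\alpha(\Gamma(1+\alpha))$'s, i.e.\ to $\fPp_\alpha(\Gamma(1+\alpha))$; these limiting partial sums lie in $\mathcal{W}'$ almost surely (since $\mathbb{P}(H_\alpha(\Gamma(1+\alpha))=0)=0$), so the continuity of $\Xi$ on $\mathcal{W}'$ (Remark \ref{rem:from_stochastic_process_to_point_process}) and the continuous mapping theorem give the claimed convergence of point processes.

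I expect the main obstacle to be the first fact, and within it the uniform-in-$k$ control near $0$: this is exactly where the infinite-measure feature shows up — a heavy atom of $\mathbb{Q}_k$ of mass $\asymp k^{-\alpha}$ sits at distance $\asymp k^{-\alpha}$ from $0$ — and one must verify it dissolves, together with the uniform expansion $|(T^k)'(x)|\to\infty$ away from $0$. Both reduce to the standard LSV distortion estimates ($c_j\asymp j^{-\alpha}$, uniformly bounded distortion of $T^j$ on $[c_{j+1},c_j]$, $\rho\asymp x^{-p}$ near $0$), after which the second fact is a routine Laplace-transform computation.
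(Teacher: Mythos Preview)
Your proposal is correct and follows the same two-step strategy as the paper: first show $\mathbb{Q}_k(x_k)\to 0$, then show that the $(\tau,\tau^{1/\alpha})$-thinning/rescaling of $\RPP(\widetilde{\mathfrak{J}}_\alpha)$ (and its delayed version) converges to $\fPp_\alpha(\Gamma(1+\alpha))$ as $\tau\to 0$, via the expansion $1-\widehat g(s)\sim s^\alpha/\Gamma(1+\alpha)$ read off \eqref{eq:laplace_transform_J_alpha_tilde}.

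The differences are in execution rather than in spirit. For the second step the paper simply cites \cite[\S 10.4.4]{Gorenflo20_book_Mittag-Leffler}, whereas you carry out the geometric-sum Laplace computation explicitly; both give the same limit. For the first step the paper argues that on $[\varepsilon,1]$ one has $\rho$ bounded and ``$T'\geq c(\varepsilon)>1$'', reduces to the case $x_k\to 0$, and computes $\rho(c_{k+1})/(T^k)'(c_{k+1})$ by hand (claiming ``other cases similarly''). Your near-$0$/away-from-$0$ split is a cleaner packaging of the same ingredients: the uniform bound $\mathbb{Q}_k(x)\lesssim j^{-\alpha}$ for $x\in[c_{j+1},c_j]$ handles all points close to $0$ at once, and your induction on $Y$ with the dichotomy (either many returns, each contributing a factor $\geq 2$, or one long excursion contributing $\gtrsim r_Y^{\alpha+1}$) makes the uniform blow-up of $(T^k)'$ on $[\delta,1]$ explicit --- this actually closes a small gap in the paper's one-line reduction, since $T'\geq c(\varepsilon)>1$ at $x$ alone does not control $(T^k)'(x)$ when the orbit makes long excursions near $0$.
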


For periodic points, the longer the period, the closer the extremal index \(\theta\) approaches \(1\), resulting in the limiting point process converging more closely to the fractional Poisson process. Similarly, Proposition \ref{prop:convergence_to_FPP_when_going_further_from_0} guarantees a parallel trend for preimages of \(0\): the farther a point lies from the neutral fixed point \(0\) (in the sense of time iterations needed to hit \(0\)), the more closely its limiting behavior aligns with that of the fractional Poisson process.

\paragraph{Barely infinite case $p = 1$.}

  When, $p=1$, the picture is different. Indeed, for neighborhoods of $0$ and a well-chosen scaling, the limit law obtained is again the exponential, as in the finite measure case.

\begin{thm}
\label{thm:Collet_Galves_p_equals_1}
\textup{\cite[Theorem 5]{CG93_Statisticsofclosevisitstotheindifferentfixedpointofanintervalmap}, \cite[Theorem 3.3]{CampaninoIsola95}} Let $p = 1$ and $B_n = [0,c_n]$ for $n\geq 0$. Then, 
\begin{align*}
    \mathbb{E}_{\mu_Y}[\lr_{B_n}]^{-1}\,\lr_{B_n} \xRightarrow[n\to +\infty]{\mathcal{L}(\mu)} \mathcal{E}
\end{align*}
where $\mathcal{E}$ is the exponential law.
\end{thm}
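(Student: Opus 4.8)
\textbf{Proof proposal for Theorem \ref{thm:Collet_Galves_p_equals_1}.}

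The plan is to reduce the convergence of hitting times to the neighborhood $B_n = [0,c_n]$ of the indifferent fixed point to a more tractable renewal-theoretic problem by passing to the induced system on $Y = [1/2,1]$, which is a finite-measure Gibbs--Markov-like system with exponential decay of correlations. First I would write $\lr_{B_n} = \lr_Y^{(r_{B_n}^Y)}$ as in the induced-system remark, so that a visit to $B_n$ is decomposed into a geometric-like number of excursions out of $Y$ before one of them is long enough to enter $[0,c_n]$. The key quantitative input is that, for $p = 1$, the return-time tails to $Y$ are only logarithmically heavy: $\mu_Y(r_Y > k) \asymp 1/(k\log^2 k)$ or a comparable barely-summable expression, which is exactly what makes $\mathbb{E}_{\mu_Y}[\lr_{B_n}]$ finite (unlike the case $p>1$) and makes the normalization $\mathbb{E}_{\mu_Y}[\lr_{B_n}]^{-1}$ the natural one. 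I would then show that, under $\mu_Y$ and after this normalization, the number of excursions needed to hit $B_n$ is asymptotically geometric, and each excursion before the successful one contributes a negligible amount of rescaled time, while the successful excursion itself contributes a time of order $\mathbb{E}_{\mu_Y}[\lr_{B_n}]$ times a random variable converging to something degenerate — so that the total hitting time, rescaled, converges to an exponential by the classical "geometric sum of i.i.d. small contributions" mechanism (a Rényi-type theorem).

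In more detail, the steps would be: (1) establish the asymptotics of $\mathbb{E}_{\mu_Y}[\lr_{B_n}]$ as $n \to \infty$, using the explicit density \eqref{eq:formula_density_LSV_map} and the structure of $c_n = T_1^{-n}1$ together with Ka\v{c}'s formula on the tower over $Y$; (2) show that the successful-excursion "overshoot" into $[0,c_n]$ contributes, after normalization, a vanishing amount — i.e. conditioned on entering $B_n$, the extra time spent travelling from $c_n$-scale down is $o(\mathbb{E}_{\mu_Y}[\lr_{B_n}])$; (3) show that the number $K_n$ of excursions out of $Y$ before hitting $B_n$ satisfies $\mathbb{E}_{\mu_Y}[\lr_{B_n}]^{-1}\sum_{j<K_n}(\text{length of }j\text{-th excursion}) \xRightarrow{} \mathcal{E}$, by checking that the excursion lengths are i.i.d.\ (under the induced dynamics, via the Markov partition $\xi$) with the right tail behaviour and that the "hit" probability per excursion is $\asymp 1/\mathbb{E}_{\mu_Y}[\lr_{B_n}] \cdot (\text{something})$, then invoking a Rényi/Darling--Kac type limit theorem for randomly-stopped sums; (4) upgrade $\xRightarrow{\mu_Y}$ to $\xRightarrow{\mathcal{L}(\mu)}$ using \cite[Corollary 6]{Zwei07} or the strong-distributional-convergence machinery of \cite[\S 3.6]{Aar97}, exploiting that $Y$ is a uniform set and the density of any $\nu \ll \mu$ can be spread forward into $Y$. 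Both cited references (Collet--Galves and Campanino--Isola) already contain the core of this; my job is to phrase it in the present notation.

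The main obstacle I expect is step (1)–(2): controlling precisely the contribution of the last, successful excursion and ruling out that the logarithmic corrections in the $p=1$ tails create a non-exponential limit. Because $p=1$ sits exactly at the boundary between the finite-measure regime (where the exponential law and a clean Poisson limit hold) and the genuinely infinite regime $p>1$ (where Theorem \ref{thm:Hitting_to_0_infinite} gives the non-exponential $\mathcal{J}_\alpha$), the estimates have to be sharp enough to see that the slowly-varying factor $\mathbb{E}_{\mu_Y}[\lr_{B_n}] = L(1/c_n)$ with $L$ slowly varying behaves, for the purpose of the randomly-stopped sum, like a constant — i.e. that $\mathbb{E}_{\mu_Y}[\lr_{B_n}]/\mathbb{E}_{\mu_Y}[\lr_{B_n} \mid \lr_{B_n} > \varepsilon \mathbb{E}_{\mu_Y}[\lr_{B_n}]] \to 1$, so no mass escapes to infinity. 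This is a delicate uniform-integrability / truncation argument that is precisely where the $p=1$ case differs from $p>1$, and it is the place where one genuinely needs the Collet--Galves / Campanino--Isola tail estimates rather than soft arguments.
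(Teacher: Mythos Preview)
The paper does not prove this theorem: it is stated as a quoted result from \cite{CG93_Statisticsofclosevisitstotheindifferentfixedpointofanintervalmap} and \cite{CampaninoIsola95}, with no proof supplied. So there is no ``paper's own proof'' to compare your proposal against; the authors simply import the statement and then build Lemma~\ref{lem:comparison_renormalizations_for_0_p=1} and Theorem~\ref{thm:REPP_preimages_of_0_barely_infinite_case} on top of it.

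That said, your sketch contains a concrete error that would derail the argument if you tried to carry it out. For $p=1$ the return-time tails to $Y$ are \emph{not} $\mu_Y(r_Y>k)\asymp 1/(k\log^2 k)$; they are $\mu_Y(r_Y>k)\asymp 1/k$ (this is used explicitly in the appendix, in the proof of Lemma~\ref{lem:equivalence_wandering_rate_tilde_r_B_n}, and follows from \eqref{eq:measure_union_cylindres_asymptotique} with $\alpha=1$). In particular the wandering rate is $w_n(Y)\asymp\log n$, not summable, and the individual excursion lengths out of $Y$ have infinite mean. This means your step~(3) cannot be run as a classical R\'enyi ``geometric sum of i.i.d.\ with finite mean'' argument: the i.i.d.\ summands have infinite expectation, so the normalized partial sums do not stabilize by a law of large numbers, and the geometric-compound mechanism as you describe it breaks down. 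The exponential limit in the $p=1$ case emerges for a subtler reason --- one has to exploit the slowly varying nature of the truncated means and a more delicate decomposition (this is precisely what the Collet--Galves and Campanino--Isola papers handle). Your step~(4) and the general induced-system framing are fine, but the probabilistic core of the argument needs to be rethought around infinite-mean excursions.
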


  As for Corollary \ref{cor:HTS_neighborhood_0_normalization_gamma} for $p >1$, we need to make sure that the result can be written in our setting to make the scaling gamma appear. This is ensured by the following lemma. 

\begin{lem}
    \label{lem:comparison_renormalizations_for_0_p=1}
    With the same notation as in Theorem \ref{thm:Collet_Galves_p_equals_1}, we have 
    \begin{align*}
        \gamma(\mu(T_2^{-1}B_n)) \uset{\widesim}{n\to+\infty} \mathbb{E}_{\mu_Y}[r_{B_n}]^{-1}.
    \end{align*}
\end{lem}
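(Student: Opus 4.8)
The plan is to obtain sharp asymptotics for each side and check they coincide. Write $D_n:=T_2^{-1}B_n=[\tfrac12,\tfrac{1+c_n}2]$; up to the single bad point $\tfrac12$ (which is mapped to the fixed point $0$) one has $D_n=\{x\in Y:r_Y(x)>n\}$, and since $\mu(Y)=1$, $\mu(T_2^{-1}B_n)=\mu_Y(D_n)=:d_n$. First I would record the elementary asymptotics: from $T_1(c_n)=c_{n-1}$, i.e. $c_n+2c_n^2=c_{n-1}$, one gets $1/c_n-1/c_{n-1}=2/(1+2c_n)\to2$, hence $c_n\sim 1/(2n)$ and $c_{n-1}-c_n=2c_n^2\sim 1/(2n^2)$. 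Since the density $\rho$ of \eqref{eq:formula_density_LSV_map} is continuous and strictly positive at $\tfrac12$, this gives $d_n\sim\tfrac12\rho(\tfrac12)\,c_n$ and, because $\{r_Y=k\}=[\tfrac{1+c_k}2,\tfrac{1+c_{k-1}}2]$, also $\mu_Y(r_Y>k)\sim\tfrac14\rho(\tfrac12)/k$; hence the truncated mean $\ell(n):=\mathbb{E}_{\mu_Y}[r_Y\wedge n]=\sum_{k<n}\mu_Y(r_Y>k)\sim\tfrac14\rho(\tfrac12)\log n$ (note $n\,\mu_Y(r_Y>n)=O(1)=o(\ell(n))$).

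For the left-hand side, recall from \eqref{eq:definition_of_gamma} that $\gamma(s)=1/b(1/s)$ with $b$ the asymptotic inverse of $(a_n)\in\RV(\alpha)$, and here $\alpha=p^{-1}=1$. For $\alpha=1$ the normalizing sequence of a pointwise dual ergodic map is governed by the return-time tail of the uniform set via $a_n\sim n/\ell(n)$ (see \cite{Zwe98, Tha83}); inverting, $b(t)\sim t\,\ell(t)$ and therefore $\gamma(s)\sim s/\ell(1/s)$ as $s\to0^+$. Plugging $s=d_n$ and using $\ell(1/d_n)\sim\tfrac14\rho(\tfrac12)\log(1/d_n)\sim\tfrac14\rho(\tfrac12)\log n$ together with $d_n\sim\tfrac14\rho(\tfrac12)/n$, the factors $\rho(\tfrac12)$ cancel and $\gamma(\mu(T_2^{-1}B_n))\sim 1/(n\log n)$.

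For the right-hand side I would split $\mathbb{E}_{\mu_Y}[r_{B_n}]=\mathbb{E}_{\mu_Y}[r_{B_n}\mathbf{1}_{D_n}]+\mathbb{E}_{\mu_Y}[r_{B_n}\mathbf{1}_{Y\setminus D_n}]$. On $D_n$ one has $T(D_n)=T_2(D_n)\subset B_n$, so $r_{B_n}\equiv1$ there and the first term equals $d_n\to0$. On $Y\setminus D_n$, since $T^{-1}B_n=[0,c_{n+1}]\sqcup D_n$ with $[0,c_{n+1}]\subset B_n$, an orbit can enter $B_n$ only one step after entering $D_n$, which gives the exact identity $r_{B_n}=r_{D_n}+1$ on $Y\setminus D_n$. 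Writing $r_{D_n}=\sum_{\ell=0}^{\tau-1}r_Y\circ T_Y^{\ell}$, where $\tau$ is the first hitting time of $D_n$ for the induced map $T_Y$, every excursion counted before $\tau$ has length $\le n$ (because $\{r_Y\le n\}=Y\setminus D_n$), which is exactly what makes this expectation finite. Now $(Y,\mathscr{B}\cap Y,\mu_Y,T_Y)$ is a full-branch uniformly expanding (Rychlik/Gibbs–Markov) map with exponential decay of correlations, and $D_n$ shrinks to $\tfrac12$, which is not periodic for $T_Y$; so the successive visits of a $\mu_Y$-orbit to $D_n$ have extremal index $1$, $\mu_Y(\tau>\ell)\sim(1-d_n)^{\ell}$, and summing by parts against the slowly varying weight $\mathbb{E}_{\mu_Y}[r_Y\mathbf{1}_{r_Y\le n}]\sim\ell(n)$ yields $\mathbb{E}_{\mu_Y}[r_{D_n}\mathbf{1}_{Y\setminus D_n}]\sim\ell(n)/d_n$. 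Collecting the pieces, $\mathbb{E}_{\mu_Y}[r_{B_n}]\sim\ell(n)/d_n\sim n\log n$, so $\mathbb{E}_{\mu_Y}[r_{B_n}]^{-1}\sim 1/(n\log n)$, matching the left-hand side. (Alternatively, this last asymptotics may be quoted directly from the estimates in \cite{CG93_Statisticsofclosevisitstotheindifferentfixedpointofanintervalmap, CampaninoIsola95} underlying Theorem \ref{thm:Collet_Galves_p_equals_1}.)

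The main obstacle is the equivalence $\mathbb{E}_{\mu_Y}[r_{D_n}\mathbf{1}_{Y\setminus D_n}]\sim\ell(n)/d_n$: one must control the hitting-time statistics of the shrinking hole $D_n$ for $T_Y$ well enough to justify $\mu_Y(\tau>\ell)\sim(1-d_n)^{\ell}$ uniformly in $\ell$ (in particular in the regime $\ell\gg 1/d_n$) and that the final, possibly long, excursion does not contribute — the standard open-system/escape-rate machinery for Gibbs–Markov maps does this — and then to track all multiplicative constants so that the $\rho(\tfrac12)$ occurring in $d_n$, in $\ell(n)$, and in $a_n\sim n/\ell(n)$ cancel, leaving a constant-free equivalence; the absence of an extra constant here, in contrast with $d_\alpha$ in Corollary \ref{cor:HTS_neighborhood_0_normalization_gamma}, is consistent with $d_1=1$.
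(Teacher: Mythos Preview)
Your route is genuinely different from the paper's. The paper does \emph{not} compute $\mathbb{E}_{\mu_Y}[r_{B_n}]$ directly. Instead it introduces, for each fixed $p$, an auxiliary variable $\widetilde{r}^{(p)}_{B_n}:=h_{E_n}\circ T_{E_p}\,\mathbf{1}_{E_n}$ (with $E_n=T_2^{-1}B_n$), shows by bounded distortion that $\mathbb{E}_{\mu_Y}[\widetilde{r}^{(p)}_{B_n}]\asymp_{C_p}\mu(E_n)\,\mathbb{E}_{\mu_{E_p}}[r_{B_n}]$ with $C_p\to1$, and then proves an exact identity via Birkhoff's theorem that yields $\mathbb{E}_{\mu_Y}[\widetilde{r}^{(p)}_{B_n}]\sim w_n(Y)$. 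The passage from $\mathbb{E}_{\mu_{E_p}}[r_{B_n}]$ to $\mathbb{E}_{\mu_Y}[r_{B_n}]$ is then obtained by invoking the strong distributional convergence of Theorem~\ref{thm:Collet_Galves_p_equals_1} under $\mu_{E_p}\in\mathcal{L}(\mu)$ and letting $p\to\infty$. Finally $\gamma(\mu(E_n))^{-1}\sim w_n(Y)/\mu(E_n)$ is read off from the explicit form of $(a_n)$. So the paper recycles the Collet--Galves result itself to get the moment asymptotics, whereas you try to produce that moment asymptotics from scratch via the Gibbs--Markov structure of $T_Y$.

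Your computation of the left-hand side is clean and matches the paper's: both routes reduce to $\gamma(\mu(E_n))\sim \mu(E_n)/w_n(Y)$ with $w_n(Y)=\ell(n)$. On the right-hand side, your decomposition $r_{B_n}=r_{D_n}+1$ on $Y\setminus D_n$ and the excursion representation $r_{D_n}=\sum_{\ell<\tau}r_Y\circ T_Y^\ell$ are correct. The gap you flag is real: the step $\mathbb{E}_{\mu_Y}\bigl[\sum_{\ell<\tau}r_Y\circ T_Y^\ell\,\mathbf{1}_{Y\setminus D_n}\bigr]\sim\ell(n)/d_n$ is a Wald-type identity that is \emph{not} automatic, because $\{\tau>\ell\}$ depends on $r_Y\circ T_Y^0,\dots,r_Y\circ T_Y^\ell$ (it is precisely the event that all of them are $\le n$), so one needs an honest decorrelation estimate with explicit control uniform in $\ell$ up to scales $\gg 1/d_n$. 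This can be done for full-branch Gibbs--Markov maps (e.g.\ via the transfer-operator/open-system bounds you allude to), but it is a nontrivial lemma in its own right, and ``summing by parts against the slowly varying weight'' is not a proof. Your fallback---quoting $\mathbb{E}_{\mu_Y}[r_{B_n}]\sim n\log n$ from \cite{CG93_Statisticsofclosevisitstotheindifferentfixedpointofanintervalmap,CampaninoIsola95}---is the honest shortcut and is essentially what the paper does in spirit, except that the paper packages it through Theorem~\ref{thm:Collet_Galves_p_equals_1} plus bounded distortion rather than quoting a moment estimate.

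One small wording issue: saying ``$1/2$ is not periodic for $T_Y$'' is a bit misleading, since $1/2$ is not even in the domain of $T_Y$ (its $T$-orbit is absorbed at $0$). What you actually need---and what holds---is that there are no short returns from $D_n$ to $D_n$ under $T_Y$, which is immediate because $D_n$ is a single cylinder of the Gibbs--Markov partition and $T_Y$ is full-branch.
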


  Using this and applying the same approach as for $p > 1$, we get the following theorem

\begin{thm}
    \label{thm:REPP_preimages_of_0_barely_infinite_case}
    Let $p=1$, $k \geq 0$ and $x \in T^{-{k+1}}\{0\}$. Let $B_n := C_x^n$. Then, 
    \begin{align*}
        N_{B_n}^{\gamma} \xRightarrow[n\to +\infty]{\mu_{B_n}} \textup{PPP}(1) \quad \text{and} \quad N_{B_n}^{\gamma} \xRightarrow[n\to +\infty]{\mathcal{L}(\mu)} \textup{PPP}(1).
    \end{align*}
\end{thm}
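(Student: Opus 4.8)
The plan is to rerun the proof of Theorem~\ref{thm:REPP_LSV_preimages_of_0} with $\alpha=1$, feeding in Theorem~\ref{thm:Collet_Galves_p_equals_1} (rewritten through Lemma~\ref{lem:comparison_renormalizations_for_0_p=1}) where the proper infinite case uses Theorem~\ref{thm:Hitting_to_0_infinite} and Corollary~\ref{cor:HTS_neighborhood_0_normalization_gamma}, and then to exploit that at $\alpha=1$ all the Mittag--Leffler quantities appearing in the limit degenerate to the unit-rate exponential law $\mathcal{E}$, so that the exotic limits collapse to $\textup{PPP}(1)$. The first input is the first-hitting-time asymptotics at $0$ written in the $\gamma$-scaling: by Theorem~\ref{thm:Collet_Galves_p_equals_1}, $\mathbb{E}_{\mu_Y}[\lr_{[0,c_n]}]^{-1}\,\lr_{[0,c_n]}\xRightarrow[n\to+\infty]{\mathcal{L}(\mu)}\mathcal{E}$, and by Lemma~\ref{lem:comparison_renormalizations_for_0_p=1} one has $\gamma(\mu(T_2^{-1}[0,c_n]))\sim\mathbb{E}_{\mu_Y}[\lr_{[0,c_n]}]^{-1}$, whence $\gamma(\mu(T_2^{-1}[0,c_n]))\,\lr_{[0,c_n]}\xRightarrow[n\to+\infty]{\mathcal{L}(\mu)}\mathfrak{J}_1$ with $\mathfrak{J}_1=\mathcal{E}\sim\mathrm{Exp}(1)$ --- the exact $p=1$ analogue of Corollary~\ref{cor:HTS_neighborhood_0_normalization_gamma}. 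Its return companion is read off from Corollary~\ref{cor:equivalence_HTS/RTS_infinite_measure}: at $\alpha=1$, \eqref{eq:Laplace_transform_HTS_from_RTS} becomes $\mathbb{E}[e^{-s\widetilde{\mathfrak{J}}_1}]=1-s^{-1}(1-\mathbb{E}[e^{-s\mathfrak{J}_1}])=(1+s)^{-1}$, so $\widetilde{\mathfrak{J}}_1\sim\mathrm{Exp}(1)$ as well.

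From here I would follow verbatim the tower-over-$Y$ and excursion analysis of the proof of Theorem~\ref{thm:REPP_LSV_preimages_of_0}. For a preimage $x\in T^{-(k+1)}\{0\}$ one has $T^k x=\tfrac12$, the branch $T^{k+1}$ maps $C_x^n$ diffeomorphically onto $[0,c_n]$ with $T^k(C_x^n)=T_2^{-1}[0,c_n]$, and $\mu(C_x^n)\sim\mathbb{Q}_k(x)\,\mu(T_2^{-1}[0,c_n])$ with $\mathbb{Q}_k(x)\in(0,1]$; since $\gamma\in\RV_{0+}(1/\alpha)$, this gives $\gamma(\mu(C_x^n))/\gamma(\mu(T_2^{-1}[0,c_n]))\to\mathbb{Q}_k(x)^{1/\alpha}$. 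Tracking how excursions of the orbit through the tower over $Y$ intersect $C_x^n$ exactly as in the $p>1$ argument --- a visit to $C_x^n$ being, with asymptotic probability $\mathbb{Q}_k(x)$, a visit to $T^{-(k+1)}[0,c_n]$ that lands in the $x$-branch, and the time change producing the $\mathbb{Q}_k(x)^{1/\alpha}$ rescaling --- then yields $N_{B_n}^{\gamma}\xRightarrow[n\to+\infty]{\mu_{B_n}}\RPP(\widetilde{\mathfrak{J}}_1)^{(\mathbb{Q}_k(x),\,\mathbb{Q}_k(x)^{1/\alpha})}$ and $N_{B_n}^{\gamma}\xRightarrow[n\to+\infty]{\mathcal{L}(\mu)}\DRPP(\mathfrak{J}_1,\widetilde{\mathfrak{J}}_1)^{(\mathbb{Q}_k(x),\,\mathbb{Q}_k(x)^{1/\alpha})}$, with $\alpha=1$ here.

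Finally I would simplify at $\alpha=1$: since $\mathfrak{J}_1,\widetilde{\mathfrak{J}}_1\sim\mathrm{Exp}(1)$, memorylessness gives $\RPP(\widetilde{\mathfrak{J}}_1)\eqlaw\DRPP(\mathfrak{J}_1,\widetilde{\mathfrak{J}}_1)\eqlaw\textup{PPP}(1)$ (equivalently, the $\alpha=\theta=1$ case of Theorem~\ref{thm:sufficient_conditions_convergence_compound_FPP}, using $\fPp_1(\Gamma(2))=\textup{PPP}(1)$; Proposition~\ref{prop:fix_point_equation_FPP} confirms $\textup{PPP}(1)$ is the common fixed point), and with $\tau:=\mathbb{Q}_k(x)\in(0,1]$ a $\mathrm{Bernoulli}(\tau)$-thinning of $\textup{PPP}(1)$ is $\textup{PPP}(\tau)$, which after rescaling its atoms by $v:=\mathbb{Q}_k(x)^{1/\alpha}=\tau$ becomes $\textup{PPP}(\tau/v)=\textup{PPP}(1)$; hence both limits equal $\textup{PPP}(1)$. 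The main obstacle, exactly as in the proof of Theorem~\ref{thm:REPP_LSV_preimages_of_0}, is the geometric/combinatorial excursion bookkeeping of the middle paragraph --- isolating the $\mathrm{Bernoulli}(\mathbb{Q}_k(x))$-thinning and the $\mathbb{Q}_k(x)^{1/\alpha}$ time change and controlling the long excursions near $0$; the two $p=1$-specific ingredients it requires, namely the exponential scaling of excursions near $0$ (Theorem~\ref{thm:Collet_Galves_p_equals_1}) and the boundedness of $(T^{k+1})'$ on $C_x^n$ (which holds because $T_1'(0)=1$, so no derivative blow-up occurs along the portion of $C_x^n$ mapped near $0$), are routine, and the $\alpha=1$ collapse above is elementary.
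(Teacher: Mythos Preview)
Your proposal is correct and follows essentially the same route as the paper: rerun the proof of Theorem~\ref{thm:REPP_LSV_preimages_of_0} with Theorem~\ref{thm:Collet_Galves_p_equals_1} and Lemma~\ref{lem:comparison_renormalizations_for_0_p=1} supplying the exponential input in place of Corollary~\ref{cor:HTS_neighborhood_0_normalization_gamma}, then use that a $(\tau,\tau)$-thinning/rescaling of $\textup{PPP}(1)$ is again $\textup{PPP}(1)$ when $\alpha=1$. One small slip: solving \eqref{eq:Laplace_transform_HTS_from_RTS} at $\alpha=1$ gives $\mathbb{E}[e^{-s\widetilde{\mathfrak{J}}_1}]=1-s\,\mathbb{E}[e^{-s\mathfrak{J}_1}]$ (not $1-s^{-1}(1-\mathbb{E}[e^{-s\mathfrak{J}_1}])$), though your conclusion $\widetilde{\mathfrak{J}}_1\sim\mathrm{Exp}(1)$ is correct.
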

Therefore, in the special case $p=1$, the preimages of $0$ have the same limit behavior as any 
other non periodic point and thus we only have a dichotomy.

\section{Return and Hitting Rare Event Point Processes and the Fractional Poisson Process}
\label{section:proofs_abstract_results}

\subsection{Equivalence between Hitting and Return Point Processes}

  This part is devoted to the proofs of Theorems \ref{thm:HTS-REPP_vs_RTS-REPP_infinite_measure_renormalized_measure} and \ref{thm:HTS-REPP_vs_RTS-REPP_infinite_measure_renormalized_returns} and their applications. 

\subsubsection{Proof of Theorems \ref{thm:HTS-REPP_vs_RTS-REPP_infinite_measure_renormalized_measure}-\ref{thm:HTS-REPP_vs_RTS-REPP_infinite_measure_renormalized_returns}}

  As stated in Remark \ref{rem:saying_that_the_two_sclaings_are_equivalents_for_hREPP_rREPP}, Theorems \ref{thm:HTS-REPP_vs_RTS-REPP_infinite_measure_renormalized_measure} and \ref{thm:HTS-REPP_vs_RTS-REPP_infinite_measure_renormalized_returns} are equivalent since $\alpha \in (0,1]$. We are going in fact to prove Theorem \ref{thm:HTS-REPP_vs_RTS-REPP_infinite_measure_renormalized_returns} and study the sequence of processes $\big(\mu(E_n)\,a(\Phi_{E_n})\big)_{n\geq 0}$. For that, we introduce some more notation. For $A \in \mathscr{B}$,  $u\in L^1(\mu), \; u\geq 0,\; d\geq 1$ and $t_1,\dots,t_d > 0$, let 
\begin{align*}
    G^{[d]}_{u,A}(t_1, \dots, t_d) := \mu_u\left(\mu(A)\,a\big(\Phi^{[d]}_A\big) \leq (t_1, \dots,t_d)\right),
\end{align*}
where $\mu_u$ is the measure  absolutely continuous with respect to $\mu$ with density $u$. In particular, if $\int u\,\dd\mu = 1$, $G^{[d]}_{u,A}$ is the distribution function of the random variable $\mu(A)\,a(\Phi_A^{[d]})$ under the probability $\mu_u$. We also define
\begin{align*}
  \widetilde{G}^{[d]}_{A}(t_1, \dots, t_d) = \mu_{A}\left(\mu(A)\,a\big(\Phi^{[d]}_A\big) \leq (t_1, \dots,t_d)\right) \quad \forall d\geq 1, \; t_1,\dots,t_d \geq 0, 
\end{align*}
as the distribution function of $\mu(A)\,a(\Phi_A^{[d]})$ under $\mu_A$. Before delving into the proof, we first establish the following lemma, an adaptation of \cite[Lemma 5.1]{RZ20}, which provides a decomposition for multiple return events.

\begin{lem}
\label{lem:decomposition_returns}
For all $A\in \mathscr{B}$, $n_1,\dots, n_d \in \mathbb{N}$, we have the following decomposition:
    \begin{equation*}
	\left\{\Phi_A^{[d]} \leq (n_1, \dots, n_d)\right\} = \bigsqcup_{l = 1}^{n_1} T^{-\ell}\left(A \cap \left\{\Phi_A^{[d-1]} \leq \left(n_2 -\ell,\dots, n_d - \ell\right)\right\} \cap \left\{\Phi_{A}^{[d]}\leq \left(n_1-\ell, ,\dots, n_d-\ell\right)\right\}^c \right) \,.
	\end{equation*}
\end{lem}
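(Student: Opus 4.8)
The plan is to establish the set identity by partitioning the event $\{\Phi_A^{[d]} \leq (n_1,\dots,n_d)\}$ according to the value of the first return time $r_A^{(1)}$, which must lie in $\{1,\dots,n_1\}$ on this event. Concretely, a point $x$ satisfies $\Phi_A^{[d]}(x) \leq (n_1,\dots,n_d)$ precisely when there exist $d$ successive returns to $A$ occurring at times $\leq n_1, \leq n_2, \dots, \leq n_d$ respectively; the first return occurs at some time $\ell$ with $1 \leq \ell \leq n_1$, and then $T^\ell x \in A$ with its own subsequent $d-1$ returns to $A$ happening at times $\leq n_2 - \ell, \dots, \leq n_d - \ell$. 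The subtlety — and the main obstacle — is to correctly encode that $\ell$ is the \emph{first} return time and not merely \emph{a} time $\leq n_1$ at which the orbit is in $A$: this is exactly the role of intersecting with the complement $\{\Phi_A^{[d]} \leq (n_1 - \ell, \dots, n_d - \ell)\}^c$ inside $T^{-\ell}(\cdot)$, which excludes the possibility that $T^\ell x$ already has a full string of $d$ returns within the shifted windows, i.e. that an earlier first return would have sufficed.

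First I would fix $x$ and $\ell \in \{1,\dots,n_1\}$ and carefully unwind what membership in the $\ell$-th term of the union means: $T^\ell x \in A$; $\Phi_A^{[d-1]}(T^\ell x) \leq (n_2-\ell,\dots,n_d-\ell)$, meaning the first $d-1$ returns of $T^\ell x$ to $A$ happen early enough; and $\Phi_A^{[d]}(T^\ell x) \not\leq (n_1-\ell,\dots,n_d-\ell)$. I would then argue, using the cocycle/additivity relation $r_A^{(k)}(x) = \ell + r_A^{(k-1)}(T^\ell x)$ valid whenever $T^\ell x \in A$ and $\ell = r_A^{(1)}(x)$ — more precisely, whenever $\ell$ is the first visit time — that the first condition and the constraint $\ell \leq n_1$ give $r_A^{(1)}(x) = \ell \leq n_1$, and the second condition gives $r_A^{(k)}(x) = \ell + r_A^{(k-1)}(T^\ell x) \leq \ell + (n_{k+1} - \ell) = n_{k+1}$ for $k = 1,\dots,d-1$, hence $\Phi_A^{[d]}(x) \leq (n_1,\dots,n_d)$. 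For the converse direction, given $x$ in the left-hand set, I would set $\ell := r_A^{(1)}(x) \in \{1,\dots,n_1\}$, verify $T^\ell x \in A$, and translate the remaining return-time inequalities for $x$ into the required inequalities for $T^\ell x$.

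The key remaining point is the \emph{disjointness} of the union and the necessity of the complement term for it: two distinct values $\ell < \ell'$ cannot both work for the same $x$, because if $x \in T^{-\ell}(\cdots)$ then $r_A^{(1)}(x) = \ell$ (that is what the complement condition guarantees — it forbids $\ell$ from being a non-minimal visit time followed by a shorter valid string), so $x$ cannot simultaneously have $r_A^{(1)}(x) = \ell'$. I would spell out why: if $x \in T^{-\ell'}(A \cap \cdots)$ with $\ell' > \ell$ and also $T^\ell x \in A$, then $T^\ell x$ already realizes $d$ returns within windows $(n_1 - \ell, \dots, n_d - \ell)$ (inherited from the fact that $T^{\ell'}x$ does so within the tighter windows $(n_1 - \ell', \dots)$, after shifting by $\ell' - \ell$), contradicting the complement condition in the $\ell$-th term. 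I expect the bookkeeping of these index shifts — keeping straight which windows shrink by $\ell$ versus $\ell - 1$ and ensuring the ``not $\leq$'' condition lines up correctly between consecutive $\ell$'s — to be the fiddly heart of the argument, but it is essentially the same combinatorial lemma as \cite[Lemma 5.1]{RZ20} extended from a single return to the vector $\Phi_A^{[d]}$, so no genuinely new idea is needed beyond careful induction on the coordinates.
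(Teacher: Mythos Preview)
Your plan to partition according to $\ell = r_A^{(1)}(x)$ is the core error. The complement condition $\{\Phi_A^{[d]} \leq (n_1-\ell,\dots,n_d-\ell)\}^c$ does \emph{not} encode that $\ell$ is the first return time; it encodes that $\ell$ is the \emph{last} visit time $p \leq n_1$ for which $T^p x \in A$ and the remaining $d-1$ returns of $T^p x$ still fit in the shifted windows. A counterexample already appears for $d=1$: take $n_1 = 10$ and suppose $x$ has return times $r_A^{(1)}(x)=3$, $r_A^{(2)}(x)=7$, $r_A^{(3)}(x)=12$. With your choice $\ell = r_A^{(1)}(x) = 3$, the complement condition demands $r_A(T^3 x) > n_1 - 3 = 7$, but $r_A(T^3 x) = 7-3 = 4 \leq 7$, so $x$ is \emph{not} in the $\ell=3$ term of the right-hand side. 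The term actually containing $x$ is $\ell = 7$, the last visit $\leq n_1$. The paper accordingly takes $\ell := \max\{p \leq n_1 : T^p x \in A \text{ and } \Phi_A^{[d-1]}(T^p x) \leq (n_2-p,\dots,n_d-p)\}$; maximality then yields the complement condition at once, since otherwise $p' := \ell + r_A(T^\ell x)$ would be a strictly larger element of that set.

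Your disjointness sketch is in fact correct in mechanism, but you misread its conclusion: showing that membership in both the $\ell$-th and $\ell'$-th terms with $\ell < \ell'$ forces $T^\ell x$ to violate its complement condition proves precisely that the index must be \emph{maximal}, not minimal. Likewise, the forward inclusion does not need $\ell = r_A^{(1)}(x)$: from $T^\ell x \in A$ alone one has $\ell = r_A^{(m)}(x)$ for some $m \geq 1$, whence $r_A^{(k)}(x) \leq r_A^{(m+k-1)}(x) = \ell + r_A^{(k-1)}(T^\ell x) \leq n_k$ for $k=1,\dots,d$.
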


\begin{rem}
    In fact, we also have the easier decomposition
    \[
    \left\{\Phi_A^{[d]} \leq (n_1, \dots, n_d)\right\} = \bigcup_{l = 1}^{n_1} T^{-\ell}\left(A \cap \left\{\Phi_A^{[d-1]} \leq \left(n_2 -\ell,\dots, n_d - \ell\right)\right\} \right),
    \]
    but we will later take advantage of the disjointness.
\end{rem}

\begin{proof}[Proof (of Lemma \ref{lem:decomposition_returns})]
      For every $\ell \geq 1$, $T^{-\ell}\left( A \cap  \left\{\Phi_A^{[d-1]} \leq \left(n_2 -\ell,\dots, n_d - \ell\right)\right\}\right) \subset \left\{\Phi_A^{[d]} \leq (n_1, \dots, n_d)\right\}$, hence we have the inclusion of the right member into the left one.\\
    Now, let $x \in \left\{\Phi_A^{[d]} \leq (n_1, \dots, n_d)\right\}$. Consider $\ell := \max \big\{p\leq n_1\;|\; x\in T^{-p} (A\cap \{\Phi_A^{[d-1]} \leq \left(n_2 -p,\dots, n_d - p\right)\})\big\}$. The maximum \(\ell\) is well-defined, as the set is non-empty with \(r_A(x)\) included as an element. By construction, $x\in T^{-\ell}\big(A\cap \big\{\Phi_A^{[d-1]} \leq \left(n_2 -\ell,\dots, n_d - \ell\right)\big\}\big)$. If $x\in T^{-\ell}\big\{\Phi_{A}^{[d]}\leq \left(n_1-\ell, ,\dots, n_d-\ell\right)\big\}$, then $\ell + r_A(T^{\ell}x)$ is also in the set which is a contradiction by definition of $\ell$. 
    
      The same approach shows that the union is disjoint. For  $1\leq \ell_1 < \ell_2 \leq n_1$, we have 
    \begin{align*}
	& T^{-\ell_1}\left(A \cap \left\{\Phi_A^{[d-1]} \leq \left(n_2 -\ell_1,\dots, n_d - \ell_1\right)\right\} \cap \left\{\Phi_{A}^{[d]}\leq \left(n_1-\ell_1, ,\dots, n_d-\ell_1\right)\right\}^c \right) \\
    & \qquad \qquad \cap T^{-\ell_2}\left(A \cap \left\{\Phi_A^{[d-1]} \leq \left(n_2 -\ell_2,\dots, n_d - \ell_2\right)\right\} \cap \left\{\Phi_{A}^{[d]}\leq \left(n_1-\ell_2, ,\dots, n_d-\ell_2\right)\right\}^c \right) \\
    & \qquad  \subset T^{-\ell_1} \Big( A \cap \big\{\Phi_{A}^{[d]}\leq (n_1-\ell_1, ,\dots, n_d-\ell_1)\big\}^c \\
    & \qquad \qquad \cap T^{-(\ell_2 - \ell_1)}\big( A \cap \big\{\Phi_A^{[d-1]} \leq (n_2 -\ell_2,\dots, n_d - \ell_2)\big\}\big)\Big) \\
    & \qquad \subset T^{\ell_1}\Big( A \cap \big\{\Phi_{A}^{[d]}\leq (n_1-\ell_1, ,\dots, n_d-\ell_1)\big\}^c \cap \big\{ \Phi_A^{[d]} \leq (\ell_2 - \ell_1,n_2 - \ell_1, \dots, n_d - \ell_1)\big\}\Big) \\
    & \qquad \subset T^{-\ell_1} \Big( A \cap \big\{\Phi_{A}^{[d]}\leq (n_1-\ell_1, ,\dots, n_d-\ell_1)\big\}^c \cap \big\{ \Phi_A^{[d]} \leq (n_1 - \ell_1,n_2 - \ell_1, \dots, n_d - \ell_1)\big\}\Big) = \emptyset.
    \end{align*}
\end{proof}

  We are now ready to go on with the proof of Theorem \ref{thm:HTS-REPP_vs_RTS-REPP_infinite_measure_renormalized_returns}. 

\begin{proof}[Proof (of Theorem \ref{thm:HTS-REPP_vs_RTS-REPP_infinite_measure_renormalized_returns})] Set $R_n := \mu(E_n)\,a(\Phi_{E_n})$ and for all $t\geq 0$, let $p_n^{[t]}$ be the integer such that 
\begin{align*}
    \big\{\mu(E_n)a(r_{E_n}) \leq t\big\} = \big\{ r_{E_n} \leq p_n^{[t]}\big\} \,.
\end{align*}
  Conversely, define for all $0\leq \ell \leq p_n^{[t]}$, the number $\vartheta_{n,\ell}^{[t]}$ such that 
\begin{align*}
    \big\{\mu(E_n)a(r_{E_n}) \leq \vartheta_{n,\ell}^{[t]}\big\} = \big\{r_{E_n} \leq p_n^{[t]} - \ell\big\}.
\end{align*}
Thus, for all $d\geq 1$ and $t_1,\dots, t_d \geq 0$ we get
\begin{align}
    \label{eq:reecriture_R_n_Phi_n_p_nt}
    \big\{R_n^{[d]} \leq (t_1,\dots,t_d)\big\} = 
    \big\{ \Phi_{E_n}^{[d]} \leq \big(p_n^{[t_1]},\dots p_n^{[t_d]}\big)\big\}.
\end{align}
and 
\begin{align}
    \label{eq:reecriture_Phi_n_R_n_theta_n_l}
    \big\{ R_n^{[d]} \leq (\vartheta_{n,\ell}^{[t_1]},\dots, \vartheta_{n,\ell}^{[t_d]})\big\} = \big\{\Phi_{E_n}^{[d]} \leq \big(p_n^{[t_1]} - \ell, \dots, p_n^{[t_d]} - \ell\big)\big\}.
\end{align}
The quantities $p_n^{[t]}$ and $\theta_{n,\ell}^{[t]}$ can be computed using $b$ the asymptotic inverse of $a$ and $a$ itself. We get 
\begin{align*}
    p_n^{[t]} := b(t/\mu(E_n)) \quad \text{and} \quad \vartheta_{n,\ell}^{[t]} = \mu(E_n)\,a\big(p_n^{[t]} - \ell\big).
\end{align*}

  First, assume the convergence of the return process, that is to say assume that $R_n \xRightarrow[n\to +\infty]{\mu_{E_n}} \widetilde{\Phi}$. This is equivalent to the convergence of $R_n^{[d]}$ for every $d\geq 1$ and thus 
assume that $\widetilde{G}_{E_n}^{[d]}(t_1, \dots, t_d) \xRightarrow[]{} \widetilde{G}^{[d]}(t_1, \dots, t_d)$ at every point of continuity $(t_1,\dots, t_d)$ of $\widetilde{G}^{[d]}$ such that $t_1 ( (t_2/t_1)^{1/\alpha} - m/M)^{\alpha}$ for all $0\leq m\leq M$ are also continuity points of $\widetilde{G}^{[d]}$. Without loss of generality, we can assume that $t_1\leq \dots \leq t_d$ as the sequence of returns is increasing. In the following, we write $\widetilde{G}^{[d]}_n$ instead of $\widetilde{G}^{[d]}_{E_n}$ to ease the notations and we will do the same for $G_u^{[d]}$. \\

  Now, we have

\begin{align*}
	& G^{[d]}_{u,n}(t_1,\dots, t_d) = \int_{\left\{R_n^{[d]} \leq (t_1,\dots, t_d)\right\}} u\,\dd\mu = \int_{\{\Phi^{[d]}_{E_n} \leq (p_n^{[t_1]}, \dots, p_n^{[t_d]})\}}u\,\dd\mu \quad \text{(by \eqref{eq:reecriture_R_n_Phi_n_p_nt})} \\
	& = \sum_{\ell = 1}^{p_n^{[t_1]}} \bigg(\int_{E_n \cap \left\{\Phi_{E_n}^{[d-1]} \leq \left(p_n^{[t_2]} -\ell,\dots, p_n^{[t_d]} - \ell\right)\right\} \cap \left\{\Phi_{E_n}^{[d]}\leq \left(p_n^{[t_1]}-\ell, ,\dots, p_n^{[t_d]}-\ell\right)\right\}^c} \widehat{T}^{\ell}u\,\dd\mu\bigg) \quad \text{(by Lemma \ref{lem:decomposition_returns})}\\
	& = \sum_{\ell = 1}^{p_n^{[t_1]}} \bigg(\int_{E_n \cap \left\{\Phi_{E_n}^{[d-1]} \leq \left(p_n^{[t_2]} -\ell,\dots, p_n^{[t_d]} - \ell\right)\right\}} \widehat{T}^{\ell}u\,\dd\mu \\
    & \qquad \qquad \qquad \qquad - \int_{E_n \cap \left\{\Phi_{E_n}^{[d]} \leq \left(p_n^{[t_1]} -\ell,\dots, p_n^{[t_d]} - \ell\right)\right\}}\widehat{T}^{\ell}u\,\dd\mu\bigg) \quad \text{(because $p_n^{[t_i]} \leq p_n^{[t_{i+1}]}$)}\\
	& = \sum_{m=0}^{M-1}  \sum_{\ell = \left\lfloor \frac{m}{M}p_n^{[t_1]}\right\rfloor + 1}^{\left\lfloor\frac{m+1}{M}p_n^{[t_1]}\right\rfloor} \bigg(\int_{E_n \cap \left\{R_n^{[d-1]} \leq \left(\vartheta_{n,\ell}^{[t_2]},\dots, \vartheta_{n,\ell}^{[t_d]}\right)\right\}} \widehat{T}^{\ell}u\,\dd\mu \\
    & \qquad \qquad \qquad \qquad - \int_{E_n \cap \left\{R_n^{[d]} \leq \left(\vartheta_{n,\ell}^{[t_1]}, \dots, \vartheta_{n,\ell}^{[t_d]}\right)\right\}}\widehat{T}^{\ell}u\,\dd\mu \bigg) \quad \text{(by \eqref{eq:reecriture_Phi_n_R_n_theta_n_l})}\\
    & \leq \sum_{m=0}^{M-1}  \sum_{\ell = \left\lfloor \frac{m}{M}p_n^{[t_1]}\right\rfloor + 1}^{\left\lfloor\frac{m+1}{M}p_n^{[t_1]}\right\rfloor} 
	\biggl(\int_{E_n \cap \left\{R_n^{[d-1]} \leq \left(\vartheta_{n,\left\lfloor \frac{m}{M}p_n^{[t_1]}\right\rfloor}^{[t_2]},\dots, \vartheta_{n,\left\lfloor \frac{m}{M}p_n^{[t_1]}\right\rfloor}^{[t_d]}\right)\right\}} \widehat{T}^{\ell}u\,\dd\mu \\ 
    & \qquad \qquad \qquad \qquad  - \int_{E_n \cap \left\{R_n^{[d]} \leq \left(\vartheta_{n,\left\lfloor \frac{m+1}{M}p_n^{[t_1]}\right\rfloor}^{[t_1]}, \dots, \vartheta_{n,\left\lfloor \frac{m+1}{M}p_n^{[t_1]}\right\rfloor}^{[t_d]}\right)\right\}}\widehat{T}^{\ell}u\,\dd\mu \biggr) \\
    & \leq \sum_{m=0}^{M-1} \int_{E_n \cap \left\{R_n^{[d-1]} \leq \left(\vartheta_{n,\left\lfloor \frac{m}{M}p_n^{[t_1]}\right\rfloor}^{[t_2]},\dots, \vartheta_{n,\left\lfloor \frac{m}{M}p_n^{[t_1]}\right\rfloor}^{[t_d]}\right)\right\}} \sum_{\ell = \left\lfloor \frac{m}{M}p_n^{[t_1]}\right\rfloor + 1}^{\left\lfloor\frac{m+1}{M}p_n^{[t_1]}\right\rfloor}\widehat{T}^{\ell}u\,\dd\mu\\
    & \qquad \qquad \qquad \qquad  - \int_{E_n \cap \left\{R_n^{[d]} \leq \left(\vartheta_{n,\left\lfloor \frac{m+1}{M}p_n^{[t_1]}\right\rfloor}^{[t_1]}, \dots, \vartheta_{n,\left\lfloor \frac{m+1}{M}p_n^{[t_1]}\right\rfloor}^{[t_d]}\right)\right\}} \sum_{\ell = \left\lfloor \frac{m}{M}p_n^{[t_1]}\right\rfloor + 1}^{\left\lfloor\frac{m+1}{M}p_n^{[t_1]}\right\rfloor} \widehat{T}^{\ell}u\,\dd\mu\,.
    \end{align*}
    At this point, we can take advantage of the fact that $Y$ is uniform for $u$. In particular, it means that of all $0 \leq c_1 < c_2$, we have 
    \begin{align*}
        \sum_{\ell = c_1k}^{c_2k - 1} \widehat{T}^{\ell}u \sim (c_2^{\alpha} - c_1^{\alpha})\,a_k \quad \text{uniformly mod $\mu$ on $Y$,}
    \end{align*}
    and thus 
    \begin{align*}
        \sum_{\ell = \left\lfloor \frac{m}{M}p_n^{[t_1]}\right\rfloor + 1}^{\left\lfloor\frac{m+1}{M}p_n^{[t_1]}\right\rfloor} \widehat{T}^{\ell}u \sim \left(\left(\frac{m+1}{M} \right)^{\alpha} - \left(\frac{m+1}{M} \right)^{\alpha}\right) a_{p_n^{[t_1]}} \quad \text{uniformly mod $\mu$ on $Y$,}
    \end{align*}
    meaning that for all $\varepsilon > 0$ and $n$ large enough,
    \begin{align*}
        \sum_{\ell = \left\lfloor \frac{m}{M}p_n^{[t_1]}\right\rfloor + 1}^{\left\lfloor\frac{m+1}{M}p_n^{[t_1]}\right\rfloor} \widehat{T}^{\ell}u \leq (1 + \varepsilon)\left(\left(\frac{m+1}{M} \right)^{\alpha} - \left(\frac{m+1}{M} \right)^{\alpha}\right) a_{p_n^{[t_1]}}\,.
    \end{align*}
    By definition, $a_{p_n^{[t_1]}} = a(b(t_1/\mu(E_n))) \sim t_1/\mu(E_n)$ and by the mean value theorem $((m+1)/M)^{\alpha} - (m/M)^{\alpha} \leq \alpha m^{\alpha - 1}/M^{\alpha}$.  Hence, for $n$ large enough, we get 
    \begin{align}
    G^{[d]}_{u,n}(t_1,\dots, t_d)& \leq (1+\varepsilon)\,\alpha\, t_1 \sum_{m=0}^{M-1} \frac{1}{M} \left(\frac{m}{M}\right)^{\alpha - 1} \left[\mu_{E_n}\Big(R_n^{[d-1]} \leq \Big(\vartheta_{n,\left\lfloor \frac{m}{M}p_n^{[t_1]}\right\rfloor}^{[t_2]},\dots, \vartheta_{k,\left\lfloor \frac{m}{M}p_n^{[t_1]}\right\rfloor}^{[t_d]}\Big)\Big) \nonumber \right.\\
	& \left.\qquad \qquad - \mu_{E_n}\Big( R_n^{[d]} \leq \Big(\vartheta_{k,\left\lfloor \frac{m+1}{M}p_n^{[t_1]}\right\rfloor}^{[t_1]}, \dots, \vartheta_{n,\left\lfloor \frac{m+1}{M}p_n^{[t_1]}\right\rfloor}^{[t_d]}\Big)\Big)\right]\nonumber \\
	 & \leq (1+\varepsilon)\,\alpha\, t_1 \sum_{m=0}^{M-1} \frac{1}{M} \left(\frac{m}{M}\right)^{\alpha - 1}
	\left[ \widetilde{G}_k^{[d-1]} \Big(\vartheta_{k,\left\lfloor \frac{m}{M}p_n^{[t_1]}\right\rfloor}^{[t_2]},\dots, \vartheta_{n,\left\lfloor \frac{m}{M}p_n^{[t_1]}\right\rfloor}^{[t_d]}\Big) \nonumber \right.\\ 
	& \left. \qquad \qquad - \widetilde{G}_n^{[d]}\Big(\vartheta_{n,\left\lfloor \frac{m+1}{M}p_n^{[t_1]}\right\rfloor}^{[t_1]},\dots, \vartheta_{n,\left\lfloor \frac{m+1}{M}p_n^{[t_1]}\right\rfloor}^{[t_d]}\Big) \right]\,.
    \label{eq:equation_Riemann_upper_bound}
\end{align}
    The lower bound is obtained similarly. Now, $\forall t_2 \geq t_1 > 0, \forall 0<c<1$, we have
\begin{align*}
     \quad \vartheta^{[t_2]}_{n, cp_n^{[t_1]}}  &= \mu(E_n)\, a\big( p_n^{[t_2]} - cp_n^{[t_1]}\big) = \mu(E_n)\, a\bigg( b\Big(\frac{t_2}{\mu(E_n)}\Big) - c b\Big(\frac{t_1} {\mu(E_n)}\Big) \bigg) \\
    & = \mu(E_n)\,a\bigg( b\Big(\frac{t_1}{\mu(E_n)}\Big) \Big( b(t_2/\mu(E_n))/b(t_1/\mu(E_n)) - c\Big) \bigg)\\
    & \sim \mu(E_n)\, a\left( b(t_1/\mu(E_n))\left(\left(\frac{t_2}{t_1}\right)^{1/\alpha} -c\right)\right) \\
    & \sim t_1 \left(\left(\frac{t_2}{t_1}\right) - c\right)^{\alpha},
\end{align*}
where we use the hypothesis $a\in \RV(\alpha)$ which implies $b\in \RV(\alpha^{-1})$. Since we assumed the convergence for the return time process, it yields
\begin{align*}
    &\widetilde{G}_n^{[d-1]} \Big(\vartheta_{n,\left\lfloor \frac{m}{M}p_n^{[t_1]}\right\rfloor}^{[t_2]},\dots, \vartheta_{n,\left\lfloor \frac{m}{M}p_n^{[t_1]}\right\rfloor}^{[t_d]}\Big) \\
    &\qquad \xrightarrow[n\to +\infty]{} \widetilde{G}^{[d-1]}\bigg( t_1 \Big(\Big(\frac{t_2}{t_1}\Big)^{1/\alpha}- \frac{m}{M}\Big)^{\alpha},\dots, t_1 \Big(\Big(\frac{t_d}{t_1}\Big)^{1/\alpha}- \frac{m}{M}\Big)^{\alpha}\bigg)
\end{align*}
  and 
\begin{align*}
    &\widetilde{G}_n^{[d]}\Big(\vartheta_{n,\left\lfloor \frac{m+1}{M}p_n^{[t_1]}\right\rfloor}^{[t_1]}, \dots, \vartheta_{n,\left\lfloor \frac{m+1}{M}p_n^{[t_1]}\right\rfloor}^{[t_d]}\Big) \\
    &\qquad \xrightarrow[n\to + \infty]{} \widetilde{G}^{[d]}\biggl( t_1 \Big(1 - \frac{m+1}{M}\Big)^{\alpha},\dots,t_1 \Big(\Big(\frac{t_d}{t_1}\Big)^{1/\alpha}- \frac{m+1}{M}\Big)^{\alpha}\biggr)\,.
\end{align*}

  Thus, together with \eqref{eq:equation_Riemann_upper_bound}, recognizing a Riemann integral and letting $\varepsilon$ go to $0$, we get 
\begin{align*}
	& G_{u,n}^{[d]}(t_1, \dots, t_d) \xrightarrow[k\to +\infty]{} \alpha\, t_1\int_0^{1} \left[ \widetilde{G}^{[d-1]}\left(\left( t_2^{1/\alpha}- t_1^{1/\alpha}r\right)^{\alpha},\dots,\left( t_d^{1/\alpha}- t_1^{1/\alpha}r\right)^{\alpha}\right)  \right. \\
	& \left. \qquad \qquad \qquad \qquad \qquad \qquad \qquad \;
 - \,\widetilde{G}^{[d]}\bigg( t_1 \left(1 - r\right)^{\alpha}, \left( t_2^{1/\alpha}- t_1^{1/\alpha}r\right)^{\alpha},
	\dots,\left( t_d^{1/\alpha}- t_1^{1/\alpha}r\right)^{\alpha}\bigg) \right] r^{\alpha - 1}\,\dd r.
\end{align*}
  This means that we have convergence of the normalized hitting time point process with distribution functions
\begin{align}
    G^{[d]}(t_1,\dots, t_d) & := \alpha\, t_1  \int_0^{1} \left[ \widetilde{G}^{[d-1]}\Big(\left( t_2^{1/\alpha}- t_1^{1/\alpha}r\right)^{\alpha},\dots,\left( t_d^{1/\alpha}- t_1^{1/\alpha}r\Big)^{\alpha}\right)  \nonumber \right. \\
    & \left. \qquad\qquad \quad \; - \,\widetilde{G}^{[d]}\Bigl( t_1 \left(1 - r\right)^{\alpha}, \left( t_2^{1/\alpha}- t_1^{1/\alpha}r\right)^{\alpha},
	\dots,\left( t_d^{1/\alpha}- t_1^{1/\alpha}r\right)^{\alpha}\Bigr) \right] r^{\alpha - 1}\,\dd r \,.
    \label{eq:equation_relation_G_d_tilde_G_d}
\end{align}

  Now, assume conversely the convergence of the hitting process. By Helly selection theorem in $\mathbb{R}^d$ (see \cite[Theorem 5.19]{Kal02_SecondEdition}) and diagonal extraction, we can find a subsequence $(n_k)$ such that $\widetilde{G}_{n_k}^{[d]}$ converges pointwise towards $\widetilde{G}^{[d]}_*$ for every $d\geq 0$. But, in this case, alongside this subsequence we have $G^{[d]}_{u,n_k}$ that converges pointwise towards $G^{[d]}_*$ defined by \eqref{eq:equation_relation_G_d_tilde_G_d} from $\widetilde{G}^{[d]}_*$. Since the equation defines uniquely $\widetilde{G}^{[d]}_*$ from $G^{[d]}_* = G^{[d]}$ by Lemma \ref{lem:functional_equation_uniquely_determined_by_one_another_normalized_returns}, it gives the wanted convergence.   
\end{proof}

  It remains to prove the following Lemma ensuring that the relationship found determines uniquely the law of one limit process from the laws of the other.

\begin{lem}
    \label{lem:functional_equation_uniquely_determined_by_one_another_normalized_returns}
    Let $\Phi$ and $\widetilde{\Phi}$ be two processes on the phase space $\overline{\mathbb{R}}_+$. For all $d\geq 1$, write $G^{[d]}$ and $\widetilde{G}^{[d]}$ the distribution functions of their marginals. By convention let $G^{[0]} = \widetilde{G}^{[0]} = 1$. Then, the equation $\forall d\geq 1$, $\forall 0\leq t_1 \leq \dots \leq t_d$,
    \begin{align*}
    G^{[d]}(t_1,\dots, t_d) & := \alpha\, t_1  \int_0^{1} \left[ \widetilde{G}^{[d-1]}\Big(\big( t_2^{1/\alpha}- t_1^{1/\alpha}s\big)^{\alpha},\dots,\big( t_d^{1/\alpha}- t_1^{1/\alpha}s\big)^{\alpha}\Big) \right. \\
	& \left. \qquad\qquad\quad\; 
    - \,\widetilde{G}^{[d]}\Bigl( t_1 \left(1 - s\right)^{\alpha}, \left( t_2^{1/\alpha}- t_1^{1/\alpha}s\right)^{\alpha},
	\dots,\left( t_d^{1/\alpha}- t_1^{1/\alpha}s\right)^{\alpha}\Bigr) \right] \,s^{\alpha - 1}\,\dd s, 
\end{align*}
    defines uniquely the law of $\Phi$ from the law of $\widetilde{\Phi}$ and conversely.
\end{lem}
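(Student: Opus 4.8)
The plan is to dispose of one implication immediately and reduce the other to the inversion of an Abel-type integral equation. The easy direction: the displayed identity is an explicit formula expressing each $G^{[d]}$ in terms of $\widetilde{G}^{[d-1]}$ and $\widetilde{G}^{[d]}$, so the family $(\widetilde{G}^{[d]})_{d\geq 0}$ determines $(G^{[d]})_{d\geq 0}$, and hence (the finite-dimensional distributions determining the law on $(\overline{\mathbb{R}}_+)^{\mathbb{N}}$) the law of $\widetilde{\Phi}$ determines that of $\Phi$. All the content is in the converse.

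First I would linearize the equation through the substitution $\tau_i := t_i^{1/\alpha}$. Put $F^{[d]}(\tau_1,\dots,\tau_d) := G^{[d]}(\tau_1^{\alpha},\dots,\tau_d^{\alpha})$ and $\widetilde{F}^{[d]}(\tau_1,\dots,\tau_d) := \widetilde{G}^{[d]}(\tau_1^{\alpha},\dots,\tau_d^{\alpha})$; then, rewriting $t_j^{1/\alpha}-t_1^{1/\alpha}s = \tau_j - \tau_1 + x$ and $t_1(1-s)^{\alpha} = x^{\alpha}$ and changing variables $x = \tau_1(1-s)$, the equation of the lemma becomes exactly \eqref{eq:relation_hitting_return_infinite_measure}:
\begin{align*}
F^{[d]}(\tau_1,\dots,\tau_d) &= \alpha \int_0^{\tau_1}\Bigl(\widetilde{F}^{[d-1]}(\tau_2 - \tau_1 + x,\dots,\tau_d - \tau_1 + x) \\
&\qquad\qquad - \widetilde{F}^{[d]}(x,\tau_2 - \tau_1 + x,\dots,\tau_d - \tau_1 + x)\Bigr)(\tau_1 - x)^{\alpha - 1}\,\dd x,
\end{align*}
valid for $0 \leq \tau_1 \leq \dots \leq \tau_d$. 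As $t\mapsto t^{1/\alpha}$ is an increasing bijection of $\mathbb{R}_+$, it suffices to prove this equation determines $(\widetilde{F}^{[d]})$ from $(F^{[d]})$.

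Next I would induct on $d$, the case $d=0$ being the convention $\widetilde{F}^{[0]}=1$. Assume $\widetilde{F}^{[d-1]}$ is known. Fix gaps $0 \leq \delta_2 \leq \dots \leq \delta_d$ and restrict the equation to the increasing path $\tau \mapsto (\tau,\tau+\delta_2,\dots,\tau+\delta_d)$. With $g(\tau) := F^{[d]}(\tau,\tau+\delta_2,\dots,\tau+\delta_d)$ and $\psi(x) := \widetilde{F}^{[d-1]}(\delta_2 + x,\dots,\delta_d + x) - \widetilde{F}^{[d]}(x,\delta_2 + x,\dots,\delta_d + x)$, the equation reads $g(\tau) = \alpha\int_0^{\tau}\psi(x)(\tau-x)^{\alpha-1}\,\dd x = \Gamma(1+\alpha)\,(I^{\alpha}\psi)(\tau)$. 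Since $|g|\leq 1$ and $|\psi|\leq 1$, their Laplace transforms exist for $\Re(s)>0$ and satisfy $\mathcal{L}[\psi](s) = s^{\alpha}\,\mathcal{L}[g](s)/\Gamma(1+\alpha)$; hence $\psi$ is determined up to a Lebesgue-null set by $F^{[d]}$ (equivalently, one invokes injectivity of the Abel operator $I^{\alpha}$ on $L^1_{\mathrm{loc}}$; for $d=1$ this is precisely \eqref{eq:Laplace_transform_HTS_from_RTS}). Consequently $\widetilde{F}^{[d]}(x,\delta_2+x,\dots,\delta_d+x) = \widetilde{F}^{[d-1]}(\delta_2+x,\dots,\delta_d+x) - \psi(x)$ is pinned down for a.e.\ $x\geq 0$, and since $x\mapsto \widetilde{F}^{[d]}(x,\delta_2+x,\dots,\delta_d+x)$ is non-decreasing and right-continuous (a multivariate distribution function evaluated along an increasing curve), a.e.\ determination upgrades to determination everywhere. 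Letting $(\delta_2,\dots,\delta_d)$ range over all of $\{0\leq\delta_2\leq\dots\leq\delta_d\}$, and noting each ordered tuple lies on one such path, $\widetilde{F}^{[d]}$ is determined on its whole domain, closing the induction.

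The step I expect to be the main obstacle is the passage from ``$\psi$ determined a.e.'' — which is the most that injectivity of the Riemann–Liouville integral gives — to ``$\widetilde{F}^{[d]}$ determined everywhere''; this is exactly where monotonicity and right-continuity of distribution functions are used. One should also verify carefully that the Laplace transforms at hand are legitimately defined and that the change of variables producing \eqref{eq:relation_hitting_return_infinite_measure} is licit, in particular at $\tau_1=0$ where both sides vanish.
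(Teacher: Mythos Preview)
Your proof is correct and follows essentially the same route as the paper: reduce to the $F^{[d]}$ form via $\tau_i = t_i^{1/\alpha}$, induct on $d$, freeze the gaps $\delta_i = \tau_i - \tau_1$ to obtain an Abel equation in the first variable, and invoke injectivity of $I^{\alpha}$. The paper frames the inductive step as showing the difference of two candidate solutions has vanishing $I^{\alpha}$, while you invert directly, but this is cosmetic; your discussion of the a.e.-to-everywhere upgrade via monotonicity along increasing paths is a point the paper leaves implicit.
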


\begin{rem}
    When $\alpha \in (0,1]$, instead of looking at the distribution function $G^{[d]}$, we can look at $F^{[d]}$ defined by
    \begin{align*}
        F^{[d]}(t_1,\dots,t_d) := G^{[d]}(t_1^{\alpha}, \dots, t_d^{\alpha})\;, \forall t_1,\dots,t_d > 0.
    \end{align*}
    In fact, $F$ is exactly the family of distribution functions of the limit point processes $\Psi$ and $\widetilde{\Psi}$ of Theorem \ref{thm:HTS-REPP_vs_RTS-REPP_infinite_measure_renormalized_measure} by Remark \ref{rem:saying_that_the_two_sclaings_are_equivalents_for_hREPP_rREPP}. Thus, Lemma \ref{lem:functional_equation_uniquely_determined_by_one_another_normalized_returns} is equivalent to Lemma \ref{lem:functional_equation_uniquely_determined_by_one_another} (below), and the equation in Lemma \ref{lem:functional_equation_uniquely_determined_by_one_another} gives \eqref{eq:relation_hitting_return_infinite_measure}, concluding the proof of Theorems \ref{thm:HTS-REPP_vs_RTS-REPP_infinite_measure_renormalized_measure}-\ref{thm:HTS-REPP_vs_RTS-REPP_infinite_measure_renormalized_returns}.
\end{rem}

\begin{lem}
    \label{lem:functional_equation_uniquely_determined_by_one_another}
    Let $\Phi$ and $\widetilde{\Phi}$ be two process on the phase space $\overline{\mathbb{R}}_+$. For all $d\geq 1$, write $F^{[d]}$ and $\widetilde{F}^{[d]}$ the distribution of their marginals. By convention, let $F^{[0]} = \widetilde{F}^{[0]} = 1$. Then, the equations 
    \begin{align*}
        F^{[d]}(t_1,\dots, t_d) &= \alpha \int_0^{t_1} \Bigl( \widetilde{F}^{[d-1]}\left(t_2 -t_1 + x, \dots, t_d - t_1 + x\right) \\
        & \qquad\qquad\; - \widetilde{F}^{[d]}\left(x, t_2 - t_1 + x,\dots, t_d - t_1 +x\right)\Bigr)  (t_1 - x)^{\alpha - 1}\,\dd x, 
    \end{align*}
    where $d\geq 1$ and one takes any $d$-uplet $0\leq t_1\leq \dots \leq t_d$,
    uniquely determines the law of $\Phi$ from the law of $\widetilde{\Phi}$, and vice versa.
\end{lem}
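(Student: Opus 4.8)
One implication is immediate: if the law of $\widetilde\Phi$ is given, then all the functions $\widetilde F^{[d]}$ are known and the displayed identity \emph{defines} $F^{[d]}$ for every $d$, pinning down the finite-dimensional distributions of $\Phi$. The content is the converse, which I would establish by induction on $d$, recognizing the equation at each step as an Abel-type (fractional) integral equation in a single variable. Fix $d\geq 1$ and a ``gap vector'' $\mathbf s=(s_2,\dots,s_d)$ with $0\leq s_2\leq\dots\leq s_d$; writing $t:=t_1$ and $t_j=t+s_j$, set $\varphi_{\mathbf s}(t):=F^{[d]}(t,t+s_2,\dots,t+s_d)$, $\widetilde\varphi_{\mathbf s}(t):=\widetilde F^{[d]}(t,t+s_2,\dots,t+s_d)$ and $h_{\mathbf s}(x):=\widetilde F^{[d-1]}(x+s_2,\dots,x+s_d)$ (with $h_{\mathbf s}\equiv 1$ when $d=1$). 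With this notation the identity in the statement becomes
\[
\varphi_{\mathbf s}(t)=\alpha\int_0^t\bigl(h_{\mathbf s}(x)-\widetilde\varphi_{\mathbf s}(x)\bigr)(t-x)^{\alpha-1}\,\dd x=\Gamma(1+\alpha)\,\bigl(I^{\alpha}(h_{\mathbf s}-\widetilde\varphi_{\mathbf s})\bigr)(t),
\]
where $I^{\alpha}$ is the Riemann--Liouville integral \eqref{eq:definition_Riemann-Liouville_integral} and we used $\alpha\Gamma(\alpha)=\Gamma(1+\alpha)$. Note that $h_{\mathbf s}-\widetilde\varphi_{\mathbf s}$ is a difference of bounded monotone functions, hence of locally bounded variation, hence an element of $L^1_{\mathrm{loc}}[0,\infty)$.

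The induction then runs as follows. For $d=1$ the displayed equation reads $I^{\alpha}\bigl(1-\widetilde F^{[1]}\bigr)=\Gamma(1+\alpha)^{-1}F^{[1]}$, and inverting the fractional integral recovers $1-\widetilde F^{[1]}$ almost everywhere, hence everywhere by right-continuity of $\widetilde F^{[1]}$. For the inductive step, suppose $\widetilde F^{[d-1]}$ is already determined by $F^{[1]},\dots,F^{[d-1]}$; then $h_{\mathbf s}$ is known, the displayed equation determines $h_{\mathbf s}-\widetilde\varphi_{\mathbf s}$ a.e.\ and hence everywhere (right-continuity of $h_{\mathbf s}$ and of $\widetilde\varphi_{\mathbf s}$ in $t$), so $\widetilde\varphi_{\mathbf s}(t)=\widetilde F^{[d]}(t,t+s_2,\dots,t+s_d)$ is determined for every $t\geq 0$ and every gap vector $\mathbf s$. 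This fixes $\widetilde F^{[d]}$ on the cone $\{0\leq t_1\leq\dots\leq t_d\}$, and since $\widetilde\Phi$ takes values in $\mathcal W$ (non-decreasing coordinates), for an arbitrary $(t_1,\dots,t_d)$ one has $\widetilde F^{[d]}(t_1,\dots,t_d)=\widetilde F^{[d]}(\tau_1,\dots,\tau_d)$ with $\tau_j:=\min(t_j,t_{j+1},\dots,t_d)$ non-decreasing, so $\widetilde F^{[d]}$ is determined everywhere. Iterating over $d$ then determines every finite-dimensional marginal of $\widetilde\Phi$, hence its law.

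The one genuinely substantive ingredient, and the step where I expect the (mild) technical care to sit, is the injectivity of $I^{\alpha}$ on $L^1_{\mathrm{loc}}[0,\infty)$. For $\alpha=1$ this is just injectivity of indefinite integration. For $\alpha\in(0,1)$ it follows from the semigroup property $I^{1-\alpha}I^{\alpha}=I^{1}$: if $I^{\alpha}f$ vanishes on an interval then so does $I^{1}f=I^{1-\alpha}(I^{\alpha}f)$, whence $f=0$ a.e.; alternatively one may invoke the classical Abel inversion formula or mimic the Laplace-transform computation already carried out for the first marginal in Corollary \ref{cor:equivalence_HTS/RTS_infinite_measure}. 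The remaining verifications---that the integrand is a bona fide element of $L^1_{\mathrm{loc}}$, and the passage from ``determined a.e.'' to ``determined everywhere'' via right-continuity of distribution functions---are routine.
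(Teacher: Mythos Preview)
Your proof is correct and follows essentially the same route as the paper: reduce to a one-variable Abel integral equation via the gap substitution $t_j=t+s_j$, then invoke injectivity of $I^{\alpha}$ on $L^1_{\mathrm{loc}}$ to conclude by induction on $d$. You are in fact somewhat more explicit than the paper about the a.e.-to-everywhere passage and the extension from the ordered cone to all of $\mathbb{R}_+^d$, but these are refinements of the same argument rather than a different approach.
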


\begin{proof}[Proof (of Lemma \ref{lem:functional_equation_uniquely_determined_by_one_another})]
    It is immediate that the law of $\Phi$ is uniquely determined by the law of $\widetilde{\Phi}$. Now, assume that there are two stochastic processes $\widetilde{\Phi}_1$ and $\widetilde{\Phi}_2$ leading to the same $\Phi$. We will argue by induction. The case $d=1$ corresponds to the equivalence between hitting and return for the first return and has already been dealt in \cite[Remark 4.2.b)]{RZ20}. For $d\geq 1$, assuming the distribution functions are equal up to $d-1$, it means that for all $0\leq t_1\leq \dots \leq t_d$,
    \begin{align*}
        &\alpha \int_0^{t_1} \widetilde{F}_1^{[d]} ( x, t_2 - t_1 + x, \dots, t_d - t_1 + x)(t_1 - x)^{\alpha - 1}\,\dd x  \\
    &\qquad = \alpha \int_0^{t_1} \widetilde{F}_2^{[d]} ( x, t_2 - t_1 + x, \dots, t_d - t_1 + x)(t_1 - x)^{\alpha - 1}\,\dd x\,.
    \end{align*}

  Taking $s_2, \dots, s_d$ such that $t_i - t_1 = s_i$ for $2\leq i \leq d$, we obtain, for all $t_1,s_d,\dots,s_d\geq 0$,
\begin{align*}
        &\alpha \int_0^{t_1} \widetilde{F}_1^{[d]} ( x, x + s_2, \dots, x+ s_d )(t_1 - x)^{\alpha - 1}\,\dd x  \\
    &\qquad = \alpha \int_0^{t_1} \widetilde{F}_2^{[d]} ( x, x + s_2, \dots, x + s_d)(t_1 - x)^{\alpha - 1}\,\dd x. 
\end{align*}
Thus, if we define
\begin{align*}
    f : x \mapsto \widetilde{F}_1^{[d]} ( x, x + s_2, \dots, x+ s_d ) - \widetilde{F}_2^{[d]} ( x, x + s_2, \dots, x + s_d) \,,
\end{align*}
we then have 
\begin{align*}
    \int_0^{t_1} f(x)(t_1 - x)^{\alpha - 1}\,\dd x = 0\,,\; \forall t_1 \geq 0.
\end{align*}

  Thus, by definition of the Riemann-Liouville integral \eqref{eq:definition_Riemann-Liouville_integral}, we have $I^{\alpha}f = 0$ which implies that $f = 0$, meaning that $\widetilde{F}_1^{[d]}= \widetilde{F}_2^{[d]}$. It completes the induction and the proof.
\end{proof}

  We turn now to the proof of Corollaries \ref{cor:equivalence_HTS/RTS_infinite_measure}-\ref{cor:generalized_Kolmogorov-Feller_HTSvsRTS}

\begin{proof}[Proof (of Corollary \ref{cor:equivalence_HTS/RTS_infinite_measure})] This is immediate by looking at the projection on the first coordinate, \textit{i.e.} $d = 1$, in \eqref{eq:relation_hitting_return_infinite_measure}. With $\widetilde{F}^{[0]} = 1$, we get
\begin{align*}
    F^{[1]}(t) = \alpha \int_0^1 (1 - \widetilde{F}^{[1]}(u) (t-u)^{\alpha - 1}\,\dd u, \quad t\geq 0,
\end{align*}
which is exactly \eqref{eq:relation_HTS_RTS_first_return}. Equation \eqref{eq:Laplace_transform_HTS_from_RTS} can be found by the same method as in \cite[Lemma 7]{PSZ11}.
\end{proof}

\begin{proof}[Proof (of Corollary \ref{cor:generalized_Kolmogorov-Feller_HTSvsRTS})] Recall the definition of $P_N(d,t) = \mathbb{P}(N[0,t] = d)$ for a point process $N$, $d\geq 0$ and $t\in \mathbb{R}$. Let $d\geq 1$ and $t\geq 0$. Then, \eqref{eq:relation_hitting_return_infinite_measure} with $t_1,\dots, t_d = t$ is
\begin{align*}
    F^{[d]}(t,\dots, t) = \alpha \int_0^t \left(\widetilde{F}^{[d-1]}(x,\dots, x) - \widetilde{F}^{[d]}(x, \dots, x) \right)(t-x)^{\alpha - 1}\,\dd x.
\end{align*}
Since $F^{[d]}(t,\dots,t) = \mathbb{P}(N[0,t] \geq d)$ and $\widetilde{F}^{[d]}(t,\dots,t) = \mathbb{P}(\widetilde{N}[0,t] \geq d)$ for all $d\geq 0$, it yields
\begin{align*}
    \mathbb{P}(N[0,t] \geq  d) &= \alpha \int_0^t \mathbb{P}(\widetilde{N}[0,t] = d-1)(t - x)^{\alpha - 1}\,\dd x\,.
\end{align*}
  Thus, for every $d\geq 1$, we get
\begin{align*}
    1 - \sum_{k = 0}^{d} P_N(k, \cdot) &= \Gamma(1+\alpha)\, I^{\alpha}\big(P_{\widetilde{N}}(d,\cdot)\big)\,,
\end{align*}
whence
\begin{align*}
    P_{N}(d, \cdot) = \Gamma(1+\alpha)\, I^{\alpha} \big( P_{\widetilde{N}}(d, \cdot) - P_{\widetilde{N}}(d-1,\cdot)\big) 
\end{align*}
  which is the integral formulation of \eqref{eq:generalized_Kolmogorov-Feller_HTSvsRTS}. Similarly, for $d = 0$, we obtain
\begin{align*}
    P_{N}(0,\cdot) = 1 - \Gamma(1+\alpha)I^{\alpha}\big(P_{\widetilde{N}}(0, \cdot)\big).
\end{align*}
\end{proof}

\subsubsection{Proof of Proposition \ref{prop:fix_point_equation_FPP}}

  As suggested in Remark \ref{rem:FPP_PhiFPP}, we actually prove the characterization of the law and its uniqueness for the stochastic process.  It entails the uniqueness of the point process via the composition by $\Xi$. The process $\Phi_{\fPp_{\alpha}(\Gamma(1+\alpha))} \eqlaw (\phi^{(i)})_{i\geq 1}$ with $\phi^{(i)} = \sum_{k = 1}^i X_k$ and $(X_k)_{k\geq 1}$ i.i.d with common law $H_{\alpha}(\Gamma(1+\alpha))$, is such that $\Xi(\Phi_{\fPp_{\alpha}(\Gamma(1+\alpha))}) \eqlaw \fPp_{\alpha}(\Gamma(1+\alpha))$. \\
We can do the same with $\Phi_{\RPP(W_{\alpha, \theta}(\theta\Gamma(1+\alpha)))} = (\psi^{(i)})_{i\geq 1}$ where $\psi^{(i)} = \sum_{k = 1}^i W_k$ and $(W_k)_{k\geq 1}$ are i.i.d having as a common law $W_{\alpha, \theta}(\theta\Gamma(1+\alpha))$ (see \eqref{eq:def_of_W_alpha_theta} for the definition of these random variables). Then, $\Xi (\Phi_{\RPP(W_{\alpha, \theta}(\theta\Gamma(1+\alpha))}) \eqlaw  \RPP(W_{\alpha, \theta}(\theta\Gamma(1+\alpha)))$. 

\begin{proof}[Proof (of Proposition \ref{prop:fix_point_equation_FPP})]
We will actually prove that for $\theta \in (0,1]$, $\Phi_{\RPP(W_{\alpha, \theta}(\theta\Gamma(1+\alpha)))}$ is the only stochastic process such that for all $d\geq 1$ and $t_1\leq \dots \leq t_d$,
    \begin{align}
        \label{eq:equation_CFPP_fix_point}
        \widetilde{F}^{[d]}(t_1,\dots, t_d) & = (1 - \theta) \widetilde{F}^{[d-1]}(t_1,\dots, t_d) \nonumber \\
        & \quad + \theta \alpha \int_0^{t_1} \Bigl( \widetilde{F}^{[d-1]}\left(t_2 -t_1 + x, \dots, t_d - t_1 + x\right) \\
        & \quad - \widetilde{F}^{[d]}\left(x, t_2 - t_1 + x,\dots, t_d - t_1 +x\right)\Bigr)  (t_1 - x)^{\alpha - 1}\,\dd x.\nonumber
    \end{align}
      In particular, when $\theta = 1$, we will get that $\fPp_{\alpha}(\Gamma(1+\alpha))$ is the only process such that its finite-dimensional marginals are fixed points of the transformation from return times to hitting times.\\
    
      As in Lemma \ref{lem:functional_equation_uniquely_determined_by_one_another}, we show by induction on $d$ that there is at most a one fixed distribution $F^{[d]}$. For $d = 1$, \eqref{eq:Laplace_transform_HTS_from_RTS} in Corollary \ref{cor:equivalence_HTS/RTS_infinite_measure} clearly ensures that there is a unique fixed law and using the Laplace transform, $W_{\alpha,\theta}(\theta\Gamma(1+\alpha))$ can be identified as the fixed law\footnote{This result corresponds to \cite[Lemma 7.1]{RZ20} and $W_{\alpha,\theta}(\theta\Gamma(1+\alpha))$ corresponds to the law $\widetilde{H}_{\alpha, \theta}$ in the article. However, in \cite{RZ20}, the constant $\Gamma(1+\alpha)$ has been forgotten in the identification.}. \\
    
      Now assume $d\geq 2$ and that the result is true for $d-1$. Let $F_1^{[d]}$ and $F_2^{[d]}$ be two distribution functions compatible with $F_1^{[d-1]}$ and $F_2^{[d]}$ satisfying \eqref{eq:equation_CFPP_fix_point}. Since $F_1^{[d-1]} = F_2^{[d-1]}$ by hypothesis,
    for all $t_1,s_2,\dots,s_d \geq 0$, we have 
    \begin{align*}
        &F_1^{[d]}(t_1,t_1+s_2,\dots, t_1 + s_d) - F_2^{[d]}(t_1, t_1 + s_2, \dots, t_1 + s_d) \nonumber \\
        & \quad = \alpha \int_0^{t_1} \Big( F_2^{[d]}(x, x + s_2,\dots, s+ s_d) - F_1^{[d]}(x, x + s_2,\dots, s+ s_d) \Big) (t_1 - x)^{\alpha - 1}\,\dd x.
    \end{align*}

  Letting 
\begin{align*}
    h : x \mapsto F_1^{[d]}(x, x+ s_2,\dots, x+ s_d) - F_2^{[d]}(x, x+ s_2, \dots, x + s_d)
\end{align*}
the equation can be rewritten as
\begin{align*}
    h = -\Gamma(1+\alpha)\, I^{\alpha}h\,.
\end{align*}
  Going into the Laplace domain (which is possible because $h$ is locally integrable as its absolute value is bounded by $1$), we get $h = 0$ and thus $F_1^{[d]} = F_2^{[d]}$. 

Thus, there can be at most one process whose distribution functions are fixed points of 
\eqref{eq:equation_CFPP_fix_point}. It remains to demonstrate the existence of at least one fixed 
point for this equation. Fortunately, the theory applies to certain examples of null-recurrent 
Markov chains for which results concerning first hitting and return times are well-established (see 
\cite{BZ01, PSZ13} for the case \(\theta = 1\), and \cite{RZ20} for the general case). 

By leveraging the strong Markov property, these results extend to the point process level in cases 
where the waiting times are independent. In such scenarios, the process \(\Phi_{\RPP(W_{\alpha, 
\theta}(\theta\Gamma(1+\alpha)))}\) emerges naturally and must satisfy 
\eqref{eq:equation_CFPP_fix_point}. This demonstrates that a fixed point indeed exists. 

Consequently, the fixed point is both unique and well-defined\footnote{In \cite{RZ20},
the first-return case confirms this fixed point, although it omits the constant
\(\Gamma(1+\alpha)\).}.
\end{proof}

\subsection{Sufficient conditions for convergence towards $\fPp_{\alpha}$ and $\cfPp_{\alpha}$}

  In this section, we prove Theorem \ref{thm:sufficient_conditions_convergence_compound_FPP}. We first need to recall the following lemma giving a uniform control of the convergence to $0$ for the average of the iterations by the transfer operator for functions in a compact subset of $L^1(\mu)$.

\begin{lem} \textup{\cite[Theorem 3.1]{Zwe07_InfiniteMeasurePreservingTransformationsWithCompactFirstRegeneration}}
    \label{lem:convergence_Birkhoff_transfer_operator_compact}
    Let $(X, \mathscr{B}, \mu, T)$ be a CEMPT and $\mathcal{U}$ a compact subset of $L_1(\mu)$ such that $\int u\,\dd\mu = 1$ for all $u\in \mathcal{U}$. Then, uniformly in $u, u^* \in \mathcal{U}$, we have 
    \begin{align*}
        \left\| \frac{1}{M}\sum_{j = 0}^{M-1} \widehat{T}^ju - \frac{1}{M}\sum_{j = 0}^{M-1} \widehat{T}^ju^*\right\|_{L^1(\mu)} \xrightarrow[M\to +\infty]{} 0.
    \end{align*}
\end{lem}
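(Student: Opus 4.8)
The plan is to first reduce the uniform statement to a single-pair statement, and then to establish that single-pair statement from the conservativity and ergodicity of $T$. Write $A_M := \frac1M\sum_{j=0}^{M-1}\widehat{T}^{\,j}$; since each $\widehat{T}$ is a positive $L^1$-contraction preserving integrals, $A_M$ is an $L^1$-contraction, and $\widehat{T}$ is in fact a Dunford–Schwartz operator (positive, and contracting both $L^1$ and $L^\infty$, the latter because $\widehat{T}\mathbf{1}\le\mathbf{1}$). For the uniformity: given $\varepsilon>0$, cover the compact set $\mathcal{U}\subset L^1(\mu)$ by finitely many balls $B(u_1,\varepsilon),\dots,B(u_N,\varepsilon)$ with centres in $\mathcal{U}$; for arbitrary $u,u^*\in\mathcal{U}$ choose $u_i,u_j$ with $\|u-u_i\|_1,\|u^*-u_j\|_1<\varepsilon$ and estimate, using $\|A_M\|_{L^1\to L^1}\le 1$,
\[
\|A_Mu-A_Mu^*\|_1\le \|A_M(u-u_i)\|_1+\|A_Mu_i-A_Mu_j\|_1+\|A_M(u_j-u^*)\|_1\le 2\varepsilon+\max_{1\le i,j\le N}\|A_Mu_i-A_Mu_j\|_1 ,
\]
which is $<3\varepsilon$ once $M$ is large, by the single-pair statement applied to the finitely many pairs $(u_i,u_j)$. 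So it remains to prove: for fixed probability densities $u,u^*$, one has $\|A_M(u-u^*)\|_1\to 0$.

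For the single-pair statement, set $h:=u-u^*\in L^1(\mu)$ with $\int h\,\dd\mu=0$. Since $\|A_M\|\le 1$ and $\mu$ is $\sigma$-finite, by density it suffices to treat $h$ a finite linear combination of differences $\mathbf{1}_A/\mu(A)-\mathbf{1}_B/\mu(B)$ with $A,B$ of positive finite measure, hence (again by linearity) just $h=\mathbf{1}_A/\mu(A)-\mathbf{1}_B/\mu(B)$. Fix an auxiliary set $Y$ of finite positive measure that essentially contains $A\cup B$ (for instance the uniform set of \ref{cond:Living_in_uniform_set}), and pass to the induced CEMPT $(Y,\mathscr{B}\cap Y,\mu_Y,T_Y)$, which preserves the \emph{probability} measure $\mu_Y$. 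On the finite-measure space $L^1(\mu_Y)$ the mean ergodic theorem applies, so $\frac1M\sum_{j<M}\widehat{T_Y}^{\,j}v\to(\int v\,\dd\mu_Y)\,\mathbf{1}_Y$ in $L^1(\mu_Y)$; applied to $v=\mathbf{1}_A/\mu(A)-\mathbf{1}_B/\mu(B)$, whose $\mu_Y$-integral vanishes, this shows that the $Y$-marginals of the Cesàro images started from $A$ and from $B$ merge. One then transports this to $\widehat{T}$ on $X$ through the Kakutani tower over $Y$: decompose $A_M h$ according to the first entrance to $Y$ (a contribution that, after normalization, is negligible and anyway independent of whether one started in $A$ or in $B$), the action of the induced map on $Y$, and the common excursion profile off $Y$. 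Once the $Y$-marginals are forced together by the induced part, the full Cesàro images agree in the limit, giving $\|A_Mh\|_1\to 0$.

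The main obstacle is genuinely analytic and specific to infinite measure: although $A_M(\mathbf{1}_A/\mu(A))\to 0$ and $A_M(\mathbf{1}_B/\mu(B))\to 0$ $\mu$-almost everywhere (by Hopf's ratio ergodic theorem together with $\sum_{j<M}\widehat{T}^{\,j}g\to\infty$ a.e.), neither converges to $0$ in $L^1$, because mass escapes to infinity while $\int A_M(\cdot)\,\dd\mu\equiv 1$. Hence one cannot finish by dominated convergence or Scheffé's lemma applied to either term separately; the argument must show that the \emph{escaping fraction of mass is asymptotically the same} for the two initial densities, which is exactly what the induced-map / ratio-ergodic-theorem step delivers. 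Concretely, the crux is a uniform-integrability-type control: for every $\varepsilon>0$ there is a finite-measure set $Z$ and an $M_0$ so that $\int_{X\setminus Z}A_M(\mathbf{1}_A/\mu(A))\,\dd\mu$ and $\int_{X\setminus Z}A_M(\mathbf{1}_B/\mu(B))\,\dd\mu$ are both within $\varepsilon$ of a common value for $M\ge M_0$; on $Z$ one then combines the a.e.\ convergence with a truncated dominated ergodic theorem. This "common escaping mass plus local $L^1$ convergence" splitting is the heart of the matter, and once it is in place the remainder is bookkeeping with the excursion estimates — which is precisely the content of \cite[Theorem 3.1]{Zwe07_InfiniteMeasurePreservingTransformationsWithCompactFirstRegeneration}, whose compact-first-regeneration hypothesis packages exactly these tail controls.
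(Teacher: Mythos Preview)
The paper does not prove this lemma; it simply quotes it from \cite{Zwe07_InfiniteMeasurePreservingTransformationsWithCompactFirstRegeneration}. Your compactness reduction from the uniform statement to the single-pair statement is correct and standard.

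The single-pair part of your argument, however, is only a sketch. You outline an inducing/Kakutani-tower strategy, correctly identify the analytic obstacle (mass escaping to infinity, so neither $A_Mu$ nor $A_Mu^*$ converges in $L^1$ individually), state that the crux is a ``common escaping mass'' control, and then effectively defer back to the cited reference for that control. As written this is not a proof but a plan. Two smaller issues: the parenthetical appeal to \ref{cond:Living_in_uniform_set} is out of place, since that condition is not a hypothesis of the lemma (any finite-measure $Y\supseteq A\cup B$ exists by $\sigma$-finiteness alone); and your closing sentence suggests the result requires a ``compact first regeneration'' hypothesis, which it does not --- the only compactness in the statement is that of $\mathcal{U}\subset L^1$, and it is used solely for the uniformity step you already handled.

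For the single-pair statement there is a much shorter, purely functional-analytic argument that bypasses inducing entirely. Since $\mu$ is $\sigma$-finite, $(L^1)^*=L^\infty$; any $\phi\in L^\infty$ annihilating $(\id-\widehat{T})L^1$ satisfies $\int(\phi-\phi\circ T)\,g\,\dd\mu=0$ for all $g\in L^1$, hence $\phi=\phi\circ T$ a.e., hence $\phi$ is constant by ergodicity. Thus $\overline{(\id-\widehat{T})L^1}=\{f\in L^1:\int f\,\dd\mu=0\}$. Given $h=u-u^*$ and $\varepsilon>0$, pick $g\in L^1$ with $\|h-(\id-\widehat{T})g\|_1<\varepsilon$; the telescoping identity $A_M(\id-\widehat{T})g=\tfrac1M(g-\widehat{T}^Mg)$ together with $\|A_M\|_{L^1\to L^1}\le 1$ yields $\|A_Mh\|_1<\varepsilon+2\|g\|_1/M<2\varepsilon$ for $M$ large. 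Note this uses only ergodicity; conservativity plays no role in the single-pair statement.
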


  Now, we can dive into the proof of Theorem \ref{thm:sufficient_conditions_convergence_compound_FPP}. 

\begin{proof}[Proof (of Theorem \ref{thm:sufficient_conditions_convergence_compound_FPP})] As discussed in Remark \ref{rem:FPP_PhiFPP} and in the proof of Proposition \ref{prop:fix_point_equation_FPP}, we will actually show the convergence of the process. Then, the application of $\Xi$ and the extended continuous mapping theorem give the convergence of the point process. \\
  We consider the sequence of stochastic processes $(\gamma(\mu(B_n))\Phi_{B_n})_{n\geq 0}$ on $\overline{\mathbb{R}}_+$ and the sequence of probability spaces $(B_n,\mu_{B_n})$, \textit{i.e} we look at the succesive return times. As $(\overline{\mathbb{R}}_+)^{\mathbb{N}}$is compact, this sequence is tight so up to a subsequence and that the convergence of the stochastic process is characterized by the convergence of its finite-dimensional marginals, we can assume that the familly of distribution functions $(\widetilde{F}_{B_n}^{[d]})_{d\geq 1}$ converges towards the family of functions $(\widetilde{F}^{[d]})_{d\geq 0}$ (meaning that we have pointwise convergence for each one of them at the continuity points of $\widetilde{F}^{[d]}$). We are going to show that the only possible limits are the distribution functions of the successive return times of the stochastic process $\Phi_{\RPP(W_{\alpha, \theta}(\theta\Gamma(1+\alpha)))}$, which is enough to get the convergence towards this process. By Theorem \ref{thm:HTS-REPP_vs_RTS-REPP_infinite_measure_renormalized_measure}, for any given 
density \( u \), let \((F_{B_n, v}^{[d]})_{d \geq 1}\) denote the family of distribution functions 
corresponding to the finite-dimensional marginals of the stochastic process
\((\gamma(\mu(B_n))\Phi_{B_n})\) on the probability space \((X, \mu_{v})\), where \(\mu_v\) is a 
probability measure absolutely continuous with respect to \(\mu\), having density \( v \). Then, 
the family of renormalized distribution functions \((\widetilde{F}_{B_n,v}^{[d]})_{d \geq 1}\) 
converges to the family of functions \((F^{[d]})_{d \geq 1}\). 
Both \((\widetilde{F}_{B_n,v}^{[d]})_{d \geq 1}\) and \((F^{[d]})_{d \geq 1}\) satisfy the 
relationship given in \eqref{eq:relation_hitting_return_infinite_measure}.

      For all $d\geq 1$ and $0\leq t_1\leq \dots \leq t_d$ such that $(t_1,\dots, t_d)$ is a continuity point of $\widetilde{F}^{[d]}$, we have 
    \begin{align}
        \label{eq:CFPP_decomposition_between_immediate_return_and_escaping_annulus}
        \widetilde{F}_{B_n}^{[d]}(t_1,\dots, t_d) =& \frac{\mu(U(B_n))}{\mu(B_n)}\mu_{U(B_n)}\big(\gamma(\mu_{B_n})\,\lr_{B_n} \leq t_1,\dots, \gamma(\mu(B_n))\,r_{B_n}^{(d)} \leq t_d\big) \nonumber\\
        & +\frac{\mu(Q(B_n))}{\mu(B_n)} \mu_{Q(B_n)}\big(\gamma(\mu_{B_n})\,\lr_{B_n} \leq t_1,\dots, \gamma(\mu(B_n))\,r_{B_n}^{(d)} \leq t_d\big)\,.
    \end{align}
    We are going to study the two members separately, in Lemma \ref{lem:first_term_decomposition_in_proof_sufficient_condition_CFPP} and Lemma \ref{lem:second_term_decomposition_in_proof_sufficient_condition_CFPP}. When $U(B_n) =\emptyset$ the first term is trivially equal to 0 by convention and we can jump to Lemma \ref{lem:second_term_decomposition_in_proof_sufficient_condition_CFPP} directly.
    
    \begin{lem}
        \label{lem:first_term_decomposition_in_proof_sufficient_condition_CFPP}
        Assume $U(B_n)\neq \emptyset$. For all $d\geq 1$ and $0\leq t_1\leq \dots \leq t_d$ such that $(t_2,\dots, t_d)$ is a continuity point of $\widetilde{F}^{[d-1]}$, we have
        \begin{align*}
            \frac{\mu(U(B_n))}{\mu(B_n)}\mu_{U(B_n)}\big(\gamma(\mu(B_n))\,\lr_{B_n} \leq t_1, \dots, \gamma(\mu(B_n))\,\lr_{B_n}^{(d)}\leq t_d\big) \xrightarrow[n\to +\infty]{} (1 - \theta)\widetilde{F}^{[d-1]}(t_2, \dots, t_d).
        \end{align*}
    \end{lem}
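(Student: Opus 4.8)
The plan is to split the event according to whether the first return occurs before or after the waiting time $\tau_n$, and then use the assumptions $(A)_\alpha$ to control each piece. First I would write
\[
\mu_{U(B_n)}\big(\gamma(\mu(B_n))\,\lr_{B_n}\leq t_1,\dots,\gamma(\mu(B_n))\,\lr_{B_n}^{(d)}\leq t_d\big)
= \mu_{U(B_n)}\big(\lr_{B_n}>\tau_n,\ \dots\big) + \mu_{U(B_n)}\big(\lr_{B_n}\leq \tau_n,\ \dots\big).
\]
By \ref{cond_CFPP:cluster_compatible_tau_n_cluster_from_U}, $\mu_{U(B_n)}(\lr_{B_n}>\tau_n)\to 0$, so the first summand is negligible (for $t_1>0$; for $t_1=0$ the whole left side tends to $0$ by $\gamma(\mu(B_n))\tau_n\Rightarrow 0$ and \ref{cond_CFPP:cluster_compatible_tau_n_cluster_from_U}, which matches $(1-\theta)\widetilde F^{[d-1]}$ evaluated with a zero first argument only if $d=1$; I would handle the boundary cases separately). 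On the event $\{\lr_{B_n}\leq\tau_n\}$, since $\gamma(\mu(B_n))\tau_n\xRightarrow{\mu_{B_n}}0$ by \ref{cond_CFPP:tau_n_small_enough}, we have $\gamma(\mu(B_n))\lr_{B_n}\to 0$ in probability, so the constraint $\gamma(\mu(B_n))\lr_{B_n}\leq t_1$ becomes automatically satisfied (for $t_1>0$), and $\gamma(\mu(B_n))\lr_{B_n}^{(k)}$ and $\gamma(\mu(B_n))(\lr_{B_n}^{(k)}-\lr_{B_n})=\gamma(\mu(B_n))\,\lr_{B_n}^{(k-1)}\circ T^{\lr_{B_n}}$ differ by a quantity tending to $0$ in probability. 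Hence the second summand is asymptotically
\[
\mu_{U(B_n)}\Big(\lr_{B_n}\leq\tau_n,\ \gamma(\mu(B_n))\,\big(\lr_{B_n}^{(k)}-\lr_{B_n}\big)\leq t_k,\ 2\leq k\leq d\Big) + o(1).
\]

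Next I would push forward by $T^{\lr_{B_n}}$, i.e.\ use the transfer operator $\widehat{T_{B_n}}$ associated with the induced map on $B_n$. Writing the above as an integral against $\mathbf 1_{U(B_n)}/\mu(U(B_n))$ and applying $\widehat{T_{B_n}}$, the event $\{\lr_{B_n}^{(k)}-\lr_{B_n}\leq\cdot\}$ becomes $\{\lr_{B_n}^{(k-1)}\leq\cdot\}$ on $B_n$, so I obtain
\[
\int_{\{\gamma(\mu(B_n))\,\lr_{B_n}^{(k-1)}\leq t_k,\ 2\leq k\leq d\}} \widehat{T_{B_n}}\Big(\frac{\mathbf 1_{U(B_n)}}{\mu(U(B_n))}\Big)\,\dd\mu + o(1).
\]
By \ref{cond_CFPP:Compatibility_Geometric_law}, $\widehat{T_{B_n}}(\mathbf 1_{U(B_n)}/\mu(U(B_n)))$ is $L^\infty(\mu_{B_n})$-close to $\mathbf 1_{B_n}/\mu(B_n)$, so up to $o(1)$ the integral equals $\mu(B_n)^{-1}\mu\big(B_n\cap\{\gamma(\mu(B_n))\,\lr_{B_n}^{(k-1)}\leq t_k\}\big)=\widetilde F_{B_n}^{[d-1]}(t_2,\dots,t_d)$, which converges to $\widetilde F^{[d-1]}(t_2,\dots,t_d)$ at continuity points. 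Finally, multiplying back by $\mu(U(B_n))/\mu(B_n)$, which tends to $1-\theta$ by \ref{cond_CFPP:extremal_index}, gives the claimed limit $(1-\theta)\widetilde F^{[d-1]}(t_2,\dots,t_d)$.

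The main obstacle I anticipate is the careful bookkeeping of the $o(1)$ error terms: one must check that replacing $\gamma(\mu(B_n))\lr_{B_n}^{(k)}$ by $\gamma(\mu(B_n))(\lr_{B_n}^{(k)}-\lr_{B_n})$ is legitimate uniformly in $2\leq k\leq d$ and doesn't break the inequalities at continuity points (a standard Slutsky-type argument, but requiring that the $t_k$ be continuity points, which is why the statement restricts to such), and that the error in \ref{cond_CFPP:Compatibility_Geometric_law}, being controlled in $L^\infty(\mu_{B_n})$, indeed produces an error of size $o(1)\cdot\mu(B_n)^{-1}\mu(B_n)=o(1)$ after integrating the indicator. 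The boundary case $t_1=0$ (and more generally degenerate $t_i$) also needs a separate short argument, as does the possibility that $\widetilde F^{[d-1]}$ is discontinuous at $(t_2,\dots,t_d)$, which is excluded by hypothesis.
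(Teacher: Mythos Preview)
Your proposal is correct and follows essentially the same strategy as the paper: both use \ref{cond_CFPP:extremal_index} for the prefactor, push forward by the induced map and invoke \ref{cond_CFPP:Compatibility_Geometric_law} to reduce to $\widetilde F_{B_n}^{[d-1]}$, and use \ref{cond_CFPP:tau_n_small_enough} together with \ref{cond_CFPP:cluster_compatible_tau_n_cluster_from_U} to show the first return is asymptotically negligible. The only cosmetic difference is packaging: the paper presents a sandwich argument (an upper bound by simply dropping the constraint $\gamma(\mu(B_n))\,r_{B_n}\le t_1$, and a lower bound obtained by shifting the $t_k$ by a small $\varepsilon$ and subtracting the probability that $\gamma(\mu(B_n))\,r_{B_n}\ge\varepsilon$), whereas you split on $\{r_{B_n}\le\tau_n\}$ and argue via Slutsky; the paper's sandwich sidesteps the bookkeeping you flag about carrying the indicator $\mathbf 1_{\{r_{B_n}\le\tau_n\}}$ through the transfer operator, but once you note (as you implicitly do) that this indicator can be dropped up to $o(1)$ by \ref{cond_CFPP:cluster_compatible_tau_n_cluster_from_U}, the two arguments are the same.
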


    \begin{proof} [Proof (of Lemma \ref{lem:first_term_decomposition_in_proof_sufficient_condition_CFPP})]
      By \ref{cond_CFPP:extremal_index}, we have $\mu(U(B_n))/\mu(B_n) \xrightarrow[n\to +\infty]{} 1 - \theta$ . It vanishes when $\theta = 1$ and else we have
    \begin{align*}
        & \mu_{U(B_n)}\big(\gamma(\mu(B_n))\,\lr_{B_n} \leq t_1,\dots, \gamma(\mu(B_n))\,\lr_{B_n}^{(d)} \leq t_d \big) \\
        & \qquad \qquad \leq \mu_{U(B_n)}\left(\bigcap_{i = 1}^{d-1} \left\{ \gamma(\mu(B_n))\,\lr_{B_n}^{(i)} \circ T_{B_n} \leq t_{i+1}\right\} \right) \\
        & \qquad \qquad \leq \mu(B_n)\int_{B_n} \frac{\mathbf{1}_{U(B_n)}}{\mu(U(B_n))} \left(\prod_{i = 1}^{d-1} \mathbf{1}_{\{\gamma(\mu(B_n))\,\lr_{B_n}^{(i)} \leq t_{i+1}\}}\right) \circ T_{B_n}\,\dd\mu_{B_n} \\
        & \qquad \qquad \leq \mu(B_n)\int_{B_n} \widehat{T_{B_n}}\left(\frac{\mathbf{1}_{U(B_n)}}{\mu(U(B_n))}\right)  \left(\prod_{i = 1}^{d-1} \mathbf{1}_{\{\gamma(\mu(B_n))\,r_{B_n}^{(i)} \leq t_{i+1}\}}\right) \,\dd\mu_{B_n}  \\
        & \qquad \qquad \xrightarrow[n\to +\infty]{} \int_{B_n} \prod_{i = 1}^{d-1} \mathbf{1}_{\{\gamma(\mu(B_n))\,\lr_{B_n}^{(i)} \leq t_{i+1}\}} \,\dd\mu_{B_n} = \widetilde{F}^{[d-1]}(t_2, \dots,  t_d)\hspace{0.4cm} \text{(by \ref{cond_CFPP:Compatibility_Geometric_law})}.
    \end{align*}
    On the other hand, we have 
    \begin{align*}
        & \mu_{U(B_n)}\big(\gamma(\mu(B_n))\,\lr_{B_n} \leq t_1,\dots, \gamma(\mu(B_n))\,\lr_{B_n}^{(d)} \leq t_d \big) \\
        & \quad = \mu_{U(B_n))}\left( \big\{\gamma(\mu(B_n))\,\lr_{B_n} \leq t_1\big\} \cap \bigcap_{i = 1}^{d-1} \left\{\gamma(\mu(B_n))\, \lr_{B_n}^{(i)}\circ T_{B_n} \leq t_d - \gamma(\mu(B_n))\,\lr_{B_n}\right\}\right)
    \end{align*}
    and for all $\varepsilon > 0$ such that $(t_2 - \varepsilon, \dots, t_d - \varepsilon)$ is a continuity point of $\widetilde{F}^{[d-1]}$, we have
    \begin{align*}
        & \mu_{U(B_n)}(\gamma(\mu(B_n))\,\lr_{B_n} \leq t_1,\dots, \gamma(\mu(B_n))\,\lr_{B_n}^{(d)} \leq t_d) \\
        & \quad \geq \mu_{U(B_n)}\left(\bigcap_{i = 1}^{d-1} \left\{ \gamma(\mu(B_n))\,\lr_{B_n}^{(i)} \circ T_{B_n} \leq t_{i+1}- \varepsilon\right\} \right) \\
        & \qquad - \mu_{U(B_n)}(\gamma(\mu(B_n))\,\lr_{B_n}\geq t_1) - \mu(\gamma(\mu(B_n))\,\lr_{B_n} \geq \varepsilon).
    \end{align*}
      Again by \ref{cond_CFPP:Compatibility_Geometric_law}, the first term converges towards $\widetilde{F}^{[d]}(t_2 - \varepsilon, \dots, t_d - \varepsilon)$. For the two other terms, by \ref{cond_CFPP:tau_n_small_enough} we have $\mu_{B_n}(\gamma(\mu(B_n)) \tau_n \geq \varepsilon) \xrightarrow[n\to +\infty]{} 0$ and thus $\mu_{U_n}(\gamma(\mu(B_n)) \tau_n \geq \varepsilon) \xrightarrow[n\to +\infty]{} 0$ because $\theta < 1$. Together with \ref{cond_CFPP:cluster_compatible_tau_n_cluster_from_U} it ensures the convergence to $0$. Since $(t_1, \dots, t_d)$ is a continuity point of $\widetilde{F}^{[d-1]}$, letting $\varepsilon \to 0$ gives the result.
    \end{proof}

    \begin{lem}
        \label{lem:second_term_decomposition_in_proof_sufficient_condition_CFPP}
        For every $d\geq 1$ and $0\leq t_1\leq \dots \leq t_d$ such that $(t_1,\dots, t_d)$ is a continuity point of $F^{[d]}$, we have
        \begin{align*}
            \frac{\mu(Q(B_n))}{\mu(B_n)}\,\mu_{Q_{B_n}}\big(\gamma(\mu(B_n))\,\lr_{B_n} \leq t_1, \dots, \gamma(\mu(B_n))\,\lr^{(d)}_{B_n} \leq t_d \big) \xrightarrow[n\to +\infty]{} \theta F^{[d]}(t_1, \dots, t_d)\,.
        \end{align*}  
    \end{lem}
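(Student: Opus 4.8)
\textbf{Proof plan for Lemma \ref{lem:second_term_decomposition_in_proof_sufficient_condition_CFPP}.}
First, by \ref{cond_CFPP:extremal_index} the prefactor $\mu(Q(B_n))/\mu(B_n)$ converges to $\theta$, so it suffices to show
$\mu_{Q(B_n)}\big(\gamma(\mu(B_n))\,\lr_{B_n}^{(i)}\leq t_i,\ 1\leq i\leq d\big)\to F^{[d]}(t_1,\dots,t_d)$ at continuity points of $F^{[d]}$.
The strategy is to push the density $\mathbf 1_{Q(B_n)}/\mu(Q(B_n))$ forward by the (non-autonomous) waiting time $\tau_n$, so that it lands in the compact family $\mathcal{U}$ supplied by \ref{cond_CFPP:good_density_after_tau_n}, and then to invoke Theorem \ref{thm:HTS-REPP_vs_RTS-REPP_infinite_measure_renormalized_measure} (legitimate here via \ref{cond:Living_in_uniform_set}), which along the chosen subsequence forces the hitting REPP to converge strongly in law to the process with marginals $(F^{[d]})_{d\ge 1}$, i.e.\ $F_{B_n,v}^{[d]}\to F^{[d]}$ for every fixed $v\in\mathcal D(\mu)$.

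\emph{Step 1 (reduction to a pushed-forward density).}
By \ref{cond_CFPP:cluster_compatible_tau_n_no_cluster_from_Q}, $\mu_{Q(B_n)}(\lr_{B_n}\leq\tau_n)\to 0$, and on the complementary event one has $\lr_{B_n}^{(i)}=\tau_n+\lr_{B_n}^{(i)}\circ T^{\tau_n}$ for every $i$. Moreover $\gamma(\mu(B_n))\,\tau_n\xRightarrow[]{\mu_{B_n}}0$ by \ref{cond_CFPP:tau_n_small_enough}, and since $\mu(Q(B_n))/\mu(B_n)\to\theta>0$ the same holds under $\mu_{Q(B_n)}$, so $\mu_{Q(B_n)}(\gamma(\mu(B_n))\,\tau_n>\varepsilon)\to 0$ for every $\varepsilon>0$. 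Set $v_n:=\widehat{T^{\tau_n}}\big(\mathbf 1_{Q(B_n)}/\mu(Q(B_n))\big)$ and use the duality $\int \tfrac{\mathbf 1_{Q(B_n)}}{\mu(Q(B_n))}(g\circ T^{\tau_n})\,\dd\mu=\int v_n\,g\,\dd\mu$. An elementary $\varepsilon$-sandwich (keeping $\varepsilon$ off a countable exceptional set so that the displayed arguments are continuity points of $F^{[d]}$) then gives
\begin{align*}
\int_X v_n \prod_{i=1}^{d}\mathbf 1_{\{\gamma(\mu(B_n))\,\lr_{B_n}^{(i)}\leq t_i-\varepsilon\}}\,\dd\mu \;-\; o(1)
&\;\leq\; \mu_{Q(B_n)}\big(\gamma(\mu(B_n))\,\lr_{B_n}^{(i)}\leq t_i,\ 1\leq i\leq d\big) \\
&\;\leq\; \int_X v_n \prod_{i=1}^{d}\mathbf 1_{\{\gamma(\mu(B_n))\,\lr_{B_n}^{(i)}\leq t_i\}}\,\dd\mu \;+\; o(1),
\end{align*}
the $o(1)$ absorbing $\mu_{Q(B_n)}(\lr_{B_n}\leq\tau_n)$ and $\mu_{Q(B_n)}(\gamma(\mu(B_n))\,\tau_n>\varepsilon)$.

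\emph{Step 2 (the pushed-forward density converges correctly) and conclusion.}
By \ref{cond_CFPP:good_density_after_tau_n}, $v_n\in\mathcal U$ with $\mathcal U$ compact in $L^1(\mu)$ and $\int v_n\,\dd\mu=1$. I claim that for every continuity point $(s_1,\dots,s_d)$ of $F^{[d]}$,
\[
\int_X v_n \prod_{i=1}^{d}\mathbf 1_{\{\gamma(\mu(B_n))\,\lr_{B_n}^{(i)}\leq s_i\}}\,\dd\mu \;\xrightarrow[n\to+\infty]{}\; F^{[d]}(s_1,\dots,s_d).
\]
For a \emph{fixed} density $v\in\mathcal D(\mu)$ the left-hand side with $v$ in place of $v_n$ is exactly $F_{B_n,v}^{[d]}(s_1,\dots,s_d)\to F^{[d]}(s_1,\dots,s_d)$ by the strong-in-law convergence of the hitting REPP recalled above, the limit being independent of $v$. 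To pass to the $n$-dependent density $v_n$, use compactness of $\mathcal U$: along any subsequence one extracts $v_{n_k}\to v^\ast\in\mathcal U$ in $L^1(\mu)$, whence $\big|\int (v_{n_k}-v^\ast)\prod_i\mathbf 1_{\{\cdots\}}\,\dd\mu\big|\leq \|v_{n_k}-v^\ast\|_{L^1(\mu)}\to 0$ while $\int v^\ast\prod_i \mathbf 1_{\{\gamma(\mu(B_{n_k}))\lr_{B_{n_k}}^{(i)}\leq s_i\}}\,\dd\mu=F_{B_{n_k},v^\ast}^{[d]}(s_1,\dots,s_d)\to F^{[d]}(s_1,\dots,s_d)$; as this limit does not depend on $v^\ast$, the whole sequence converges. (Equivalently, one may invoke Lemma \ref{lem:convergence_Birkhoff_transfer_operator_compact} to realize the required uniformity over $\mathcal U$.) Plugging this into the sandwich of Step 1, letting $\varepsilon\downarrow 0$ and using continuity of $F^{[d]}$ at $(t_1,\dots,t_d)$, and multiplying by $\mu(Q(B_n))/\mu(B_n)\to\theta$, yields the lemma.

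\emph{Main obstacle.} The delicate point is the uniformity over the compact family $\mathcal U$ in Step 2: strong convergence in law of the hitting REPP only delivers convergence for each \emph{fixed} absolutely continuous density, whereas the density $v_n$ naturally produced by the inducing argument depends on $n$; it is precisely hypothesis \ref{cond_CFPP:good_density_after_tau_n} (together with Lemma \ref{lem:convergence_Birkhoff_transfer_operator_compact}) that bridges this gap. A secondary, purely bookkeeping issue is the $\varepsilon$-sandwich forced by the fact that $\gamma(\mu(B_n))\,\tau_n$ vanishes only in distribution, which is also why the statement is restricted to continuity points of $F^{[d]}$.
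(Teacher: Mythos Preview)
Your proof is correct and follows essentially the same route as the paper's: push the density forward by $\tau_n$ using \ref{cond_CFPP:good_density_after_tau_n}, control the error via \ref{cond_CFPP:tau_n_small_enough}--\ref{cond_CFPP:cluster_compatible_tau_n_no_cluster_from_Q}, and then identify the limit through the strong-in-law convergence supplied by Theorem \ref{thm:HTS-REPP_vs_RTS-REPP_infinite_measure_renormalized_measure}. The only notable difference is in Step~2: the paper establishes the uniformity over $\mathcal U$ by testing against bounded Lipschitz functions and invoking Lemma \ref{lem:convergence_Birkhoff_transfer_operator_compact} directly, whereas your primary argument is the cleaner $L^1$-compactness subsequence extraction (with Lemma \ref{lem:convergence_Birkhoff_transfer_operator_compact} mentioned only as an alternative); both reach the same conclusion.
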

    
    \begin{proof}[Proof (of Lemma \ref{lem:second_term_decomposition_in_proof_sufficient_condition_CFPP})]
      This time \ref{cond_CFPP:extremal_index} gives $\mu(Q(B_n))/\mu(B_n) \xrightarrow[n\to +\infty]{} \theta$. By \ref{cond:Living_in_uniform_set}, there exist a function $u \in L^1(\mu)$ such that $Y$ is $u$-uniform (without loss of generality we assume $\int u \,\dd\mu = 1$). We consider $\mu_{v_n}$ the probability absolutely continuous with respect to $\mu$ and of density $v_n :=  \widehat{T^{\tau_n}}(\mathbf{1}_{Q(B_n)}/\mu(Q(B_n)))$ and write $(F_{B_n,v_n}^{[d]})_{d\geq 1}$ the family of distribution functions of $\gamma(\mu(B_n))\Phi_{B_n}$ drawn from $\mu_{v_n}$. By \ref{cond_CFPP:good_density_after_tau_n}, for all $n\geq 1$, $v_n\in \mathcal{U}$. 

      By definition of $v_n$, we have $(\gamma(\mu(B_n))\Phi_{B_n})_{\#} \mu_{v_n} = (\gamma(\mu(B_n))\Phi_{B_n} \circ T^{\tau_n})_{\#} \mu_{Q(B_n)}$. On $\{r_{B_n} > \tau_n\}$, we have $\gamma(\mu(B_n))\Phi_{B_n} = \gamma(\mu(B_n))\Phi_{B_n} \circ T^{\tau_n} +  \gamma(\mu(B_n))\tau_n$, so by \ref{cond_CFPP:tau_n_small_enough}- \ref{cond_CFPP:cluster_compatible_tau_n_no_cluster_from_Q} we get $d(\gamma(\mu(B_n))\Phi_{B_n}, \gamma(\mu(B_n))\Phi_{B_n}\circ T^{\tau_n}) \xrightarrow[n\to +\infty]{\mu_{Q(B_n)}} 0$. In particular, for all $d\geq 1$, we have 
    \begin{align}
        \label{eq:first_step_equivalence_convergence_cluster}
        \mu_{Q(B_n)}(\gamma(\mu(B_n))\,\lr_{B_n} \leq t_1,\dots, \gamma(\mu(B_n))\,\lr_{B_n}^{(d)}\leq t_d) - F_{B_n, v_n}^{[d]}(t_1,\dots, t_d) \xrightarrow[n\to +\infty]{} 0. 
    \end{align}

      Now, we show that the stochastic processes $(\gamma(\mu(B_n))\Phi_{B_n}$ drawn from $\mu_{v_n}$ and $\mu_v$ share the same limit. We denote $(F_{B_n,v_n}^{[d]})_{d\geq 1}$ and $(F_{B_n,v}^{[d]})_{d\geq 1}$ their respective family of distribution functions of the finite-dimensional marginals. For every $d\geq 1$, consider a bounded Lipschitz function $\psi : \mathbb{R}^d\to \mathbb{R}^d$. We are going to show that 
    \begin{align*}
        \int \psi \circ (\gamma(\mu(B_n))\Phi^{[d]}_{B_n})\, u\,\dd\mu - \int \psi \circ (\gamma(\mu(B_n))\Phi^{[d]}_{B_n})\, u^*\,\dd\mu \xrightarrow[n\to +\infty]{} 0 \quad \text{uniformly in $u,u^* \in \mathcal{U}$.}
    \end{align*}
      Let $\varepsilon > 0$ and consider $M$ large enough so that the quantity in Lemma \ref{lem:convergence_Birkhoff_transfer_operator_compact} is smaller than $\varepsilon$. It yields,
    \begin{align*}
        \left|\int \psi \circ \big(\gamma(\mu(B_n))\Phi^{[d]}_{B_n}\big) \cdot \frac{1}{M}\sum_{j = 0}^{M - 1} \big(\widehat{T}^ju - \widehat{T}^ju^*\big)\,\dd\mu \right| & \leq \sup |\psi| \left\| \frac{1}{M}\sum_{j = 0}^{M - 1} \big(\widehat{T}^ju - \widehat{T}^ju^* \big) \right\|_{L^1(\mu)}\\
        & \leq \varepsilon \sup |\psi| .
    \end{align*}
    Furthermore, we also have 
    \begin{align*}
        &\left| \int \psi \circ \big(\gamma(\mu(B_n))\Phi^{[d]}_{B_n}\big) \Big(u - \frac{1}{M}\sum_{j= 0}^{M-1}\widehat{T}^ju\Big)\,\dd\mu\right| \\
        & \qquad\leq \frac{1}{M} \sum_{j=0}^{M-1} \int \left| \psi \circ (\gamma(\mu(B_n))\Phi^{[d]}_{B_n}) - \psi \circ \big(\gamma(\mu(B_n))\Phi^{[d]}_{B_n}\big) \circ T^{j} \right|u\,\dd\mu \\
        & \qquad \leq \frac{1}{M} \sum_{j= 0}^{M-1} \left( 2 \sup |\psi| \int_{\{r_{B_n} \leq j\}} u\,\dd\mu + \lip(\psi)\,\gamma(\mu(B_n))j\right) \\
        & \qquad \leq 2\sup |\psi| \int_{\{r_{B_n}\leq M\}} u\,\dd\mu + \lip(\psi) \,\gamma(\mu(B_n))M \\
        & \qquad \leq \varepsilon \; \text{for $n$ large enough and uniformly on $u\in \mathcal{U}$ since $\mathcal{U}$ is uniformly integrable.}
    \end{align*}
    
      Thus, by Portemanteau theorem, for every $d\geq 1$
    \begin{align}
        \label{eq:second_step_equivalence_convergence_cluster}
        F_{B_n,v_n}^{[d]}(t_1,\dots, t_d) - F_{B_n,v}^{[d]}(t_1,\dots, t_d) \xrightarrow[n\to +\infty]{} 0.
    \end{align}
    Since $F_{B_n,v}^{[d]}(t_1,\dots, t_d)$ converges towards $F^{[d]}(t_1, \dots, t_d)$, \eqref{eq:first_step_equivalence_convergence_cluster} and \eqref{eq:second_step_equivalence_convergence_cluster} together give the desired result.
    \end{proof}

      Now, going back to \eqref{eq:CFPP_decomposition_between_immediate_return_and_escaping_annulus} and with Lemmas \ref{lem:first_term_decomposition_in_proof_sufficient_condition_CFPP} and \ref{lem:second_term_decomposition_in_proof_sufficient_condition_CFPP}, we get, for all $d\geq 1$ and $t_1 \leq\dots \leq t_d$ such that $(t_1,\dots, t_d)$ and $(t_2,\dots, t_d)$ are continuity points of $\widetilde{F}^[d], F^{[d]}$ and $\widetilde{F}^{[d-1]}$ we have 
    \begin{align*}
        \widetilde{F}_{B_n}^{[d]} (t_1,\dots, t_d) \xrightarrow[n\to +\infty]{} (1 - \theta)\widetilde{F}^{[d-1]}(t_2,\dots, t_d) + \theta F^{[d]}(t_1,\dots, t_d)\,.
    \end{align*}
    But we assumed that $\widetilde{F}_{B_n}^{[d]} (t_1,\dots, t_d) \xrightarrow[n\to +\infty]{} \widetilde{F}^{[d]}(t_1,\dots, t_d)$. Hence, we get the following equality
    \begin{align*}
        \widetilde{F}^{[d]}(t_1,\dots, t_d) = (1 - \theta)\widetilde{F}^{[d-1]}(t_2,\dots, t_d) + \theta F^{[d]}(t_1,\dots, t_d).
    \end{align*}
      Using Theorem \ref{thm:HTS-REPP_vs_RTS-REPP_infinite_measure_renormalized_measure}, we can express $F^{[d]}$ from $\widetilde{F}^{[d]}$ and $\widetilde{F}^{[d-1]}$ by \eqref{eq:relation_hitting_return_infinite_measure} we get
    \begin{align*}
        \widetilde{F}^{[d]}(t_1,\dots, t_d) & = (1 - \theta) \widetilde{F}^{[d-1]}(t_1,\dots, t_d) \nonumber \\
        & \quad + \theta \alpha \int_0^{t_1} \Bigl( \widetilde{F}^{[d-1]}\left(t_2 -t_1 + x, \dots, t_d - t_1 + x\right) \\
        & \quad - \widetilde{F}^{[d]}\left(x, t_2 - t_1 + x,\dots, t_d - t_1 +x\right)\Bigr)  (t_1 - x)^{\alpha - 1}\,\dd x\,,
    \end{align*}
    which is exactly \eqref{eq:equation_CFPP_fix_point}. In the proof of Proposition \ref{prop:fix_point_equation_FPP}, we showed that the only process such that the distribution functions of its marginals satisfy \eqref{eq:equation_CFPP_fix_point} is $\Phi_{\RPP(W_{\alpha,\theta}(\theta\Gamma(1+\alpha)))}$. Thus, we get the convergence
    \begin{align*}
        N_{B_n}^{\gamma} \xRightarrow[n\to +\infty]{\mu_{B_n}} \RPP(W_{\alpha,\theta}(\theta\Gamma(1+\alpha)))\,.
    \end{align*}
    By \eqref{eq:relation_hitting_return_infinite_measure} and Corollary \ref{cor:equivalence_HTS_RTS_true_Point_Processes}, we also obtain
    \begin{align*}
        N_{B_n}^{\gamma} \xRightarrow[n\to +\infty]{\mathcal{L}(\mu)} \cfPp_{\alpha}(\theta\Gamma(1+\alpha), \geo(\theta)).
    \end{align*}
\end{proof}

\section{Quantitative recurrence for maps with an indifferent fixed point}
\label{section:proofs_LSV_map}

  We now focus on the family of maps with one indifferent fixed point with LSV parameters and prove the results presented in Section \ref{Paragraph_map_with_one_indifferent_fixed_point}.

\subsection{Additional properties}

\label{subsection:preliminaries_LSVmaps}

  We recall the basic properties of the map $T$ defined in \eqref{def:LSVmap} where $p\geq 1$. Recall that there is a unique absolutely continuous invariant measure $\mu$ and $\mu([0,1]) = +\infty$ if and only if $p\geq 1$. Recall the definitions of $c_n = T_1^{-n} 1$, where $T_1$ is the first branch of $T$, and of the partitions $\xi = \{[c_{n+1},c_n],\, n\geq 0\}$ and $\xi_k  = \bigvee_{j=0}^{k-1} T^{-j}\xi$. \\

Although we will not directly use the coding of the map by a renewal shift, we will take advantage of this perspective and use symbolic notations in what follows. 

  For all $n \geq 0$, let $[n] := [c_{n+1},c_n]$. In particular,  $\xi= \{[n], \;n\geq 0\}$. By definition of $T$, for $n\geq 1$,  $T_{|[n]} : [n] \to [n-1]$ and $T_{|[0]} : [0] \to [0,1]$ are diffeomorphisms. Similarly, the partition $\xi_k$ can be expressed with cylinder notations.

\begin{lem}
    \label{lem:description_partition_xi_with_symbolic_notations}
    For all $k \geq 0$, we have 
    \begin{align*}
    \xi_k = \bigvee_{j = 0}^{k-1} T^{-j}\xi = \big\{[a_0^{k-1}] \;\big| \; (a_0^{k-1})\in \mathbb{N}^k,\; \forall 0\leq i \leq k-2,\; a_{i+1} = a_i - 1 \;\text{or} \; a_i = 0\big\},  
    \end{align*}
    where $[a_0^{k-1}] = \{ x \in [0,1]\; |\; T^j(x) \in [a_j]\; \forall\, 0\leq j \leq k-1\}$.
\end{lem}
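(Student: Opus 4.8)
The natural approach is induction on $k$. For $k=1$ the admissibility condition in the statement is vacuous and $\xi_1 = \xi = \{[a_0] : a_0\in\mathbb{N}\}$, so there is nothing to check (the degenerate case $k=0$, giving the trivial partition $\{[0,1]\}$, is handled the same way). For the inductive step I would use the identity $\xi_{k+1} = \bigvee_{j=0}^{k}T^{-j}\xi = \xi \vee T^{-1}\xi_k$, so that a generic nonempty cell of $\xi_{k+1}$ is of the form $[a_0]\cap T^{-1}[a_1^k]$ with $[a_1^k]\in\xi_k$, i.e.\ with $(a_1,\dots,a_k)$ admissible in the sense of the lemma by the induction hypothesis. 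The whole matter then reduces to deciding for which $a_0$ this intersection is nonempty and to recognizing that the nonempty ones are exactly the cylinders $[a_0^k]$.

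The key ingredient is the branch structure recalled just before the lemma: $T_{|[0]} : [0]\to[0,1]$ is a diffeomorphism \emph{onto} all of $[0,1]$, whereas for $n\geq 1$ the map $T_{|[n]} : [n]\to[n-1]$ is a diffeomorphism. Hence if $a_0\geq 1$ then $T([a_0])=[a_0-1]$; since $[a_1^k]\subseteq[a_1]$ and $[a_1],[a_0-1]$ are cells of $\xi$, the intersection $[a_0]\cap T^{-1}[a_1^k]$ is nonempty if and only if $a_1=a_0-1$, in which case it equals $(T_{|[a_0]})^{-1}[a_1^k]$, a nonempty subinterval of $[a_0]$. If instead $a_0=0$, then $[a_1^k]\subseteq[0,1]=T([0])$ always holds, so $[0]\cap T^{-1}[a_1^k]=(T_{|[0]})^{-1}[a_1^k]$ is a nonempty subinterval of $[0]$ with no constraint on $a_1$. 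In both cases this cell is an interval, being the diffeomorphic inverse-branch image of an interval.

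Combining the two cases, $[a_0^k]=[a_0]\cap T^{-1}[a_1^k]$ is nonempty if and only if $(a_1,\dots,a_k)$ is admissible \emph{and} ($a_0=0$ or $a_1=a_0-1$), which is precisely the admissibility of $(a_0,a_1,\dots,a_k)$: the condition at index $i=0$ together with those at $i=1,\dots,k-1$. Since $\xi$ and $T^{-1}\xi_k$ are countable partitions of $[0,1]$ (modulo a null set), their common refinement $\xi_{k+1}$ is exactly the collection of these nonempty cylinders, which closes the induction. I do not expect a genuine obstacle here; the one point to handle with care is the asymmetry between $a_0=0$ and $a_0\geq 1$, namely that admissibility of $(a_1,\dots,a_k)$ by itself does not place $[a_1^k]$ inside $T([a_0])$ when $a_0\geq 1$ — this is exactly what forces the extra relation $a_1=a_0-1$ and hence yields the stated admissibility rule.
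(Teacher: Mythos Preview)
Your proof is correct and takes essentially the same inductive approach as the paper: both use the branch structure ($T_{|[0]}$ onto $[0,1]$, $T_{|[n]}$ onto $[n-1]$ for $n\geq 1$) to analyze how cells split when passing from $\xi_k$ to $\xi_{k+1}$. The only cosmetic difference is that you write $\xi_{k+1}=\xi\vee T^{-1}\xi_k$ and intersect $[a_0]$ with $T^{-1}[a_1^k]$, whereas the paper writes $\xi_{k+1}=\xi_k\vee T^{-1}\xi_k$ and decomposes $T^{-1}[a_0^{k-1}]$ directly via the two inverse branches $T_1^{-1},T_2^{-1}$; these are the same computation viewed from the two ends.
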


  Using the terminology of symbolic dynamics, we will say that $(a_0^{k-1})$ is admissible if $[a_0^{k-1}] \neq \emptyset$.

\begin{proof}[Proof (of Lemma \ref{lem:description_partition_xi_with_symbolic_notations})]
    This can be easily seen by induction. When $k = 1$, this is the definition of $\xi$. Now, $\xi_{k+1} = \xi_{k} \vee T^{-1}\xi_k$. Consider an element $[a_0^{k-1}]$ in $\xi_k$. Then, $T^{-1}[a_0^{k-1}] = T_1^{-1}[a_0^{k-1}] \sqcup T_2^{-1}[a_0^{k-1}]$. By definition $T_2^{-1}[a_0^{k-1}] = [0a_0^{k-1}]$ and since $T_1^{-1}[a_0] = [a_0+1]$, we have $T_1^{-1}[a_0^{k-1}] = [(a_0 +1)a_0^{k-1}]$ and $(0 a_0^{k-1})$ and $((a_0+1)a_0^{k-1})$ are admissible. Reciprocally, for an admissible sequence, we have
    \begin{align}
        \label{eq:representation_cylinders_in_[0,1]}
        [a_0^{k-1}] = T^{-1}_{\sigma(a_{0})} \cdots T^{-1}_{\sigma(a_{k-2})}[a_{k-1}] \in \xi_k, 
    \end{align}
    where $\sigma(0) := 2$ and $\sigma(k) := 1$ for all $k\geq 1$.
\end{proof}

\begin{rem}
    Due to the Markov property of the partition, for all $k\geq 1$ and all $(a_0^{k-1}) \in \mathbb{N}^k$ admissible, $T^j : [a_0^{k-1}] \to [a_j^{k-1}]$ is a homeomorphism. Furthermore, we always have $[a_0^k] = [a_0^k (a_{k} -1)(a_k -2)\dots 0]$.
\end{rem}

  For all $0 \leq n \leq q \leq \infty$, we define the sets 
\begin{align*}
    [[n,q]] := \bigsqcup_{n\leq k\leq q} [k] = [c_{q+1},c_{n}]. 
\end{align*}
  When $q = \infty$, we simply write $[(\geq n)] := [[n,\infty]] = [0,c_{n}]$ and when $n = 0$, $[(\leq q)] := [[0,q]] = [c_{q+1},1]$.

\begin{lem}
    \label{lem:cylinders_are_intervals}
    For all $k \geq 1$ and $(a_0^{k-1}) \in \mathbb{N}^k$ admissible, $[a_0^{k-1}]$ is an interval. If furthermore $a_{k-1} = 0$, then, for all $0\leq n \leq q \leq \infty$, $[a_0^{k-1}[n,q]]$ is also an interval.
\end{lem}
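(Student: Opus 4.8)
The plan is to reduce both assertions to a single statement about iterated inverse branches. The relevant geometric facts are that $T_1\colon[0,1/2]\to[0,1]$ and $T_2\colon[1/2,1]\to[0,1]$ are increasing homeomorphisms — for $T_1$ because $T_1'(x)=1+2^p(p+1)x^p>0$, for $T_2$ because it is affine — so each inverse branch $T_\sigma^{-1}$ carries a subinterval of the image of $T_\sigma$ to a subinterval of its domain, and that $T_1([n])=[n-1]$ for $n\ge 1$ while $T_2([0])=[0,1]$. Consequently, when $a_{j+1}=a_j-1$ or $a_j=0$ (the admissibility condition), the inverse branch $T_{\sigma(a_j)}^{-1}$ maps $[a_{j+1}]$ homeomorphically onto a subinterval of $[a_j]$, and moreover $T$ coincides with $T_{\sigma(a_j)}$ on $[a_j]$.

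The main step is to prove, by induction on $m\ge 0$, the following claim: for every admissible word $(a_0^{m})$ and every subinterval $I\subseteq[a_m]$, the set $\{x\in[0,1]\;:\;T^jx\in[a_j]\text{ for }0\le j\le m-1\text{ and }T^mx\in I\}$ equals $T_{\sigma(a_0)}^{-1}\circ\cdots\circ T_{\sigma(a_{m-1})}^{-1}(I)$, and is therefore a subinterval of $[a_0]$. The base case $m=0$ is trivial. For the inductive step I peel off the first symbol: the set in question is $\{x\in[a_0]\;:\;Tx\in S\}$, where $S$ is the analogous set for the admissible word $(a_1^{m})$, which by the induction hypothesis is a subinterval of $[a_1]$; since $T$ agrees with $T_{\sigma(a_0)}$ on $[a_0]$ and $S\subseteq[a_1]$ lies in the image of $T_{\sigma(a_0)}|_{[a_0]}$, this set equals exactly $T_{\sigma(a_0)}^{-1}(S)$, hence is a subinterval of $[a_0]$. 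This is essentially the computation already recorded for \eqref{eq:representation_cylinders_in_[0,1]}, so I would present it briefly.

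Both conclusions then follow by specialization. For the first, apply the claim with $m=k-1$ and $I=[a_{k-1}]$, which gives that $[a_0^{k-1}]$ is an interval. For the second, use $a_{k-1}=0$: then $[a_{k-1}]=[1/2,1]$ and $T_2$ maps it onto all of $[0,1]$, and since $[[n,q]]=[c_{q+1},c_{n}]$ is a subinterval of $[0,1]$, the set $I:=\{y\in[0]\;:\;Ty\in[[n,q]]\}=T_2^{-1}([[n,q]])$ is a subinterval of $[a_{k-1}]$; applying the claim with $m=k-1$ to this $I$ identifies $[a_0^{k-1}[n,q]]$ with $T_{\sigma(a_0)}^{-1}\circ\cdots\circ T_{\sigma(a_{k-2})}^{-1}(I)$, an interval.

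I do not expect a serious obstacle here; the only point requiring care is the bookkeeping in the inductive identity — checking that, thanks to admissibility, the subinterval produced at each stage does sit inside the domain of the next inverse branch (equivalently, inside the image of the relevant restricted branch), which is exactly where the hypotheses ``$(a_0^{k-1})$ admissible'' and ``$a_{k-1}=0$'' enter. The edge case $q=\infty$ (where $[[n,q]]=[0,c_n]$ is still an interval) and the degenerate case $n>q$ need no separate treatment.
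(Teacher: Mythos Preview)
Your proposal is correct and follows essentially the same approach as the paper: the paper's proof is a one-liner invoking the representation \eqref{eq:representation_cylinders_in_[0,1]} and the fact that $[[n,q]]$ is an interval, and your argument is precisely a detailed unpacking of that representation via induction on the inverse branches. The extra care you take with admissibility (ensuring each successive subinterval lies in the domain of the next inverse branch) is exactly the content behind \eqref{eq:representation_cylinders_in_[0,1]}.
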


\begin{proof}[Proof (of Lemma \ref{lem:cylinders_are_intervals})]
    This is immediate from \eqref{eq:representation_cylinders_in_[0,1]} and because $[[n,q]]$ are intervals for every $0\leq n\leq q \leq \infty$. 
\end{proof}

  For every $j\geq 0$, it is known that the induced map on $Y := [c_{j+1},1]$ has a Gibbs-Markov structure. Indeed, define
\begin{align*}
    \xi^Y := \xi^Y_1 = \{[i],\; 1\leq i\leq j\} \sqcup \{[0i],\; i\geq 0\},
\end{align*}
  and for all $k\geq 1$
\begin{align*}
    \xi^Y_k := \bigvee_{i = 0}^{k-1} T_Y^{-i} \xi^Y_1.
\end{align*}
  Then, $(Y,T_Y, \mu_Y)$ is Gibbs-Markov with respect to the partition $\xi^Y$. We can also define 
\begin{align*}
    \xi^Y_0 := T_Y\xi^Y = \xi \cap Y = \{[i],\; 0\leq i\leq j\} = \{[c_{i+1},c_i],\; 0\leq i\leq j]\}.
\end{align*}

\begin{rem}
    If $j = 0$, \textit{i.e.} $Y = [1/2,1]$, the system is full-branched Gibbs-Markov. Otherwise, it is Gibbs-Markov with the ``big image'' property.
\end{rem}

  We also recall some useful estimates. This can be found in \cite{You99} or \cite{LSV97} for example.
\begin{lem}
    We have the following asymptotic results
    \begin{align}
        \label{eq:measure_union_cylindres_asymptotique}
        \mu[0(\geq n)] \asymp \frac{1}{n^{\alpha}} 
    \end{align}
    and 
    \begin{align}
        \label{eq:measure_1-cylindres_asymptotique}
        \mu[0n] \asymp \frac{1}{n^{\alpha+1}}\,,
    \end{align}
    where $a_n\asymp b_n$ means that there exists $C>1$ such that $C^{-1}b_n\leq a_n\leq C b_n$, for all $n\geq 1$.
\end{lem}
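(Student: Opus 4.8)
The plan is to prove both asymptotics by exploiting the one-to-one correspondence between the sets $[0(\geq n)]$, $[0n]$ and neighborhoods of the neutral fixed point $0$, transported by the inverse branch $T_2^{-1}$. First I would observe that, by Lemma~\ref{lem:cylinders_are_intervals}, $[0(\geq n)] = T_2^{-1}[(\geq n)] = T_2^{-1}[0,c_n]$ and $[0n] = T_2^{-1}[n] = T_2^{-1}[c_{n+1},c_n]$, since $T_2$ maps $[1/2,1]$ diffeomorphically onto $[0,1]$ with $T_2(x) = 2x-1$, so $T_2^{-1}$ has constant derivative $1/2$ and $\rho$ is bounded above and below on $[1/2,1]$. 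Hence, up to multiplicative constants,
\begin{align*}
\mu[0(\geq n)] \asymp \leb[0,c_n] = c_n, \qquad \mu[0n] \asymp \leb[c_{n+1},c_n] = c_n - c_{n+1}.
\end{align*}
Thus everything reduces to the classical estimate $c_n \asymp n^{-\alpha}$ for the sequence $c_n = T_1^{-n}1$, together with $c_n - c_{n+1} \asymp n^{-\alpha-1}$.

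For $c_n \asymp n^{-\alpha}$, I would use the defining relation $c_{n-1} = T_1(c_n) = c_n + 2^p c_n^{p+1}$, i.e. $c_n - c_{n-1} = -2^p c_n^{p+1}$, which exhibits $(c_n)$ as the orbit of a map tangent to the identity at $0$ from the left. The standard trick is to study $u_n := c_n^{-1/\alpha} = c_n^{-p}$: from $c_{n-1} = c_n(1 + 2^p c_n^{p})$ one gets
\begin{align*}
u_n - u_{n-1} = c_n^{-p} - c_{n-1}^{-p} = c_n^{-p}\left(1 - (1 + 2^p c_n^p)^{-p}\right) = c_n^{-p}\left(p\,2^p c_n^p + O(c_n^{2p})\right) \xrightarrow[n\to\infty]{} p\,2^p,
\end{align*}
using $c_n \to 0$. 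By Cesàro, $u_n/n \to p\,2^p$, hence $c_n = u_n^{-\alpha} \sim (p\,2^p)^{-\alpha} n^{-\alpha}$, which gives $c_n \asymp n^{-\alpha}$ (in fact with an explicit constant). Then $c_{n-1} - c_n = 2^p c_n^{p+1} \asymp (n^{-\alpha})^{p+1} = n^{-\alpha - 1}$ since $\alpha(p+1) = 1 + \alpha$, and a telescoping/monotonicity argument (or summation by parts) upgrades this to $c_n - c_{n+1} \asymp n^{-\alpha-1}$ as stated. Plugging these into the two displayed $\asymp$ relations yields \eqref{eq:measure_union_cylindres_asymptotique} and \eqref{eq:measure_1-cylindres_asymptotique}.

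The only delicate point is the control of $\rho$ near $0$: the density formula \eqref{eq:formula_density_LSV_map} gives $\rho(x) = h_0(x)\,x/(x - T_1^{-1}x)$ with $x - T_1^{-1}x \asymp x^{p+1}$, so $\rho(x) \asymp x^{-p} = x^{-1/\alpha}$ blows up at $0$ — it is \emph{not} bounded there, so one cannot simply replace $\mu$ by Lebesgue on $[0,c_n]$. The resolution, and the main obstacle, is precisely that we integrate not over $[0,c_n]$ but over its preimage $[0(\geq n)] = T_2^{-1}[0,c_n] \subset [1/2,1]$, where $\rho$ \emph{is} bounded above and below; so the reduction to Lebesgue measure is legitimate there. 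I would therefore be careful to phrase the first step as an estimate on $[1/2,1]$ and only afterwards invoke $c_n \asymp n^{-\alpha}$ and $c_n - c_{n+1} \asymp n^{-\alpha-1}$; alternatively one can cite \cite{You99, LSV97} directly for $c_n \asymp n^{-\alpha}$ and for \eqref{eq:measure_union_cylindres_asymptotique}–\eqref{eq:measure_1-cylindres_asymptotique}, which is the route the paper takes.
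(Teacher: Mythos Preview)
Your argument is correct and is essentially the standard derivation: reduce to Lebesgue on $[1/2,1]$ via boundedness of $\rho$ there, and then prove $c_n\asymp n^{-\alpha}$ (and hence $c_n-c_{n+1}=2^p c_{n+1}^{p+1}\asymp n^{-\alpha-1}$) by the usual $u_n=c_n^{-p}$ trick and Ces\`aro. The paper itself gives no proof of this lemma, simply deferring to \cite{You99,LSV97}; your sketch is exactly the computation carried out there, so there is nothing to compare. One cosmetic remark: no telescoping is needed for $c_n-c_{n+1}\asymp n^{-\alpha-1}$, since the exact identity $c_n-c_{n+1}=2^p c_{n+1}^{p+1}$ combined with $c_{n+1}\asymp n^{-\alpha}$ already gives it.
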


  Finally, we recall the following lemma (see \cite{PSZ13} p.31 and \cite{Tha80_EstimatesInvariantDensities, Tha83}) giving a distortion bound for good subsets (this lemma is true for the bigger class of $\textup{AFN}$-maps). 

\begin{lem}
    \label{lem:Distortion_estimates_Thaler}
    Let $Y$ be of the form $[c_{j+1},1]$ for some $j\geq 0$. Then, if $V,W \subset Y$ are intervals such that there exists $m\in \mathbb{N}$ with $T^m : V \to W$ being a homeomorphism onto $W$, then $\widehat{T}^m\mathbf{1}_V \in \mathcal{C}_C(W)$ for some $C > 0$ depending only on $Y$, where
    \begin{align*}
        \mathcal{C}_C(W) :=  \{f : Y \to [0,+\infty)\;|\; |f(x)/f(y) - 1| \leq Cd(x,y) \; \forall x,y \in W\}.  
    \end{align*}
\end{lem}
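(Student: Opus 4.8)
The plan is to use that $T^m|_V$ is injective, which puts the transfer operator in single‑preimage form, and then to reduce the Lipschitz distortion estimate to a \emph{summable} bound on the orbit separations, the only delicate point being the passages of the orbit near the neutral fixed point $0$. Write $\psi:=(T^m|_V)^{-1}:W\to V$ for the inverse branch; for $x\in W$ the equation $T^my=x$ has the single solution $y=\psi(x)\in V$, so
\begin{align*}
\widehat{T}^m\mathbf{1}_V(x)=\frac{\rho(\psi(x))}{\rho(x)\,|(T^m)'(\psi(x))|}\,\mathbf{1}_W(x),\qquad x\in W,
\end{align*}
with $\rho=\dd\mu/\dd\leb$. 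Since $V,W\subseteq Y=[c_{j+1},1]$ are bounded away from $0$, and $\rho$ is, by \cite{Tha80_EstimatesInvariantDensities,Tha83}, bounded away from $0$ and $\infty$ and Lipschitz on $Y$, the factors $x\mapsto\rho(x)^{-1}$ and $x\mapsto\rho(\psi(x))$ satisfy $|f(x)/f(y)-1|\le C_0(Y)|x-y|$ on $W$ with $C_0$ depending only on $Y$ — for the second one because $|\psi'|=|(T^m)'\circ\psi|^{-1}\le1$ (as $T'\ge1$), so $\psi$ is $1$‑Lipschitz into $V$. As $\mathcal{C}_a(W)\cdot\mathcal{C}_b(W)\subseteq\mathcal{C}_{C'}(W)$ with $C'=C'(a,b,\diam Y)$, it remains only to show $x\mapsto\log|(T^m)'(\psi(x))|$ is Lipschitz on $W$ with constant depending only on $Y$.

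Fix $x,y\in W$ and set $x_k:=T^k\psi(x)$, $y_k:=T^k\psi(y)$, so $x_0,y_0\in V$, $x_m=x$, $y_m=y$; for $0\le k\le m-1$ the interval with endpoints $x_k,y_k$ maps homeomorphically onto the one with endpoints $x,y$ under $T^{m-k}$, hence lies in a single atom of $\xi$ — so $x_k,y_k$ sit on one monotone branch of $T$ and are, for each $k$, either both in $Y$ or both in $Y^c=[(\ge j+1)]$ (the orbit may well leave $Y$). Telescoping $\log|(T^m)'|=\sum_{k=0}^{m-1}\log T'\circ T^k$ and using the mean value theorem together with Adler's condition $D:=\sup|T''|/(T')^2<\infty$ — valid for \eqref{def:LSVmap} when $p\ge1$, and the defining feature of $\textup{AFN}$‑maps in general — yields
\begin{align*}
\bigl|\log|(T^m)'(\psi(x))|-\log|(T^m)'(\psi(y))|\bigr|\le\sum_{k=0}^{m-1}\frac{|T''(\zeta_k)|}{T'(\zeta_k)}|x_k-y_k|\le D\,(\sup T')\sum_{k=0}^{m-1}|x_k-y_k|.
\end{align*}
Everything thus reduces to proving $\sum_{k=0}^{m-1}|x_k-y_k|\le C_Y|x-y|$ with $C_Y$ depending only on $Y$.

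This last bound is the main obstacle, since near $0$ the map is only weakly expanding, so the naive geometric estimate fails there. Let $0=t_0<t_1<\dots<t_N=m$ be the successive times with $x_k\in Y$, and consider the intervening maximal excursions outside $Y$. Because $T$ maps $[\ell]$ onto $[\ell-1]$ for $\ell\ge1$, each excursion is a monotone climb $[\ell]\to[\ell-1]\to\dots\to[j+1]$ terminating with a return to $[j]\subseteq Y$; hence, writing $\delta_i:=|x_{t_i}-y_{t_i}|$, backward contraction along the excursion terminating at time $t_{i+1}$ gives
\begin{align*}
\sum_{k=t_i+1}^{t_{i+1}-1}|x_k-y_k|\le\delta_{i+1}\sum_{\ell\ge j+1}\ \prod_{m'=j+1}^{\ell}\bigl(1+2^p(p+1)c_{m'+1}^{p}\bigr)^{-1}.
\end{align*}
From $c_n\sim(\alpha^{\alpha}/2)\,n^{-\alpha}$ one gets $2^p(p+1)c_{m'}^{p}\sim(1+1/p)\,m'^{-1}$, so each product is comparable, for fixed $j$, to $(j/\ell)^{1+1/p}$; since $1+1/p>1$ the series converges and the excursion sum is $\le K_j\,\delta_{i+1}$. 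On $Y$ the induced map $T_Y$ is uniformly expanding, $\inf|T_Y'|=:\Lambda_Y>1$ (the Gibbs–Markov structure of $(Y,T_Y,\mu_Y)$ recalled above), so $\delta_i\le\Lambda_Y^{-(N-i)}|x-y|$ and $\sum_{i=0}^N\delta_i\le\frac{\Lambda_Y}{\Lambda_Y-1}|x-y|$; combining, $\sum_{k=0}^{m-1}|x_k-y_k|\le(1+K_j)\frac{\Lambda_Y}{\Lambda_Y-1}|x-y|=:C_Y|x-y|$. The decisive input is the inequality $\alpha(p+1)=1+\alpha>1$, i.e. the regular variation of index $\alpha\in(0,1)$ at the neutral point, which is precisely what makes the near‑$0$ excursions contribute a convergent rather than a divergent series. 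Feeding this back bounds the log‑distortion of $(T^m)'\circ\psi$ on $W$ by $D\,(\sup T')\,C_Y\,\diam(Y)$, hence $|(T^m)'(\psi(x))/(T^m)'(\psi(y))-1|\le C(Y)|x-y|$ on $W$; together with the density factors this gives $\widehat{T}^m\mathbf{1}_V\in\mathcal{C}_C(W)$ with $C=C(Y)$. This is exactly the argument behind \cite[p.\,31]{PSZ13} and \cite{Tha80_EstimatesInvariantDensities,Tha83}, which extends verbatim to general $\textup{AFN}$‑maps once Adler's condition replaces the explicit bound on $|T''|/(T')^2$.
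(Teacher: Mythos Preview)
The paper does not give its own proof of this lemma; it simply cites \cite{PSZ13} (p.\,31) and \cite{Tha80_EstimatesInvariantDensities,Tha83}. Your argument is exactly the standard one behind those references, and the main engine — Adler's condition together with the convergent excursion series $\sum_{\ell\ge j+1}\prod_{m'=j+1}^{\ell}(1+2^p(p+1)c_{m'+1}^p)^{-1}<\infty$ coming from $c_n^p\asymp 1/n$ — is correctly identified and correctly estimated.

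There is, however, one genuine gap. Your assertion that the interval with endpoints $x_k,y_k$ ``lies in a single atom of $\xi$'' does not follow from $T^{m-k}$ being a homeomorphism on it. What \emph{does} follow is only that $T^k(V)$ avoids $c_1=1/2$ in its interior, i.e.\ $x_k,y_k$ lie on the same monotone branch of $T$ (sufficient for the mean value theorem). But for $j\ge1$ the interval $T^k(V)$ may well straddle several $c_n$'s: for instance with $j=2$, $V=[c_2-\epsilon,c_2+\epsilon]\subset Y$, $m=1$, the interval $V$ itself spans $[1]$ and $[2]$. More seriously, during a deep excursion $T^k(V)$ can span several atoms $[\ell],[\ell+1],\dots$ near $0$, so that $x_k$ and $y_k$ need not share an atom, and the ``both in $Y$ or both in $Y^c$'' dichotomy on which your excursion bookkeeping rests can fail. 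The fix is short: since $T'$ is non-decreasing on each branch ($T_1'(x)=1+2^p(p+1)x^p$ is increasing, $T_2'\equiv2$), set $u_k:=\min(x_k,y_k)=T^k(\min(\psi(x),\psi(y)))$, which is a \emph{single} $T$-orbit with $u_0\in V\subset Y$ and $u_m\in W\subset Y$. Then $\zeta_k\ge u_k$ gives $T'(\zeta_k)\ge T'(u_k)$, hence
\[
|x_k-y_k|\le\frac{|x-y|}{(T^{m-k})'(u_k)},\qquad
\sum_{k=0}^{m-1}|x_k-y_k|\le|x-y|\sum_{k=0}^{m-1}\frac{1}{(T^{m-k})'(u_k)}\,,
\]
and your excursion/return decomposition applies verbatim to the orbit $(u_k)$, for which the times in $Y$ are unambiguous. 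With this correction your proof is complete (and coincides with the argument in the cited references).
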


  From this lemma we are able to derive the following corollary that is going to be of crucial importance in the proof of Theorem \ref{thm:REPP_LSV_preimages_of_0}.

\begin{cor}
    \label{cor:distortion_bounds_comparison_measures}
    There exists a constant $C > 0$ such that for all $k,\ell,m \geq 0$, for all $(a_0^{k-1})$, $(b_0^{\ell - 1})$, $(c_0^{m -1})$ admissible (with the possibility that $b_{\ell - 1}$ and $c_{m-1}$ are of the form $[i,j]$ for some $0\leq i\leq j \leq \infty$) such that $[0a_0^{k-1}0b_0^{\ell - 1}] \neq \emptyset$ and $[0a_0^{k-1}0c_0^{m-1}] \neq \emptyset$, we get
    \begin{align*}
        &\frac{\mu[0a_0^{k-1}0b_0^{\ell - 1}]}{\mu[0a_0^{k-1}0c_0^{m-1}]} =  \left( 1 \pm C\diam([0b_0^{\ell-1}] \cup [0c_0^{m-1}]) \right) \frac{\mu[0b_0^{\ell - 1}]}{\mu[0c_0^{m-1}]}\,,
    \end{align*}
    where $x = (1\pm C)y$ means that $(1-C)y\leq x \leq (1+C)y$.
\end{cor}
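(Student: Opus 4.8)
The plan is to reduce the comparison of the measures of long cylinders to a distortion estimate coming from Lemma \ref{lem:Distortion_estimates_Thaler}. Fix the common prefix $P := 0a_0^{k-1}0$, which ends in the symbol $0$, so that $[P]$ is an interval contained in $Y = [c_{j+1},1]$ (for an appropriate inducing set, e.g. $Y = [1/2,1]$ or whatever contains the relevant cylinders), and write $N := k+2$ for its length. First I would observe that, since $P$ ends with $0$, the map $T^N$ sends $[P b_0^{\ell-1}]$ homeomorphically onto $[b_0^{\ell-1}]$ and $[Pc_0^{m-1}]$ homeomorphically onto $[c_0^{m-1}]$, by the Markov property of the partition $\xi$ (the remark after Lemma \ref{lem:cylinders_are_intervals}). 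Hence
\begin{align*}
\mu[Pb_0^{\ell-1}] = \int_{[b_0^{\ell-1}]} \big(\widehat{T}^N \mathbf{1}_{[P]}\big)\,\rho\,\dd\leb, \qquad \mu[Pc_0^{m-1}] = \int_{[c_0^{m-1}]} \big(\widehat{T}^N \mathbf{1}_{[P]}\big)\,\rho\,\dd\leb,
\end{align*}
and similarly $\mu[b_0^{\ell-1}]$, $\mu[c_0^{m-1}]$ are obtained by integrating $\rho$ alone over the same two sets.

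Next I would apply Lemma \ref{lem:Distortion_estimates_Thaler}: since $T^N : [P] \to [0,1]$ (or onto its appropriate image, which contains $[b_0^{\ell-1}]\cup[c_0^{m-1}]$) is a homeomorphism, $g := \widehat{T}^N\mathbf{1}_{[P]} \in \mathcal{C}_C(W)$ on the relevant window $W$, with $C$ depending only on $Y$. Thus $g$ is comparable at any two points of $[0b_0^{\ell-1}]\cup[0c_0^{m-1}]$ up to a multiplicative factor $1 \pm C\,d(x,y)$, hence up to $1 \pm C\,\diam([0b_0^{\ell-1}]\cup[0c_0^{m-1}])$. Writing the two ratios
\begin{align*}
\frac{\mu[Pb_0^{\ell-1}]}{\mu[Pc_0^{m-1}]} = \frac{\int_{[b_0^{\ell-1}]} g\,\rho\,\dd\leb}{\int_{[c_0^{m-1}]} g\,\rho\,\dd\leb}
\end{align*}
and factoring out a value $g(x_\ast)$ at a reference point $x_\ast$ in the union of the two intervals, the near-constancy of $g$ gives
\begin{align*}
\frac{\mu[Pb_0^{\ell-1}]}{\mu[Pc_0^{m-1}]} = \big(1 \pm C'\diam([0b_0^{\ell-1}]\cup[0c_0^{m-1}])\big)\,\frac{\int_{[b_0^{\ell-1}]}\rho\,\dd\leb}{\int_{[c_0^{m-1}]}\rho\,\dd\leb} = \big(1 \pm C'\diam([0b_0^{\ell-1}]\cup[0c_0^{m-1}])\big)\,\frac{\mu[b_0^{\ell-1}]}{\mu[c_0^{m-1}]},
\end{align*}
after absorbing constants; the factor $\rho$ itself is continuous and bounded away from $0$ and $\infty$ on $Y$ (formula \eqref{eq:formula_density_LSV_map}), so it contributes only another such factor. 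One more point worth noting is that the hypothesis allows $b_{\ell-1}$ or $c_{m-1}$ to be a block $[[i,q]]$ with possibly $q = \infty$, i.e. the intervals $[0b_0^{\ell-1}]$, $[0c_0^{m-1}]$ may be semi-infinite unions of cylinders; but by Lemma \ref{lem:cylinders_are_intervals} they are still genuine intervals contained in $Y$, so the integral identities and the application of Lemma \ref{lem:Distortion_estimates_Thaler} (whose conclusion is uniform over $W$) go through verbatim.

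The main obstacle I anticipate is bookkeeping rather than conceptual: ensuring that the reference window $W$ on which $\widehat{T}^N\mathbf{1}_{[P]}$ lives actually contains both $[0b_0^{\ell-1}]$ and $[0c_0^{m-1}]$ (this forces the natural choice $W$ to be $[0,1]$ or the full image branch, which is harmless here), keeping the single constant $C$ uniform over all choices of $k$, $\ell$, $m$ and all admissible words — which is exactly what Lemma \ref{lem:Distortion_estimates_Thaler} guarantees, its constant depending only on $Y$ — and combining the three sources of multiplicative error (distortion of $g$, oscillation of $\rho$, choice of reference point) into a single $1 \pm C\,\diam(\cdot)$ with a renamed constant, using that $\diam \le \diam([0,1]) = 1$ so that $1 \pm C\diam$ can always be taken nonnegative after shrinking, and that $\diam([0b_0^{\ell-1}]\cup[0c_0^{m-1}])$ dominates $d(x,y)$ for any two points $x,y$ in that union. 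None of these steps requires a delicate estimate; they just need to be stated carefully so the constant is genuinely independent of everything but $Y$.
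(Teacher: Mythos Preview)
Your strategy is the right one and essentially matches the paper's, but you have an off-by-one in the number of iterations that breaks the argument as written. You set $P=0a_0^{k-1}0$ and $N=k+2$, so that $T^N[Pb_0^{\ell-1}]=[b_0^{\ell-1}]$. Two problems follow. First, the ratio you end up comparing to is $\mu[b_0^{\ell-1}]/\mu[c_0^{m-1}]$, whereas the corollary asserts the comparison with $\mu[0b_0^{\ell-1}]/\mu[0c_0^{m-1}]$; these ratios are different in general (and indeed in your ``obstacles'' paragraph you slip back to writing $[0b_0^{\ell-1}]$, which shows the confusion). Second, and more seriously, the image sets $[b_0^{\ell-1}]$, $[c_0^{m-1}]$ need not lie in $Y$: if $b_0\ge 1$ they sit in $(0,1/2)$, possibly arbitrarily close to $0$. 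Lemma~\ref{lem:Distortion_estimates_Thaler} requires $V,W\subset Y$, so it does not apply with $W=[0,1]$; and your fallback, controlling the oscillation of $\rho$, fails precisely because $\rho$ blows up near $0$.

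The fix is simply to iterate $k+1$ times instead of $k+2$. Then $T^{k+1}$ maps $[0a_0^{k-1}0b_0^{\ell-1}]$ homeomorphically onto $[0b_0^{\ell-1}]\subset Y$, Lemma~\ref{lem:Distortion_estimates_Thaler} applies with $V=[0a_0^{k-1}0b_0^{j-1}]$ and $W=[0b_0^{j-1}]\subset Y$ (where $j$ is the first index at which $b$ and $c$ differ, so that $W$ contains both $[0b_0^{\ell-1}]$ and $[0c_0^{m-1}]$), and you recover exactly the ratio $\mu[0b_0^{\ell-1}]/\mu[0c_0^{m-1}]$. There is no need to introduce $\rho$ at all: work with $d\mu$ and the $\mu$-transfer operator throughout, and the ``oscillation of $\rho$'' step disappears. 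With these corrections your argument is the paper's.
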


\begin{proof}[Proof (of Corollary \ref{cor:distortion_bounds_comparison_measures})]
    Consider the constant $C$ from Lemma \ref{lem:Distortion_estimates_Thaler} with $Y = [1/2,1] =[0]$. Let $j := \min\{k\geq 0\;|\; b_k \neq c_k\}$ with the convention that $j := \min\{\ell,m\} - 1$ if $b_0^{\min\{\ell,m\} - 1} = c_0^{\min\{\ell,m\} - 1}$. In particular, we have $\diam([0b_0^{\ell-1}] \cup [0c_0^{m-1}]) = \diam([0b_0^{j-1}])$. By a property of the map $T$, we know that $T^{k+1} : [0a_0^{k-1}0b_0^{j-1}] \mapsto [0b_0^{j-1}]$ is an homeomorphism with both $[0a_0^{k-1}0b_0^{j-1}]$ and $[0b_0^{j-1}]$ being intervals of $Y$. Thus, by Lemma \ref{lem:Distortion_estimates_Thaler}, we have $\widehat{T}^{k+1} \mathbf{1}_{[0a_0^{k-1}0b_0^{j-1}]} \in \mathcal{C}_C([0b_0^{j-1}])$. Consequently,
    \begin{align*}
        \mu[0a_0^{k-1}0b_0^{\ell-1}] & = \int \mathbf{1}_{[0a_0^{k-1}0b_0^{\ell-1}]}\,\dd\mu = \int \widehat{T}^{k+1} \mathbf{1}_{[0a_0^{k-1}0b_0^{\ell-1}]}\dd\mu \\
        & =  \int \mathbf{1}_{[0b_0^{\ell-1}]}\cdot \widehat{T}^{k+1} \mathbf{1}_{[0a_0^{k-1}0b_0^{j-1}]}\,\dd\mu \\
        & = \widehat{T}^{k+1} \mathbf{1}_{[0a_0^{k-1}0b_0^{j-1}]}(y) \int \mathbf{1}_{[0b_0^{\ell-1}]}(x) \cdot \frac{\widehat{T}^{k+1} \mathbf{1}_{[0a_0^{k-1}0b_0^{j-1}]}(x)}{\widehat{T}^{k+1} \mathbf{1}_{[0a_0^{k-1}0b_0^{j-1}]}(y)}\,\dd\mu \quad (\forall y \in [0c_0^{m-1}])\\
        & = \left(1 \pm Cd(x,y)\right) \widehat{T}^{k+1} \mathbf{1}_{[0a_0^{k-1}0b_0^{j-1}]}(y)\int \mathbf{1}_{[0b_0^{\ell-1}]}\dd\mu \quad \forall y \in [0c_0^{m-1}]\\
        & = \left(1\pm C\diam([0b_0^{j-1}])\right) \mu[0b_0^{\ell - 1}]\, \mathbb{E}_{\mu_{[0c_0^{m-1}]}}\left[\widehat{T}^{k+1} \mathbf{1}_{[0a_0^{k-1}0b_0^{j-1}]}\right]\\
        & = \left(1\pm C\diam([0b_0^{j-1}])\right) \frac{\mu[0b_0^{\ell - 1}]}{\mu[0c_0^{m-1}]}\int \mathbf{1}_{[0c_0^{m-1}]}\, \widehat{T}^{k+1} \mathbf{1}_{[0a_0^{k-1}0b_0^{j-1}]} \,\dd\mu \\
        & = \left(1\pm C\diam([0b_0^{j-1}])\right) \frac{\mu[0b_0^{\ell - 1}]}{\mu[0c_0^{m-1}]} \,\mu[0a_0^{k-1}0c_0^{m-1}].
    \end{align*}
\end{proof}

\begin{rem}
    Although we are using notations coming from the symbolic dynamics, we are still working on the interval with its topology. For example, $\diam([0(\geq p)]) \xrightarrow[p\to +\infty]{} 0$ in our case, whereas $\diam([0(\geq p)]) = 1/4$ for all $p\geq 0$ with the usual geometric topology of countable Markov shifts.  
\end{rem}

\subsection{Proof of Theorems \ref{thm:REPP_LSV_nonperiodic_points}-\ref{thm:REPP_LSV_periodic_points}}

  Once $x \in [0,1]\backslash \bigcup_{k\geq 0} T^{-k}\{0\}$ is chosen, we fix $Y$ of the form $[c_{j+1}, 1]$ such that $x \in Y$. Furthermore, there exists a sequence of integers $(a_k)_{k\geq 0}$ such that $(a_0^{n-1})$ is admissible for all $n$ and $B_n = \xi_n(x) =  [a_0^{n-1}]$. Set $s_n := S_{n - 1} \mathbf{1}_Y(x)$ the number of visits to $Y$ before $n$ (note that $s_n \geq 1$ for $n\geq 2$ since we assumed $x \in Y$). 
\begin{lem}
    \label{lem:equivalence_cylinders_induced_and_cylinders}
    For all $n\geq 0$, we have 
    \begin{align*}
        B_n = \xi^Y_{s_n}(x). 
    \end{align*}
\end{lem}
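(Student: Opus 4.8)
Write the $\xi$-itinerary of $x$ as $(a_m)_{m\ge 0}$, so that $T^m x\in[a_m]$ and $B_n=\xi_n(x)=[a_0^{n-1}]$, and put $r_Y^{(0)}(x):=0$, so that $T_Y^{k}x=T^{r_Y^{(k)}(x)}x$ for all $k\ge 0$. The plan is to compare the two nested families of cells $n\mapsto\xi_n(x)$ and $k\mapsto\xi^Y_{k}(x)$ by recording \emph{when each one gets strictly refined}, and then to prove the identity by induction on $n\ge 1$. (For $n=0$ the two sides do not coincide, so the statement is to be understood for $n\ge 1$.)

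First I would isolate two parallel \emph{refinement criteria}. On the $\xi$-side, Lemma~\ref{lem:description_partition_xi_with_symbolic_notations} and the admissibility rule give, for $n\ge 1$: $\xi_{n+1}(x)=\xi_n(x)$ if $T^{n-1}x\notin[0]$ (then $a_n=a_{n-1}-1$ is forced), while $\xi_{n+1}(x)=\xi_n(x)\cap T^{-n}[a_n]\subsetneq\xi_n(x)$ if $T^{n-1}x\in[0]$ (then $T^n$ maps $\xi_n(x)$ homeomorphically onto $[0,1]$). On the $\xi^Y$-side, the description of the Gibbs--Markov partition $\xi^Y$ shows that for every $z\in Y$ the atom $\xi^Y(z)$ determines both $r_Y(z)$ and which atom $[i]$ $(0\le i\le j)$ of $\xi\cap Y$ contains $T_Y z$; consequently, for $k\ge 0$: $\xi^Y_{k+1}(x)=\xi^Y_{k}(x)$ if $T_Y^{k}x\notin[0]$, while $\xi^Y_{k+1}(x)=\xi^Y_{k}(x)\cap T_Y^{-k}\big(\xi^Y(T_Y^{k}x)\big)\subsetneq\xi^Y_{k}(x)$ if $T_Y^{k}x\in[0]$ (the case $k=0$ using $\xi^Y_0=\xi\cap Y$). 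The crucial observation is that, since $[0]\subset Y$, every time $i$ with $T^i x\in[0]$ is a return time $i=r_Y^{(k)}(x)$ for some $k$, and then $T_Y^{k}x=T^i x\in[0]$: the two families are refined by the \emph{same events}.

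Then I would run the induction. For $n=1$: $s_1=0$ and $\xi_1(x)=[a_0]=\xi^Y_0(x)$ since $x\in[a_0]\subset Y$. For the step, use $s_{n+1}=s_n+\mathbf 1_Y(T^{n-1}x)$ and split on the location of $T^{n-1}x$. If $T^{n-1}x\notin[0]$, then $\xi_{n+1}(x)=\xi_n(x)$; moreover, when $T^{n-1}x\in Y$ the time $n-1$ is exactly the $s_n$-th return, i.e.\ $r_Y^{(s_n)}(x)=n-1$, so $T_Y^{s_n}x=T^{n-1}x\notin[0]$ and $\xi^Y_{s_n+1}(x)=\xi^Y_{s_n}(x)$; hence in all such cases both sides stay equal to $\xi_n(x)=\xi^Y_{s_n}(x)$ (note $s_{n+1}=s_n$ if $T^{n-1}x\notin Y$ and $s_{n+1}=s_n+1$ if $T^{n-1}x\in Y\setminus[0]$). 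If $T^{n-1}x\in[0]$, then $s_{n+1}=s_n+1$ and both sides are strictly refined; here $\xi^Y(T_Y^{s_n}x)=\xi^Y(T^{n-1}x)=[0a_n]$, and on the cell $\xi^Y_{s_n}(x)=\xi_n(x)=[a_0^{n-1}]$ — on which $a_{n-1}=0$ is already imposed and on which $T_Y^{s_n}$ agrees with $T^{n-1}$ (all its points share the first $s_n$ return times) — the extra constraint $\xi^Y(T_Y^{s_n}\cdot)=[0a_n]$ reduces exactly to $T^n(\cdot)\in[a_n]$, so $\xi^Y_{s_n+1}(x)=\xi_n(x)\cap T^{-n}[a_n]=\xi_{n+1}(x)$. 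This closes the induction.

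The one delicate point is the index bookkeeping: relating a single iterate of $T$ to the increments of $s_n$, in particular the identity $r_Y^{(s_n)}(x)=n-1$ on the event $\{T^{n-1}x\in Y\}$ (which rests on the convention $S_{n-1}\mathbf 1_Y=\sum_{i=0}^{n-2}\mathbf 1_Y\circ T^i$), and checking in the refining case that the unique new symbolic constraint on the two sides is the same. Everything else is a direct consequence of the Markov structures of $\xi$ and of $\xi^Y$ recalled in Section~\ref{subsection:preliminaries_LSVmaps}.
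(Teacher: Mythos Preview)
Your proof is correct and follows essentially the same approach as the paper: induction on $n\ge 1$ with a case split according to whether $T^{n-1}x$ lies outside $Y$, in $Y\setminus[0]$, or in $[0]$ (equivalently, whether $a_{n-1}>j$, $1\le a_{n-1}\le j$, or $a_{n-1}=0$). Your version is somewhat more explicit than the paper's in justifying the two refinement criteria and the index identity $r_Y^{(s_n)}(x)=n-1$ on $\{T^{n-1}x\in Y\}$, and you correctly flag that the statement should be read for $n\ge 1$.
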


\begin{proof}[Proof (of Lemma \ref{lem:equivalence_cylinders_induced_and_cylinders})]
    We have $B_1 = \xi(x) = \xi^Y_0(x)$ and $s_1 = S_0\mathbf{1}_Y(x) = 0$. Assume that the result is true for some $n\geq 1$. We have $B_n = \xi_{n}(x) = [a_0^{n-1}]$ and $B_{n+1} = \xi_{n+1}(x) = [a_0^n]$. There are three cases, if $a_{n-1} > j$, $B_{n+1} = [a_0^{n-1} (a_{n-1} - 1)] = [a_0^{n-1}] = B_n$ et $s_{n+1} = S_n\mathbf{1}_Y(x) = S_{n-1}\mathbf{1}_Y(x) = s_n$ meaning that $\xi^Y_{s_{n+1}}(x) = \xi^Y_{s_{n}}(x) = B_n = B_{n+1}$. If $1 \leq a_{n-1} \leq j$, $s_{n+1} = s_n + 1$ but $\xi^Y_{s_n + 1}(x) = \xi^Y_{s_n}(x)$ and again $\xi^Y_{s_{n+1}}(x) = \xi^Y_{s_n}(x) = B_n = B_{n+1}$. \\
    Finally, if $a_{n-1} = 0$, then $s_{n+1} = s_n + 1$ and $\xi^Y_{s_{n} + 1}(x) = [a_0^n] = B_{n+1}$ . 
\end{proof}

\paragraph{Proof of Theorem \ref{thm:REPP_LSV_nonperiodic_points}: Non periodic points.}

\begin{proof}[Proof (of Theorem \ref{thm:REPP_LSV_nonperiodic_points})]
    Let $x \in [0,1]$ be non periodic and not a preimage of $0$. Let $\tau_n := \min\{i \geq n - 1 \;|\; T^ix\in Y\}$. Thus $s_n = S_{n -1}\mathbf{1}_Y(x) =  S_{\tau_n}\mathbf{1}_Y(x)$ and $B_n = \xi_n(x) = \xi^{Y}_{s_n}(x)$ for $n\geq 1$ by Lemma \ref{lem:equivalence_cylinders_induced_and_cylinders}. Since $(Y, T_Y, \mu_Y, \xi^Y)$ is Gibbs-Markov, there exists $\kappa, c > 0$ such that $\mu(B_n) \leq \kappa e^{-cs_n}$.\\
    We will use Theorem \ref{thm:sufficient_conditions_convergence_compound_FPP} to get the result. Thus, we need to prove conditions \ref{cond:Living_in_uniform_set}-\ref{cond_CFPP:Compatibility_Geometric_law}. \ref{cond:Living_in_uniform_set} is immediate since $Y$ is a uniform set. Since there is no appearance of clusters in the non periodic case, we set $U(B_n) = \emptyset$ and $Q(B_n) = 1$ for \ref{cond_CFPP:extremal_index}. It remains to show \ref{cond_CFPP:good_density_after_tau_n}-\ref{cond_CFPP:cluster_compatible_tau_n_no_cluster_from_Q} for our particular deterministic choice $\tau_n$. We start with condition \ref{cond_CFPP:tau_n_small_enough}. In fact, we will even show the stronger result $\tau_n\mu(B_n) \xrightarrow[n\to +\infty]{\mu_{B_n}} 0$. For that there are two cases, either $x$ is a "well-behaved" point, meaning is will return to $Y$ often enough, or $x$ does not come back sufficiently often to $Y$. In the first case, the exponential decay of the cylinder measures for the induced map is enough to get the result. In the second case, since $x$ is a special point, the measure of its cylinder will already be small enough.
    Consider a sequence $(n_k)$ such that $\lim_{k\to +\infty} \tau_{n_k}\mu(B_{n_k}) = \limsup_{n\to +\infty} \tau_n\mu(B_n)$. Without loss of generality, we can assume that $(S_{\tau_{n_k}}\mathbf{1}_Y(x)/\log(\tau_{n_k}))_{k\geq 0}$ is either bounded or diverges to $+\infty$. \\
    If it diverges to $+\infty$, we have 
    \begin{align*}
        \tau_{n_k}\mu(B_{n_k}) \leq \tau_{n_k} \kappa\, e^{-cs_{n_k}} = \tau_{n_k} \kappa\, e^{-cS_{\tau_{n_k}}\mathbf{1}_{Y}(x)} \xrightarrow[k\to +\infty]{} 0.
    \end{align*}
    Otherwise, if the sequence is bounded by $K > 0$, we have $S_{\tau_{n_k}}\mathbf{1}_Y(x) \leq K\log(\tau_{n_k})$ and thus, there exists some $j\leq K\log(\tau_{n_k})$ such that $r_Y^{\{j\}}(x)\geq \tau_{n_k}/(K\log(\tau_{n_k}))$. It yields,
    \begin{align*}
        \tau_{n_k}\mu(B_{n_k}) & \leq \tau_{n_k}\mu\big(Y \cap T_Y^{-j+1}\{r_Y = r_Y^{\{j\}}(x)\}\big) \leq \tau_{n_k}\mu\big(Y \cap \{r_Y = r_Y^{\{j\}}(x)\}\big) \\
        & \leq C\tau_{n_k} \frac{1}{r_Y^{\{j\}}(x)^{\alpha+1}} \leq CK^{\alpha} \frac{\tau_{n_k}\log(\tau_{n_k})^{\alpha+1}}{\tau_{n_k}^{\alpha+1}} \xrightarrow[k\to +\infty]{} 0 \quad \text{(using \eqref{eq:measure_1-cylindres_asymptotique})}.
    \end{align*}
      Hence condition \ref{cond_CFPP:tau_n_small_enough} is satisfied. For condition \ref{cond_CFPP:cluster_compatible_tau_n_no_cluster_from_Q}, we can use \ref{cond_CFPP:tau_n_small_enough} and the fact that we have return time statistics towards the exponential law for the induced map (because it is Gibbs-Markov). We get 
    \begin{align*}
        \mu_{B_n}(\lr_{B_n} < \tau_n) &\leq \mu_{B_n}\Big(r_Y^{(r^Y_{B_n})} < \tau_n\Big) \leq \mu_{B_n} \big(S_{\tau_n}\mathbf{1}_Y > \lr^Y_{B_n}\big) \\
        & \leq  \mu_{B_n}\big(\mu_Y(B_n)\,\lr^Y_{B_n} < \mu_Y(B_n) S_{\tau_n}\mathbf{1}_Y) \\
        & \leq \mu_{B_n}(\mu_Y(B_n)\,\lr^Y_{B_n} < t) + \mu_{B_n}\big(\mu_Y(B_n)S_{\tau_n}\mathbf{1}_Y > t\big),
    \end{align*}
    for all $t > 0$. Fix $\varepsilon > 0$. Since $B_n$ has return time statistics for the induced map, we can take $t$ small enough so that $\limsup_{n\to +\infty} \mu_{B_n}(\mu_Y(B_n)r^Y_{B_n} < t) \leq \varepsilon$. On the other hand, $S_{\tau_n}\mathbf{1}_Y \leq \tau_n$ and using $\mu(B_n)\tau_n \xrightarrow[n\to +\infty]{\mu_{B_n}} 0$, it yields
    \begin{align*}
        \limsup_{n\to +\infty} \mu_{B_n}(r_{B_n} < \tau_n) \leq \varepsilon,
    \end{align*}
    proving \ref{cond_CFPP:cluster_compatible_tau_n_no_cluster_from_Q}.\\

      It remains to show \ref{cond_CFPP:good_density_after_tau_n}. Since $B_n = \xi_{s_n}^Y(x)$, we have
    \begin{align*}
        \widehat{T^{\tau_n}}(\mathbf{1}_{B_n}/\mu(B_n)) = \widehat{T_Y^{s_n}} (\mathbf{1}_{\xi_{s_n}^Y(x)}/\mu(\xi_{s_n}^Y(x)))\,.
    \end{align*}
    As the return map to $Y$ is Gibbs-Markov, by \cite[\S 4.7]{Aar97} (see also \cite[Section 10.2]{Zwe22}), there exists some $\mathcal{U}$ compact in $L^1(\mu)$ such that \(\widehat{T^{\tau_n}}(\mathbf{1}_{B_n}/\mu(B_n)) \in \mathcal{U}\) for every $n\geq 1$. 

      Thus, since every condition of Theorem \ref{thm:sufficient_conditions_convergence_compound_FPP} in the case $U(B_n) = \emptyset$ is satisfied, it gives us the wanted result.
\end{proof}

\paragraph{Proof of Theorem \ref{thm:REPP_LSV_periodic_points}: Periodic points.} 
\begin{proof}[Proof (of Theorem \ref{thm:REPP_LSV_periodic_points})]
Assume now that $x \in [0,1] \backslash \bigcup_{k\geq 0} T^{-k}\{0\}$ is periodic of prime period $q$. Again, we want to use Theorem \ref{thm:sufficient_conditions_convergence_compound_FPP} and thus we have to show \ref{cond:Living_in_uniform_set}-\ref{cond_CFPP:Compatibility_Geometric_law}. \ref{cond:Living_in_uniform_set} is immediate since $Y$ is a uniform set. Recall that $B_n = \xi_n(x) = \xi^Y_{s_n}(x)$ with $s_n = S_{n-1}\mathbf{1}_Y(x)$. Let $Q(B_n) := B_n \cap T^{-q}B_n^c$ be the escape annulus and $U(B_n) = B_n \backslash Q(B_n) = B_n \cap T^{-q}B_n$. We start by conditions \ref{cond_CFPP:extremal_index}and \ref{cond_CFPP:Compatibility_Geometric_law}. We have 
\begin{align*}
    \mu(U(B_n)) & = \mu(B_n \cap T^{-q}B_n) = \int \mathbf{1}_{B_n}\cdot \mathbf{1}_{B_n}\circ T^q \,\dd\mu = \int \mathbf{1}_{B_n}\cdot \mathbf{1}_{B_n}\circ T^q \rho \,\dd\leb\,.
\end{align*}
For $n$ large enough such that $T^jB_n \cap B_n = \emptyset$ for $1\leq j < q$ and by construction of $U(B_n)$, $T_{B_n} = T^q$ on $U(B_n)$ and $T^q : U(B_n) \to B_n$ is a diffeomorphism. Thus, for every $y \in B_n$
\begin{align*}
    \widehat{T_{B_n}}\mathbf{1}_{U(B_n)}(y) = \widehat{T}^q\mathbf{1}_{U(B_n)}(y) \mathbf{1}_{B_n}(y) = \frac{1}{(T^q)'(T^{-q}_{U(B_n)}y)}
\end{align*}
where $T^{-q}_{U(B_n)}y$ is the only $q$-preimage of $y$ in $U(B_n)$. Since $T'$ is continuous at $x$ (and $T'(x) > 0$), $\|1/(T'\circ T^{-q}_{U(B_n)}) - 1/(T')(x)\|_{L^{\infty}(\mu_{B_n})} \xrightarrow[n\to +\infty]{} 0$. Thus, using that 
$\widehat{T_{B_n}}\left(\frac{\mathbf{1}_{U(B_n)}}{\mu(B_n)}\right)$ and $\frac{\mathbf{1}_{B_n}}{\mu(B_n)}$ are probability densities, we have 
\begin{align*}
    \left\|\,\widehat{T_{B_n}}\left(\frac{\mathbf{1}_{U(B_n)}}{\mu(B_n)}\right) - \frac{\mathbf{1}_{B_n}}{\mu(B_n)} \right\|_{L^{\infty}(\mu_{B_n})} \xrightarrow[n\to +\infty]{} 0
\end{align*}
and 
\begin{align*}
    \frac{\mu(U(B_n))}{\mu(B_n)} \xrightarrow[n\to +\infty]{} \frac{1}{(T^q)'(x)},
\end{align*}
proving conditions \ref{cond_CFPP:extremal_index} and \ref{cond_CFPP:Compatibility_Geometric_law} with 
\begin{align*}
    \theta = 1 - \frac{1}{(T^q)'(x)}.
\end{align*}

  For \ref{cond_CFPP:cluster_compatible_tau_n_cluster_from_U}, by construction, we have $U(B_n) = B_{n+q} = \xi^Y_{s_{n + q}}(x)$ and we set $\tau_n := \min \{ i \geq n+q-1\; |\; T^ix \in Y\}$ so that $s_{n+q} = S_{\tau_n}\mathbf{1}_Y(x)$. First, since $r_{B_n} = q$ on $U(B_n)$, we immediately get
\begin{align*}
    \mu_{U(B_n)}(\lr_{B_n} > \tau_n) = \mu_{U(B_n)}(q > \tau_n) \xrightarrow[n\to +\infty]{} 0.
\end{align*}
Furthermore, since $x$ is a periodic point, $S_n\mathbf{1}_Y(x)/ \log(n) \xrightarrow[n\to +\infty]{} +\infty$. Thus, since the measure of cylinders in Gibbs-Markov maps decays exponentially and $s_{n + q} \leq s_n + q$ 
\begin{align*}
    \tau_n\, \mu(B_n) \leq \tau_n \kappa\, e^{- c s_n} \leq \tau_n\,  \kappa e^{q} e^{-cs_{n+q}} = (\kappa e^q)\tau_n e^{S_{\tau_n}\mathbf{1}_Y(x)} \xrightarrow[n\to +\infty]{} 0,
\end{align*}
  which proves \ref{cond_CFPP:tau_n_small_enough}. For \ref{cond_CFPP:cluster_compatible_tau_n_no_cluster_from_Q}, as in the non periodic case, we can take advantage of the return time statistics for the induced Gibbs-Markov system. This time, the convergence of $\mu_Y(B_n)r_{B_n}^Y$ is not to an exponential law but to a law with distribution function $t\mapsto (1 -\theta) + \theta(1 - e^{-\theta t})$. However, starting from $Q(B_n)$ instead of $B_n$ implies that we do not have the cluster at $0$ and a convergence to the same law as for the hitting time, that is to say the law with distribution function $t\mapsto 1 - e^{-\theta t}$ \cite[Theorem 3.2]{Zwe22}. Thus we can  use the same decomposition as before to get 
\begin{align*}
    \mu_{Q(B_n)}(\lr_{B_n} < \tau_n) &\leq \mu_{Q(B_n)}(\mu_Y(B_n)\,\lr^Y_{B_n} < t) + \mu_{Q(B_n)}(\mu_Y(B_n)S_{\tau_n}\mathbf{1}_Y > t)
\end{align*}
for every $t > 0$. Fix $\varepsilon > 0$. By the 
convergence for the induced map, the first term will be smaller than $\varepsilon$ if $t$ is chosen small enough. For the second term, $S_{\tau_n}\mathbf{1}_Y \leq \tau_n$ and we already proved that $\tau_n \mu(B_n) \xrightarrow[n\to +\infty]{\mu_{B_n}} 0$ and thus, for all $t > 0$,
\begin{align*}
    \mu_{Q(B_n)}(\mu(B_n)\,\tau_n > t) & = \frac{1}{\mu(Q(B_n))} \int_{Q(B_n)} \mathbf{1}_{\{\mu(B_n)\,\tau_n > t\}}\,\dd\mu\\
    & \leq \frac{\mu(B_n)}{\mu(Q(B_n))}\, \mu_{B_n} (\mu(B_n)\tau_n > t) \xrightarrow[n\to +\infty]{} \theta^{-1}\times 0 = 0.
\end{align*}
For the remaining condition \ref{cond_CFPP:good_density_after_tau_n}, we can again use \cite[\S 4.7]{Aar97} (see also \cite[Section 10.2]{Zwe22}), to ensure the existence of some $\mathcal{U}$ compact (and convex) in $L^1(\mu)$ such that $\widehat{T^{\tau_n}}(\mathbf{1}_{Q(B_n)}/\mu(Q(B_n)) \in \mathcal{U}$ for every $n\geq 1$ because $Q(B_n)$ is a finite union of $\xi_{s_{n+q}}$-cylinders.
\end{proof}

\subsection{Proofs of Theorems \ref{thm:REPP_LSV_preimages_of_0} and \ref{thm:REPP_preimages_of_0_barely_infinite_case}}

\subsubsection{The case $x = 1/2$}

\begin{prop}
    \label{prop:HTS/RTS_1/2}
    Let $B_n := [0(\geq n)] = [1/2,\delta_n]$ be a sequence of nested balls shrinking towards $1/2$, where $\delta_n = (1+c_n)/2$. Then, 
    \begin{equation}
        \label{eq:HTS_right_neighborhood_1/2}
        \gamma(\mu(B_n))\,\lr_{B_n} \xRightarrow[n\to +\infty]{\mathcal{L}(\mu)} \mathfrak{J}_{\alpha}
    \end{equation}
    and 
    \begin{equation}
        \label{eq:RTS_right_neighborhood_1/2}
        \gamma(\mu(B_n))\,\lr_{B_n} \xRightarrow[n\to +\infty]{\mu_{B_n}} \mathfrak{\widetilde{J}}_{\alpha}\,.
    \end{equation}
\end{prop}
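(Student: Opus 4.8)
The point is that $1/2$ is the unique preimage of the neutral fixed point $0$ distinct from $0$, and that $B_n=[0(\geq n)]=[1/2,\delta_n]$ is \emph{exactly} the preimage of the neighborhood $A_n:=[(\geq n)]=[0,c_n]$ of $0$ under the second branch $T_2$. So the plan is to transfer the already known hitting behaviour at $0$ --- Theorem~\ref{thm:Hitting_to_0_infinite} in the normalized form of Corollary~\ref{cor:HTS_neighborhood_0_normalization_gamma} --- to $1/2$ (throughout $p>1$, so that $\mathfrak{J}_\alpha$ is defined; for $p=1$ one argues identically using Theorem~\ref{thm:Collet_Galves_p_equals_1} and Lemma~\ref{lem:comparison_renormalizations_for_0_p=1} in place of those, with $\mathfrak{J}_\alpha$ replaced by the exponential law). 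First I would record the exact relation between the two targets: since $T_1$ is increasing on $[0,1/2]$ with $T_1(c_{n+1})=c_n$, and $T_2(x)=2x-1$ on $[1/2,1]$, one has $T^{-1}(A_n)=T_1^{-1}(A_n)\sqcup T_2^{-1}(A_n)=[0,c_{n+1}]\sqcup[1/2,\delta_n]$; as $c_{n+1}<c_n$ the first piece lies inside $A_n$, hence for $n$ large
\[
B_n=T^{-1}(A_n)\setminus A_n,\qquad T(B_n)=A_n .
\]

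From this I would deduce the pointwise identity $\lr_{A_n}(x)=\lr_{B_n}(x)+1$ for every $x\notin A_n\cup B_n$ (with $\lr_{A_n}(x)<\infty$, which is $\mu$-a.e.\ since the system is a CEMPT): if $k:=\lr_{A_n}(x)$ then $T^{j}x\notin A_n$ for $0\le j\le k-1$, while $T(T^{k-1}x)\in A_n$, so $T^{k-1}x\in T^{-1}(A_n)\setminus A_n=B_n$; moreover $k\geq 2$ because $x\notin B_n$, so $k-1\geq 1$ and $\lr_{B_n}(x)\leq k-1$, whereas conversely $T^{m}x\in B_n$ forces $T^{m+1}x\in A_n$, giving $\lr_{A_n}(x)\leq m+1$. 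The set $A_n\cup B_n$ has to be excluded precisely because on $B_n$ one has $\lr_{A_n}=1$ while $\lr_{B_n}\geq 1$.

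For the hitting statistics~\eqref{eq:HTS_right_neighborhood_1/2} I would fix an arbitrary probability $\nu\ll\mu$. The sequences $A_n=[0,c_n]$ and $B_n=[1/2,\delta_n]$ decrease to the $\mu$-null sets $\{0\}$ and $\{1/2\}$, so $\nu(A_n\cup B_n)\to 0$ by continuity from above; hence with $\nu$-probability tending to $1$ we may write $\gamma(\mu(B_n))\,\lr_{B_n}=\gamma(\mu(B_n))\,\lr_{A_n}-\gamma(\mu(B_n))$. By~\eqref{eq:measure_union_cylindres_asymptotique}, $\mu(B_n)=\mu[0(\geq n)]\asymp n^{-\alpha}\to 0$, and since $\gamma\in\RV_{0+}(1/\alpha)$ with $1/\alpha>0$ the additive correction $\gamma(\mu(B_n))$ tends to $0$; thus $\gamma(\mu(B_n))\,\lr_{B_n}$ and $\gamma(\mu(B_n))\,\lr_{A_n}$ differ by a term vanishing in $\nu$-probability. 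Applying Corollary~\ref{cor:HTS_neighborhood_0_normalization_gamma} to the neighborhood $[0,c_n]$ of $0$ (for which $T_2^{-1}([0,c_n])=B_n$, so the scaling there is exactly $\gamma(\mu(B_n))$) gives $\gamma(\mu(B_n))\,\lr_{A_n}\xRightarrow{\nu}\mathfrak{J}_\alpha$, and Slutsky's theorem together with the arbitrariness of $\nu$ yields~\eqref{eq:HTS_right_neighborhood_1/2}. For the return statistics~\eqref{eq:RTS_right_neighborhood_1/2} I would then note that $B_n\subset Y:=[1/2,1]$, which is a uniform set with normalizing sequence in $\RV(\alpha)$, and $\mu(B_n)\to 0$, so $(B_n)$ satisfies~\ref{cond:Living_in_uniform_set}; Corollary~\ref{cor:equivalence_HTS/RTS_infinite_measure} converts~\eqref{eq:HTS_right_neighborhood_1/2} into a $\mu_{B_n}$-limit whose law is characterized from $\mathfrak{J}_\alpha$ by~\eqref{eq:Laplace_transform_HTS_from_RTS}, i.e.\ is $\widetilde{\mathfrak{J}}_\alpha$ by~\eqref{eq:laplace_transform_J_alpha_tilde}.

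The only genuinely delicate point --- the main obstacle --- is that $A_n=[0,c_n]$ is a neighborhood of the indifferent fixed point and therefore has \emph{infinite} $\mu$-measure: it is not an asymptotically rare event and cannot be fed directly into the abstract machinery. The argument must be run relative to fixed absolutely continuous probabilities, exploiting that $\nu(A_n)\to 0$ even though $\mu(A_n)=+\infty$, and one must check that the deterministic one-step shift between $\lr_{A_n}$ and $\lr_{B_n}$ is annihilated by the vanishing scaling $\gamma(\mu(B_n))$. Everything else is routine once Corollaries~\ref{cor:HTS_neighborhood_0_normalization_gamma} and~\ref{cor:equivalence_HTS/RTS_infinite_measure} are in hand.
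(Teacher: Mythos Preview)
Your proof is correct and follows essentially the same approach as the paper: both transfer the known hitting result at $0$ (Corollary~\ref{cor:HTS_neighborhood_0_normalization_gamma}) to $1/2$ via the identity $\lr_{A_n}=\lr_{B_n}+1$ on the complement of $A_n\cup B_n$, use that the deterministic $+1$ shift is killed by the scaling $\gamma(\mu(B_n))\to 0$, and then invoke Corollary~\ref{cor:equivalence_HTS/RTS_infinite_measure} (with $B_n\subset Y$ uniform) to pass from hitting to return. The only cosmetic difference is that the paper fixes the convenient reference measure $\mu_{[3/4,1]}$ (a set eventually disjoint from $A_n\cup B_n$), appealing to strong distributional convergence to reduce to that case, whereas you work directly with an arbitrary $\nu\ll\mu$ and use $\nu(A_n\cup B_n)\to 0$; these are equivalent formulations of the same argument.
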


\begin{proof}[Proof (of Proposition \ref{prop:HTS/RTS_1/2})]
    Since $B_n$ is uniformly bounded away from $0$, it lies inside a uniform set for all $n$ and thus Corollary \ref{cor:equivalence_HTS/RTS_infinite_measure}-2) ensures that \eqref{eq:HTS_right_neighborhood_1/2} and \eqref{eq:RTS_right_neighborhood_1/2} are equivalent. Furthermore, with the strong distributional convergence \cite[Lemma 4.1]{RZ20}, \eqref{eq:HTS_right_neighborhood_1/2} is equivalent to \(\gamma(\mu(B_n))\,\lr_{B_n} \xRightarrow[n\to +\infty]{\mu_{[3/4,1]}} \mathfrak{J}_{\alpha}\). Notice that $T(B_n) = [0,c_n]$ is a right neighborhood of $0$ and that on $[3/4,1]$, we have $\lr_{B_n} = \lr_{T(B_n)} - 1$. Using \ref{cor:HTS_neighborhood_0_normalization_gamma}, we obtain 
    \begin{align*}
        \gamma\big(\mu(T_2^{-1}(T(B_n)))\big)\,\lr_{T(B_n)} \xRightarrow[n\to +\infty]{\mu_{[3/4,1]}} \mathfrak{J}_{\alpha}.
    \end{align*}
    Since $T_2^{-1}(T(B_n)) = B_n$, it concludes the proof.
\end{proof}

\begin{prop}
    \label{prop:result_REPP_point_1/2}
    Let $B_n := [1/2,\delta_n]$ be as in Proposition \ref{prop:HTS/RTS_1/2}. Then,
    \begin{equation}
        \label{eq:HREPP_1/2}
        N_{B_n}^{\gamma} \xRightarrow[n\to +\infty]{\mathcal{L}(\mu)} \DRPP(\mathfrak{J}_{\alpha},\widetilde{\mathfrak{J}}_{\alpha}),
    \end{equation}
    and 
        \begin{equation}
        \label{eq:RREPP_1/2}
        N_{B_n}^{\gamma} \xRightarrow[n\to +\infty]{\mu_{B_n}} \RPP(\widetilde{\mathfrak{J}}_{\alpha}).
    \end{equation}
\end{prop}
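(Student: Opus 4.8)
The plan is to deduce both statements from the already–proven first–passage result, Proposition \ref{prop:HTS/RTS_1/2}, together with the abstract equivalence machinery. Since $B_n=[1/2,\delta_n]\subset Y=[1/2,1]$ and $Y$ is a uniform set, Corollary \ref{cor:equivalence_HTS_RTS_true_Point_Processes} applies, so it is enough to prove one of \eqref{eq:HREPP_1/2} and \eqref{eq:RREPP_1/2}, the other following automatically with the two limits linked by \eqref{eq:relation_hitting_return_infinite_measure}; I would prove the return statement \eqref{eq:RREPP_1/2}. Two bookkeeping points legitimise the reduction. First, reading off the Laplace transforms of $\mathcal{J}_{\alpha}$ (Theorem \ref{thm:Hitting_to_0_infinite}) and of $\widetilde{\mathfrak{J}}_{\alpha}$ (equation \eqref{eq:laplace_transform_J_alpha_tilde}), one checks $\mathbb{P}(\mathfrak{J}_{\alpha}\in(0,+\infty))=\mathbb{P}(\widetilde{\mathfrak{J}}_{\alpha}\in(0,+\infty))=1$, so $\Phi_{\RPP(\widetilde{\mathfrak{J}}_{\alpha})}$ and $\Phi_{\DRPP(\mathfrak{J}_{\alpha},\widetilde{\mathfrak{J}}_{\alpha})}$ take values in $\mathcal{W}'$ almost surely and $\Xi$ can be applied as in Remark \ref{rem:from_stochastic_process_to_point_process}. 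Second, one must identify the partner of $\RPP(\widetilde{\mathfrak{J}}_{\alpha})$ under \eqref{eq:relation_hitting_return_infinite_measure} as $\DRPP(\mathfrak{J}_{\alpha},\widetilde{\mathfrak{J}}_{\alpha})$: conditioning the delayed renewal process on its first point turns the left side of \eqref{eq:relation_hitting_return_infinite_measure} into $\int_0^{t_1}\widetilde F^{[d-1]}(t_2-v,\dots,t_d-v)\,\dd F_{\mathfrak{J}_{\alpha}}(v)$, and after the substitution $x=t_1-v$ the right side matches this exactly because $\mathfrak{J}_{\alpha}$ and $\widetilde{\mathfrak{J}}_{\alpha}$ satisfy the $d=1$ relation \eqref{eq:Laplace_transform_HTS_from_RTS} (which is precisely how $\widetilde{\mathfrak{J}}_{\alpha}$ was constructed), the general $d$ then following by induction using the renewal property; alternatively, Proposition \ref{prop:HTS/RTS_1/2} already gives $\mathfrak{J}_{\alpha}$ as the first marginal of the hitting limit, and the uniqueness part of Theorem \ref{thm:HTS-REPP_vs_RTS-REPP_infinite_measure_renormalized_measure} forces the remaining marginals. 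With these checks in place, proving \eqref{eq:RREPP_1/2} yields \eqref{eq:HREPP_1/2}.

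To prove \eqref{eq:RREPP_1/2} I would show that, under $\mu_{B_n}$, the scaled cumulative returns $\gamma(\mu(B_n))\,\Phi_{B_n}$ converge in law to the partial–sum process of i.i.d.\ copies of $\widetilde{\mathfrak{J}}_{\alpha}$, arguing coordinate by coordinate through the inter–arrival times $r_{B_n}^{\{k\}}=r_{B_n}\circ T^{r_{B_n}^{(k-1)}}$. The first scaled inter–arrival converges to $\widetilde{\mathfrak{J}}_{\alpha}$ by \eqref{eq:RTS_right_neighborhood_1/2}. For the induction step I need a loss–of–memory statement: upon returning to $B_n$ at a point $y$, the image $Ty$ lies in the right neighbourhood $[0,c_n]$ of $0$, and the length of the ensuing excursion is governed by which cylinder $[k]$, $k\geq n$, contains $Ty$; the key input is that, conditionally on $k$, the push–forward of $\mu_{B_n}$ by the first–return map has a density on $[k]$ uniformly (in $n$ and $k$) comparable to $\rho$, which follows from the Thaler–type distortion bounds of Lemma \ref{lem:Distortion_estimates_Thaler} and Corollary \ref{cor:distortion_bounds_comparison_measures} applied along the branch realising the return. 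Consequently each subsequent scaled return increment is, up to a vanishing error, a hitting time to a right neighbourhood of $0$ of exactly the kind treated in Theorem \ref{thm:Hitting_to_0_infinite} and Corollary \ref{cor:HTS_neighborhood_0_normalization_gamma}, hence converges to $\widetilde{\mathfrak{J}}_{\alpha}$, and since the escaping excursion equidistributes through $Y$ before re–entering $B_n$ this increment is asymptotically independent of the coordinates already recorded. Iterating over finitely many coordinates and using the strong distributional convergence of \cite[Lemma 4.1]{RZ20} to absorb the dependence on the precise re–entry density at each stage gives joint convergence of the first $d$ coordinates for every $d\geq 1$, hence \eqref{eq:RREPP_1/2}.

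The hard part will be the loss–of–memory step: one must show that every visit to $B_n$ effectively restarts the long near–$0$ excursion with a density lying in a fixed compact family, uniformly in $n$. Because $1/2$ is a preimage of the neutral fixed point $0$, the induced map on $Y=[1/2,1]$ is not even defined at $1/2$, so the Gibbs–Markov inducing argument of Theorem \ref{thm:sufficient_conditions_convergence_compound_FPP} — which requires $\widehat{T^{\tau_n}}(\mathbf{1}_{B_n}/\mu(B_n))$ to stay in a compact subset of $L^1(\mu)$ — cannot be applied to $B_n$ directly; instead one has to work with the geometry of the tower over $Y$ near its apex $[0,c_n]$ and rely on the $\mathrm{AFN}$–distortion estimates. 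A secondary, routine, difficulty is keeping the three scalings $\gamma(\mu(B_n))$, $\mu(T_2^{-1}B_n)$, and the normalisation $I(\cdot)$ of Theorem \ref{thm:Hitting_to_0_infinite} mutually asymptotic, which is handled exactly as in Lemma \ref{lem:Comparison_renormalization_0} and Corollary \ref{cor:HTS_neighborhood_0_normalization_gamma}.
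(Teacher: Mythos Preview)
Your overall strategy---prove the return statement by induction on inter-arrival times, using distortion to get asymptotic independence, then transfer to the hitting statement---is the same as the paper's. The paper also argues by induction on successive returns (citing \cite[(xii)--(xiii)]{PSZ13}), with Proposition~\ref{prop:HTS/RTS_1/2} as the base case; for the hitting REPP it simply remarks that the argument is parallel rather than invoking Corollary~\ref{cor:equivalence_HTS_RTS_true_Point_Processes}, but your route through the abstract equivalence is equally valid.

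The difference is in how the inductive step is executed, and here the paper's argument is cleaner than yours. You propose to follow the orbit after a return: push forward to $Ty\in[0,c_n]$, condition on the cylinder $[k]$, control the density there via distortion, and then view the next increment as a hitting time to a neighbourhood of $0$. This detour is unnecessary and contains a misstep: Theorem~\ref{thm:Hitting_to_0_infinite} and Corollary~\ref{cor:HTS_neighborhood_0_normalization_gamma} give convergence to $\mathfrak{J}_{\alpha}$, not $\widetilde{\mathfrak{J}}_{\alpha}$, so they are the wrong reference for the subsequent increments. (Relatedly, the time the orbit spends in $[0,c_n]$ before returning to $Y$ is of order at least $n$, i.e.\ of the same order as $\gamma(\mu(B_n))^{-1}$, so it is \emph{not} a vanishing error to be absorbed into a hitting time from outside.) The paper instead stays on $B_n$ and uses the induced transfer operator directly: writing $M_n=B_n\cap\bigcap_{i=0}^{d-1}\{\gamma(\mu(B_n))\,r_{B_n}\circ T_{B_n}^i\leq t_i\}$, one observes that $M_n$ is a union of branches on each of which $T_{B_n}^d$ is a homeomorphism onto $B_n$; Lemma~\ref{lem:Distortion_estimates_Thaler} then gives $\widehat{T}_{B_n}^d(\mathbf{1}_{M_n}/\mu(M_n))\in\mathcal{C}_r(B_n)$, and since this density has integral $1$ and $\diam(B_n)\to 0$, one gets
\[
\left\|\mu(B_n)\,\widehat{T}_{B_n}^d\!\left(\frac{\mathbf{1}_{M_n}}{\mu(M_n)}\right)-\mathbf{1}_{B_n}\right\|_{L^\infty(\mu_{B_n})}\xrightarrow[n\to\infty]{}0.
\]
This says the conditional law on $B_n$ after $d$ returns is asymptotically $\mu_{B_n}$, so the $(d{+}1)$-th increment is asymptotically independent and distributed as $\gamma(\mu(B_n))\,r_{B_n}$ under $\mu_{B_n}$, which converges to $\widetilde{\mathfrak{J}}_{\alpha}$ by \eqref{eq:RTS_right_neighborhood_1/2} itself---no passage through the tower or through Theorem~\ref{thm:Hitting_to_0_infinite} is needed. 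Your ``hard part'' is thus a one-line consequence of the bounded distortion on $B_n$.
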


\begin{proof}[Proof (of Proposition \ref{prop:result_REPP_point_1/2})]
     To show the convergence, it is enough to prove that the stochastic process of the waiting times converges towards the stochastic process $(W_k)_{k\geq 1}$ of independent waiting times with $W_1 \eqlaw \mathfrak{J}_{\alpha}$ and $W_i \eqlaw \widetilde{\mathfrak{J}}_{\alpha}$ for $i\geq 2$ when the convergence is under $\mathcal{L}(\mu)$ and the i.i.d sequence of $\widetilde{\mathfrak{J}}_{\alpha}$ when the convergence is under $\mu_{B_n}$. This is enough to get the convergence of the point processes, as discussed in Remark \ref{rem:FPP_PhiFPP}. We will focus only on the return REPP as the proof is similar for the hitting REPP. It follows the argument used in \cite[(xii)-(xiii)]{PSZ13} and we proceed by induction on successive returns. The initialization is exactly Proposition \ref{prop:HTS/RTS_1/2}. Consider now $d\geq 1$. Our goal is to show that the distribution of the $(d+1)$-interarrival time is independent of the previous interarrivals and the law is the same. Let $t_0,\dots,t_{d-1}, t_d \in \mathbb{R}_+^*$ and let $M_n := B_n \cap \bigcap_{i= 0}^{d-1} \{ \gamma(\mu(B_n))\,r_{B_n}\circ T^{i}_{B_n} \leq t_i\}$. We have
    \begin{align*}
        \mu_{B_n}\left(M_n \cap \{\gamma(\mu(B_n))\,r_{B_n}\circ T_{B_n}^{d} \leq t_d\}\right) &= \frac{\mu\left(M_n \cap \{\gamma(\mu(B_n))\,r_{B_n}\circ T_{B_n}^{d} \leq t_d\}\right)}{\mu(B_n)} \\
        &= \mu_{B_n}(M_n) \,\mu_{M_n}(\gamma(\mu(B_n))\,r_{B_n}\circ T_{B_n}^{d} \leq t_d)\,.
    \end{align*}
    By assumption, we know the limit of $\mu_{B_n}(M_n)$. Hence, we need to prove the convergence of $\gamma(\mu(B_n))\,r_{B_n}\circ T_{B_n}^{d}$ under $\mu_{M_n}$. We have 
    \begin{align*}
        \mu_{M_n}\left(\gamma(\mu(B_n))\,r_{B_n}\circ T_{B_n}^{d} \leq t_d \right) & = \frac{\mu(B_n)}{\mu(M_n)} \int_{B_n} \mathbf{1}_{M_n} \mathbf{1}_{\left\{\gamma(\mu(B_n))\,r_{B_n} \leq t_d\right\}} \circ T_{B_n}^{d} \,\dd\mu_{B_n} \\
        & = \int_{B_n} \mu(B_n)\,\widehat{T}_{B_n}^d\left(\frac{\mathbf{1}_{M_n}}{\mu(M_n)}\right) \mathbf{1}_{\left\{\gamma(\mu(B_n))\,r_{B_n} \leq t_d\right\}}\,\dd\mu_{B_n}.
    \end{align*}
      Since $B_n = [0(\geq n)]$ is an interval and a union of cylinders build from $\xi$, the induced map is piecewise and a partition $\xi_{B_n}$ of $B_n$ can naturally be defined. On each element of $\xi_{B_n}$, $r_{B_n}$ is constant. Furthermore, for all $i\geq 1$, we can define $\xi_{B_n,i} = \bigvee_{k=0}^{i-1} T_{B_n}^{-k}\xi_{B_n}$. In particular, $M_n$ is $\xi_{B_n,d-1}$ measurable. We write $\kappa_n$ for the element of $\xi_{B_n,d-1}$ contained in $M_n$. On each $V \in \kappa_n$,
    there exists some $m_V$ such that $T^{d}_{B_n}|_V = T^{m_V} : V \to B_n$ is an homeomorphism and thus, $\widehat{T}^{m_V}\mathbf{1}_V \in C_r(B_n)$ by Lemma \ref{lem:Distortion_estimates_Thaler}. Hence,
    \begin{align*}
        \widehat{T}^{d}_{B_n} \left(\frac{\mathbf{1}_{M_n}}{\mu(M_n)}\right) & = \frac{1}{\mu(M_n)} \sum_{V\in \kappa_n} \widehat{T}^d_{B_n} \mathbf{1}_{V} = \frac{1}{\mu(M_n)} \sum_{V\in \kappa_n} \widehat{T}^{m_V} \mathbf{1}_{V} \in C_r(B_n).
    \end{align*}
    Now, writing $u_n := \mu(B_n)\,\widehat{T}_{B_n}^d\left(\frac{\mathbf{1}_{M_n}}{\mu(M_n)}\right)$ and remarking that $\int u_n\dd\mu = \mu(B_n)$, it gives, since $\text{supp}(u_n) \subset B_n$, that $\inf_{B_n} u_n \leq 1 \leq \sup_{B_n} u_n$. Since $u_n \in C_r(B_n)$ we have, for every $x,y\in B_n$, 
    \begin{align*}
        u_n(y)(1 - r\diam(B_n)) \leq u_n(x) \leq u_n(y)(1 + r\diam(B_n))\,.
    \end{align*}
    Hence, 
    \begin{align*}
        1 - r\diam(B_n)\leq \inf_{B_n} u_n \leq \sup_{B_n} u_n \leq 1 + r\diam(B_n)\,,
    \end{align*}
    which, since $\diam(B_n) \xrightarrow[n\to +\infty]{} 0$, implies
    \begin{align}
        \label{eq:asymptotic_renewal}
        \left\|\,\mu(B_n)\,\widehat{T}_{B_n}^d\left(\frac{\mathbf{1}_{M_n}}{\mu(M_n)}\right) - \mathbf{1}_{B_n}\right\|_{L^{\infty}(\mu_{B_n})} \xrightarrow[n\to +\infty]{} 0.
    \end{align}
      Thus
    \begin{align*}
        \mu_{M_n}\big(\gamma(\mu(B_n))\,\lr_{B_n} \circ T_{B_n}^d \leq t_d\big)\uset{\widesim}{n\to +\infty} \int \mathbf{1}_{\{\gamma(\mu(B_n))\,r_{B_n}\leq t_d\}}\,\dd\mu_{B_n} \xrightarrow[n\to +\infty]{} \mathbb{P}(\widetilde{\mathfrak{J}}_{\alpha} \leq t_d)\,,
    \end{align*}
    where we used again Proposition \ref{prop:HTS/RTS_1/2} to get the convergence and this concludes the proof.
\end{proof}

  In particular, Proposition \ref{prop:result_REPP_point_1/2} is Theorem \ref{thm:REPP_LSV_preimages_of_0} for $1/2$ or, otherwise stated, for $k = 0$. We will now capitalize on the convergence for $1/2$ and go backwards for the further preimages of $0$.

\subsubsection{Preimages of $1/2$}

  Fix $Y = [1/2,1]$ and for all $n\geq 1$, let $E_n = T_2^{-1}[0,c_n] = [0(\geq n)] = [1/2,\delta_n]$ where $\delta_n := (1 + c_n)/2$. In particular, with the notations chosen, we have for all $0\leq p\leq n$, $[0[p,n]] = [\delta_{n+1}, \delta_p] = Y \cap \{p+1\leq r_Y \leq n+1\}$. Fix $k\geq 1$ and let $B_n := T^{-k}E_n$. By invariance of $\mu$ we have $\mu(E_n) = \mu(B_n)$. By definition of the map $T$, for $n$ large enough, we have 
\begin{align*}
    B_n = \bigsqcup_{z \in T^{-k}\{1/2\}} B_{z,n},
\end{align*}
where $B_{z,n} = [z,z+\eta_{z,n}]$. With symbolic notations, we have $B_{z,n} = [z_0^{k-1}0(\geq n)]$, where $z = T^{-1}_{\sigma(a_0)}\cdots T^{-1}_{\sigma(a_{k-1})} 1/2$. So, to an element $z\in T^{-k}\{1/2\}$, we can associate a unique sequence $(z_0^{k-1})$ such that $(z_0^{k-1} 0)$ is admissible and reciprocally, for every admissible $(z_0^{k-1}0)$ we can associate $z \in T^{-k}\{1/2\}$. We define the localization map $\phi_n : B_n \to T^{-k}\{1/2\}$ by $\phi_n(x) = z$ if $x\in B_{z,n}$. In fact, if $n_0\geq 1$ is such that $(B_{z,n_0})_{z}$ are disjoint, then $\phi_n = \phi_{n_0}|_{B_n}$ for all $n\geq n_0$ and thus we can consider $\phi := \phi_{n_0}$. \\

  Let $\Psi^{\gamma,k}_n$ be a point process on $\mathbb{R} \times T^{-k}\{1/2\}$ defined by
\begin{align*}
    \Psi^{\gamma,k}_n := \sum_{j\geq 1} \delta_{(\gamma(\mu(B_n))r^{(j)}_{B_n},\, \phi \circ T_{B_n}^j)}. 
\end{align*}

\begin{thm}
    \label{thm:Marking_RREPP}
    Let $k\geq 1$ and $\Psi^{\gamma,k}_n$ be defined as previously, then we have 
    \begin{align*}
        \Psi^{\gamma,k}_n \xRightarrow[n\to +\infty]{\mu_{E_n}} P_k,
    \end{align*}
    where $P_k$ is an independent $\mathbb{Q}_k$-marking of $\RPP(\widetilde{\mathfrak{J}}_{\alpha})$.
\end{thm}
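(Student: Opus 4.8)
The plan is to reduce the marked return process of $B_n=T^{-k}E_n$ to the unmarked successive returns to $E_n$ — for which convergence to $\RPP(\widetilde{\mathfrak{J}}_{\alpha})$ is Proposition~\ref{prop:result_REPP_point_1/2} — and then to read off the $\mathbb{Q}_k$-marks from the branch structure of the dynamics, following the inductive scheme of the proof of that proposition. Since $B_n=T^{-k}E_n$ and $T$ is onto, the hitting times of $B_n$ are exactly those of $E_n$ shifted down by $k$; because $1/2$ is not periodic ($1/2\mapsto0\mapsto0$) and its forward iterates push $E_n=[1/2,\delta_n]$ into the shrinking right-neighbourhoods $[0,c_{n-j}]$ of $0$, one has $\mu_{E_n}(r_{E_n}\le k)=0$ for $n$ large, so that $\mu_{E_n}$-a.s.\ $r_{B_n}^{(j)}=r_{E_n}^{(j)}-k$ for all $j$. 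As $\mu(B_n)=\mu(E_n)$ and $\gamma(\mu(E_n))\to0$, the rescaled return times for $B_n$ and $E_n$ differ by $k\gamma(\mu(E_n))\to0$, while $\phi\circ T_{B_n}^{j}$ equals the branch $\mathcal{Z}_j\in T^{-k}\{1/2\}$ by which the orbit re-enters $E_n$ at its $j$-th return — i.e.\ the unique inverse branch $\psi_{\mathcal{Z}_j}$ of $T^{k}$ with $T^{\,r_{E_n}^{(j)}-k}x=\psi_{\mathcal{Z}_j}\big(T^{\,r_{E_n}^{(j)}}x\big)$. It therefore suffices to prove that, under $\mu_{E_n}$, the finite-dimensional marginals of $\big(\gamma(\mu(E_n))\,r_{E_n}^{(j)},\,\mathcal{Z}_j\big)_{j\ge1}$ converge to those of a renewal process with i.i.d.\ $\widetilde{\mathfrak{J}}_{\alpha}$ interarrivals carrying independent i.i.d.\ $\mathbb{Q}_k$-marks; composing with (the marked analogue of) $\Xi$ and the extended continuous mapping theorem then gives the statement for point processes.

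The law $\mathbb{Q}_k$ enters through invariance of $\mu$: if the orbit re-enters $E_n$ at $y\in E_n$, it visited, $k$ steps earlier, one of the $2^{k}$ preimages $\psi_z(y)$, $z\in T^{-k}\{1/2\}$, and the change of variables for the diffeomorphisms $T^{k}:B_{z,n}\to E_n$ together with $\widehat{T}^{k}\rho=\rho$ shows that the mass carried along the $z$-branch is $\dfrac{\rho(\psi_z(y))}{|(T^{k})'(\psi_z(y))|\,\rho(y)}\,(1+o(1))\xrightarrow[\,y\to 1/2\,]{}\mathbb{Q}_k(z)$, uniformly for $y\in E_n$, and $\sum_z\mathbb{Q}_k(z)=1$; in particular $\mu(B_{z,n})/\mu(B_n)\to\mathbb{Q}_k(z)$. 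Since these conditional branch weights are asymptotically constant in $y$, they are asymptotically independent of the re-entry point, hence of the first return time; combining this with Proposition~\ref{prop:result_REPP_point_1/2} — which provides, under $\mu_{E_n}$, convergence of the rescaled first return time to $\widetilde{\mathfrak{J}}_{\alpha}$ together with the (flattened) equidistribution of the re-entry point — yields the marked limit for the first return, $\big(\gamma(\mu(E_n))\,r_{E_n}^{(1)},\mathcal{Z}_1\big)\Rightarrow(\widetilde{\mathfrak{J}}_{\alpha},M_1)$ with $M_1\sim\mathbb{Q}_k$ independent.

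The passage to successive returns copies the induction in the proof of Proposition~\ref{prop:result_REPP_point_1/2}, now keeping track of the visited component. Conditioning on the event $M_n$ that the first $d-1$ returns to $B_n$ have rescaled times at most $t_1,\dots,t_{d-1}$ and marks $z_1,\dots,z_{d-1}$ — a finite union of cylinders, with $T_{B_n}^{d-1}(M_n)\subseteq B_{z_{d-1},n}$ — the density $\mu(B_n)\,\widehat{T}_{B_n}^{\,d-1}\!\big(\mathbf 1_{M_n}/\mu(M_n)\big)$ is supported on $B_{z_{d-1},n}$ and lies in a fixed distortion cone $\mathcal{C}_C(B_{z_{d-1},n})$ by Lemma~\ref{lem:Distortion_estimates_Thaler}, hence is asymptotically constant there exactly as in \eqref{eq:asymptotic_renewal}; transporting this ``restarted'' density through $T^{k}$ onto $E_n$ and re-running the analysis of the preceding paragraph produces the $d$-th interarrival $\widetilde{\mathfrak{J}}_{\alpha}$ and the $d$-th mark $M_d\sim\mathbb{Q}_k$, asymptotically independent of $M_n$. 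I expect the main obstacle to be the decoupling in the first-return step: one must control the joint law $\mu_{E_n}\big(r_{B_n}=m,\,T^{m}x\in B_{z,n}\big)$ in a manner that is uniform in $m$, so that it survives summation against the bounded but non-compactly-supported weights coming from the test functions, and this is exactly where the single multiplicative distortion estimate of Corollary~\ref{cor:distortion_bounds_comparison_measures}, valid simultaneously at all cylinder depths (together with the distortion bound of Lemma~\ref{lem:Distortion_estimates_Thaler}), is needed.
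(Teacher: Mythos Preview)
Your overall architecture matches the paper's: reduce the marked process to successive returns to $E_n$ via $r_{B_n}^{(j)}=r_{E_n}^{(j)}-k$ on $E_n$, establish the base case $(\gamma(\mu(B_n))\,r_{B_n},\,\phi\circ T_{B_n})\Rightarrow(\widetilde{\mathfrak J}_\alpha,M)$ with $M\sim\mathbb Q_k$ independent, and then induct by flattening the conditional density and restarting. The inductive step is essentially the paper's, though the paper iterates $T_{E_n}$ rather than $T_{B_n}$ so that every intermediate cylinder already ends in $E_n\subset Y$, which lets Corollary~\ref{cor:distortion_bounds_comparison_measures} apply without enlarging $Y$ (some of your $B_{z,n}$ lie outside $[1/2,1]$, so Lemma~\ref{lem:Distortion_estimates_Thaler} does not apply as stated).

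The genuine gap is the base case. Your formula $\rho(\psi_z(y))/\big(|(T^k)'(\psi_z(y))|\,\rho(y)\big)$ is the relative $\mu$-weight of the branch $\psi_z$ among the $T^k$-preimages of $y$; it would give the law of the mark only if you already knew that $(T_{B_n})_\#\mu_{E_n}$ has density w.r.t.\ $\mu$ that is asymptotically the \emph{same} constant on each component $B_{z,n}$. The flattening \eqref{eq:asymptotic_renewal} tells you only that $\widehat{T}^k$ of this density is asymptotically $\mathbf 1_{E_n}/\mu(E_n)$, i.e.\ one linear relation among $2^k$ unknown constants --- it does not rule out, say, all the mass concentrating on a single $B_{z,n}$. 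Equivalently, given the branch $V=[0a_1\cdots a_{m-1}0(\geq n)]$ of the first return to $E_n$, the mark is fixed by the last $k$ symbols $a_{m-k}\cdots a_{m-1}$, and Corollary~\ref{cor:distortion_bounds_comparison_measures} cannot be applied to split there because the symbol at position $m-k$ need not be $0$. The paper supplies the missing idea in Proposition~\ref{prop:first_return_B_n_preimages_of_1/2}: one decomposes each return path at its \emph{last} visit to the annulus $[\delta_n,\delta_p]=[0[p,n)]$ (choosing $p$ so large that every $z_i<p$, hence this last visit precedes the $k$ symbols determining the mark). That visit is a genuine $0$-symbol, so Corollary~\ref{cor:distortion_bounds_comparison_measures} factors the branch measure there; Lemma~\ref{lem:measure_images_back_to_[p,n)_wrt_Bni} shows the post-split piece hitting $B_{z,n}$ has mass $\mu(B_{z,n})+O(\mu(B_n)^2)$, and Lemma~\ref{lem:comparison_measure_real_set_and_up_to_last_passage} controls the error from truncating the time window at the split. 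Without this last-visit decomposition your decoupling claim is not proved.
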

  Recall that $\mathbb{Q}_k$ is the probability on $T^{-k}\{1/2\}$ such that for $\rho$ the density of $\mu$ and all $z \in T^{-k}\{1/2\}$, 
\begin{align*}
        \mathbb{Q}_k(z) = \frac{\rho(z)}{\rho(1/2) (T^k)'(z)} \quad \forall z\in T^{-k}\{1/2\}.
    \end{align*}

\subsubsection{Proof of Theorem \ref{thm:Marking_RREPP}}

We start with the following easy observations.
\begin{lem}
    \label{lem:quotient_measures_B_nz}
    For all $k\geq 1$, $z\in T^{-k}\{1/2\}$ and all $d\geq 0$, we have $\mu_{B_n}(\phi \circ T_{B_n}^d = z) \sim \mathbb{Q}_k(z)$.
\end{lem}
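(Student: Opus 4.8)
The plan is to peel off the dependence on $d$ by invariance of the induced map, and then to evaluate the single ratio of measures $\mu(B_{z,n})/\mu(B_n)$ by a change of variables. First I would observe that since $([0,1],\mathscr B,\mu,T)$ is a CEMPT, the first‑return map $T_{B_n}$ preserves the normalized restriction $\mu_{B_n}$ (a standard property of induced transformations), hence so does each iterate $T_{B_n}^d$. For $n\geq n_0$ the localization map $\phi$ equals $z$ exactly on $B_{z,n}$, so $\{\phi\circ T_{B_n}^d=z\}=(T_{B_n}^d)^{-1}B_{z,n}$ as subsets of $B_n$, and therefore
\[
\mu_{B_n}\bigl(\phi\circ T_{B_n}^d=z\bigr)=\mu_{B_n}(B_{z,n})=\frac{\mu(B_{z,n})}{\mu(B_n)}
\qquad\text{for every }d\geq 0 .
\]
(One may equivalently invoke $\widehat{T_{B_n}}\mathbf 1_{B_n}=\mathbf 1_{B_n}$, as already used above.) Thus it remains to show $\mu(B_{z,n})/\mu(B_n)\to\mathbb Q_k(z)$, which does not involve $d$ at all.

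For this, I would use $T$‑invariance to write $\mu(B_n)=\mu(T^{-k}E_n)=\mu(E_n)$, and note that since $B_{z,n}=[z_0^{k-1}0(\geq n)]$, the Markov property of $\xi$ (Lemma \ref{lem:description_partition_xi_with_symbolic_notations} and the remark following it) makes $T^k$ restrict to an orientation‑preserving $C^1$ diffeomorphism of $B_{z,n}$ onto $[0(\geq n)]=E_n$. Writing $g_z:=(T^k|_{B_{z,n}})^{-1}:E_n\to B_{z,n}$ and changing variables in $\mu(B_{z,n})=\int_{B_{z,n}}\rho\,\dd\leb$ gives
\[
\mu(B_{z,n})=\int_{E_n}\frac{\rho(g_z(y))}{(T^k)'(g_z(y))}\,\dd\leb(y),\qquad
\mu(E_n)=\int_{E_n}\rho\,\dd\leb .
\]
Since $E_n=[1/2,\delta_n]$ with $\delta_n\to 1/2$, one has $\leb(E_n)=c_n/2\to 0$; as $(T^k)'\geq 1$ everywhere, $\diam(B_{z,n})\leq \diam(E_n)\to 0$, so $g_z(y)\to z$ uniformly in $y\in E_n$. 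Because $z\in(0,1)$, the density $\rho$ in \eqref{eq:formula_density_LSV_map} and $(T^k)'$ are continuous and bounded away from $0$ near $z$, and $\rho$ is likewise continuous and positive at $1/2$; consequently $\mu(B_{z,n})=\bigl(\rho(z)/(T^k)'(z)+o(1)\bigr)\leb(E_n)$ and $\mu(E_n)=\bigl(\rho(1/2)+o(1)\bigr)\leb(E_n)$, and dividing yields $\mu(B_{z,n})/\mu(B_n)\to \rho(z)/\bigl(\rho(1/2)(T^k)'(z)\bigr)=\mathbb Q_k(z)$.

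I expect no serious obstacle here; the two points needing a line of care are bookkeeping. First, one must justify that $(T^k)'$ is genuinely continuous at $z$, i.e. that $T^jz\neq 1/2$ (the only non‑differentiability point of $T$) for $0\leq j\leq k-1$: this holds because $T(1/2)=0$ is the fixed point, so $1/2$ is not periodic, whereas $T^kz=1/2$, which rules out $T^jz=1/2$ for $j<k$. Second, the $o(1)$ terms above must be uniform in $y\in E_n$, which is immediate from $\diam(B_{z,n})\to 0$ together with the continuity of $\rho$ and $(T^k)'$ at $z$. The only structural input is the invariance observation of the first paragraph, which is what collapses the statement to the computation for $d=0$.
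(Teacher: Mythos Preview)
Your proposal is correct and follows essentially the same approach as the paper: reduce to $d=0$ by invariance of $\mu_{B_n}$ under the induced map, then compute $\mu(B_{z,n})/\mu(E_n)$ via $\mu(B_{z,n})\sim\rho(z)\,\leb(B_{z,n})$, $\mu(E_n)\sim\rho(1/2)\,\leb(E_n)$ and $\leb(B_{z,n})/\leb(E_n)\to 1/(T^k)'(z)$. Your version is simply more explicit, in particular the check that $T^jz\neq 1/2$ for $0\le j\le k-1$ is a nice detail the paper leaves implicit.
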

\begin{proof}[Proof (of Lemma \ref{lem:quotient_measures_B_nz})]
    By invariance of the induced measure on $B_n$, for all $d\geq 0$, we have 
    \begin{align*}
        \mu_{B_n}(\phi \circ T_{B_n}^d = z) & = \mu_{B_n}(\phi = z) = \mu_{B_n}(B_{z,n})
         =  \frac{\mu(B_{z,n})}{\mu(E_n)}\\
         & \uset{\widesim}{n\to +\infty} \frac{\rho(z)\leb(B_{z,n})}{\rho(1/2)\leb(E_n)} \\
        & \uset{\widesim}{n\to +\infty} \frac{\rho(z)}{\rho(1/2)(T^k)'(z)}.
    \end{align*}
\end{proof}

\begin{lem}
    \label{lem:convergence_r_B_n_starting_from_E_n}
    We have 
    \begin{align*}
        \gamma(\mu(B_n))\,r_{B_n} \xRightarrow[n\to +\infty]{\mu_{E_n}} \widetilde{\mathfrak{J}}_{\alpha}\,.
    \end{align*}
\end{lem}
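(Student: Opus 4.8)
The plan is to transfer the known convergence $\gamma(\mu(E_n))\,\lr_{E_n} \xRightarrow{\mu_{E_n}} \widetilde{\mathfrak{J}}_{\alpha}$ from Proposition \ref{prop:result_REPP_point_1/2} (equation \eqref{eq:RREPP_1/2}, first return coordinate) over to the first return time $\lr_{B_n}$ to the larger set $B_n = T^{-k}E_n$, still starting from $\mu_{E_n}$. First I would observe that $\mu(B_n) = \mu(E_n)$ by invariance, so the scaling factor $\gamma(\mu(B_n))$ is literally the same as $\gamma(\mu(E_n))$, and hence the only thing to compare is $\lr_{B_n}$ against $\lr_{E_n}$ under $\mu_{E_n}$. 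The key geometric fact is that $B_n \supset E_n$ (since $E_n = T^{-k}E_n \cap \dots$ — more precisely $E_n$ is one of the components $B_{z,n}$ with $z = 1/2$, because $T^k(1/2)\in\{$iterates of $1/2\}$... I should instead just note $E_n$ intersected with $B_n$: actually $E_n$ itself sits inside $B_n$ only after checking that $1/2$ is a $k$-preimage of $1/2$, which is false in general). The cleaner route: $\lr_{B_n} \le \lr_{E_n}$ is not automatic either. So the real comparison is: starting from a point of $E_n$, the orbit returns to $E_n$ at time $\lr_{E_n}$, and it returns to the bigger set $B_n$ possibly earlier, at time $\lr_{B_n} \le \lr_{E_n}$. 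I would then argue that, after rescaling by $\gamma(\mu(B_n))$, these two differ by $o(1)$ in probability.

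The main step is to show $\mu_{E_n}\big(\gamma(\mu(B_n))(\lr_{E_n} - \lr_{B_n}) > \varepsilon\big) \to 0$ for every $\varepsilon>0$. The idea: on the event that the first return to $B_n$ lands in a component $B_{z,n}$ with $z \ne 1/2$ (i.e. $\phi\circ T^{\lr_{B_n}}_{B_n} = z \ne 1/2$), the orbit is in a tiny neighborhood $B_{z,n}$ of the point $z\in T^{-k}\{1/2\}$, which is not a preimage of $0$ within the relevant time window; from there it must apply $T^k$ — a bounded number of steps with bounded derivative, hence a bounded time — and then wait to hit $E_n = [0(\ge n)]$ again. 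But the extra time to get from $B_{z,n}$ to $E_n$ is, after the next application of $T_{B_n}$, again of the order of one rescaled return time $\widetilde{\mathfrak J}_\alpha$, which is finite a.s. — so it does NOT vanish. This tells me the correct statement is not $\lr_{B_n}\approx \lr_{E_n}$ but rather that $\gamma(\mu(B_n))\lr_{B_n}$ already converges to $\widetilde{\mathfrak J}_\alpha$ directly, and that this is essentially the first-coordinate marginal of Theorem \ref{thm:Marking_RREPP} viewed the other way — except that is what we are trying to prove en route. So instead I would run the argument of Proposition \ref{prop:result_REPP_point_1/2} verbatim with $E_n$ replaced by $B_n$: the induced map on $B_n$ (a finite union of intervals, each a union of $\xi$-cylinders) is piecewise, distortion estimates (Lemma \ref{lem:Distortion_estimates_Thaler}) apply, the density of $\mathbf 1_{B_n}/\mu(B_n)$ pushed forward has bounded distortion, and combining with Corollary \ref{cor:HTS_neighborhood_0_normalization_gamma} / Proposition \ref{prop:HTS/RTS_1/2} transferred component by component gives the first return law.

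Concretely: write $\mu_{E_n}(\gamma(\mu(B_n))\lr_{B_n}\le t) = \sum_{z\in T^{-k}\{1/2\}} \mu_{E_n}(\phi\circ T^{\lr_{B_n}}_{B_n} = z,\ \gamma(\mu(B_n))\lr_{B_n}\le t)$, decompose each summand via the transfer operator for $T_{B_n}$, and show each term converges to $\mathbb{Q}_k(z)\,\mathbb{P}(\widetilde{\mathfrak J}_\alpha \le t)$, whose sum over $z$ is $\mathbb{P}(\widetilde{\mathfrak J}_\alpha\le t)$ since $\mathbb{Q}_k$ is a probability measure (as noted after the statement of Theorem \ref{thm:REPP_LSV_preimages_of_0}). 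To handle the $z\ne 1/2$ terms one relates returns to $B_n$ to returns to $E_n$: starting in $B_{z,n}$, after a uniformly bounded number $m_z = $ (time for $T^k$ to map $B_{z,n}$ onto $E_n$... actually to map $z$ to $1/2$) of steps the orbit is at $\mu_{E_n}$-typical configuration with a controlled density on $E_n$ (again by Lemma \ref{lem:Distortion_estimates_Thaler}, since $T^{k}:B_{z,n}\to E_n$ is a homeomorphism with bounded distortion), and $\gamma(\mu(B_n))\cdot m_z\to 0$ because $m_z$ is bounded and $\gamma(\mu(B_n))\to 0$; then apply Proposition \ref{prop:HTS/RTS_1/2}. \textbf{The main obstacle} I anticipate is the bookkeeping showing that the extra (bounded) number of steps needed to pass from a component $B_{z,n}$ back to $E_n$ is genuinely $O(1)$ uniformly in $n$ and negligible after rescaling, and that the intermediate density stays in a fixed bounded-distortion class so that Proposition \ref{prop:HTS/RTS_1/2} (which is stated for starting measure $\mu_{E_n}$, equivalently $\mu_{[3/4,1]}$ by strong distributional convergence) can be invoked; this is exactly the kind of argument carried out in the proof of Proposition \ref{prop:result_REPP_point_1/2}, so it should go through, but it requires care with the localization map $\phi$ and the fact that $\gamma\in\RV_{0+}(1/\alpha)$ so $\gamma(\mu(B_n))\to 0$.
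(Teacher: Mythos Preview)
You have missed the key one-line observation that makes the paper's proof immediate: since $B_n = T^{-k}E_n$, for any $x\in E_n$ and any $j\ge 1$ one has $T^jx\in B_n$ if and only if $T^{j+k}x\in E_n$. Because $r_{E_n}(x)\ge n+2>k$ for $x\in E_n$ (the orbit first drops into $[0,c_n]$ and needs at least $n$ steps to escape), this yields the \emph{exact deterministic identity}
\[
r_{B_n} = r_{E_n} - k \quad \text{on } E_n.
\]
Together with $\mu(B_n)=\mu(E_n)$ (so the scaling is literally the same) and $k\,\gamma(\mu(B_n))\to 0$, Proposition \ref{prop:HTS/RTS_1/2} gives the result in one line.

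Your proposal goes astray on the geometry: for $k\ge 1$ and $n$ large, $E_n$ and $B_n$ are \emph{disjoint} (since $1/2\notin T^{-k}\{1/2\}$, as $T(1/2)=0$), so the heuristic ``$B_n$ is larger, hence $r_{B_n}\le r_{E_n}$'' has no basis. More importantly, your fallback plan --- decomposing over the localization $\phi\circ T_{B_n}$ and showing each piece contributes $\mathbb{Q}_k(z)\,\mathbb{P}(\widetilde{\mathfrak J}_\alpha\le t)$ --- is essentially the content of Proposition \ref{prop:first_return_B_n_preimages_of_1/2} and Theorem \ref{thm:Marking_RREPP}, whose proofs \emph{use} the present lemma. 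So that route would be circular. The identity $r_{B_n}=r_{E_n}-k$ on $E_n$ is both the missing idea and the entire proof.
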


\begin{proof}[Proof (of Lemma \ref{lem:convergence_r_B_n_starting_from_E_n})]
    On $E_n$, by definition of $B_n$, we have $r_{B_n} = r_{E_n} - k$. Thus, for all $t > 0$ and by Proposition \ref{prop:HTS/RTS_1/2},
    \begin{align*}
        \mu_{E_n}(\gamma(\mu(B_n))\,r_{B_n} \leq t) &= \mu_{E_n}(\gamma(\mu(B_n))\,r_{E_n} \leq t) + \mu_{E_n}\left(t\leq \gamma(\mu(B_n))\,r_{E_n} \leq t + k\gamma(\mu(B_n))\right) \\ 
        &\xrightarrow[n\to +\infty]{} \mathbb{P}(\widetilde{\mathfrak{J}}_{\alpha} \leq t).
    \end{align*}
\end{proof}

  To establish Theorem \ref{thm:Marking_RREPP}, the strategy is to analyze the last visit to the interval \([0[p,n)] = [\delta_n, \delta_p]\) before the system returns to \(E_n\). Leveraging the bounded distortion property (Corollary \ref{cor:distortion_bounds_comparison_measures}), we obtain accurate estimates for the limiting process. Throughout this analysis, it is essential to ensure that the errors introduced by approximations diminish in the limit. This concern is addressed through a series of supporting lemmas and the pivotal Proposition \ref{prop:first_return_B_n_preimages_of_1/2}. To proceed systematically, we begin by introducing the necessary notations.

  Fix $k \geq 1$. Let $t > 0$ and take $p\geq 1$ large enough so that $[1/2,\delta_p] \cap \bigcup_{j= 1}^{k} T^{-j}\{1/2\} = \emptyset$. In particular, it implies that for all $z \in T^{-k}\{1/2\}$ and $0 \leq i \leq k-1$, $z_i < p$ (recall that $(z_0^{k-1})$ is the path in symbolic notations that corresponds to $z$ as a $k$-preimage of $1/2$). Let 
\begin{align*}
    D_n^p(t) := E_n \cap \left\{r_{[\delta_n,\delta_p]} \leq \frac{t}{\gamma(\mu(B_n))}\right\} \cap \left\{r_{[\delta_n,\delta_p]} \leq r_{E_n}\right\},
\end{align*}
  and let $\kappa_n^p(t)$ be a collection of branches belonging to $D_{n}^p(t)$ defined as follows
\begin{align*}
    \kappa_n^p(t) &:= \{ V \subset W \in \xi_m  \cap E_n \; \big|\; T^m(V) = [\delta_n,\delta_p],\; m \leq t/\gamma(\mu(B_n)) \; \text{and} \; \mu(T^jV \cap E_n) = 0 \; \forall 0 \leq j\leq m \} \\
    & = \{[0a_0^{m-2}0[p,n)] \;\big|\; 2\leq m\leq t/\gamma(\mu(B_n)), \; a_0\geq n, \; \text{and} \;\exists! \,0\leq j\leq m-2, \; a_j = n\}.
\end{align*}
  For $V \in \kappa_n^p(t)$, we write $m_V$ its associated $m$ coming from the definition of $V$ and $\kappa_n^p(t)$. Note that $\kappa_n^p(t)$ is not a partition of $D_n^p(t)$ as multiple returns to $[\delta_n,\delta_p]$ are possible but we can build a partition of the sets 
\begin{align*}
    D_{n,z}^p(t) = D_n^p(t) \cap \left\{ \max_{k\geq 1} \left\{r_{[\delta_n, \delta_p]}^{(k)}\;|\; r_{[\delta_n, \delta_p]}^{(k)} < r_{E_n}\right\} \leq \frac{t}{\gamma(\mu(B_n)}\right\} \cap \{r_{E_n}  = r_{B_{n,z}} + k\}, \quad z\in T^{-k}\{1/2\}.
\end{align*}
Indeed, for $V \in \kappa_{n}^p(t)$ and $z \in T^{-k}\{1/2\}$, define 
\[V_z := \left\{x \in V \;|\; r_{B_{n,z}}(T^{m_V}x) = r_{B_n}(T^{m_V}x) < r_{[\delta_n,\delta_p]}(T^{m_V}x)\right\}.\]
Then $\kappa_{n,z}^p(t) := \left\{ V_z,\; V \in \kappa_n^p(t)\right\}$
is a partition of $D_{n,z}^p(t)$. To be more precise, for all $V = [0a_0^{m_V-2}0[p,n)] \in \kappa_n^p(t)$, we have 
\begin{align*}
    V_z = \bigsqcup_{\underline{b} \in \mathcal{I}^p_z} [0a_0^{m_V-2}0\underline{b}z_0^{k-1}0(\geq n)]
\end{align*}
with $\mathcal{I}^p_z := \{\underline{b} := (b_0^{\ell-1}) \;|\; \ell\geq 1, \,(0b_0^{\ell-1}z_0) \;\text{admissible},\, b_0 \in [p,n) \;\text{and}\; \exists!\, 0\leq j < \ell, \; b_j = p\}$.\\

  The following lemma ensures that the measure of the image $T^{m_V}V_z$ will be comparable to the one of $B_{n,z}$ for all $V_z \in \kappa_{n,z}^p(t)$.
\begin{lem}
    \label{lem:measure_images_back_to_[p,n)_wrt_Bni}
    For all $n > p$, $z \in T^{-k}\{1/2\}$ and $V_z \in \kappa_{n,z}^p(t)$,
    \begin{align*}
        \mu(T^{m_V}(V_z)) = \mu(B_{n,z}) + O(\mu(B_n)^2)
    \end{align*}
      Furthermore, $(T^{m_V}V_z)_z$ are disjoint and for $V_1 := \bigsqcup_{z\in T^{-k}\{1/2\}} V_z$, we have 
    \begin{align*}
        \mu(T^{m_V} V_1) = \mu(B_n) + O(\mu(B_n)^2).
    \end{align*}
\end{lem}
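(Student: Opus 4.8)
The plan is to recognise $T^{m_V}(V_z)$ as a ``clean first–return'' subset of $[\delta_n,\delta_p]$ and to evaluate its measure via the invariance of $\mu$ under the first–return map to $S_n:=[\delta_n,\delta_p]\sqcup B_n\sqcup E_n$, the single non–trivial error term being estimated from the blow–up of the invariant density at the neutral fixed point.

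\emph{Step 1 (identification of $T^{m_V}(V_z)$).} First I would note that, since $V=[0a_0^{m_V-2}0[p,n)]$ and $T^{m_V}$ acts on $V$ as the homeomorphism onto $[\delta_n,\delta_p]$ that deletes the common prefix (Lemma~\ref{lem:description_partition_xi_with_symbolic_notations} and the remark following it), the image $T^{m_V}(V_z)$ does not depend on $V$, and using the description $V_z=\bigsqcup_{\underline b\in\mathcal I^p_z}[0a_0^{m_V-2}0\,\underline b\,z_0^{k-1}0(\geq n)]$ one gets
\begin{align*}
G_z:=\bigsqcup_{\underline b=(b_0^{\ell-1})\in\mathcal I^p_z}[0\,b_0^{\ell-1}z_0^{k-1}0(\geq n)]=\big\{y\in[\delta_n,\delta_p]\ :\ T_{S_n}(y)\in B_{n,z}\big\},
\end{align*}
where $T_{S_n}$ is the first–return map to $S_n$ (the three sets $[\delta_n,\delta_p]$, $B_n$, $E_n$ being pairwise disjoint once $p$ and $n$ are large): the ``$\exists!\,j,\ b_j=p$'' constraint built into $\mathcal I^p_z$ is exactly what prevents the excursion from re–entering $[\delta_n,\delta_p]$ or $E_n$ before it reaches $B_{n,z}$.

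\emph{Step 2 (invariance and splitting).} Next, since $\mu$ restricted to $S_n$ is $T_{S_n}$–invariant, $\mu\big(\{y\in S_n:T_{S_n}(y)\in B_{n,z}\}\big)=\mu(B_{n,z})$, and splitting the domain according to which of $[\delta_n,\delta_p],B_n,E_n$ contains $y$ gives
\begin{align*}
\mu(G_z)=\mu(B_{n,z})-\mu\big(\{y\in B_n:T_{S_n}(y)\in B_{n,z}\}\big)-\mu\big(\{y\in E_n:T_{S_n}(y)\in B_{n,z}\}\big).
\end{align*}
The first correction vanishes: for $y\in B_{n,z'}$ one has $T^k y\in E_n$ while $T^j y$ stays outside $S_n$ for $1\le j<k$ (the cylinder $[z'_0\dots z'_{k-1}0(\geq n)]$ is shifted by $T^j$ into a cylinder disjoint from $B_n$, from $[\delta_n,\delta_p]$ and from $E_n$), so $T_{S_n}y=T^k y\in E_n$, never in $B_{n,z}\subset B_n$.

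\emph{Step 3 (the $E_n$–correction is $O(\mu(B_n)^2)$).} It remains to control $\mu\big(\{y\in E_n:T_{S_n}(y)\in B_{n,z}\}\big)$. From $y\in E_n=\bigsqcup_{m\ge n}[0m]$ the orbit first performs the forced decreasing excursion $[m],[m-1],\dots,[1],[0]$; since every point of $T^{-k}\{1/2\}$ has first symbol $\le k<p$ and a forced–decreasing word can only match a cylinder $[z_0\dots z_{k-1}0(\geq n)]$ when $z_0=k$, this excursion meets $B_n$ only along $B_{n,c_{k+1}}$, and then only when the symbol following the return to $[0]$ is $\ge n$, i.e. for $y\in[0m]\cap T^{-(m+2)}[(\geq n)]$. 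Writing $[0m]=[0,m,m-1,\dots,1,0]$, the first–return Gibbs--Markov map $T_Y$ of $Y=[1/2,1]$ sends $[0m]$ onto $Y$ with bounded distortion (Lemma~\ref{lem:Distortion_estimates_Thaler}) and $T|_{[0]}$ is the affine branch $x\mapsto 2x-1$, whence $\widehat T^{\,m+2}\mathbf 1_{[0m]}\asymp\mu[0m]\,\rho^{-1}$ on $[(\geq n)]=[0,c_n]$; combined with $\rho(x)\asymp x^{-1/\alpha}$ as $x\to0$, read off from \eqref{eq:formula_density_LSV_map}, this yields
\begin{align*}
\mu\big([0m]\cap T^{-(m+2)}[(\geq n)]\big)=\int_{[0,c_n]}\widehat T^{\,m+2}\mathbf 1_{[0m]}\,\dd\mu\ \asymp\ \mu[0m]\int_0^{c_n}\rho(x)^{-1}\rho(x)\,\dd x\ =\ \mu[0m]\,c_n\ \asymp\ \mu[0m]\,n^{-\alpha}.
\end{align*}
Summing over $m\ge n$, using $\sum_{m\ge n}\mu[0m]=\mu[0(\geq n)]\asymp n^{-\alpha}$ and $c_n\asymp n^{-\alpha}$ from \eqref{eq:measure_union_cylindres_asymptotique}--\eqref{eq:measure_1-cylindres_asymptotique}, and adding in the geometrically smaller contributions of orbits that make one or more further short excursions before hitting $B_{n,z}$, one obtains the total bound $O(n^{-2\alpha})=O(\mu(B_n)^2)$. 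This proves $\mu(T^{m_V}(V_z))=\mu(G_z)=\mu(B_{n,z})+O(\mu(B_n)^2)$, uniformly in $V\in\kappa_n^p(t)$ and in $z\in T^{-k}\{1/2\}$.

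\emph{Step 4 (joint statement).} Finally the $(V_z)_z$ partition $V_1=\bigsqcup_z V_z\subseteq V$ and $T^{m_V}$ is injective on $V$, so $(T^{m_V}V_z)_z$ are pairwise disjoint and $\mu(T^{m_V}V_1)=\sum_z\mu(T^{m_V}V_z)$; since $\#\,T^{-k}\{1/2\}=2^k$ is finite and $\sum_z\mu(B_{n,z})=\mu(B_n)$ exactly, summing Step~3 over $z$ gives $\mu(T^{m_V}V_1)=\mu(B_n)+O(\mu(B_n)^2)$.

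\emph{The main obstacle} is Step~3: one must first pin down exactly which excursions out of $E_n$ can re–enter $B_n$ before $E_n$ (a combinatorial statement about the Markov partition $\xi$), and then estimate the mass of that thin family, which forces the use of the precise rate $\rho(x)\asymp x^{-1/\alpha}$ at the neutral fixed point together with the regular–variation asymptotics \eqref{eq:measure_union_cylindres_asymptotique}--\eqref{eq:measure_1-cylindres_asymptotique} and the Thaler distortion bound of Lemma~\ref{lem:Distortion_estimates_Thaler}; the remaining steps are routine bookkeeping with the Markov structure and the invariance of $\mu$.
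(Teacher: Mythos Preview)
Your approach is genuinely different from the paper's and the idea is good, but Step~3 has a real gap.

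\textbf{What you do differently.} The paper expresses $\mu(B_{n,z})$ via a Kac-type sum over returns to $[\delta_n,\delta_p]$, then writes $\mu(T^{m_V}V_z)=\mu([\delta_n,\delta_p]\cap\{r_{B_{n,z}}=r_{B_n}<r_{[\delta_n,\delta_p]}\})$ and bounds two error terms (a union-vs-sum discrepancy and a double-hit term) separately via Corollary~\ref{cor:distortion_bounds_comparison_measures}. Your Steps~1--2 replace all of this by the single observation that $T_{S_n}$ preserves $\mu|_{S_n}$ and that from $B_n$ the first return to $S_n$ always lands in $E_n$ (since $T^k(B_{n,z'})\subset E_n$ and the intermediate symbols $z'_1,\dots,z'_{k-1}<p$ keep the orbit out of $S_n$). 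This is cleaner: it reduces the lemma to a single correction $\mu(\{y\in E_n:T_{S_n}(y)\in B_{n,z}\})$. Step~4 is fine.

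\textbf{The gap in Step~3.} Your explicit computation only treats the event ``next excursion height $\ge n$'', which corresponds to the orbit hitting $B_{n,c_{k+1}}$ during the first excursion; this is the full correction for $z=c_{k+1}$ and $J=1$ (no intermediate short excursions), but for other $z$'s and for $J\ge2$ you only assert that the remaining pieces are ``geometrically smaller'' without proof. The density estimate you give is correct but unnecessarily delicate, and it does not by itself control the infinitely many remaining cases.

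\textbf{How to close it.} A much shorter argument covers everything at once. Since $r_{E_n}=r_{B_n}+k$ always (hitting $B_n$ is exactly $k$ steps before hitting $E_n$), and the orbit cannot touch $[\delta_n,\delta_p]$ during those $k$ steps (all $z_i<p$), one has $\{y\in E_n:T_{S_n}(y)\in B_n\}=\{y\in E_n:T_{E_p}(y)\in E_n\}$. Now partition by the cylinder up to the first return to $E_p$ and apply Corollary~\ref{cor:distortion_bounds_comparison_measures} (or directly the Gibbs--Markov property of $T_Y$) to split off the final $[0(\geq n)]$: each branch contributes $\le C\,\mu(\text{prefix})\,\mu(E_n)$, and the prefixes form a genuine partition of $E_n$, giving $\mu(\{y\in E_n:T_{E_p}(y)\in E_n\})\le C\,\mu(E_n)^2=O(\mu(B_n)^2)$. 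This replaces your density computation and the hand-waving simultaneously, and is closer in spirit to how the paper handles its error terms.
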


\begin{proof}[Proof (of Lemma \ref{lem:measure_images_back_to_[p,n)_wrt_Bni})]
    For all $n > p$ and $z \in T^{-k}\{1/2\}$, we have by ergodicity and conservativity 
    \begin{align*}
        \mu(B_{n,z}) & = \sum_{k = 0}^{+\infty} \mu\left( [\delta_n,\delta_p] \cap \{r_{[\delta_n,\delta_p]} > k\} \cap T^{-k}B_{n,z}\right) \\
        & = \sum_{k = 0}^{+\infty} \sum_{\ell = k+1}^{+\infty} \mu\left( [\delta_n,\delta_p] \cap \{r_{[\delta_n,\delta_p]}  = \ell\} \cap T^{-k}B_{n,z}\right)\\
        & = \sum_{\ell = 1}^{+\infty} \sum_{k = 0}^{\ell - 1} \mu\left( [\delta_n,\delta_p] \cap \{r_{[\delta_n,\delta_p]}  = \ell\} \cap T^{-k}B_{n,z}\right)\,.
    \end{align*}

  On another hand, we have 
\begin{align*}
    \mu (T^{m_V}V_z) & = \mu \left([\delta_n,\delta_p] \cap \{r_{B_n} = r_{B_{n,z}} < r_{[\delta_{n},\delta_p]}\} \right) \\
    & = \mu\left([\delta_n,\delta_p] \cap \{r_{B_{n,z}} < r_{[\delta_n,\delta_p]}\}\right) - \mu\left([\delta_p,\delta_n] \cap \{r_{B_n} < r_{B_{n,z}} < r_{[\delta_p,\delta_n]}\} \right)\\
    & = \sum_{\ell = 1}^{+\infty} \mu\left( \bigcup_{k = 0}^{\ell - 1} [\delta_{n}, \delta_p] \cap \{r_{[\delta_n,\delta_p]} = \ell\} \cap T^{-k}B_{n,z}\right) - \mu\left([\delta_p,\delta_n] \cap \{r_{B_n} < r_{B_{n,z}} < r_{[\delta_p,\delta_n]}\} \right).
\end{align*}
We have 
\begin{align*}
    \mu\left([\delta_p,\delta_n] \cap \{r_{B_n} < r_{B_{n,z}} < r_{[\delta_p,\delta_n]}\} \right) & \leq \mu\left([\delta_p,\delta_n] \cap \{r^{(2)}_{B_n}< r_{[\delta_n,\delta_p]}\} \right)\,.
\end{align*}
Consider the partition of $[\delta_p,\delta_n] \cap \{r^{(2)}_{B_n}< r_{[\delta_p,\delta_n]}\} = [\delta_p,\delta_n] \cap \{r^{(2)}_{E_n}< r_{[\delta_p,\delta_n]}\}$ defined as follows 
\begin{align*}
    G_{n}^p & := \big\{ W \subset [\delta_p,\delta_n] \;|\;\exists q\geq 0,\; T^qW = [1/2,\delta_n],\\
    &\qquad \;\exists!\, 1\leq j < q ,\; T^jW \in [1/2,\delta_n],\; \forall 1\leq \ell \leq q, \; T^{\ell} W \cap [\delta_n,\delta_p] = \emptyset\big\} \\
    & = \big\{ [0b_0^{\ell-1}0c_0^{m-1}0(\geq n)] \;|\; \ell,m\geq 1,\; b_0 \in [p,n),\\
    &\qquad \; c_0 \geq n,\; \exists!\, 0\leq j <\ell \; b_j = p,\; \exists!\, 0\leq i < m \; c_i = p\big\}\,.
\end{align*}
  Then, for every $W = [0b_0^{\ell-1}0c_0^{m-1}0(\geq n)] \in G_n^p$, we can use Corollary \ref{cor:distortion_bounds_comparison_measures} to get 
\begin{align*}
    \mu(W) & = \mu [0b_0^{\ell-1}0c_0^{m-1}0(\geq n)]  \leq C\frac{\mu[0b_0^{\ell-1}0c_0^{m-1}0]\mu[0(\geq n)]}{\mu{[0]}} \\
    & \leq C^2\mu(B_n) \frac{\mu[0b_0^{\ell-1}0c_{i + p +1}^{m-1}0]\mu[0c_{0}^{m-1}0]}{\mu[0c_{i+p+1}^{m-1}0]} \\
    & \leq C^2\mu(B_n) \frac{\mu[0b_0^{\ell-1}0c_{i+p+1}^{m-1}0]}{\mu[0c_{i+p+1}^{m-1}0]} C \frac{\mu[0c_{i+p+1}^{m-1}0]\mu[0c_0^{i+p-1}0]}{\mu[0]} \\
    & \leq C^3 \mu(B_n) \mu[0b_0^{\ell-1}0c_{i+p+1}^{m-1}0] \mu{[0c_{0}^{i+p-1}0]}.
\end{align*}
Thus, summing on $W \in G_n^p$ we get 
\begin{align*}
    \mu\left([\delta_n,\delta_p] \cap \{r^{(2)}_{B_n}< r_{[\delta_n,\delta_p]}\} \right) & = \sum_{W \in G_{n}^p} \mu(W)\\
    & \leq \sum C^3\mu(B_n)\mu[0b_0^{\ell-1}0c_{i+p+1}^{m-1}0] \mu{[0c_{0}^{i+p-1}0]}\\
    & \leq C^3\mu(B_n)^2\mu([\delta_n,\delta_p]) = O(\mu(B_n)^2)\,.
\end{align*}
  Going back to the computation of $\mu(B_{n,z}) - \mu(T^{m_V}V_z)$, we have 
\begin{align*}
    \mu(B_{n,z}) - \mu(T^{m_V}V_z) & = \sum_{\ell = 1}^{+\infty}  \mu\left( \bigcup_{0\leq j < k \leq \ell - 1} [\delta_n,\delta_p] \cap \{r_{[\delta_n,\delta_p]} = \ell \} \cap T^{-j}B_{n,z} \cap T^{-k}B_{n,z}\right) + O(\mu(B_n)^2) \\
    & = \mu\left([\delta_n,\delta_p] \cap \{r^{(2)}_{B_{n,z}} < r_{[\delta_n,\delta_p]}\}\right) + O(\mu(B_n)^2) = O(\mu(B_n)^2),
\end{align*}
using the same argument and Corollary \ref{cor:distortion_bounds_comparison_measures} as before. \\

  Now, by definition and by the Markov property, we have for all $V = [0a_0^{m-2}0[p,n)] \in \kappa_{n}^p(t)$ 
\begin{align*}
    T^{m_V}V_z = \bigsqcup_{\underline{b}\in \mathcal{I}_z^p} [0\underline{b}z_0^{k-1}0(\geq n)].
\end{align*}
  It is easy to see that $(T^{m_V}V_z)_z$ are all disjoint and thus
\begin{align*}
    \mu(T^{m_V}V_1) &= \sum_{z\in T^{-k}\{1/2\}} \mu(T^{m_V} V_z) = \sum_{z\in T^{-k}\{1/2\}} \mu(B_{n,z}) + O(\mu(B_n)^2)\\
    & = \mu(B_n) + O(\mu(B_n)^2).
\end{align*}
\end{proof}

  Now, let $M_{n,z}(t) := E_n \cap \left\{r_{B_n} \leq t/\gamma(\mu(B_n)) \right\} \cap T_{B_n}^{-1}(B_{n,z})$ be the quantity we want to control. The next lemma ensures that we can approximate it well by $D_{n,z}^p(t)$.

\begin{lem}
    \label{lem:comparison_measure_real_set_and_up_to_last_passage}
    For all $t > 0$, $p\geq 1$ large enough so that $[1/2,\delta_p] \cap \bigcup_{j=1}^k T^{-j}\{1/2\}$ and $z \in T^{-k} \{1/2\}$ we have 
    \begin{align*}
        \mu\left(D_{n,z}^p(t) \,\triangle\, M_{n,z}(t) \right) +  o(\mu(B_n))\,.
    \end{align*}
\end{lem}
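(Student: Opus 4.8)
The plan is to compare the two sets $M_{n,z}(t)$ and $D_{n,z}^p(t)$ by decomposing each according to the behaviour of the orbit before its first return to $E_n$. Both sets live inside $E_n$, both require $r_{B_n}\le t/\gamma(\mu(B_n))$, and both end up (after the return) in $B_{n,z}$. The difference between them is governed entirely by what happens near the inner boundary: $D_{n,z}^p(t)$ additionally demands that the \emph{last} visit to the thin annulus $[\delta_n,\delta_p]$ before returning to $E_n$ occurs within the allotted time and that, from that last visit onward, the orbit reaches $B_{n,z}$ without first hitting $E_n$ or coming back to $[\delta_n,\delta_p]$. So the symmetric difference $D_{n,z}^p(t)\,\triangle\, M_{n,z}(t)$ is contained in the union of two ``bad'' events: (i) points of $M_{n,z}(t)$ whose orbit, before returning to $E_n$, never visits the annulus $[\delta_n,\delta_p]$ at all — but since $E_n=[1/2,\delta_n]\subset[1/2,\delta_p]=[\delta_n,\delta_p]\sqcup E_n$ and every excursion leaving $E_n$ must, by the interval structure of the map and Lemma \ref{lem:cylinders_are_intervals}, pass through the annulus before it can come back, this forces a direct return $r_{B_n}=k$-type short excursion whose measure is controlled; and (ii) points where the orbit visits the annulus at least twice with a full excursion to $E_n$ in between, which is exactly the event $\{r^{(2)}_{B_n}<r_{[\delta_n,\delta_p]}\}$ already estimated to be $O(\mu(B_n)^2)$ in the proof of Lemma \ref{lem:measure_images_back_to_[p,n)_wrt_Bni}.

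First I would make the inclusion precise: write $M_{n,z}(t)\setminus D_{n,z}^p(t)$ as the disjoint union over $V\in\kappa_n^p(t)$ of pieces, plus a remainder consisting of orbits that either spend too long ($>t/\gamma(\mu(B_n))$) between the last annulus visit and the return, or make more than one excursion through the annulus, or skip the annulus. The ``too long'' part and the ``skip the annulus'' part I would bound using Lemma \ref{lem:convergence_r_B_n_starting_from_E_n} (strictly, its proof: $\mu_{E_n}(\gamma(\mu(B_n))r_{B_n}\le t)$ converges to a continuous distribution function, so the contribution of $\{r_{B_n}\in[(t-\varepsilon)/\gamma,t/\gamma]\}$ is small) together with the observation that a short excursion not entering the annulus has $r_{B_n}$ bounded by a constant (namely the fixed return time from $B_{n,z}$'s initial symbolic coordinates), whence by $\gamma(\mu(B_n))\to 0$ its scaled value tends to $0$ and contributes negligibly in the limit after we let $\varepsilon\to0$. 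The ``multiple annulus excursions'' part is handled by Corollary \ref{cor:distortion_bounds_comparison_measures} exactly as in Lemma \ref{lem:measure_images_back_to_[p,n)_wrt_Bni}, giving $O(\mu(B_n)^2)=o(\mu(B_n))$.

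Conversely, for $D_{n,z}^p(t)\setminus M_{n,z}(t)$: such a point makes its last timely annulus visit, then returns to $B_{n,z}$, but either its \emph{overall} first return to $B_n$ happens earlier via a different excursion (again an event inside $\{r^{(2)}_{B_n}<r_{[\delta_n,\delta_p]}\}$, so $O(\mu(B_n)^2)$), or the first return to $B_n$ overshoots the time budget $t/\gamma(\mu(B_n))$ only because of the final leg — but by construction of $\kappa_n^p(t)$ the time up to the annulus is $\le t/\gamma(\mu(B_n))$ and the extra time from the annulus to $B_{n,z}$ is at most $k$ (a bounded overshoot), so again this is absorbed by the continuity of the limiting distribution after replacing $t$ by $t-\varepsilon$ and letting $\varepsilon\to0$. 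Summing these contributions over $z\in T^{-k}\{1/2\}$ (a finite set) preserves the $o(\mu(B_n))$ bound.

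\textbf{Main obstacle.} The delicate point is the bounded-overshoot/continuity argument at the time boundary: one must show that the orbit segments between the last annulus visit and the return to $B_n$ are uniformly short (on the scale $1/\gamma(\mu(B_n))$), which relies on the fact that from $[\delta_n,\delta_p]$ to $B_{n,z}\subset B_n$ the relevant symbolic prefix has bounded length once $p$ is fixed — and then invoking that the limit law $\widetilde{\mathfrak J}_\alpha$ has no atoms, so $\mu_{E_n}(\gamma(\mu(B_n))r_{B_n}\in[t-\varepsilon,t+k\gamma(\mu(B_n))])\to\mathbb P(\widetilde{\mathfrak J}_\alpha\in[t-\varepsilon,t])$, which vanishes as $\varepsilon\to0$. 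Keeping the $O(\mu(B_n)^2)$ error terms and the $\varepsilon$-error terms cleanly separated, and verifying the non-atomicity of $\widetilde{\mathfrak J}_\alpha$ from its Laplace transform \eqref{eq:laplace_transform_J_alpha_tilde}, is where the real care is needed; everything else is bookkeeping with the partitions $\kappa_{n,z}^p(t)$ and $G_n^p$ already set up and Corollary \ref{cor:distortion_bounds_comparison_measures}.
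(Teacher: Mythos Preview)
Your overall strategy of controlling $M_{n,z}(t)\setminus D_{n,z}^p(t)$ and $D_{n,z}^p(t)\setminus M_{n,z}(t)$ separately matches the paper, but each half of your argument contains a genuine error.

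For $M_{n,z}(t)\setminus D_{n,z}^p(t)$: your claim that an excursion from $E_n$ must pass through the annulus $[\delta_n,\delta_p]$ before returning to $E_n$ is false. The map $T$ has two full branches on $[0,1]$; after leaving $E_n$ the orbit goes to $[0,c_n]$, iterates upward via $T_1$, and upon first re-entering $Y=[1/2,1]$ can land \emph{directly} in $E_n$ without ever touching $[\delta_n,\delta_p]$. There is no topological obstruction here --- Lemma~\ref{lem:cylinders_are_intervals} says cylinders are intervals, not that orbits traverse them continuously --- so your conclusion that ``skipping the annulus forces a $r_{B_n}=k$-type short excursion'' does not follow. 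The paper instead uses the inclusion $M_{n,z}(t)\setminus D_{n,z}^p(t)\subset E_n\cap\{r_{E_n}<r_{[\delta_n,\delta_p]}\}$ together with the \emph{measure-theoretic} fact $\mu_{E_n}(r_{E_n}<r_{[\delta_n,\delta_p]})=o(1)$, which comes from bounded distortion: at each return to $Y$, the probability of landing in the shrinking set $E_n$ rather than the fixed-measure annulus $E_p\setminus E_n$ is $O(\mu(E_n))$.

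For $D_{n,z}^p(t)\setminus M_{n,z}(t)$: your claim that the overshoot (the time from the last annulus visit to the actual return $r_{B_n}$) is ``at most $k$'' is also false. By the symbolic description this final leg has the form $0b_0^{\ell-1}z_0^{k-1}0(\geq n)$ with $b_0\in[p,n)$, so its length is at least $b_0+k+2\ge p+k+2$ and can be of order $n$; it is not bounded uniformly in $n$, and therefore cannot be absorbed by continuity of the law of $\widetilde{\mathfrak J}_\alpha$ at the single point $t$. The paper repairs this with an intermediate scale $y_n\to\infty$, $y_n=o(1/\gamma(\mu(B_n)))$, and splits: branches with overshoot $\leq y_n$ lie in $E_n\cap\{t/\gamma\leq r_{E_n}\leq t/\gamma+y_n\}$, which is $o(\mu(B_n))$ by Proposition~\ref{prop:HTS/RTS_1/2} since $\gamma(\mu(B_n))y_n\to0$; branches with overshoot $>y_n$ are factored via Corollary~\ref{cor:distortion_bounds_comparison_measures} into a product, one factor of which is bounded by $\mu([\delta_n,\delta_p]\cap\{r_{[\delta_n,\delta_p]}>y_n\})=o(1)$ because the annulus has measure bounded below by $\mu([\delta_{p+1},\delta_p])>0$.
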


\begin{proof}[Proof (of Lemma \ref{lem:comparison_measure_real_set_and_up_to_last_passage})]
    We have the following partition $\eta_{n,z}(t)$ of our set $M_{n,z}(t)$
    \begin{align*}
        \eta_{n,z}(t) & := \bigg\{ V \subset W \in \xi_{m+1} \cap E_n \; \bigg|\; m\leq t/\gamma(\mu(B_n)),\\
        & \qquad \; T^mV = B_{n,z},\; \mu(T^kV \cap [1/2,\delta_n]) =  0 \; \forall 1\leq k < m\bigg\}\\
        & = \bigg\{[0a_0^{m-2}z_0^{k-1}0(\geq n)] \;\bigg| \; m\leq t/\gamma(\mu(B_n)),\\
        &\qquad \; a_0 \geq n, \; (a_0^{m-2}z_0) \; \text{admissible}\; \text{and}\; \exists!\, 0\leq j \leq m-2,\; a_j = n\bigg\}.
    \end{align*}
    On the other side, recall that
    \begin{align*}
        \kappa_{n,z}^p(t) =  \bigg\{&[0a_0^{m-2}0[p,n)] \cap T^{-m}\{r_{B_{n,z}} = r_{B_n} < r_{[0[p,n)]} \} \;\bigg|\\
        &\; m\leq t/\gamma(\mu(B_n)),\; a_0\geq n \; \text{and}\; \exists!\, 0\leq j\leq m-2, \; a_j = n \bigg\}\,.
    \end{align*}

      We start by controlling $M_{n,z}(t) \backslash D_{n,z}^p(t)$. For $V = [0a_0^{m-2}z_0^{k-1}0(\geq n)] \in \eta_{n,z}(t)$ such that $r_{[\delta_n,\delta_p]} < r_{B_n} = r_{B_{n,z}} = m$, let $j =  \max\{i \leq m \; |\; T^ix \in [\delta_n,\delta_p]\}$. Then $V \subset W = [0a_0^{j - 2}0[p,n)\} \cap T^{-j}\{r_{B_{n,z}} = r_{B_n} < r_{[\delta_n,\delta_p]}\} \in \kappa_{n,z}^p(t)$. Thus, it only remains the intervals for which $r_{[\delta_n,\delta_p]} > r_{B_n}$, \textit{i.e.}
    \begin{align*}
        M_{n,z}(t) \backslash D_{n,z}^p(t) \subset M_{n,z}(t) \cap \left\{r_{B_n} < r_{[\delta_n, \delta_p]}\right\} \subset E_n \cap \{r_{E_n} <  r_{[\delta_n, \delta_p]} \}\,.
    \end{align*}
    We already know that $\mu_{E_n}(r_{E_n} < r_{[\delta_n, \delta_p]}) = o(1)$, so
    \begin{align*}
        \mu\left(M_{n,z}(t) \backslash D_{n,z}^p(t)\right) = o(\mu(B_n)).
    \end{align*}

      Now, we need to control $D_{n,z}^p(t) \backslash M_{n,z}(t)$. This is more difficult as we allow for more branches in $D_{n,z}^p(t)$ because we do not impose any time control for returns to $B_n$. For every $V_z = [0a_0^{m-2}0[p,n)] \cap T^{-m}\{r_{B_{n,z}}  = r_{B_n}  < r_{[0[p,n)]} \} \in \kappa_{n,z}^p(t)$, we can build a partition $\kappa_{n,z}^{p,V_z}(t)$ of $V_z$ taking cylinders up to the return time to $B_{n,z}$.  
    \begin{align*}
        \kappa_{n,z}^{p,V_z}(t) := \bigg\{& [0a_0^{m-2}0b_0^{j-1}z_0^{k-1}0(\geq n)] \; \bigg|\; m\leq \frac{t}{\gamma(\mu(B_n))},\\
        &\; b_0 \in [p,n),\; (b_0^{j-1}z_0^{k-1}) \; \text{admissible},\; \exists !\, 0\leq \ell < j, \; b_{\ell} = p\bigg\}. 
    \end{align*}
       Observe that $W \in \kappa_{n,z}^{p,V_z}(t)$ is an interval included in $E_n$. Then, it gives another partition $\kappa_{n,z}^{p,ex}(t)$ of $D_{n,z}^p(t)$ by
     \begin{align*}
         \kappa_{n,z}^{p,ex}(t) := \bigcup_{V_z \in \kappa_{n,z}^p(t)} \kappa_{n,z}^{p,V_z}(t).
     \end{align*}
       Consider the subset $\kappa_{n,z}^{p,long}(t)$ of $\kappa_{n,z}^{p,ex}(t)$ defined by
    \begin{align*}
         \kappa_{n,z}^{p,long}(t) := \left\{ W = [0a_0^{m-2}0b_0^{j-1}z_0^{k-1}0(\geq n)] \in \kappa_{n,z}^{p,ex}(t)\; \bigg|\; m +j+1 > t/ \gamma(\mu(B_n))\right\}.
     \end{align*}
    By construction, we have 
    \begin{align*}
         D_{n,z}^p(t) \backslash M_{n,z}(t) =
         \bigsqcup_{W \in \kappa_{n,z}^{p,long}(t)}  W.
    \end{align*}
      For every sequence $(y_n)_n$ going to $+\infty$ such that $y_n = o(\gamma(\mu(B_n))^{-1})$, we have 
    \begin{align*}
        \mu(D_{n,z}^p(t) \backslash M_{n,z}(t)) & = \sum_{W \in \kappa_{n,z}^{p,long}(t), \; r_{E_n} - m > y_n} \mu(W) + \sum_{W \in \kappa_{n,z}^{p,long}(t), \; r_{E_n} - m \leq y_n} \mu(W) \\
        & \leq \sum_{\underset{r_{E_n} - m > y_n}{W \in \kappa_{n,z}^{p,long}(t),}} \mu[0a_0^{m-2}0b_0^{j-1}z_0^{k-1}0(\geq n)] \\
        &\qquad \qquad + \mu\big(E_n \cap \{t/\gamma(\mu(B_n)) \leq r_{E_n} \leq t/\gamma(\mu(B_n)) + y_n\}\big) \\
        & \leq C\sum_{\underset{r_{E_n} - m > y_n}{W \in \kappa_{n,z}^{p,long}(t),}} \frac{\mu[0a_0^{m-2}0(\geq n)] \mu[0b_0^{j-1}z_0^{k-1}0(\geq n)]}{\mu[0(\geq n)]} + o(\mu(B_n))\,,
    \end{align*}
    where we used Corollary \ref{cor:distortion_bounds_comparison_measures} and Proposition \ref{prop:HTS/RTS_1/2}. Again with Corollary \ref{cor:distortion_bounds_comparison_measures}, we get 
    \begin{align*}
        \mu(D_{n,z}^p(t) \backslash M_{n,z}(t)) & \leq C^2\sum_{\underset{r_{E_n} - m > y_n}{W \in \kappa_{n,z}^{p,long}(t),}} \frac{\mu[0a_0^{m-2}0(\geq n)]}{\mu[0(\geq n)]} \frac{\mu[0b_0^{j-1}z_0^{k-1}0]\mu[0(\geq n)]}{\mu[0]} + o(\mu(B_n))\\
        & \leq C^2 \sum_{\underset{r_{E_n} - m > y_n}{W \in \kappa_{n,z}^{p,long}(t),}} \mu[0a_0^{m-2}0(\geq n)]\mu[0b_0^{j-1}z_0^{k-1}0] + o(\mu(B_n)).
    \end{align*}
      At this point, we can split the sum into two sums on the possible $a_0^{m-2}$ and the possible $b_0^{j-1}$. $(b_0^{j-1})$ is such that $b_0 \in [p,n)$, $(b_0^{j-1}z_0^{k-1})$ is admissible and $\exists! \, 0\leq \ell < j, \; b_{\ell} = p$. Since $r_{E_n} - m > y_n$, this implies 
    \begin{align*}
        \sum_{b_0^{j-1}} \mu[0b_0^{j-1}z_0^{k-1}0] \leq \mu\left([\delta_n,\delta_p] \cap \{r_{[\delta_n,\delta_p]} > y_n\}\right).
    \end{align*}
    Yet, for all $n> p$, $\mu[\delta_n, \delta_p] > \mu([\delta_{p+1},\delta_p]) > 0$ and thus \cite[Proposition 2.1 b)]{RZ20} ensures that $\mu([\delta_n,\delta_p] \cap \{r_{[\delta_n,\delta_p]} > y_n\}) = o(1)$ as $y_n \xrightarrow[n\to +\infty]{} +\infty$.\\

      On the other side, we have $(a_0^{m-2})$ that must be such that $(a_0^{m-2}0)$ is admissible, $m\leq t/\gamma(\mu(B_n))$, $a_0 \geq n$ and $\exists!\, 0\leq \ell < m-1$, $a_{\ell} = n$. Thus,
    \begin{align*}
        \sum_{a_0^{m-2}} \mu[0a_0^{m-2}0(\geq n)] \leq \mu\left(E_n \cap \left\{r_{E_n} \leq \frac{t}{\gamma(\mu(B_n))}\right\} \right).
    \end{align*}
    By Proposition \ref{prop:HTS/RTS_1/2}, since $t > 0$, we know that
    \begin{align*}
        \mu \left(E_n \cap \left\{r_{E_n} \leq \frac{t}{\gamma(\mu(B_n))}\right\} \right) \lesssim \mu(E_n) = \mu(B_n).
    \end{align*}
    Hence, we have
    \begin{align*}
        \mu(D_{n,z}^p(t) \backslash M_{n,z}(t)) &\lesssim C^2 \mu([\delta_n,\delta_p] \cap \{r_{[\delta_n,\delta_p]} > y_n\}\mu(B_n) + o(\mu(B_n))\\
        & = o(\mu(B_n))
    \end{align*}
    and it concludes the proof of the lemma.
\end{proof}

  Let $M_n(t) := E_n \cap \{\gamma(\mu(B_n))\,r_{B_n} \leq t\} = \bigsqcup_{z\in T^{-k}\{1/2\}} M_{n,z}(t)$.

\begin{prop}
    \label{prop:first_return_B_n_preimages_of_1/2}
    For $z \in T^{-k}\{1/2\}$ and all $t > 0$, we have 
    \begin{align*}
        \mu_{M_n(t)}\left( T_{B_n}^{-1}B_{n,z}\right) = \frac{\mu(M_{n,z}(t))}{\mu(M_n(t))} \xrightarrow[n\to +\infty]{} \mathbb{Q}_k(z)\,.
    \end{align*}
\end{prop}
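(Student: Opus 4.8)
The plan is to use the chain of approximations already set up in the preceding lemmas to replace the genuine target $M_{n,z}(t)$ by the geometrically transparent set $D_{n,z}^p(t)$, and then compute $\mu(D_{n,z}^p(t))$ via the partition $\kappa_{n,z}^p(t)$ together with the bounded-distortion estimate of Corollary \ref{cor:distortion_bounds_comparison_measures}. Concretely, I would first fix $t>0$ and $p$ large enough that $[1/2,\delta_p]\cap\bigcup_{j=1}^k T^{-j}\{1/2\}=\emptyset$, so that all the constructions of $\kappa_n^p(t)$, $D_{n,z}^p(t)$, $\eta_{n,z}(t)$ make sense. By Lemma \ref{lem:comparison_measure_real_set_and_up_to_last_passage} we have $\mu(M_{n,z}(t)) = \mu(D_{n,z}^p(t)) + o(\mu(B_n))$ for each $z$, and summing over the finitely many $z\in T^{-k}\{1/2\}$ gives $\mu(M_n(t)) = \sum_z \mu(D_{n,z}^p(t)) + o(\mu(B_n))$. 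Since $\mu(M_n(t))\asymp\mu(B_n)$ by Proposition \ref{prop:HTS/RTS_1/2} (the limit $\widetilde{\mathfrak{J}}_\alpha$ is a nondegenerate random variable, so $\mathbb{P}(\widetilde{\mathfrak{J}}_\alpha\le t)>0$ for $t>0$), it suffices to prove
\begin{align}
\label{eq:target_ratio_D}
\frac{\mu(D_{n,z}^p(t))}{\sum_{z'}\mu(D_{n,z'}^p(t))} \xrightarrow[n\to+\infty]{} \mathbb{Q}_k(z),
\end{align}
uniformly in $p$ large. For \eqref{eq:target_ratio_D} I would sum over the branches: writing $D_{n,z}^p(t)$ (up to an $o(\mu(B_n))$ error coming from multiple returns to $[\delta_n,\delta_p]$, handled exactly as in Lemma \ref{lem:measure_images_back_to_[p,n)_wrt_Bni}) as the disjoint union $\bigsqcup_{V\in\kappa_n^p(t)} V_z$, we get $\mu(D_{n,z}^p(t)) = \sum_{V\in\kappa_n^p(t)}\mu(V_z) + o(\mu(B_n))$.

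The key computation is then the branchwise ratio. For a fixed branch $V=[0a_0^{m_V-2}0[p,n)]\in\kappa_n^p(t)$, the map $T^{m_V}$ is a homeomorphism from $V$ onto $[\delta_n,\delta_p]$, and $V_z$ is the sub-cylinder on which the orbit returns to $B_{n,z}$ before revisiting $[\delta_n,\delta_p]$. Applying Lemma \ref{lem:Distortion_estimates_Thaler} (via $\widehat{T}^{m_V}\mathbf 1_V\in\mathcal C_C([\delta_n,\delta_p])$) exactly as in the proof of Corollary \ref{cor:distortion_bounds_comparison_measures}, one gets
\begin{align*}
\mu(V_z) = \bigl(1\pm C\,\diam([\delta_n,\delta_p])\bigr)\,\frac{\mu(V)}{\mu([\delta_n,\delta_p])}\,\mu\bigl(T^{m_V}V_z\bigr),
\end{align*}
and by Lemma \ref{lem:measure_images_back_to_[p,n)_wrt_Bni}, $\mu(T^{m_V}V_z) = \mu(B_{n,z}) + O(\mu(B_n)^2)$ while $\sum_{z'}\mu(T^{m_V}V_{z'}) = \mu(B_n)+O(\mu(B_n)^2)$. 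Dividing, the factors $\mu(V)/\mu([\delta_n,\delta_p])$ and the distortion factor $1\pm C\diam([\delta_n,\delta_p])$ cancel in the ratio $\mu(V_z)/\sum_{z'}\mu(V_{z'})$, leaving
\begin{align*}
\frac{\mu(V_z)}{\sum_{z'}\mu(V_{z'})} = \frac{\mu(B_{n,z}) + O(\mu(B_n)^2)}{\mu(B_n) + O(\mu(B_n)^2)}\cdot\bigl(1 + O(\diam([\delta_n,\delta_p]))\bigr).
\end{align*}
By Lemma \ref{lem:quotient_measures_B_nz}, $\mu(B_{n,z})/\mu(B_n)\to\mathbb Q_k(z)$; and $\diam([\delta_n,\delta_p])\le\diam([1/2,\delta_p])\to 0$ as $p\to+\infty$. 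Summing the branch estimates (the error terms are uniformly small once multiplied by $\sum_V\mu(V)\lesssim\mu(B_n)$) and then first letting $n\to\infty$ and afterwards $p\to\infty$ yields \eqref{eq:target_ratio_D}, hence the proposition.

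The main obstacle is bookkeeping the error terms so that they genuinely vanish relative to $\mu(B_n)$ after summation over the (unboundedly many, of total $\mu$-mass $\asymp\mu(B_n)$) branches $V\in\kappa_n^p(t)$: one must check that the $O(\mu(B_n)^2)$ corrections from Lemma \ref{lem:measure_images_back_to_[p,n)_wrt_Bni} and the $o(\mu(B_n))$ corrections from double returns to $[\delta_n,\delta_p]$ (Lemma \ref{lem:comparison_measure_real_set_and_up_to_last_passage}) do not accumulate, and that the distortion constant $C$ is uniform in the branch (which it is, by Lemma \ref{lem:Distortion_estimates_Thaler}, since $Y=[1/2,1]$ is fixed). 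The second subtlety is the order of limits: the $p$-dependent distortion error $O(\diam([1/2,\delta_p]))$ must be sent to $0$ only \emph{after} the $n$-limit, which is legitimate because the left-hand side $\mu(M_{n,z}(t))/\mu(M_n(t))$ does not depend on $p$ at all, so any $p$-indexed bound that becomes tight as $p\to\infty$ pins down the genuine limit.
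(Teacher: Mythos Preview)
Your proposal is correct and follows essentially the same approach as the paper: replace $M_{n,z}(t)$ by $D_{n,z}^p(t)$ via Lemma~\ref{lem:comparison_measure_real_set_and_up_to_last_passage}, compute branchwise over $\kappa_n^p(t)$ using bounded distortion and Lemma~\ref{lem:measure_images_back_to_[p,n)_wrt_Bni}, then invoke Lemma~\ref{lem:quotient_measures_B_nz} and take $n\to\infty$ followed by $p\to\infty$. The only cosmetic difference is that the paper writes the distortion comparison as $\mu(V_z)=(1\pm C\diam([\delta_n,\delta_p]))\,\mu(V_1)\,\mu(T^{m_V}V_z)/\mu(T^{m_V}V_1)$ with $V_1=\bigsqcup_{z'}V_{z'}$ rather than your $\mu(V)/\mu([\delta_n,\delta_p])$ formulation, and note that $\{V_z:V\in\kappa_n^p(t)\}$ is an exact partition of $D_{n,z}^p(t)$, so no additional $o(\mu(B_n))$ error is needed at that step.
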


\begin{proof}[Proof (of Proposition \ref{prop:first_return_B_n_preimages_of_1/2})]
    For every $V\in \kappa_n^p(t)$, let $V_1 := \bigsqcup_{z\in T^{-k}\{1/2\}} V_z$. By Lemma \ref{lem:comparison_measure_real_set_and_up_to_last_passage}, for all $p \geq 1$  large enough, $n\geq p$ and $z \in T^{-k}\{1/2\}$, we have 
    \begin{align*}
        \mu(M_{n,z}(t)) + o(\mu(B_n)) & = \mu(D_{n,z}^p(t)) = \sum_{V \in \kappa_{n}^p(t)} \mu(V_z) \\
        & = \left(1 \pm C \diam([\delta_{n},\delta_p]) \right) \sum_{V \in \kappa_{n}^p(t)} \mu(V_{1})\frac{\mu(T^{m_V}V_z)}{\mu(T^{m_V}V_{1})},
    \end{align*}
    where we use the bounded distortion Corollary \ref{cor:distortion_bounds_comparison_measures} for the last inequality. Hence, using Lemma \ref{lem:measure_images_back_to_[p,n)_wrt_Bni}, for all $z \in T^{-k}\{1/2\}$ 
    \begin{align*}
        \mu(M_{n,z}(t)) + o(\mu(B_n)) & = \left(1 \pm C \diam([\delta_{n},\delta_p]) \right) \sum_{V \in \kappa_{n}^p(t)} \mu(V_1) \frac{\mu(B_{n,z}) + O(\mu(B_n)^2)}{\mu(B_{n}) + O(\mu(B_n)^2)}
    \end{align*}
    and 
    \begin{align*}
        \sum_{V \in \kappa_{n}^p(t)} \mu(V_1) & = \sum_{z'\in T^{-k}\{1/2\}}\sum_{V_{z'}\in \kappa_{n,z'}^p(t)}\mu(V_{z'}) \\
        & = \sum_{z'\in T^{-k}\{1/2\}}\mu(D_{n,z'}^p(t)) \\
        & = \sum_{z'\in T^{-k}\{1/2\}}\left(\mu(M_{n,z'}(t)) + o(\mu(B_n))\right)\\
        & = \left(\mu(M_n(t)) + o(\mu(B_n))\right)\,.
    \end{align*}
      Thus, with Lemma \ref{lem:quotient_measures_B_nz}, for all $p$ large enough, we have 
    \begin{align*}
        \mu(M_{n,z}(t))/\mu(M_n(t)) + o(1) = \left(1 \pm C \diam([\delta_{n},\delta_p])\right) \frac{\mu(B_{n,z}) + O(\mu(B_n)^2)}{\mu(B_{n}) + O(\mu(B_n)^2)} (1 + o(1))
    \end{align*}
    using Lemma \ref{lem:convergence_r_B_n_starting_from_E_n} to ensure that $o(\mu(B_n)) = o(\mu(M_n(t)))$. \\
    Hence, for all $t> 0$, we have 
    \begin{align*}
        \limsup_{n\to +\infty} \frac{M_{n,z}(t)}{M_n(t)} \leq (1 + C\diam([1/2,\delta_p])) \lim_{n\to +\infty}\frac{\mu(B_{n,z})}{\mu(B_n)} \leq (1 + C\diam([1/2,\delta_p])) \mathbb{Q}_k(z).
    \end{align*}
    We have a similar control for the $\liminf$. Since it was true for $p$ large enough, taking $p\to +\infty$ concludes the proof of Proposition \ref{prop:first_return_B_n_preimages_of_1/2}.
\end{proof}

Building up on all these lemmas, we are now able to prove Theorem \ref{thm:Marking_RREPP}.

\begin{proof}[Proof (of Theorem \ref{thm:Marking_RREPP})]
    To get the convergence of the marking process, it is enough to get the convergence of the finite dimensional marginal of the stochastic process
    \begin{align*}
        \left((\gamma(\mu(B_n)))\,r_{B_n} \circ T^i_{B_n}, \phi \circ T^{i+1}_{B_n})\right)_{i\geq 0}.
    \end{align*}
    Let $d\geq 1$. We want to show
    \begin{align}   \label{eq:convergence_first_d_returns_from_mu_E_n}
        \left((\gamma(\mu(B_n)))\,r_{B_n} \circ T^i_{B_n}, \phi \circ T^{i+1}_{B_n})\right)_{0\leq i \leq d} \xRightarrow[n\to +\infty]{\mu_{E_n}} \left((\widetilde{\mathfrak{J}}_{\alpha}^{(i)}, Y^{(i)})\right)_{0\leq i\leq d},
    \end{align}
    where $\left((\widetilde{\mathfrak{J}}_{\alpha}^{(i)}, Y^{(i)})\right)_{0\leq i\leq d}$ are i.i.d, $\widetilde{\mathfrak{J}}_{\alpha}^{(1)} \eqlaw\widetilde{\mathfrak{J}}_{\alpha}$,  $Y^{(1)} \eqlaw \mathbb{Q}_k$ and $\widetilde{\mathfrak{J}}_{\alpha}^{(1)}$ and $Y^{(1)}$ are independent.

      As we did for the point $1/2$ in Proposition \ref{prop:result_REPP_point_1/2} and building up again on \cite[(xii)-(xiii)]{PSZ13}, we prove the result by induction.  Proposition \ref{prop:first_return_B_n_preimages_of_1/2} together with Lemma \ref{lem:convergence_r_B_n_starting_from_E_n} gives 
\begin{align}
    \label{eq:first_return_marking_process}
    (\gamma(\mu(B_n))\,r_{B_n}, \phi \circ T_{B_n}) \xRightarrow[n \to +\infty]{\mu_{E_n}} (\widetilde{\mathfrak{J}}_{\alpha}, Y)\,,
\end{align}
where $Y \eqlaw \mathbb{Q}$ and $\widetilde{\mathfrak{J}}_{\alpha}$ and $Y$ are independent, proving the result for $d = 0$. 

  Now, we do the inductive step from $d$ to $d+1$. By invariance of the measure, for all $k \geq 1$, we have 
\begin{align*}
    (\gamma(\mu(B_n))\,r_{B_n} \circ T_{E_n}^k, \phi \circ T_{B_n}^{k+1}) \xRightarrow[n \to +\infty]{\mu_{E_n}} (\widetilde{\mathfrak{J}}_{\alpha}, Y), 
\end{align*}
  using that $T_{B_n} \circ T_{E_n}^i = T_{B_n}^{i+1}$ on $E_n$ for all $i\geq 0$. Let $t_0, \dots, t_d > 0 $ and $z^{(0)},\dots,z^{(d)} \in T^{-k}\{1/2\}$. Let 
    \begin{align*}
        M_n := E_n \cap \bigcap_{i = 0}^d \{ \gamma(\mu(B_n))\,r_{B_n}\circ T_{E_n}^i \leq t_i\} \cap \{\phi \circ T_{B_n}^{i+1} = z^{(i)}\}.
    \end{align*}
      Let $t > 0$ and $z^{(d+1)} \in T^{-k}\{1/2\}$. We show 
    \begin{align*}
        \label{eq:recurrence_T_E_n}
        \mu_{M_n}\left( \{\gamma(\mu(B_n))\,r_{B_n} \circ T_{E_n}^{d+1} \leq t\} \cap \{\phi \circ T_{B_n}^{d+2} = z^{(d+1)}\}\right) \xrightarrow[n\to +\infty]{} \mathbb{P}(\widetilde{\mathfrak{J}}_{\alpha} \leq t) \mathbb{Q}_{k}(z^{(d+1)}). 
    \end{align*}
    Because $E_n$ is an union of $1$-cylinders, we can give a partition $ \eta_n$ in cylinders of $M_n$ and a partition $\eta'_n$ of $N_n := M_n \cap  \{\gamma(\mu(B_n))\,r_{B_n} \circ T_{E_n}^{d+1} \leq t\} \cap \{\phi \circ T_{B_n}^{d+2} = z^{(d+1)}\}$. Furthermore, associated to the partition $\eta_n$, we set $\mathcal{I}_n$ such that
    \begin{align*}
        \eta_n = \{V := [0a_0^{m-1}0(\geq n)] \; |\; (a_0^{m-1}) \in \mathcal{I}_n \}. 
    \end{align*}
    This yields
    \begin{align*}
        \eta'_n = \bigg\{& [0a_0^{m-1}0b_0^{j-1}0(\geq n)]\; \bigg|\; a_0^{m-1} \in \mathcal{I}_n,\; b_0 \geq n,\\
        & \exists! i\; b_i = n,\; j\leq t/\gamma(\mu(B_n)),\; b_{j - 1 - k}^{j-1} = (z^{(d+1)})_0^{k-1}\bigg\}  \,.
    \end{align*}
    Thus,
    \begin{align*}
        \mu(N_n) &= \sum_{[0a_0^{m-1}0b_0^{j-1}0(\geq n)] \in \eta'_n} \mu[0a_0^{m-1}0b_0^{j-1}0(\geq n)] \\
        & = (1 \pm C \diam(E_n)) \sum_{[0a_0^{m-1}0b_0^{j-1}0(\geq n)] \in \eta'_n} \mu[0a_0^{m-1}0(\geq n)] \frac{\mu[0b_0^{j-1}0(\geq n)]}{\mu(E_n)} \\
        & = (1 \pm C\diam(E_n)) \mu_{E_n}( \{\gamma(\mu(B_n))\,r_{B_n} \leq t\} \cap \{\phi \circ T_{B_n} = z^{(d+1)}\}) \mu(M_n)\,,
    \end{align*}
    where we used Corollary \ref{cor:distortion_bounds_comparison_measures}, meaning that 
    \begin{align*}
        \mu_{M_n}(N_n) \xrightarrow[n\to +\infty]{} \mathbb{P}(\widetilde{\mathfrak{J}}_{\alpha} \leq t) \mathbb{Q}_{k}(z^{(d+1)}).
    \end{align*}
    Since $r_{B_n} \circ T_{E_n}^i = r_{B_n} \circ T_{B_n}^i - k$ for $i\geq 1$ on $E_n$ and $\gamma(\mu(B_n))k \xrightarrow[n\to +\infty]{} 0$, it is sufficient to get \eqref{eq:convergence_first_d_returns_from_mu_E_n} and it concludes the proof of Theorem \ref{thm:Marking_RREPP}.
\end{proof}

\subsubsection{Proof of Theorems \ref{thm:REPP_LSV_preimages_of_0} and \ref{thm:REPP_preimages_of_0_barely_infinite_case}}

\begin{proof}[Proof (of Theorem \ref{thm:REPP_LSV_preimages_of_0})]
    By Theorem \ref{thm:Marking_RREPP}, we have 
    \begin{align*}
        \psi_n^{\gamma, k} = \sum_{j\geq 1} \delta_{(\gamma(\mu(B_n))\,r_{B_n}^{(j)}, \phi \circ T_{B_n}^{j+1})} \xRightarrow[n\to +\infty]{\mu_{E_n}} P_k.
    \end{align*}
    Furthermore, for all $z \in T^{-k}\{1/2\}$, we have 
    \begin{align*}
        \left\|\widehat{T}^k\left(\frac{\mathbf{1}_{B_{n,z}}}{\mu(B_{n,z})}\right) - \frac{1}{\mu(E_n)}\mathbf{1}_{E_n} \right\|_{L^{\infty}(\mu_{E_n})} \xrightarrow[n\to +\infty]{} 0.
    \end{align*}
    Together with $k\gamma(\mu(B_n)) \xrightarrow[n\to +\infty]{} 0$ and $r_{B_n}\circ T^k = r_{B_n} - k$ on $B_n$, this is enough to ensure 
    \begin{align*}
        \Psi_{n}^{\gamma,k} \xRightarrow[n\to +\infty]{\mu_{B_{n,z}}} P_k.
    \end{align*}

      Thus, we get
    \begin{align*}
        \sum_{j\geq 1} \delta_{\gamma(\mu(B_n))\,r_{B_{n,z}}^{(j)}} = \sum_{j\geq 1} \delta_{(\gamma(\mu(B_n))\,r_{B_n}^{(j)}, \phi \circ T^{j+1}_{B_n})} \mathbf{1}_{\{\phi \circ T_{B_n}^{j+1} = z\}} \xRightarrow[n\to +\infty]{\mu_{B_{n,z}}} P_k(\cdot \times \{z\}) \eqlaw \RPP(\widetilde{\mathfrak{J}}_{\alpha})^{(\mathbb{Q}_k(z),1)}
    \end{align*}
    Finally, using that 
    \begin{align*}
        \lim_{n\to+\infty} \gamma(\mu(B_{n,z}))/\gamma(\mu(B_n)) = \lim_{n\to +\infty} \left(\mu(B_{n,z})/\mu(B_n)\right)^{1/\alpha} = \mathbb{Q}_k(z)^{1/\alpha}, 
    \end{align*}
    we get, for every $k\geq 1$ and $z\in T^{-k}\{1/2\}$, 
    \begin{align*}
        N_{B_{n,z}}^{\gamma} \xRightarrow[n\to +\infty]{\mu_{B_{n,z}}} \RPP(\widetilde{\mathfrak{J}}_{\alpha})^{(\mathbb{Q}_{k}(z), \mathbb{Q}_{k}(z)^{1/\alpha})}.
    \end{align*}
Therefore, by Corollary \ref{cor:equivalence_HTS_RTS_true_Point_Processes}, it also gives
    \begin{align*}
        N_{B_{n,z}}^{\gamma} \xRightarrow[n\to +\infty]{\mathcal{L}(\mu)} \DRPP(\mathfrak{J}_{\alpha}, \widetilde{\mathfrak{J}}_{\alpha})^{(\mathbb{Q}_{k}(z), \mathbb{Q}_{k}(z)^{1/\alpha})}.
    \end{align*}
\end{proof}

\begin{proof}[Proof (of Theorem \ref{thm:REPP_preimages_of_0_barely_infinite_case})]
    The proof of Theorem \ref{thm:REPP_preimages_of_0_barely_infinite_case} is similar to the proof of Theorem \ref{thm:REPP_LSV_preimages_of_0}. The laws $\mathfrak{J}_{\alpha}$ and $\widetilde{\mathfrak{J}}_{\alpha}$ must be changed to the exponential law $\mathcal{E}$ but, apart from that, the proof stays identical and especially within the key Theorem \ref{thm:Marking_RREPP}. Indeed, from Theorem \ref{thm:Collet_Galves_p_equals_1} and Lemma \ref{lem:comparison_renormalizations_for_0_p=1}, the same proof as for Proposition \ref{prop:result_REPP_point_1/2}, gives this time a renewal point process of exponential waiting times, \textit{i.e.} the homogeneous Poisson process. Building up on it, an equivalent of Theorem \ref{thm:Marking_RREPP} can be proven the same way. Finally, since the Poisson point process is stable through thinning and rescaling by the same parameter, \textit{i.e.} if $N$ is a Poisson point process and $\tau > 0$, $N^{(\tau, \tau)}$ is again a Poisson process, and since $\alpha = 1$, the limit for the preimages of $1/2$ are again Poisson point processes. 
\end{proof}

\subsubsection{Proof of Proposition \ref{prop:convergence_to_FPP_when_going_further_from_0}}

\begin{proof}[Proof (of Proposition \ref{prop:convergence_to_FPP_when_going_further_from_0})]
    Recall the formula \eqref{eq:laplace_transform_J_alpha_tilde} for the Laplace transform of the law $\widetilde{\mathfrak{J}}_{\alpha}$.
    \begin{align*}
        \mathbb{E}\left[e^{-s\widetilde{\mathfrak{J}}_{\alpha}}\right] = 1 - \frac{s^{\alpha}}{\Gamma(1+\alpha)} \left(e^{-sd_{\alpha}} + sd_{\alpha}\int_0^{1} y^{-\alpha}e^{-d_{\alpha}sy}\,\dd y\right)^{-1}, \quad s\geq 0.
    \end{align*}
    In particular, we have
    \begin{align*}
        \mathbb{E}\left[e^{-s\widetilde{\mathfrak{J}}_{\alpha}}\right] = 1 - \frac{s^{\alpha}}{\Gamma(1+\alpha)} + o(s^{\alpha}).
    \end{align*}
    However, for a renewal process $\RPP(X)$ with $\mathbb{E}[e^{-sX}] = 1 - \lambda s^{\alpha} + o(s^{\alpha})$, we have 
    \begin{align*}
        \RPP(X)^{(\tau,\tau^{\alpha})} \xRightarrow[\tau \to 0]{} \fPp_{\alpha}(\lambda^{-1}).
    \end{align*}
    This can be found for example in \cite[\S 10.4.4 p.355]{Gorenflo20_book_Mittag-Leffler}.
      Hence, it remains to show that for every sequence $(x_k)_{k\geq 0}$ with $x_k \in T^{-k}\{1/2\}$, $\mathbb{Q}_k(x_k) \xrightarrow[k\to +\infty]{} 0$ which is equivalent to $\rho(x_k)/ (T^k)'(x_k) \xrightarrow[k\to +\infty]{} 0$. Since on every interval on the form $[\varepsilon, 1]$, $\rho$ is bounded away from $+\infty$ and $T' \geq c(\varepsilon) > 1$, this is enough to show it when $x_k \xrightarrow[k\to +\infty]{} 0$. We consider the case $x_k = T_1^{-k}1/2 = c_{k+1}$, the other cases can be considered similarly. Since the return map $Y$ is full-branched Gibbs-Markov, for each branch $T^{k+1} : [\delta_{k+1}, \delta_k]$ is such that there exists $y \in [\delta_{k+1}, \delta_k]$ with $(T^{k+1})'(y) = (2(\delta_k -\delta_{k+1}))^{-1} = (c_k - c_{k+1})^{-1}$ by the mean value theorem. By bounded distortion, it yields $(T^{k+1})'(\delta_{k+1}) \asymp (c_k - c_{k+1})^{-1}$ and since $T'(\delta_{k+1}) = 1/2$, we obtain $(T^{k})'(c_{k+1}) \asymp (c_k - c_{k+1})^{-1}$. On the other side, by \eqref{eq:formula_density_LSV_map}, we have $\rho(c_k) \asymp c_{k+1} (c_{k+1} - c_{k+2})^{-1}$ and implying $\rho(c_k)/(T^k)'(c_k) \xrightarrow[k\to +\infty]{} 0$.  
    \end{proof}

\section{Fractional Poisson processes in infinite ergodic theory: a concluding perspective}
\label{section:discussion_Pene_Saussol}

  In the recent article \cite{PS23} (\cite{Yas18} for the first return), the authors showed a convergence for a recurrence REPP in the infinite measure preserving context. They consider a $\mathbb{Z}$-extension over a one-sided subshift of finite type, defined as follows: let $(\Omega, \sigma, \nu)$ be a topologically mixing subshift of finite type endowed with a Gibbs measure $\nu$ and let $h : \Omega \to \mathbb{Z}$ be a centered integrable Hölder observable. On the phase space $X := \Omega \times \mathbb{Z}$, we define the dynamics $T$ by
\begin{align*}
    T : (x, a) \mapsto (\sigma(x), a + h(x)), \quad x\in \Omega,\; a\in \mathbb{Z}.
\end{align*}
The $\sigma$-finite measure $\mu := \nu \otimes \left(\sum_{z\in \mathbb{Z}} \delta_z \right)$ is preserved by $T$. 

\begin{thm}\textup{\cite[Theorem 3.10]{PS23}}
    \label{thm:proof_Z_ext_PS23}
    For all $r> 0$, let $N_r$ be the point process defined as follows
    \begin{align*}
        N_r(x) := \sum_{k \geq 1} \delta_{\nu(B(x,r))^2r^{(k)}_{B(x,r)\times \{0\}}} \quad \forall x\in \mathcal{X}.
    \end{align*}
    Then, for the vague convergence, 
    \begin{equation*}
        N_r \xRightarrow[r\to 0]{\mu} N \circ L_0 \quad \text{and} \quad N_r \xRightarrow[r\to 0]{\mu_{B(x,r) \times \{0\}}} \widetilde{N} \circ \widetilde{L}_0,
    \end{equation*}
    where $N, \widetilde{N}$ are standard Poisson process and $L_0, \widetilde{L}_0$ is are local times at $0$ of the standard Brownian motion, independent of $N, \widetilde{N}$.
\end{thm}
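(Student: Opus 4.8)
\emph{Proof proposal.} I would deduce this statement by combining our abstract machinery with a direct analysis of the fibered structure, in four steps. \emph{Step 1 (setting up the framework).} The $\mathbb{Z}$-extension is a PDE CEMPT: conservativity and ergodicity follow from $h$ being centered (Atkinson's criterion), while pointwise dual ergodicity of $\mathbb{Z}$-extensions over a Gibbs--Markov base is classical, with normalizing sequence $a_n\sim c\sqrt n$ coming from the CLT for $S_nh$, hence index $\alpha=1/2$ and scaling $\gamma(s)\asymp s^2$ --- which is exactly the $\nu(B(x,r))^2$ appearing in the statement, since $\mu(B(x,r)\times\{0\})=\nu(B(x,r))$. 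As $B(x,r)\times\{0\}\subset\Omega\times\{0\}$ and $\Omega\times\{0\}$ is a uniform set, \ref{cond:Living_in_uniform_set} holds with $\alpha=1/2$; so Theorem~\ref{thm:HTS-REPP_vs_RTS-REPP_infinite_measure_renormalized_measure} and Corollary~\ref{cor:equivalence_HTS_RTS_true_Point_Processes} reduce everything to proving convergence of the \emph{return} REPP (for $x$ non-periodic for $\sigma$, the generic case), the hitting limit being then forced by the functional equation \eqref{eq:relation_hitting_return_infinite_measure}.

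\emph{Step 2 (return REPP).} I would apply Theorem~\ref{thm:sufficient_conditions_convergence_compound_FPP} with $\theta=1$, $U(B_n)=\emptyset$, $Q(B_n)=B_n:=B(x,r_n)\times\{0\}$, along the lines of the proof of Theorem~\ref{thm:REPP_LSV_nonperiodic_points}. One picks a relaxation time $\tau_n$ --- morally the first return to the fiber $\Omega\times\{0\}$ after enough steps to dissolve the base cylinder of radius $r_n$ --- and verifies: \ref{cond_CFPP:good_density_after_tau_n}, that $\widehat{T^{\tau_n}}(\mathbf 1_{B_n}/\mu(B_n))$ stays in a fixed compact subset of $L^1(\mu)$; \ref{cond_CFPP:tau_n_small_enough}, that $\gamma(\mu(B_n))\,\tau_n\to 0$ in $\mu_{B_n}$-probability (return times to $\Omega\times\{0\}$ grow at most polynomially while $\gamma(\mu(B_n))\asymp\nu(B(x,r_n))^2\to0$); and \ref{cond_CFPP:cluster_compatible_tau_n_no_cluster_from_Q}, which then follows exactly as in Theorem~\ref{thm:REPP_LSV_nonperiodic_points} from the return-time statistics of the induced Gibbs--Markov factor. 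This yields $N_{B_n}^\gamma\to\fPp_{1/2}(\Gamma(3/2))$ both for the return and, via \eqref{eq:relation_hitting_return_infinite_measure}, the hitting REPP (up to the constant encoding the difference between the canonical $\gamma$ and the normalization $\nu(B)^2$, which merely rescales the parameter).

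\emph{Step 3 (matching the limit).} It remains to recognize the limit of Theorem~\ref{thm:proof_Z_ext_PS23} --- a rate-one Poisson process $N$ subordinated by the Brownian local time $L_0$ --- as a version of $\fPp_{1/2}$. This is excursion theory: the points of $N\circ L_0$ are $\tau(\ell_1)<\tau(\ell_2)<\cdots$, where $\tau$ is the inverse local time of standard Brownian motion (a stable subordinator of index $1/2$) and $0<\ell_1<\ell_2<\cdots$ the points of $N$; by independence and stationary independent increments, the gaps $\tau(\ell_{k+1})-\tau(\ell_k)$ are i.i.d.\ with the law of $\tau(\mathcal E)$ for $\mathcal E$ an independent exponential, and $\mathbb E[e^{-s\tau(\mathcal E)}]=(1+\sqrt{2s})^{-1}$, the Laplace transform of a Mittag--Leffler$(1/2)$ law up to scaling. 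Hence $N\circ L_0\eqlaw\fPp_{1/2}(\lambda)$ and likewise $\widetilde N\circ\widetilde L_0\eqlaw\fPp_{1/2}(\lambda)$ for a suitable $\lambda$, which --- consistently with $\fPp_{1/2}$ being the fixed point of \eqref{eq:relation_hitting_return_infinite_measure} --- is the same on both sides. Alternatively one may argue directly: the (rescaled) counting process of returns to $\Omega\times\{0\}$ converges functionally to $L_0$ by the occupation-time limit theorem, while the further requirement $\sigma^ny\in B(x,r)$ produces an independent Bernoulli thinning as $r\to0$, the two limits being asymptotically independent because consecutive returns to $\Omega\times\{0\}$ occur at times tending to infinity, leaving room for the base to mix.

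\emph{Main obstacle.} The delicate point is \ref{cond_CFPP:good_density_after_tau_n}: unlike in the LSV setting, the push-forward $\widehat{T^{\tau_n}}(\mathbf 1_{B_n}/\mu(B_n))$ must spread its mass along the $\mathbb{Z}$-fibers according to the local limit theorem for $S_nh$, and one needs precompactness in $L^1(\mu)$ \emph{uniformly in $n$}, which demands quantitative control --- uniform over the shrinking target --- coupling the exponential mixing on the base with the renewal kernel on the fibers. This is precisely the analytic content hidden behind the functional convergence to Brownian local time in \cite{PS23}; importing it into the framework of Theorem~\ref{thm:sufficient_conditions_convergence_compound_FPP} is the substantive work. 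A minor additional check is that \cite[Corollary 6]{Zwei07} applies, so the hitting limit is independent of the reference probability and the Brownian motion in the local-time picture may be taken to start at $0$.
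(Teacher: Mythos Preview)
The first thing to note is that this theorem is not proved in the paper at all: it is quoted verbatim from \cite[Theorem~3.10]{PS23} and serves only as a point of comparison in the concluding discussion. The paper's own contribution in Section~\ref{section:discussion_Pene_Saussol} is limited to computing the scaling in a model case (the simple random walk) and then identifying the limit $N\circ L_0(\tfrac{2}{\pi}\,\cdot)$ with $\fPp_{1/2}(\Gamma(3/2))$ via Lemma~\ref{lem:equivalence_PS23_article}. That lemma is argued through the inverse-stable-subordinator representation of the fractional Poisson process \cite{MNV11_FFPAndTheInverseStableSubordinator} together with the fact that $E^{1/2}\eqlaw\sqrt{2}\,\bar B\eqlaw\sqrt{2}\,L_0$. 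Your Step~3 is essentially this same identification, carried out instead via excursion theory and the Laplace transform of $\tau(\mathcal E)$; both routes are short and equivalent.

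Your Steps~1--2, by contrast, attempt something the paper explicitly does \emph{not} do: re-derive the \cite{PS23} result from the abstract Theorem~\ref{thm:sufficient_conditions_convergence_compound_FPP}. There is a structural mismatch you gloss over. In Theorem~\ref{thm:proof_Z_ext_PS23} the target $B(x,r)\times\{0\}$ depends on the point $x$ itself, so $N_r$ is not the REPP of a fixed sequence of rare events $(B_n)$ --- it is a random-target process, and the convergence is stated under $\mu$ (annealed over $x$). The hypotheses $(A)_\alpha$ are formulated for a \emph{fixed} sequence $(B_n)$, so applying Theorem~\ref{thm:sufficient_conditions_convergence_compound_FPP} would at best give, for each non-periodic $x$ separately, convergence of the REPP with deterministic targets $B(x,r_n)\times\{0\}$; passing from this to the $x$-dependent statement under $\mu$ is an additional step (essentially a Fubini-type argument plus uniformity in $x$) that your proposal does not address. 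The paper itself flags exactly this gap: ``the techniques developed could have also given the convergence \ldots\ for generic points $x$ and shrinking balls'', and ``combining the two approaches could provide fruitful insights for future developments''. Your honest acknowledgement of the $L^1$-precompactness obstacle in \ref{cond_CFPP:good_density_after_tau_n} is apt --- spreading over the $\mathbb Z$-fibre via the local limit theorem is indeed where the analytic work of \cite{PS23} lives --- but even granting that, the fixed-vs-moving-target discrepancy remains.
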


  Here, the target set depends on the point $x$ considered. However, the techniques developed could have also given the convergence of the hitting REPP for generic points $x$ and shrinking balls $B_n = B(x,\eta_n)$ for some $\eta_n \xrightarrow[n\to+\infty]{} 0$. We justify that both results are linked and we think combining the two approaches could provide fruitful insights for future developments. \\

  For simplicity, consider the subshift of finite type on $\Omega := \{-b,\dots, b\}^{\mathbb{N}}$ for some $b > 0$, $\nu = m^{\otimes \mathbb{N}}$ a Bernoulli measure on $\Omega$ with $m$ a probability of mean $0$ and variance $\sigma^2 = 1$ and $h := \text{pr}_0$ the projection onto the first coordinate. Then the $\mathbb{Z}$-extension $(X,T)$ of $(\Omega, \sigma)$ models the usual random walk on $\mathbb{Z}$ and we assume furthermore that it is aperiodic. By inducing on $Y := \Omega \times \{0\}$ and using \cite[Lemma 3.7.4]{Aar97}, it is possible to show that the system is PDE. Furthermore, using classical estimates for random walks, we easily an equivalent of the wandering rate $w_n(Y) := \mu(Y)\mathbb{E}_{\mu_Y}[r_Y \wedge n] = \sum_{k=0}^{n-1}\mu(Y \cap \{r_Y > n\})$ when $n \to +\infty$. 

\begin{lem}
    \label{lem:wandering_rate_Z_extension_RW}
    For $Y := \Omega \times \{0\}$, we have 
    \begin{equation*}
       w_n(Y) \uset{\widesim}{n\to +\infty} \frac{2\sqrt{2}}{\sqrt{\pi}}\sqrt{n}. 
    \end{equation*}
\end{lem}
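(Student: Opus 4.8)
The plan is to compute the asymptotics of the wandering rate $w_n(Y) = \sum_{k=0}^{n-1}\mu(Y\cap\{r_Y > k\})$ directly from the return-time tail $\mu(Y\cap\{r_Y > k\})$, which for the $\mathbb{Z}$-extension coincides with $\nu(Y\cap\{r_Y>k\})$ and has a classical probabilistic interpretation: $\{r_Y > k\}$ on $Y = \Omega\times\{0\}$ is the event that the random walk $S_j = h + h\circ\sigma + \dots + h\circ\sigma^{j-1}$ (with $S_0 = 0$) does not return to $0$ in the first $k$ steps. So the first step is to recall that $\nu(r_Y > k) = \mathbb{P}(S_1\neq 0, S_2 \neq 0,\dots, S_k\neq 0)$, i.e. $\mathbb{P}(\tau_0 > k)$ where $\tau_0$ is the first return time to the origin of the aperiodic mean-zero, variance-one random walk on $\mathbb{Z}$.

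The second step is to invoke the standard local-CLT/renewal estimate for such walks: $\mathbb{P}(\tau_0 > k) \sim \sqrt{2/(\pi k)}$ as $k\to\infty$. This is the classical fact that the expected number of returns to $0$ up to time $n$ grows like $\sum_{j=1}^n \mathbb{P}(S_j = 0) \sim \sum_{j=1}^n (2\pi j)^{-1/2}\sqrt{2} \cdot$(aperiodicity constant), combined with the renewal relation $\big(\sum_{j\geq 0}\mathbb{P}(S_j=0)x^j\big)\big(1 - \sum_{j\geq 1}\mathbb{P}(\tau_0 = j)x^j\big) = 1$ and a Tauberian theorem; equivalently one cites directly that $\mathbb{P}(\tau_0 > k)\sim \sqrt{2/\pi}\,k^{-1/2}$ for the simple symmetric walk normalized to variance $1$ — here, since $\sigma^2 = 1$, $S_k/\sqrt{k}$ is asymptotically standard normal and the local CLT gives $\mathbb{P}(S_k = 0)\sim (2\pi k)^{-1/2}$, whence $\mathbb{P}(\tau_0 > k)\sim 1/\big(\sqrt{\pi k/2}\,\big) = \sqrt{2/(\pi k)}$ (this last step is Kolmogorov–Feller/Sparre Andersen; references such as Spitzer or Feller vol.~I suffice).

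The third step is summation: $w_n(Y) = \sum_{k=0}^{n-1}\nu(r_Y > k) \sim \sum_{k=1}^{n-1}\sqrt{2/(\pi k)} \sim \sqrt{2/\pi}\cdot 2\sqrt{n} = \frac{2\sqrt{2}}{\sqrt{\pi}}\sqrt n$, using $\sum_{k=1}^{n-1}k^{-1/2}\sim 2\sqrt n$ (comparison with $\int_0^n x^{-1/2}\,dx$) and the fact that a sequence $\sim c\,k^{-1/2}$ has partial sums $\sim 2c\sqrt n$ (a one-line consequence of regular variation / Cesàro). Since $\mu(Y) = \nu(\Omega) = 1$ with the normalization $\mu = \nu\otimes\sum_z\delta_z$, one indeed has $w_n(Y) = \mu(Y)\sum_{k=0}^{n-1}\mu_Y(r_Y > k)$ matching the stated definition.

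The main obstacle is purely bookkeeping rather than conceptual: one must pin down the exact constant, which requires being careful that (i) the variance normalization $\sigma^2 = 1$ is what makes the constant $\sqrt{2/\pi}$ appear (a different variance would rescale it), and (ii) aperiodicity of the walk is exactly what guarantees the clean local CLT $\mathbb{P}(S_k=0)\sim(2\pi k)^{-1/2}$ with no periodicity factor; without aperiodicity one would get an extra factor equal to the period. I would state these hypotheses explicitly (they are assumed in the surrounding text) and cite a standard reference for the return-probability asymptotics, then the computation of $w_n$ is immediate. Everything else — the identification of $\{r_Y > k\}$ with a random-walk event via the definition of the $\mathbb{Z}$-extension, and the summation — is routine.
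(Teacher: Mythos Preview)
Your proposal is correct and follows essentially the same approach as the paper: identify $\mu_Y(r_Y>k)$ with the tail $\mathbb{P}(\tau_0>k)\sim\sqrt{2/(\pi k)}$ of the first return to $0$ for the variance-one aperiodic random walk, then sum to get $w_n(Y)\sim\frac{2\sqrt{2}}{\sqrt{\pi}}\sqrt{n}$. The paper's proof is in fact terser than yours, simply citing ``classical local limit theorems for random walks'' for the tail asymptotic and summing.
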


\begin{proof}[Proof (of Lemma \ref{lem:wandering_rate_Z_extension_RW})]
    Using classical local limit theorems for random walks, we know that $\mu_Y(r_Y > k) = \mathbb{P}_0(r_0 > k) \uset{\widesim}{n\to +\infty} \sqrt{\frac{2}{\pi k}}$. Since it is not summable, we get
    \begin{align*}
        w_n(Y) = \mu(Y) \sum_{k=0}^{n-1} \mu_Y(r_Y > k) \uset{\widesim}{n\to +\infty} \sum_{k=0}^{n-1} \sqrt{\frac{2}{\pi k}} \uset{\widesim}{n\to +\infty} \frac{2\sqrt{2}}{\sqrt{\pi}} \sqrt{n}.
    \end{align*}
\end{proof}

  Then, by  \cite[Proposition 3.8.7 p.137]{Aar97}, we can identify the normalizing sequence $(a_n)_{n\geq 0}$ as we have 
    \begin{align*}
        a_n \uset{\widesim}{n\to +\infty} \frac{1}{\Gamma(2 - \alpha)\Gamma(1+\alpha)}\frac{n}{w_n(Y)}.
    \end{align*}
    Since $w_n(Y) \in \RV(1/2)$ we have $\alpha = 1/2$ and thus, using Lemma \ref{lem:wandering_rate_Z_extension_RW}, we have  
    \begin{align*}
        a_n \uset{\widesim}{n\to +\infty} \frac{\sqrt{\pi}}{2\sqrt{2}\Gamma(3/2)^2}\sqrt{n} = \frac{4}{\pi}\frac{\sqrt{\pi}}{2\sqrt{2}}\sqrt{n} = \sqrt{\frac{2}{\pi n}},
    \end{align*}
    using that $\Gamma(3/2) = \Gamma(1/2)/2$ and $\Gamma(1/2)^2 = \pi$.
      Now, by definition \eqref{eq:definition_of_gamma} of $\gamma$, $\gamma : s \mapsto 2s^2/\pi$ is a suitable scaling. Thus, Theorem \ref{thm:proof_Z_ext_PS23} implies the following, for $x$ generic in $\Omega \times \{0\}$ and $B_n := B(x,r_n) \times \{0\}$ with $r_n \xrightarrow[n\to+\infty]{} 0$,

\begin{align*}
    \sum_{k \geq 1} \delta_{\gamma(\mu(B_n))\,\mr_{B_n}^{(k)}} & = \sum_{k\geq 1} \delta_{\frac{2}{\pi}\mu(B_n)^2\,\mr_{B_n}^{(k)}} \xRightarrow[n\to +\infty]{\mu} N \circ L_0\left(\frac{2}{\pi} \;\cdot\right). 
\end{align*}

  It turns out this point process is in fact equal to the limit process we expect with the methods developed in the previous sections.

\begin{lem}
    \label{lem:equivalence_PS23_article}
    Let $N$ be a standard Poisson point process and $L_0$ be the local time at $0$ of a standard Brownian motion. Then,
    \begin{align*}
        N \circ L_0\left(\frac{2}{\pi}\;\cdot \right) \eqlaw \fPp_{1/2}(\Gamma(1+1/2)).
    \end{align*}
\end{lem}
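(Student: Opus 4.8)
### Proof proposal for Lemma \ref{lem:equivalence_PS23_article}

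My plan is to identify both sides as renewal point processes and check that the waiting-time laws coincide. The right-hand side $\fPp_{1/2}(\Gamma(1+1/2))$ is by Definition \ref{defn:Fractional_Poisson_Process} the renewal process $\RPP(H_{1/2}(\Gamma(3/2)))$, where $H_{1/2}(\Gamma(3/2))$ has Laplace transform $s\mapsto \Gamma(3/2)/(\Gamma(3/2)+s^{1/2})$. So it suffices to show that $N\circ L_0(\tfrac2\pi\,\cdot)$ is a renewal point process whose first (hence, by the renewal/regenerative structure, every) waiting time has this Laplace transform. For the left-hand side, I would use the classical fact that the local time $L_0$ of standard Brownian motion is a continuous nondecreasing process whose right-continuous inverse $\tau(\ell):=\inf\{t:L_0(t)>\ell\}$ is a stable subordinator of index $1/2$; concretely $\mathbb{E}[e^{-s\tau(\ell)}]=e^{-\ell\sqrt{2s}}$ with the normalization of local time used in \cite{PS23} (I would state this normalization explicitly and pin down the constant, since the factor $2/\pi$ is exactly there to reconcile conventions). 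Composing an independent homogeneous Poisson process $N$ of rate $1$ with the subordinator-driven clock $\ell\mapsto L_0(\tfrac2\pi t)$ gives a point process whose points are the images under $\tau$ of a rate-one Poisson process run on the local-time scale, rescaled; this is a standard "subordinated Poisson process" and is a renewal process by the strong Markov property of Brownian motion at the successive times its local time hits the Poisson epochs.

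The key computation is then the Laplace transform of the first point $S_1$ of $N\circ L_0(\tfrac2\pi\,\cdot)$. Let $E$ be the first epoch of $N$, so $E\sim\mathrm{Exp}(1)$ independent of the Brownian motion. Then $S_1 = \tfrac\pi2\,\tau(E)$ (up to the normalization bookkeeping), so
\begin{align*}
\mathbb{E}[e^{-sS_1}] = \mathbb{E}\big[e^{-s\frac\pi2\tau(E)}\big]
= \int_0^\infty e^{-\ell}\,\mathbb{E}\big[e^{-\frac{\pi s}{2}\tau(\ell)}\big]\,\dd\ell
= \int_0^\infty e^{-\ell}\,e^{-\ell\sqrt{2\cdot\frac{\pi s}{2}}}\,\dd\ell
= \int_0^\infty e^{-\ell(1+\sqrt{\pi s})}\,\dd\ell
= \frac{1}{1+\sqrt{\pi s}}.
\end{align*}
It remains to match this with the target $\Gamma(3/2)/(\Gamma(3/2)+s^{1/2})$. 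Since $\Gamma(3/2)=\sqrt\pi/2$, we have
\[
\frac{\Gamma(3/2)}{\Gamma(3/2)+s^{1/2}} = \frac{\sqrt\pi/2}{\sqrt\pi/2 + \sqrt s} = \frac{1}{1 + 2\sqrt{s}/\sqrt\pi} = \frac{1}{1+\sqrt{4s/\pi}},
\]
which agrees with $\tfrac{1}{1+\sqrt{\pi s}}$ only if the internal constant from the $2/\pi$ scaling is tracked correctly — so the real content of the lemma is precisely making the constants in the Brownian local-time normalization, the subordinator Laplace exponent, and the $2/\pi$ factor all consistent so that the two Laplace transforms land on the same function. Once the first waiting time is identified, independence and identical distribution of the successive waiting times follow from the strong Markov property of $(B_t,L_0(t))$ at the stopping times $S_1,S_2,\dots$, giving $N\circ L_0(\tfrac2\pi\,\cdot)\eqlaw\RPP(H_{1/2}(\Gamma(3/2)))=\fPp_{1/2}(\Gamma(3/2))$.

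The main obstacle I anticipate is bookkeeping of normalizations rather than any deep probabilistic step: the local time $L_0$ in \cite{PS23} may be normalized so that $L_0(t)\sim$ (occupation density) or via the Tanaka formula, and the stable subordinator's Laplace exponent picks up a corresponding constant; the factor $\tfrac2\pi$ appearing in the statement must be reverse-engineered from these conventions (it is presumably the ratio between $\mathbb{E}[\tau(1)^{-1/2}]$-type constants and $\Gamma(3/2)$). I would therefore devote most of the write-up to fixing conventions carefully, then the Laplace-transform identity above is a one-line integral, and the renewal structure is a routine appeal to the strong Markov property.
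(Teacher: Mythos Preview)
Your approach is sound and genuinely different from the paper's. The paper does not compute Laplace transforms of waiting times at all; instead it strings together distributional identities at the process level. It first invokes L\'evy's theorem $L_0 \eqlaw \bar B$ (running supremum of Brownian motion), then Brownian scaling to absorb the $2/\pi$, then the identification $E^{1/2}\eqlaw \sqrt{2}\,\bar B$ of the inverse $1/2$-stable subordinator (cited from \cite{App09_LevyProcessesAndStochasticCalculus}), and finally the characterization $N_\lambda\circ E^{\alpha}\eqlaw \fPp_{\alpha}(\lambda)$ due to Meerschaert--Nane--Vellaisamy \cite{MNV11_FFPAndTheInverseStableSubordinator}, which the paper states as a separate proposition just before the proof. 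This route is shorter and conceptually explains \emph{why} the fractional Poisson process appears (as a Poisson process time-changed by an inverse stable subordinator), at the cost of importing an external result. Your route --- identify both sides as renewal processes, compute the first waiting-time Laplace transform via $S_1=\tfrac{\pi}{2}\tau(E)$ with $\tau=L_0^{-1}$ a $1/2$-stable subordinator, and invoke strong Markov for the i.i.d.\ structure --- is more elementary and self-contained. You are right that the only substantive work in either approach is constant bookkeeping: your mismatch $\sqrt{\pi s}$ versus $\sqrt{4s/\pi}$ is exactly the issue of which normalization of $L_0$ is meant in \cite{PS23}, and the paper's chain of equalities faces the same hazard (indeed, tracking the scaling step $\bar B(\tfrac{2}{\pi}\cdot)\eqlaw\sqrt{2/\pi}\,\bar B$ carefully is essential there too). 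Neither approach has a real gap beyond this.
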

  
In Definition \ref{defn:Fractional_Poisson_Process}, the fractional Poisson process is introduced as a specific type of renewal point process. However, this is not the only perspective; alternative definitions have been shown to be equivalent. One such approach characterizes it as a classical Poisson process where the time parameter is randomly rescaled, as we now explain.
Let $\alpha \in (0,1]$ and let $D_{\alpha}$ be the standard $\alpha$-stable subordinator, that is to say an increasing Lévy process such that for all $s,t \geq 
0$, $\mathbb{E}[e^{-sD_{\alpha}(t)}] = \exp(-ts^{\alpha})$ (see 
\cite{bertoin96_LevyProcesses} for more details on Lévy processes and subordinators). 
Let $E^{\alpha}$ be the inverse stable subordinator defined as the generalized 
inverse of $D_{\alpha}$, \textit{i.e.}
\begin{align*}
E^{\alpha}(t) = D_{\alpha}^{\leftarrow}(t) := \inf \{ u > 0 \;|\; D_{\alpha}(u) > t\} \quad \text{for}\; t\geq 0.
\end{align*}
This process $E^{\alpha}$ is also non decreasing and is the correct random time scaling to define the fractional Poisson process.

\begin{rem}
    In infinite ergodic theory, this process is also defined as the Mittag-Leffler process and appears as a functional limit for averages of integrable observables for infinite CEMPT \cite{OwadaSamorodnitsky15,Sera20}.
\end{rem}

\begin{prop}{\textup{\cite[Theorem 2.2]{MNV11_FFPAndTheInverseStableSubordinator}}}
    Let $N$ be a standard Poisson point process of parameter $\lambda > 0$ and $E^{\alpha}$ an inverse stable subordinator independent of $N$. Then, $N \circ E^{\alpha}$ is a fractional Poisson process of parameter $\alpha$ and $\lambda$.  
\end{prop}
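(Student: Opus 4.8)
The plan is to identify the atoms of $N\circ E^{\alpha}$ explicitly and recognize that they form a renewal point process with Mittag--Leffler waiting times, which is exactly the definition of $\fPp_{\alpha}(\lambda)$ given in Definition \ref{defn:Fractional_Poisson_Process}. Write $N=\sum_{i\geq 1}\delta_{S_i}$ with $0=:S_0<S_1<S_2<\cdots$, so that the gaps $\varepsilon_i:=S_i-S_{i-1}$ are i.i.d.\ exponential of parameter $\lambda$, and let $D_{\alpha}$ be the $\alpha$-stable subordinator of which $E^{\alpha}$ is the generalized inverse, independent of $N$. Interpreting $N\circ E^{\alpha}$ as the point process whose counting function is $t\mapsto N\big([0,E^{\alpha}(t)]\big)=\#\{i\geq 1: S_i\leq E^{\alpha}(t)\}$, its $i$-th atom $T_i$ is the first time $t$ with $E^{\alpha}(t)\geq S_i$.

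First I would make this first time explicit. From $E^{\alpha}(t)=\inf\{u>0: D_{\alpha}(u)>t\}$ and monotonicity of $D_{\alpha}$ one gets $E^{\alpha}(t)\geq S_i\iff D_{\alpha}(S_i-)\leq t$, hence $T_i=D_{\alpha}(S_i-)$. Since $D_{\alpha}$ is a Lévy process it has no fixed times of discontinuity, and $S_i$ is independent of $D_{\alpha}$; conditioning on $(S_j)_j$ therefore gives $D_{\alpha}(S_i-)=D_{\alpha}(S_i)$ almost surely, so $T_i=D_{\alpha}(S_i)$. As $D_{\alpha}$ is strictly increasing and the $S_i$ strictly increase, the $T_i$ strictly increase, and $T_i=\sum_{j=1}^{i}\big(D_{\alpha}(S_j)-D_{\alpha}(S_{j-1})\big)$ is an increasing sum of positive i.i.d.\ terms, so $T_i\to+\infty$ a.s.; thus $N\circ E^{\alpha}=\sum_{i\geq 1}\delta_{T_i}$ is a well-defined simple element of $\mathcal{M}^{\mathrm{Rad}}_{\mathrm{atom}}(\mathbb{R}_+)$.

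Next I would compute the joint law of the waiting times $W_i:=T_i-T_{i-1}=D_{\alpha}(S_i)-D_{\alpha}(S_{i-1})$. Conditionally on $(\varepsilon_j)_j$ the times $S_0\leq S_1\leq\cdots$ are deterministic, so by the independent and stationary increments of $D_{\alpha}$ the variables $(W_i)_i$ are conditionally independent with $W_i$ distributed as $D_{\alpha}(\varepsilon_i)$; since the $\varepsilon_i$ are i.i.d., it follows that $(W_i)_i$ are (unconditionally) i.i.d.\ with the law of $D_{\alpha}(\varepsilon)$, where $\varepsilon$ is exponential of parameter $\lambda$ and independent of $D_{\alpha}$. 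Using $\mathbb{E}[e^{-sD_{\alpha}(t)}]=e^{-ts^{\alpha}}$, its Laplace transform is
\begin{align*}
\mathbb{E}\big[e^{-sD_{\alpha}(\varepsilon)}\big]=\mathbb{E}\big[e^{-\varepsilon s^{\alpha}}\big]=\int_0^{+\infty}\lambda e^{-\lambda u}e^{-us^{\alpha}}\,\dd u=\frac{\lambda}{\lambda+s^{\alpha}}\,,\qquad s\geq 0,
\end{align*}
which is exactly the Laplace transform of $H_{\alpha}(\lambda)$. Hence $N\circ E^{\alpha}$ is a renewal point process with i.i.d.\ $H_{\alpha}(\lambda)$-distributed waiting times, i.e.\ $N\circ E^{\alpha}\eqlaw\RPP(H_{\alpha}(\lambda))=\fPp_{\alpha}(\lambda)$, as claimed.

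The main obstacle I expect is not analytic depth but the bookkeeping of the time change: one must justify rigorously that the $i$-th atom of $N\circ E^{\alpha}$ equals $D_{\alpha}(S_i)$ — controlling the left limit $D_{\alpha}(S_i-)$ via the independence of $N$ and $D_{\alpha}$ — and that, conditionally on the exponential gaps, the increments of $D_{\alpha}$ sampled at the random times $S_j$ are genuinely independent, i.e.\ the independent-increments property of a Lévy process evaluated at times that are independent of its path. Everything past that is the one-line Laplace computation above. An alternative route is to check that $P_{N\circ E^{\alpha}}(d,\cdot)=\mathbb{E}\big[e^{-\lambda E^{\alpha}(\cdot)}(\lambda E^{\alpha}(\cdot))^{d}/d!\big]$ solves the generalized Kolmogorov--Feller equation \eqref{eq:generalized_Kolmogorov-Feller_definition_FPP}; but because $N\circ E^{\alpha}$ has dependent increments this only pins down the one-dimensional marginals and still requires the renewal argument above to obtain the full law.
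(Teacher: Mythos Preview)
The paper does not give its own proof of this proposition: it is quoted verbatim from \cite[Theorem 2.2]{MNV11_FFPAndTheInverseStableSubordinator} and used as a black box in the proof of Lemma \ref{lem:equivalence_PS23_article}. So there is nothing in the paper to compare against.

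Your argument is correct and self-contained. The identification $T_i=D_{\alpha}(S_i-)=D_{\alpha}(S_i)$ via the inverse relation and the absence of fixed discontinuities is the right way to make the time change precise, and the conditional-independence step (conditioning on the exponential gaps, then using independent stationary increments of $D_{\alpha}$) is exactly what is needed to get the i.i.d.\ structure of the waiting times. The Laplace computation then pins down the law as $H_{\alpha}(\lambda)$, matching Definition \ref{defn:Fractional_Poisson_Process}. Your closing remark is also on point: the route via the fractional Kolmogorov--Feller equation \eqref{eq:generalized_Kolmogorov-Feller_definition_FPP} --- which is closer in spirit to how \cite{MNV11_FFPAndTheInverseStableSubordinator} and \cite{Las03} proceed --- only identifies the one-dimensional marginals $P_{N\circ E^{\alpha}}(d,\cdot)$, whereas your renewal argument yields the full law of the point process directly, which is what the paper actually needs.
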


  In particular, for $\alpha = 1/2$, $E^{1/2}$ has the same law as $\sqrt{2}\, \bar{B}$ where $\bar{B}$ is the supremum of the Brownian motion \cite[Theorem 2.2.9 p.95]{App09_LevyProcessesAndStochasticCalculus}. We are now able to show Lemma \ref{lem:equivalence_PS23_article} showing that the point process obtained in \cite[Theorem 3.10]{PS23} is the fractional Poisson process of parameter $1/2$.

\begin{proof}[Proof (of Lemma \ref{lem:equivalence_PS23_article})]
    We have 
    \begin{align*}
    N \circ L_0\left(\frac{2}{\pi} \cdot\right) & \eqlaw N \circ \bar{B} \left(\frac{2}{\pi} \cdot\right) \eqlaw N \circ \sqrt{\frac{\pi}{2}} \bar{B} \eqlaw N_{\frac{\sqrt{\pi}}{2}} \circ \sqrt{2}\,\bar{B}\\
    & \eqlaw N_{\Gamma(3/2)}\circ E^{1/2} \eqlaw \fPp_{1/2}(\Gamma(1+ 1/2))\,,
    \end{align*}
    where $N_{\lambda}$ is a Poisson point process of parameter $\lambda$ and every processes and point processes are assumed to be independent.
\end{proof}

  This closes the gap between the two approaches when $\alpha = 1/2$, the right scale when considering a $\mathbb{Z}$-extension. However, as we have seen, the construction of the fractional Poisson point process $N\circ E^{\alpha}$ could provide a useful approach for $\alpha \neq 1/2$ in future works.

\section*{Funding}

DBT is partially supported by CMUP, which is financed by national funds through FCT – Fundação para a Ciência e a Tecnologia, I.P., under the project with reference UIDB/00144/2020. He is also financed by CNRS and École Polytechnique.

\bibliographystyle{amsalpha}
	
\bibliography{main.bib}

\newcommand{\etalchar}[1]{$^{#1}$}
\providecommand{\bysame}{\leavevmode\hbox to3em{\hrulefill}\thinspace}
\providecommand{\MR}{\relax\ifhmode\unskip\space\fi MR }
\providecommand{\MRhref}[2]{%
  \href{http://www.ams.org/mathscinet-getitem?mr=#1}{#2}
}
\providecommand{\href}[2]{#2}
\begin{thebibliography}{GKMR20}

\bibitem[Aar97]{Aar97}
Jon Aaronson, \emph{An introduction to infinite ergodic theory}, Mathematical Surveys and Monographs, vol.~50, American Mathematical Society, Providence, RI, 1997. \MR{1450400}

\bibitem[Alv20]{Alv20}
Jos\'{e}~F. Alves, \emph{Nonuniformly hyperbolic attractors---geometric and probabilistic aspects}, Springer Monographs in Mathematics, Springer, Cham, [2020] \copyright 2020. \MR{4226156}

\bibitem[App09]{App09_LevyProcessesAndStochasticCalculus}
David Applebaum, \emph{L\'{e}vy processes and stochastic calculus}, second ed., Cambridge Studies in Advanced Mathematics, vol. 116, Cambridge University Press, Cambridge, 2009. \MR{2512800}

\bibitem[Ber96]{bertoin96_LevyProcesses}
Jean Bertoin, \emph{L{\'e}vy processes}, vol. 121, Cambridge university press Cambridge, 1996.

\bibitem[BGT89]{Bingham89_RegularVariation}
Nicholas~H. Bingham, Charles~M. Goldie, and Jef~L. Teugels, \emph{Regular variation}, no.~27, Cambridge university press, 1989.

\bibitem[BTF23]{BF23}
Dylan Bansard-Tresse and Jorge~Milhazes Freitas, \emph{Inducing techniques for quantitative recurrence and applications to misiurewicz maps and doubly intermittent maps}, to appear in Annales de l'Institut Henri Poincaré Probabilités Statistiques (2023).

\bibitem[BZ01]{BZ01}
Xavier Bressaud and Roland Zweim\"{u}ller, \emph{Non exponential law of entrance times in asymptotically rare events for intermittent maps with infinite invariant measure}, Ann. Henri Poincar\'{e} \textbf{2} (2001), no.~3, 501--512. \MR{1846853}

\bibitem[CC13]{ChazottesCollet13}
J.-R. Chazottes and P.~Collet, \emph{Poisson approximation for the number of visits to balls in non-uniformly hyperbolic dynamical systems}, Ergodic Theory Dynam. Systems \textbf{33} (2013), no.~1, 49--80. \MR{3009103}

\bibitem[CG93]{CG93_Statisticsofclosevisitstotheindifferentfixedpointofanintervalmap}
Pierre Collet and Antonio Galves, \emph{Statistics of close visits to the indifferent fixed point of an interval map}, J. Statist. Phys. \textbf{72} (1993), no.~3-4, 459--478. \MR{1239564}

\bibitem[CI95]{CampaninoIsola95}
Massimo Campanino and Stefano Isola, \emph{Statistical properties of long return times in type {I} intermittency}, Forum Math. \textbf{7} (1995), no.~3, 331--348. \MR{1325560}

\bibitem[DT23]{DT23}
Mark Demers and Mike Todd, \emph{A trichotomy for hitting times and escape rates for a class of unimodal maps}, arXiv preprint arXiv:2309.09624 (2023).

\bibitem[FFTV16]{FFTV16}
Ana Cristina~Moreira Freitas, Jorge~Milhazes Freitas, Mike Todd, and Sandro Vaienti, \emph{Rare events for the {M}anneville-{P}omeau map}, Stochastic Process. Appl. \textbf{126} (2016), no.~11, 3463--3479. \MR{3549714}

\bibitem[GKMR20]{Gorenflo20_book_Mittag-Leffler}
Rudolf Gorenflo, Anatoly~A. Kilbas, Francesco Mainardi, and Sergei Rogosin, \emph{Mittag-{L}effler functions, related topics and applications}, Springer Monographs in Mathematics, Springer, Berlin, 2020, Second edition. \MR{4179587}

\bibitem[Gou04]{Gouezel04_PhD}
S{\'e}bastien Gou{\"e}zel, \emph{Vitesse de d{\'e}corr{\'e}lation et th{\'e}oremes limites pour les applications non uniform{\'e}ment dilatantes}, Ph.D. thesis, Ph. D. Thesis, Ecole Normale Sup{\'e}rieure, 2004.

\bibitem[Hay13]{Haydn13_EntryAndReturnTimesDistribution}
N.~T.~A. Haydn, \emph{Entry and return times distribution}, Dyn. Syst. \textbf{28} (2013), no.~3, 333--353. \MR{3170620}

\bibitem[HLV05]{HLV05}
N.~Haydn, Y.~Lacroix, and S.~Vaienti, \emph{Hitting and return times in ergodic dynamical systems}, Ann. Probab. \textbf{33} (2005), no.~5, 2043--2050. \MR{2165587}

\bibitem[Kal02]{Kal02_SecondEdition}
Olav Kallenberg, \emph{Foundations of modern probability}, second ed., Probability and its Applications (New York), Springer-Verlag, New York, 2002. \MR{1876169}

\bibitem[Las03]{Las03}
Nick Laskin, \emph{Fractional {P}oisson process}, vol.~8, 2003, Chaotic transport and complexity in classical and quantum dynamics, pp.~201--213. \MR{2007003}

\bibitem[LFF{\etalchar{+}}16]{LFFF16}
Valerio Lucarini, Davide Faranda, Ana~Cristina Freitas, Jorge Miguel~Milhazes Freitas, Mark Holland, Tobias Kuna, Matthew Nicol, Mike Todd, and Sandro Vaienti, \emph{Extremes and recurrence in dynamical systems}, Pure and Applied Mathematics (Hoboken), John Wiley \& Sons, Inc., Hoboken, NJ, 2016. \MR{3558780}

\bibitem[LP18]{LastPenrose18_LecturesOnPoissonProcess}
G\"{u}nter Last and Mathew Penrose, \emph{Lectures on the {P}oisson process}, Institute of Mathematical Statistics Textbooks, vol.~7, Cambridge University Press, Cambridge, 2018. \MR{3791470}

\bibitem[LSV99]{LSV97}
Carlangelo Liverani, Beno\^{\i}t Saussol, and Sandro Vaienti, \emph{A probabilistic approach to intermittency}, Ergodic Theory Dynam. Systems \textbf{19} (1999), no.~3, 671--685. \MR{1695915}

\bibitem[Mar17]{Mar17}
Jens Marklof, \emph{Entry and return times for semi-flows}, Nonlinearity \textbf{30} (2017), no.~2, 810--824. \MR{3604363}

\bibitem[MNV11]{MNV11_FFPAndTheInverseStableSubordinator}
Mark~M. Meerschaert, Erkan Nane, and P.~Vellaisamy, \emph{The fractional {P}oisson process and the inverse stable subordinator}, Electron. J. Probab. \textbf{16} (2011), no. 59, 1600--1620. \MR{2835248}

\bibitem[MS19]{MS19_Stochastic_models_for_fractional_calculus}
Mark~M. Meerschaert and Alla Sikorskii, \emph{Stochastic models for fractional calculus}, second ed., De Gruyter Studies in Mathematics, vol.~43, De Gruyter, Berlin, 2019. \MR{3971272}

\bibitem[OS15]{OwadaSamorodnitsky15}
Takashi Owada and Gennady Samorodnitsky, \emph{Functional central limit theorem for heavy tailed stationary infinitely divisible processes generated by conservative flows}, Ann. Probab. \textbf{43} (2015), no.~1, 240--285. \MR{3298473}

\bibitem[PS10]{PeneSaussol10_BackToBallsInBilliards}
Fran\c{c}oise P\`ene and Beno\^{i}t Saussol, \emph{Back to balls in billiards}, Comm. Math. Phys. \textbf{293} (2010), no.~3, 837--866. \MR{2566164}

\bibitem[PS24]{PS23}
\bysame, \emph{Quantitative recurrence for $t,t^{-1}$ tranformation}, Probability Theory and Related Fields (2024).

\bibitem[PSZ13]{PSZ11}
Fran\c{c}oise P\`ene, Beno\^{i}t Saussol, and Roland Zweim\"{u}ller, \emph{Recurrence rates and hitting-time distributions for random walks on the line}, Ann. Probab. \textbf{41} (2013), no.~2, 619--635. \MR{3077520}

\bibitem[PSZ17]{PSZ13}
\bysame, \emph{Return- and hitting-time limits for rare events of null-recurrent {M}arkov maps}, Ergodic Theory Dynam. Systems \textbf{37} (2017), no.~1, 244--276. \MR{3590502}

\bibitem[RZ20]{RZ20}
Simon Rechberger and Roland Zweim\"{u}ller, \emph{Return- and hitting-time distributions of small sets in infinite measure preserving systems}, Ergodic Theory Dynam. Systems \textbf{40} (2020), no.~8, 2239--2273. \MR{4120779}

\bibitem[Sar01]{Sar01_NullRecurrentPotentials}
Omri~M. Sarig, \emph{Thermodynamic formalism for null recurrent potentials}, Israel J. Math. \textbf{121} (2001), 285--311. \MR{1818392}

\bibitem[Sau09]{Sau09_Survey_AnIntroductionToQuantitativeRecurrenceInDynamicalSystems}
Beno{\^{\i}}t Saussol, \emph{An introduction to quantitative {P}oincar\'{e} recurrence in dynamical systems}, Rev. Math. Phys. \textbf{21} (2009), no.~8, 949--979. \MR{2568049}

\bibitem[SB22]{SuBunimovitch22}
Yaofeng Su and Leonid~A. Bunimovich, \emph{Poisson approximations and convergence rates for hyperbolic dynamical systems}, Comm. Math. Phys. \textbf{390} (2022), no.~1, 113--168. \MR{4381186}

\bibitem[Ser20]{Sera20}
Toru Sera, \emph{Functional limit theorem for occupation time processes of intermittent maps}, Nonlinearity \textbf{33} (2020), no.~3, 1183--1217. \MR{4063962}

\bibitem[Tha80]{Tha80_EstimatesInvariantDensities}
Maximilian Thaler, \emph{Estimates of the invariant densities of endomorphisms with indifferent fixed points}, Israel J. Math. \textbf{37} (1980), no.~4, 303--314. \MR{599464}

\bibitem[Tha83]{Tha83}
\bysame, \emph{Transformations on {$[0,\,1]$} with infinite invariant measures}, Israel J. Math. \textbf{46} (1983), no.~1-2, 67--96. \MR{727023}

\bibitem[Yas18]{Yas18}
Nasab Yassine, \emph{Quantitative recurrence of some dynamical systems preserving an infinite measure in dimension one}, Discrete and Continuous Dynamical Systems \textbf{38} (2018), 343--361.

\bibitem[Yas24]{Yas24_quantitativerecurrencezextensionthreedimensional}
\bysame, \emph{Quantitative recurrence for $\mathbb{Z}$-extension of three-dimensional axiom a flows}, preprint, 2024.

\bibitem[You99]{You99}
Lai-Sang Young, \emph{Recurrence times and rates of mixing}, Israel J. Math. \textbf{110} (1999), 153--188. \MR{1750438}

\bibitem[Zwe00]{Zwe98}
Roland Zweim\"{u}ller, \emph{Ergodic properties of infinite measure-preserving interval maps with indifferent fixed points}, Ergodic Theory Dynam. Systems \textbf{20} (2000), no.~5, 1519--1549. \MR{1786727}

\bibitem[Zwe07a]{Zwe07_InfiniteMeasurePreservingTransformationsWithCompactFirstRegeneration}
\bysame, \emph{Infinite measure preserving transformations with compact first regeneration}, J. Anal. Math. \textbf{103} (2007), 93--131. \MR{2373265}

\bibitem[Zwe07b]{Zwei07}
\bysame, \emph{Mixing limit theorems for ergodic transformations}, J. Theoret. Probab. \textbf{20} (2007), no.~4, 1059--1071. \MR{2359068}

\bibitem[Zwe08]{Zwe08}
\bysame, \emph{Waiting for long excursions and close visits to neutral fixed points of null-recurrent ergodic maps}, Fund. Math. \textbf{198} (2008), no.~2, 125--138. \MR{2369126}

\bibitem[Zwe16]{Zwe16}
\bysame, \emph{The general asymptotic return-time process}, Israel J. Math. \textbf{212} (2016), no.~1, 1--36. \MR{3504316}

\bibitem[Zwe19]{Zwe18}
\bysame, \emph{Hitting-time limits for some exceptional rare events of ergodic maps}, Stochastic Process. Appl. \textbf{129} (2019), no.~5, 1556--1567. \MR{3944776}

\bibitem[Zwe22]{Zwe22}
\bysame, \emph{Hitting times and positions in rare events}, Ann. H. Lebesgue \textbf{5} (2022), 1361--1415. \MR{4526257}

\end{thebibliography}


\appendix

\section{Additional proofs}

\subsection{Proof of Corollary \ref{cor:HTS_neighborhood_0_normalization_gamma}}

  Corollary \ref{cor:HTS_neighborhood_0_normalization_gamma} can be easily deduced from Theorem \ref{thm:Hitting_to_0_infinite} and the following Lemma \ref{lem:Comparison_renormalization_0}.
\begin{lem}
    \label{lem:Comparison_renormalization_0}
    Let $(B_n)_{n\in \mathbb{N}}$ and $(\eta_n)_{n\in \mathbb{N}}$ be defined as in Theorem \ref{thm:Hitting_to_0_infinite}. Let $Q(B_n) := B_n \cap T^{-1}(B_n^c)$. Then we have 
    \begin{align}
        \label{eq:equivalence_renormalization_0}
        \gamma(\mu(T_2^{-1}B_n)) = \gamma\left(\mu(Q(B_n))\right) \uset{\widesim}{n\to +\infty} \left(\frac{1}{\Gamma(1+\alpha)\Gamma(1 - \alpha)}\right)^{1/\alpha} \frac{1}{I(\eta_n)}.
    \end{align}
\end{lem}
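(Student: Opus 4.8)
The plan is to establish \eqref{eq:equivalence_renormalization_0} in two steps: first identify $T_2^{-1}B_n$ with $Q(B_n)$ (so the two normalizations coincide exactly, not just asymptotically), then compute the asymptotics of $\mu(Q(B_n))$ in terms of $I(\eta_n)$ and feed this through the definition \eqref{eq:definition_of_gamma} of $\gamma$ together with the regular variation of $a_n$.

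First I would check the set identity $Q(B_n) = T_2^{-1}B_n$. Since $B_n = [0,\eta_n]$ is a left neighborhood of the fixed point $0$, the point $0$ itself has $T0 = 0 \in B_n$, so $0 \notin Q(B_n)$; more generally a point $x \in [0,\eta_n)$ in the first branch has $T_1 x = x + 2^p x^{p+1} > x$, and $T_1 x \in B_n^c$ precisely when $x > T_1^{-1}\eta_n$, contributing the interval $(T_1^{-1}\eta_n, \eta_n]$, which is $\mu$-negligibly different from $B_n \cap [1/2,1]$'s preimage considerations; the bulk of $Q(B_n)$ is the part of $B_n$ coming from the \emph{second} branch, namely $T_2^{-1}B_n = [1/2, (1+\eta_n)/2]$, on which $T_2 x = 2x - 1$ maps into $B_n$ but $T$ of the next iterate leaves. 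Up to sets of measure zero one gets $Q(B_n) = T_2^{-1}B_n$, which gives the first equality in \eqref{eq:equivalence_renormalization_0}. (I would state this cleanly as: $Q(B_n) = B_n \setminus T_1^{-1}B_n$ up to null sets since $T_1^{-1}B_n \subset B_n$, and then $\mu(Q(B_n)) = \mu(B_n) - \mu(T_1^{-1}B_n)$, while invariance gives $\mu(T_2^{-1}B_n) = \mu(B_n) - \mu(T_1^{-1}B_n)$ as well, because $T^{-1}B_n = T_1^{-1}B_n \sqcup T_2^{-1}B_n$ and $\mu(T^{-1}B_n) = \mu(B_n)$.)

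Next I would compute $\mu(Q(B_n))$ asymptotically. Using the density formula \eqref{eq:formula_density_LSV_map}, $\rho(x) = h_0(x)\,x/(x - T_1^{-1}x)$ with $h_0$ continuous and bounded away from $0$ and $\infty$, one has
\begin{align*}
\mu(Q(B_n)) = \int_{T_1^{-1}\eta_n}^{\eta_n} \rho(x)\,\dd x = \int_{T_1^{-1}\eta_n}^{\eta_n} h_0(x)\,\frac{x}{x - T_1^{-1}x}\,\dd x.
\end{align*}
Since $h_0(x) \to h_0(0)$ and $x/(x - T_1^{-1}x) = 1/(1 - T_1^{-1}x/x) \to $ a finite limit as $x \to 0$ (with the precise constant pinned down by the expansion of $T_1$ near $0$), and since the interval $[T_1^{-1}\eta_n,\eta_n]$ shrinks, this integral is comparable to a single-term contribution; a cleaner route is to relate it directly to $I(\eta_n) = \int_{\eta_n}^1 \dd x/(x - T_1^{-1}x)$ via the known fact (from \cite{Tha80_EstimatesInvariantDensities} or \cite{Zwe98}) that the wandering rate of $Y = [1/2,1]$ satisfies $w_n(Y) \asymp$ a regularly varying function of index $1-\alpha$, and that $I(\eta_n)$ plays exactly the role of $w_{N(\eta_n)}(Y)$ where $N(\eta_n)$ is the level of the tower at height $\eta_n$. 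Concretely, I expect $\mu(Q(B_n)) \sim c\,\eta_n / I(\eta_n)$ is \emph{not} quite right; rather the relation $\mu(Q(B_n)) I(\eta_n) \to \text{const}$ should come out, because $I(\eta_n) \asymp \mu(B_n^c)/\mu(B_n)\cdot(\dots)$ type identities hold. I would pin this down using $\mu(Q(B_n)) = h_0(0)(1 + o(1))\int_{T_1^{-1}\eta_n}^{\eta_n}\dd x/(x - T_1^{-1}x)\cdot x$, bound $x$ between $T_1^{-1}\eta_n$ and $\eta_n$, and use that $a_n \in \RV(\alpha)$ forces $I(\eta_n)\,\mu(Q(B_n)) \to \Gamma(1+\alpha)\Gamma(1-\alpha)$ after matching constants against the known scaling in Theorem \ref{thm:Hitting_to_0_infinite}. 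Finally, since $a \in \RV(\alpha)$ implies $\gamma \in \RV_{0+}(1/\alpha)$, from $\mu(Q(B_n)) \sim (\Gamma(1+\alpha)\Gamma(1-\alpha))^{-1}\cdot\big(\text{something} \sim I(\eta_n)^{-1}\big)$ and $\gamma(s) = 1/b(1/s)$ with $b \in \RV(1/\alpha)$, one gets $\gamma(\mu(Q(B_n))) \sim (\Gamma(1+\alpha)\Gamma(1-\alpha))^{-1/\alpha}/I(\eta_n)$, which is exactly \eqref{eq:equivalence_renormalization_0}.

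\textbf{Main obstacle.} The delicate point is the exact constant $\Gamma(1+\alpha)\Gamma(1-\alpha)$, rather than just the order of magnitude. Getting the order right is routine regular-variation bookkeeping, but the constant requires carefully reconciling three normalizations: the choice $\mu([1/2,1])=1$, the asymptotic-inverse relation $b(a(s))\sim s$ defining $\gamma$ via \eqref{eq:definition_of_gamma}, and the precise statement of Theorem \ref{thm:Hitting_to_0_infinite} with its scaling $1/I(\eta_n)$ and limit law $\mathcal{J}_\alpha$. I would handle this by \emph{not} computing the constant from scratch but instead deducing it: since $\gamma(\mu(Q(B_n)))\,r_{B_n}$ must converge (to $\mathfrak{J}_\alpha = d_\alpha \mathcal{J}_\alpha$, by Corollary \ref{cor:HTS_neighborhood_0_normalization_gamma}, which is what we are proving) and $\frac{1}{I(\eta_n)}r_{B_n} \Rightarrow \mathcal{J}_\alpha$, the ratio $\gamma(\mu(Q(B_n)))\,I(\eta_n)$ must converge to the deterministic constant $d_\alpha = (\Gamma(1+\alpha)\Gamma(1-\alpha))^{-1/\alpha}$ — but this is circular as a proof of the Lemma, so instead I must genuinely compute $\mu(Q(B_n)) \sim c_0/I(\eta_n)$ with an explicit $c_0$ from the density asymptotics near $0$ (using $\rho$ and the expansion $T_1^{-1}x = x - 2^p x^{p+1} + O(x^{2p+1})$, giving $x - T_1^{-1}x \sim 2^p x^{p+1}$), obtaining $\mu(Q(B_n)) \sim h_0(0)\int_{T_1^{-1}\eta_n}^{\eta_n} x^{-p}\,2^{-p}\,\dd x$, then relate $\int^{\eta_n} x^{-p}dx$ to $I(\eta_n) \sim \int_{\eta_n}^1 2^{-p}x^{-p-1}\cdot x\,dx$ and to the wandering rate, and finally invoke \cite[Prop. 3.8.7]{Aar97} (the Aaronson Tauberian theorem linking $a_n$, $w_n$, and the $\Gamma$ constants) to recover $\gamma$ and the factor $(\Gamma(1+\alpha)\Gamma(1-\alpha))^{-1/\alpha}$. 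That Tauberian input is where the Gamma constants genuinely enter.
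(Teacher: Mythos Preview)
Your overall strategy --- establish $\mu(Q(B_n)) = \mu(T_2^{-1}B_n)$ by invariance, compute $\mu(Q(B_n))$ from the density formula \eqref{eq:formula_density_LSV_map}, compute $I(\eta_n)$ asymptotically, and feed everything through the explicit constant in $a_n$ coming from \cite{Zwe98} (equivalently, Aaronson's Tauberian theorem) --- is exactly what the paper does. However, there is a genuine error in your scaling bookkeeping that would produce a wrong answer if carried through.

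You repeatedly assert that $\mu(Q(B_n))\,I(\eta_n) \to \text{const}$, i.e.\ $\mu(Q(B_n)) \sim c_0/I(\eta_n)$. This is false for $\alpha < 1$. From the density formula and the mean value theorem one gets $\mu(Q(B_n)) = \int_{T_1^{-1}\eta_n}^{\eta_n} \rho(x)\,\dd x \sim h_0(0)\,\eta_n$; from $x - T_1^{-1}x \sim 2^{1/\alpha}x^{1/\alpha+1}$ one gets $I(\eta_n) \sim \alpha\, 2^{-1/\alpha}\,\eta_n^{-1/\alpha}$. Hence $\mu(Q(B_n))\,I(\eta_n) \sim c\,\eta_n^{1-1/\alpha} \to 0$. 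The correct relation is $\mu(Q(B_n))^{1/\alpha} \sim c/I(\eta_n)$, and it is only \emph{after} applying $\gamma$ --- which, from $a_n \sim \frac{2(1-\alpha)}{h_0(0)\Gamma(1+\alpha)\Gamma(2-\alpha)\alpha^\alpha}\,n^\alpha$, takes the explicit form $\gamma(s) = \big(\frac{2(1-\alpha)}{h_0(0)\Gamma(1+\alpha)\Gamma(2-\alpha)\alpha^\alpha}\big)^{1/\alpha} s^{1/\alpha}$ --- that you obtain $\gamma(\mu(Q(B_n)))\,I(\eta_n) \to (\Gamma(1+\alpha)\Gamma(1-\alpha))^{-1/\alpha}$. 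The factor $h_0(0)$ cancels between $\mu(Q(B_n))$ and the constant in $\gamma$, and the Gamma combination emerges via $\Gamma(2-\alpha) = (1-\alpha)\Gamma(1-\alpha)$. Once you fix this power, your plan goes through exactly as in the paper; your attempt to get the constant indirectly from the wandering rate is an unnecessary detour.

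A minor point: $Q(B_n)$ and $T_2^{-1}B_n$ are \emph{not} the same set (one lies near $0$, the other near $1/2$), so the set-identity argument in your first paragraph cannot work. Your parenthetical fallback --- $\mu(Q(B_n)) = \mu(B_n) - \mu(T_1^{-1}B_n) = \mu(T_2^{-1}B_n)$ by $T$-invariance and the disjoint decomposition $T^{-1}B_n = T_1^{-1}B_n \sqcup T_2^{-1}B_n$ --- is correct and is all that is needed for the first equality in \eqref{eq:equivalence_renormalization_0}.
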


\begin{proof}[Proof (of Lemma \ref{lem:Comparison_renormalization_0})]
      The first equality follows directly from the $T$-invariance of $\mu$. Recall \eqref{eq:formula_density_LSV_map} stating that the density $\rho := \dd\mu/\dd\leb$ is of the special form
    \begin{align*}
        \rho(x) = h_0(x)\times\frac{x}{x - T_1^{-1} x} \,,\; \forall x\in (0,1],
    \end{align*}
    where $h_0 : [0,1] \to \mathbb{R}$ is a positive continuous function (see for example \cite[Lemma 4]{Tha83}).
      Furthermore, recall that $\gamma : s \mapsto 1/b(1/s)$ where $b$ is the asymptotic inverse of $a$. Since we are studying a special AFN-map we know that (see \cite[Theorem 4]{Zwe98}):
    \begin{align*}
        a_n \uset{\widesim}{n\to +\infty} \frac{2(1-\alpha)}{h_0(0)\Gamma(1+\alpha)\Gamma(2 - \alpha)\alpha^{\alpha}} n^{\alpha}.
    \end{align*}
    Hence, we can choose
    \begin{align*}
        \gamma(s) =  \left(\frac{2(1-\alpha)}{h_0(0)\Gamma(1+\alpha)\Gamma(2 - \alpha)\alpha^{\alpha}}\right)^{1/\alpha} s^{1/\alpha}\,,\; \forall s\geq 0.
    \end{align*}
      On the other part, we have 
    \begin{align*}
        \mu(Q(B_n)) &= \int_{T_1^{-1}\eta_n}^{\eta_n} \rho(x)\dd x \uset{\widesim}{n\to +\infty} h_0(0) \int_{T_1^{-1}\eta_n}^{\eta_n} \frac{x}{x - T_1^{-1}x}\dd x  = h_0(0) \xi_n \frac{\eta_n - T_1^{-1}\eta_n}{\xi_n - T_1^{-1}\xi_n}\quad \text{for some} \; \xi_n \in [T_1^{-1}\eta_n, \eta_n] \\
        & \uset{\widesim}{n\to +\infty} h_0(0) \xi_n \uset{\widesim}{n\to +\infty} h_0(0) \eta_n.
    \end{align*}
    Hence,
    \begin{align}
        \label{eq:equivalence_gamma_mu_Bn}
        \gamma(\mu(Q(B_n)) \uset{\widesim}{n\to +\infty} \left(\frac{2(1-\alpha)}{\Gamma(1+\alpha)\Gamma(2 - \alpha)\alpha^{\alpha}}\right)^{1/\alpha} \eta_n^{1/\alpha}\,.
    \end{align}
    On the other side, we have
    \begin{align}
        \label{eq_equivalence_I_Bn}
        I(\eta_n) = \int_{\eta_n}^1 \frac{\dd x}{x - T_1^{-1}x} \uset{\widesim}{n\to +\infty} \int_{\eta_n}^1 \frac{\dd x}{2^{1/\alpha}x^{1/\alpha + 1}} = \frac{\alpha}{2^{1/\alpha}}\left(\frac{1}{\eta_n^{1/\alpha}} - 1\right)\uset{\widesim}{n\to +\infty} \frac{\alpha}{2^{1/\alpha}\eta_n^{1/\alpha}}, 
    \end{align}
    where we use the fact that $x-T_1^{-1}x \uset{\widesim}{x\to 0^+} 2^{1/\alpha}x^{1/\alpha + 1}$ since $T_1^{-1}x = x - 2^{1/\alpha}x^{1/\alpha+1} + o(x^{1/\alpha})$. Putting together \eqref{eq:equivalence_gamma_mu_Bn} and \eqref{eq_equivalence_I_Bn} gives \eqref{eq:equivalence_renormalization_0} using that $\Gamma(2 - \alpha) = (1 - \alpha)\Gamma(1 - \alpha)$.
\end{proof}

\subsection{Proof of Lemma \ref{lem:comparison_renormalizations_for_0_p=1}}

  Recall that $B_n = [0,c_n]$ and let $E_n := T_2^{-1}B_n = Y \cap \{r_Y > n\} = [1/2,\delta_n]$. Recall that we have chosen the renormalization of $\mu$ so that $\mu(Y) = 1$. We know by \cite[Theorem 4]{Zwe98} that, in the barely infinite case, we have $a_n \sim \frac{2n}{h_0(0)\log(n)}$ and thus $b_n := \frac{h_0(0)}{2}n\log(n)$ is an asymptotic inverse of $(a_n)_{n\geq 0}$ (recall that $\gamma(s) := b(s^{-1})^{-1}$). \\

  Fix some $p\geq 0$ and assume $n \geq p$ (this is no issue as we are interested by the limit on $n$). Let $\widetilde{r}^{(p)}_{B_n}$ be the random variable defined as follows:
\begin{align*}
    \widetilde{r}^{(p)}_{B_n} := h_{E_n}\circ T_{E_p} \mathbf{1}_{E_n} \,,
\end{align*}
where $h_A(x) := \inf\{n\geq 0\;|\; T^n(x)\in A\}$.

\begin{lem}
    \label{lem:bounded_distortion_to_1_again}
    There exist a constant $C_p$, with $C_p \xrightarrow[p\to +\infty]{} 1$ such that for all $n\geq p$, 
    \begin{align*}
        C_p^{-1} \mu(E_n)\leq \frac{\mathbb{E}_{\mu_Y}[\widetilde{r}^{(p)}_{B_n}]}{\mathbb{E}_{\mu_{E_p}}[r_{B_n}]} \leq C_p \mu(E_n)\,.
    \end{align*}
\end{lem}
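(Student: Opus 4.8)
\textbf{Proof plan for Lemma \ref{lem:bounded_distortion_to_1_again}.}

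The plan is to unfold the two expectations using the ergodic sum decomposition and then exploit the bounded distortion estimate of Lemma \ref{lem:Distortion_estimates_Thaler} to compare the relevant transfer operator iterates on the interval $[1/2,1]$. First I would write $\mathbb{E}_{\mu_{E_p}}[r_{B_n}]$ and $\mathbb{E}_{\mu_Y}[\widetilde r^{(p)}_{B_n}]$ as sums over cylinders. Recall $\widetilde r^{(p)}_{B_n} = h_{E_n}\circ T_{E_p}\,\mathbf 1_{E_n}$, so its $\mu_Y$-expectation is supported on points whose $T_{E_p}$-image lands in $E_n$, i.e. on the cylinders $[0b_0^{\ell-1}(\geq n)]$ such that the induced return-to-$E_p$ structure is respected and the first passage of $T_{E_p}x$ to $E_n$ is being counted. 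Using the Markov partition $\xi$ and the symbolic description of Lemma \ref{lem:description_partition_xi_with_symbolic_notations}, both expectations become weighted sums where the weights are measures of cylinders of the form $[0a_0^{m-1}0(\geq n)]$ (for $r_{B_n}$ from $E_p$, unrolling via $T_2^{-1}B_n = E_n$ so that $h_{B_n}$ is $h_{E_n}+1$ up to boundary terms) versus the same cylinders prefixed by the word corresponding to the excursion inside $E_p$ that brings a point of $Y$ to $E_p$.

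The key step is then to apply Corollary \ref{cor:distortion_bounds_comparison_measures}: for any admissible word $w$ describing an excursion from $Y$ back into $E_p=[1/2,\delta_p]$ (which is itself $\xi$-measurable, being $\bigsqcup_{0\le k\le p}[k]$ up to relabeling, hence an interval ending at $0$ in symbolic terms), and any cylinder $[0a_0^{m-1}0(\geq n)]\subset E_n$, we have
\begin{align*}
\mu\big[0w\,0a_0^{m-1}0(\geq n)\big] = \big(1\pm C\diam([1/2,\delta_p])\big)\,\frac{\mu[0w0]}{\mu[0]}\,\mu\big[0a_0^{m-1}0(\geq n)\big].
\end{align*}
Summing over $w$ (the total mass of such excursion words being $\mu(Y)=1$ after normalization, by conservativity/ergodicity) and over the cylinders realizing each value of $h_{E_n}$, the two sums become proportional with proportionality constant $\mu(E_n) = \mu([0(\geq n)])$ coming from the normalization $\mu(Y)=1$ — more precisely, $\mathbb{E}_{\mu_Y}[\widetilde r^{(p)}_{B_n}]$ integrates over all of $Y$ whereas $\mathbb{E}_{\mu_{E_p}}[r_{B_n}]$ is normalized by $\mu(E_p)$, and the ratio of the geometric weights is controlled by $\big(1\pm C\diam([1/2,\delta_p])\big)$. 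Setting $C_p := \frac{1+C\diam([1/2,\delta_p])}{1-C\diam([1/2,\delta_p])}$ gives the claimed bound, and $\diam([1/2,\delta_p])=\tfrac12(c_0-c_p)\to 0$... wait, rather $\delta_p=(1+c_p)/2\to 1/2$ as $p\to\infty$ since $c_p\to 0$, so $\diam([1/2,\delta_p])\to 0$ and hence $C_p\to 1$.

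\textbf{Main obstacle.} The delicate point is making the cylinder decomposition of $\widetilde r^{(p)}_{B_n}$ precise: one must track exactly which symbolic words contribute, handle the $+1$ shifts coming from $B_n$ versus $E_n=T_2^{-1}B_n$ and from $h$ versus $r$, and verify that every cylinder appearing has the form to which Corollary \ref{cor:distortion_bounds_comparison_measures} applies (in particular that the prefix word representing an excursion into $E_p$ ends in the digit $0$, so that the corollary's hypothesis is met — this is where the fact that $E_p$ is a finite union of $\xi$-atoms and the remark after Lemma \ref{lem:description_partition_xi_with_symbolic_notations} guaranteeing $[a_0^k]=[a_0^k(a_k-1)\cdots 0]$ are used). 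Once the bookkeeping is organized so that the distortion corollary can be applied term by term, the uniformity in $n$ is automatic since the distortion constant $C$ depends only on $Y=[1/2,1]$, and the $p$-dependence enters solely through $\diam([1/2,\delta_p])$.
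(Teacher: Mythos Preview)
Your overall strategy is the same as the paper's --- decompose both expectations branch by branch into sums over cylinders and compare them via Corollary~\ref{cor:distortion_bounds_comparison_measures}, with the distortion factor $1\pm C\diam(E_p)$ producing the constant $C_p\to 1$. That part is right, and your identification of the ``main obstacle'' (careful symbolic bookkeeping so the corollary applies term by term) is accurate.

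There is, however, a genuine confusion in your description that would derail the computation if left uncorrected. The function $\widetilde r^{(p)}_{B_n}=h_{E_n}\circ T_{E_p}\,\mathbf 1_{E_n}$ is supported on $E_n$ (that is what the indicator $\mathbf 1_{E_n}$ does), \emph{not} on ``points whose $T_{E_p}$-image lands in $E_n$''; in fact $h_{E_n}\circ T_{E_p}$ vanishes precisely on the latter set. Consequently, in the cylinder expansion the \emph{prefix} word encodes the trajectory from $E_n$ to its first return to $E_p$, and the \emph{suffix} encodes the subsequent trajectory from $E_p$ until hitting $E_n$. Summing over prefixes therefore gives $\mu(E_n)$ (they partition $E_n$), while summing the suffix part, weighted by the value of $h_{E_n}$, gives $\mathbb{E}_{\mu_Y}[h_{E_n}\mathbf 1_{E_p}]=\mu(E_p)\,\mathbb{E}_{\mu_{E_p}}[h_{E_n}]$; the normalizing $\mu(E_p)$ in the denominator of the distortion estimate cancels against this, leaving exactly $\mu(E_n)\,\mathbb{E}_{\mu_{E_p}}[h_{E_n}]$. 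Your account has the prefix ranging over ``excursions from $Y$ to $E_p$'' with total mass $\mu(Y)=1$, which is not what happens and leaves the factor $\mu(E_n)$ unexplained. Once you correct the roles of prefix and suffix, the rest of your plan (including handling $h_{E_n}$ versus $r_{B_n}$ up to a bounded additive shift) goes through exactly as in the paper.
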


\begin{proof}[Proof (of Lemma \ref{lem:bounded_distortion_to_1_again})]
    As usual, we use the bounded distortion result Corollary \ref{cor:distortion_bounds_comparison_measures}  and work branch by branch. Let 
    \begin{align*}
        A_n := \{[0a_0^{j-1}b_0^{k}],\; a_0 > n, \exists!i \; a_i = p, \; b_0 =0, \; b_1 > p,\; b_{k-1} = 0, \; b_k > n, \; \forall 0\leq \ell \leq k-1 \; b_k\leq n\}
    \end{align*}
    be a partition in branches of $\mathbf{1}_{E_n}$ for $\widetilde{r}_{B_n}^{(p)}$. Then, we have
    
    \begin{align*}
        \mathbb{E}_{\mu_Y}[\widetilde{r}^{(p)}_{B_n}] &= \sum_{[0a_0^{j-1}b_0^{k}] \in A_n} (k-1)\mu[0a_0^{j-1}b_0^{k}]\\
        & \leq (1+C\diam(E_p))\sum_{[0a_0^{j-1}b_0^{k}] \in A_n} (k-1) \frac{\mu[0a_0^{j-1}0(>p)]\mu[b_0^k]}{\mu[0(>p)]} \\
        & \leq C'_p \frac{\mu(E_n)}{\mu(E_p)} \mathbb{E}_{\mu_Y}[h_{E_n} \mathbf{1}_{E_p}]\\
        & \leq C'_p \frac{\mu(E_n)}{\mu(E_p)} \mathbb{E}_{\mu_Y}[(r_{B_n} - 1) \mathbf{1}_{E_p}]\\
        & \leq C_p \frac{\mu(E_n)}{\mu(E_p)} \mathbb{E}_{\mu_Y}[r_{B_n}\mathbf{1}_{E_p}].
    \end{align*}
      The lower bound is obtained similarly. Since $\diam(E_p) \xrightarrow[p\to +\infty]{}$, we have $C_p \xrightarrow[p\to +\infty]{} 1$.
\end{proof}

\begin{lem}
    \label{lem:equivalence_wandering_rate_tilde_r_B_n}
    For a fixed $p\geq 0$, we have the following asymptotic result:
    \begin{align*}
        \mathbb{E}_{\mu_Y}[\widetilde{r}_{B_n}^{(p)}] \sim \int (r_Y \wedge n)\,\dd\mu_Y = w_n(Y).
    \end{align*}
\end{lem}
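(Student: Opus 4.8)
The statement to prove is $\mathbb{E}_{\mu_Y}[\widetilde{r}_{B_n}^{(p)}] \sim \int (r_Y \wedge n)\,\dd\mu_Y = w_n(Y)$ for fixed $p$. The quantity $\widetilde{r}_{B_n}^{(p)}$ counts, starting from a point of $E_n$, the time to hit $E_n$ again but only after first applying $T_{E_p}$ — that is, it measures an excursion ``from $E_p$ to $E_n$'' rather than from $E_n$ to $E_n$. My plan is to relate both sides to the wandering rate through the structure of the return partition $\xi$ and the tower over $Y$.

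First, I would recall that $w_n(Y) = \sum_{k=0}^{n-1}\mu(Y\cap\{r_Y>k\}) = \mu(Y)\,\mathbb{E}_{\mu_Y}[r_Y\wedge n]$, and that in the barely infinite case ($p=1$, $\alpha=1$) this is a slowly-varying-times-$n$ quantity; more precisely $w_n(Y)\sim \sum_{k=1}^n \mu[0(\geq k)] \asymp \sum_{k=1}^n k^{-1}\sim \log n$ using \eqref{eq:measure_union_cylindres_asymptotique}. Then I would expand $\mathbb{E}_{\mu_Y}[\widetilde{r}_{B_n}^{(p)}]$ over the partition $A_n$ of branches introduced in the proof of Lemma \ref{lem:bounded_distortion_to_1_again}: each branch $[0a_0^{j-1}b_0^k]$ contributes its length $(k-1)$ weighted by its measure. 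Using Corollary \ref{cor:distortion_bounds_comparison_measures} exactly as in Lemma \ref{lem:bounded_distortion_to_1_again}, the sum over the $a_0^{j-1}$-part factors out, contributing $\mu(E_n)$ up to the bounded-distortion constant $C_p$, and what remains is $\sum (k-1)\,\mu[0b_0^k]$ over admissible first-excursion blocks $b_0^k$ with all intermediate symbols $\leq n$ and final symbol $>n$; this is precisely $\mathbb{E}_{\mu_Y}[h_{E_n}\mathbf{1}_{Y}]$ up to the trivial shift, i.e.\ it equals $\mathbb{E}_{\mu_Y}[(r_{B_n}-1)\wedge(\text{something})]$.

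The cleaner route, which I would actually write, is: (i) show $\mathbb{E}_{\mu_Y}[\widetilde{r}_{B_n}^{(p)}]$ differs from $\mu(Y)^{-1}\mu(E_n)^{-1}\cdot(\text{something})$... — no; better, I would directly show $\mathbb{E}_{\mu_Y}[\widetilde r_{B_n}^{(p)}]$ is asymptotically independent of $p$, reducing to the case where the ``pre-excursion'' $T_{E_p}$ is absorbed, and then identify the resulting quantity with $\mathbb{E}_{\mu_Y}[r_Y\wedge n]$ by noting that $r_Y\wedge n$ on $Y$ and the excursion length up to the first return to $E_n=\{r_Y>n\}$ are related by the standard identity $\mathbb{E}_{\mu_Y}[h_{E_n}] = \sum_{k\geq 0}\mu_Y(h_{E_n}>k)$ and $\{h_{E_n}>k\}$ is, modulo a negligible set, a union of excursion pieces of length $<n$; summing gives $\sum_{k=0}^{n-1}\mu_Y(r_Y>k)\cdot(1+o(1)) = w_n(Y)(1+o(1))/\mu(Y)$. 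Concretely: partition $Y$ according to the first return time $r_Y$; an orbit started in $Y$ stays outside $E_n$ until its first excursion of length $\geq n$; the expected number of steps before that, counted in the induced map $T_Y$, telescopes into $\sum_{k<n}\mu_Y(r_Y=k)\cdot(\text{conditional excursion length})$ plus a tail term, and a direct computation using $\mu_Y(r_Y>k)\asymp k^{-1}$ (not summable, barely) shows the dominant contribution is $\sum_{k=1}^{n}\mu_Y(r_Y>k)$.

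The main obstacle I anticipate is bookkeeping the combinatorics of the branch decomposition so that the factorization via Corollary \ref{cor:distortion_bounds_comparison_measures} is clean and the error terms are genuinely $o(w_n(Y))$ — in the barely infinite case $w_n(Y)\sim\text{const}\cdot\log n$ grows extremely slowly, so one must be careful that the distortion factors $(1\pm C\diam(E_p))$ and the finitely many ``excess'' steps introduced by $T_{E_p}$ (which are $O(1)$ in $n$, hence $o(\log n)$) are correctly absorbed. Once those are under control, the statement follows because both $\mathbb{E}_{\mu_Y}[\widetilde r_{B_n}^{(p)}]$ and $w_n(Y)$ reduce to the same weighted sum $\sum_{k=1}^n \mu_Y(r_Y>k)$ up to a multiplicative $(1+o(1))$, and the normalization $\mu(Y)=1$ removes the remaining constant. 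I would then remark that this lemma, combined with Lemma \ref{lem:bounded_distortion_to_1_again} and the identification $a_n\sim \tfrac{1}{\Gamma(2-\alpha)\Gamma(1+\alpha)}\tfrac{n}{w_n(Y)}$ from \cite[Proposition 3.8.7]{Aar97} (here $\alpha=1$), yields Lemma \ref{lem:comparison_renormalizations_for_0_p=1}.
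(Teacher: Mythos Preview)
Your proposal has a genuine gap. The bounded-distortion/branch-decomposition route you sketch first is exactly what Lemma \ref{lem:bounded_distortion_to_1_again} already does, and it only produces estimates up to the multiplicative constant $C_p$ (which tends to $1$ as $p\to\infty$, not as $n\to\infty$). The present lemma asserts asymptotic \emph{equivalence} for \emph{fixed} $p$, so distortion alone cannot close the argument. Your ``cleaner route'' then tries to show asymptotic independence of $p$ directly, but the mechanism you propose --- relating $\{h_{E_n}>k\}$ to $\{r_Y>k\}$ and summing --- is not correct as stated: $\{h_{E_n}>k\}$ in the full system is not a union of short excursions in any way that yields $\sum_{k<n}\mu_Y(r_Y>k)$, and you give no argument for why the $p$-dependence should disappear in the $n$-limit.

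The paper's approach is quite different and sidesteps all of this. It applies Birkhoff's theorem on the induced system $(Y,T_Y,\mu_Y)$ to the observable $\ell_n := r_Y\mathbf{1}_{\{r_Y\le n\}}$. By decomposing the ergodic average $\tfrac1j S_j^Y\ell_n$ according to successive visits to $E_n$ and then to $E_p$, one obtains an \emph{exact} identity
\[
\mathbb{E}_{\mu_Y}\big[\widetilde r_{B_n}^{(p)}\big] \;=\; \mathbb{E}_{\mu_Y}[\ell_n] \;-\; \mathbb{E}_{\mu_Y}\big[\mathbf{1}_{E_n}\, h_{E_p}\circ T_Y\big].
\]
The second term is bounded uniformly in $n$ (it is at most $\mathbb{E}_{\mu_Y}[\mathbf{1}_{E_p}\,h_{E_p}\circ T_Y]$, a finite constant depending only on $p$), hence $o(w_n(Y))$. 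Finally $\mathbb{E}_{\mu_Y}[\ell_n] = w_n(Y) - n\mu_Y(r_Y>n)$, and in the barely infinite case $n\mu_Y(r_Y>n)\asymp 1$ while $w_n(Y)\asymp\log n$, giving the result. The Birkhoff-sum identity is the key idea you are missing; it is what converts the approximate $C_p$-comparison into a sharp asymptotic.
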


\begin{proof}[Proof (of Lemma \ref{lem:equivalence_wandering_rate_tilde_r_B_n})]
    We look at the Birkhoff's sums for the induced map on $Y$ and consider the map $\ell_{n} := r_Y\mathbf{1}_{E_n^c} = r_Y \mathbf{1}_{\{r_Y \leq n\}}$. Let $j\geq 1$, $x\in Y$ and assume for simplicity that $T_Y^jx \in E_n$. Write $k(j) := \sup \{i \geq 0\;|\; r_{E_n}^{Y,(i)} < j\}$ (in particular, with the hypothesis we made, we have $j = r_{E_n}^{Y,(k(i)+1)}$). We have 
    \begin{align*}
        \frac{1}{j}S^Y_j\ell_n(x)  & = \frac{1}{j}\sum_{k = 0}^{j-1} r_Y(T_Y^{k}x)\mathbf{1}_{E_n^c}(T_Y^kx)\\
        & = \frac{1}{j} \sum_{k = 1}^{k(j)} \sum_{i = 1}^{r^Y_{E_n}\circ T_{E_n}^k(x) - 1} r_Y(T_Y^i \circ T_{E_n}^k x) + \frac{1}{j}h_{E_n}\\
        & = \frac{1}{j} \sum_{k = 1}^{k(j)} \left(\sum_{i = r^Y_{E_p}\circ T_{E_n}^k(x)}^{r^Y_{E_n}\circ T_{E_n}^k(x) - 1} r_Y(T_Y^i \circ T_{E_n}^k x) + \sum_{i = 1}^{r^Y_{E_p}\circ T_{E_n}^k(x) - 1} r_Y(T_Y^i \circ T_{E_n}^k x)\right) + \frac{1}{j}h_{E_n}.
    \end{align*}

      On one side, we have 
    \begin{align*}
        \frac{1}{j} \sum_{k = 1}^{k(j)}\sum_{i = 1}^{r^Y_{E_p}\circ T_{E_n}^k(x) - 1} r_Y(T_Y^i \circ T_{E_n}^k x) &= \frac{1}{j} \sum_{k = 1}^{k(j)} h_{E_p} \circ T_Y \circ T_{E_n}^k(x)\\
        & = \frac{1}{j} \sum_{k = 1}^{j-1} \left(\mathbf{1}_{E_n}h_{E_p} \circ T_Y\right) \circ T_Y^k(x)\\
        &\sim \frac{1}{j} S_j^{Y}\left( \mathbf{1}_{E_n}h_{E_p} \circ T_Y\right) \xrightarrow[j\to +\infty]{\mu_Y \; a.e.} \mathbb{E}_{\mu_Y}[\mathbf{1}_{E_n}h_{E_p} \circ T_Y]
    \end{align*}
      On the other side, we have 
    \begin{align*}
        \frac{1}{j} \sum_{k = 1}^{k(j)} \sum_{i = r^Y_{E_p}\circ T_{E_n}^k(x)}^{r^Y_{E_n}\circ T_{E_n}^k(x) - 1} r_Y\circ T_Y^i \circ T_{E_n}^k (x) & = \frac{1}{j} \sum_{k=1}^{k(j)} h_{E_n} \circ T_{E_p} \circ T_{E_n}^k(x) \quad \text{using $E_n \subset E_p$}
    \end{align*}
    
      The $h_{E_n}$ instead of $r_{E_n}$ comes from the possibility that the sum is empty if $r_{E_p}^Y = r_{E_n}^Y$ and the $0$ we get is exactly $h_{E_n}$ in that case. \\

      For $\widetilde{r}_{B_n}^{(p)}$, we have 
    \begin{align*}
        \frac{1}{j}S^Y_j \widetilde{r}_{B_n}^{(p)}(x) &= \frac{1}{j} \sum_{k=0}^{j -1} \widetilde{r}_{B_n}^{(p)} \circ T_Y^k(x)\\
        & = \frac{1}{j} \sum_{k = 1}^{k(j)} h_{E_n} \circ T_{E_p} \circ T_{E_n}^k(x) \qquad \text{by definition of $\widetilde{r}_{B_n}^{(p)}$.}
    \end{align*}

      Putting everything together, we get by Birkhoff's theorem,
    \begin{align*}
        \mathbb{E}_{\mu_Y}[\widetilde{r}_{B_n}^{(p)}] = \mathbb{E}_{\mu_Y}[\ell_n] - \mathbb{E}_{\mu_Y}[\mathbf{1}_{E_n}h_{E_p}\circ T_Y].
    \end{align*}
    In particular, when $n$ goes to $+\infty$, we have the asymptotic equivalence
    \begin{align*}
        \mathbb{E}_{\mu_Y}\big[\widetilde{r}_{B_n}^{(p)}\big] \uset{\widesim}{n\to +\infty} \mathbb{E}_{\mu_Y}[\ell_n].
    \end{align*}

      Finally, we have 
    \begin{align*}
        \mathbb{E}_{\mu_Y}[\ell_n] = \int_Y r_Y \mathbf{1}_{\{r_Y \leq n\}}\,\dd\mu_Y = w_n(Y) - n\mu_Y(r_Y > n).
    \end{align*}
      However, in the particular barely infinite case, we have $\mu_Y(r_Y > n) \asymp 1/n$ whereas $w_n(Y) \asymp \log(n)$. Thus, 
    \begin{align*}
        \mathbb{E}_{\mu_Y}[\ell_n] \uset{\widesim}{n\to +\infty} w_n(Y).
    \end{align*}
    This yields the wanted result $\mathbb{E}_{\mu_Y}\big[\widetilde{r}_{B_n}^{(p)}\big] \uset{\widesim}{n\to +\infty} w_n(Y)$.
\end{proof}

\begin{proof}[Proof (of Lemma \ref{lem:comparison_renormalizations_for_0_p=1})]
  With Lemma \ref{lem:bounded_distortion_to_1_again} and \ref{lem:equivalence_wandering_rate_tilde_r_B_n}, we obtain
\begin{align*}
    C_p^{-1}\mu(E_n) &\lesssim \frac{w_n(Y)}{\mathbb{E}_{\mu_{E_p}}[r_{B_n}]} \lesssim C_p\mu(E_n)\\
     C_p^{-1}&\lesssim \frac{\mathbb{E}_{\mu_{Y}}[r_{B_n}]^{-1} w_n(Y)/\mu(E_n)}{\mathbb{E}_{\mu_{Y}}[r_{B_n}]^{-1}\mathbb{E}_{\mu_{E_p}}[r_{B_n}]} \lesssim C_p,
\end{align*}
where $u_n \lesssim v_n$ means that $\limsup_{n\to+\infty} u_n/v_n \leq 1$.\\

  Taking the limit in $n$ ($\limsup$ and $\liminf$) and since the convergence in Theorem \ref{thm:Collet_Galves_p_equals_1} is true $\mu_{E_p} \in \mathcal{L}(\mu)$, we get, taking finally the limit $p\to +\infty$,  
\begin{align*}
    C_p^{-1} \lesssim \frac{w_n(Y)}{\mu(E_n)}\mathbb{E}_{\mu_Y}[r_{B_n}]^{-1} \lesssim C_p.
\end{align*}
Taking afterwards $p\to +\infty$, it yields
\begin{align*}
     \frac{w_n(Y)}{\mu(E_n)} \uset{\widesim}{n\to +\infty} \mathbb{E}_{\mu_Y}[r_{B_n}].
\end{align*}
However, 
\begin{align*}
    \gamma(\mu(E_n))^{-1} &= b(\mu(E_n)^{-1}) \uset{\widesim}{n\to +\infty} \frac{h_0(0)}{2\mu(E_n)}\log(\mu(E_n)^{-1}) \\
    & \uset{\widesim}{n\to +\infty} \frac{h_0(0)}{2\mu(E_n)}\log(n) \uset{\widesim}{n\to +\infty} w_n(Y)/\mu(E_n).
\end{align*}
\end{proof}

\begin{rem}
    In fact, in Lemma \ref{lem:equivalence_wandering_rate_tilde_r_B_n}, we actually showed that $\mathbb{E}_{\mu_Y}[\widetilde{r}_{B_n}^{(p)}] \uset{\widesim}{n\to +\infty} w_n(Y) - n\mu_Y(Y\cap \{r_Y > n\})$, whenever $p > 1$. In the case $p> 1$, both terms are of same order. Since we also have 
    \begin{align*}
        w_n(Y) = \sum_{k=0}^{n-1} \mu(Y \cap \{r_Y > k\})
    \end{align*}
    and we know that $\mu(Y \cap \{r_Y > n\}) \uset{\widesim}{n\to +\infty} Cn^{-\alpha}$ for some constant $\alpha$, we obtain
    \begin{align*}
        \mathbb{E}_{\mu_Y}[r_{B_n}] \uset{\widesim}{n\to +\infty} \frac{\alpha}{1 -\alpha}n,
    \end{align*}
    which can also be proven with Theorem \ref{thm:Hitting_to_0_infinite} which implies
    \begin{equation}
        \frac{1}{n}r_{B_n} \xRightarrow[n\to+\infty]{\mu_Y} \mathcal{J}_{\alpha}
    \end{equation}
    In particular, since $\mathbb{E}[\mathcal{J}_{\alpha}] = \alpha/(1 - \alpha)$, it also gives $\mathbb{E}_{\mu_Y}[r_{B_n}] \uset{\widesim}{n\to +\infty} \alpha n/(1- \alpha)$.
\end{rem}

\end{document}